\documentclass{amsart}

\usepackage{amssymb, graphicx}

\newcommand{\ncmnd}{\newcommand}
\ncmnd{\nthm}{\newtheorem}
\ncmnd{\R}{\mathbf{R}}

\theoremstyle{plain}

\nthm{thm}{Theorem}[section]
\nthm{cor}[thm]{Corollary}
\nthm{prop}[thm]{Proposition}
\nthm{lem}[thm]{Lemma}

\theoremstyle{definition}
\nthm{note}[thm]{Note}
\nthm{rem}[thm]{Remark}
\nthm{defn}[thm]{Definition}
\nthm{examp}[thm]{Example}
\nthm{prob}[thm]{Problem}

\DeclareMathOperator{\grad}{grad}

\makeindex

\begin{document}

\title{Riemannian geometry}

\author{Richard L. Bishop}

\maketitle

\begin{center}
{\bf \Large Preface}
\end{center}
These lecture notes are based on the course in Riemannian geometry
at the University of Illinois over a period of many years. The material
derives from the course at MIT developed by Professors Warren
Ambrose and I M Singer and then reformulated in the book by Richard
J. Crittenden and me, ``Geometry of Manifolds'', Academic Press, 1964.
That book was reprinted in 2000 in the AMS Chelsea series. These notes
omit the parts on differentiable  manifolds, Lie groups, and isometric
imbeddings. The notes are not intended to be for individual self study;
instead they depend heavily on an instructor's guidance and the use of
numerous problems with a wide range of difficulty.

The geometric structure in this treatment emphasizes the use of the
bundles of bases and frames and avoids the arbitrary coordinate expressions
as much as possible. However, I have added some material of historical
interest, the Taylor expansion of the metric in normal coordinates which
goes back to Riemann. The curvature tensor was probably discovered by
its appearance in this expansion.

There are some differences in names which I believe are a substantial
improvement over the fashionable ones. What is usually called a ``distribution''
is called a ``tangent subbundle'', or ``subbundle'' \index{subbundle} for short. The name ``solder
form'' \index{solder form} never made much sense to me and is now labeled the descriptive term
``universal cobasis''\index{universal cobasis}. The terms ``first Bianchi identity'' and ``second Bianchi\index{cobasis!universal}
identity'' are historically inaccurate and are replaced by ``cyclic curvature identity''
and ``Bianchi identity'' -- Bianchi was too young to have anything to do with the
first, and even labeling the second with his name is questionable since it appears
in a book by him but was discovered by someone else (Ricci?).

The original proof of my Volume Theorem used Jacobi field comparisons and
is not reproduced. Another informative approach is to use comparison theory
for matrix Riccati equations and a discussion of how that works is included
and used to prove the Rauch Comparison Theorem.

In July, 2013, I went through the whole file, correcting many typos, making
minor additions, and, most importantly, redoing the index using the Latex option for that purpose.

\vspace{\medskipamount}
\noindent Richard L. Bishop

\noindent University of Illinois at Urbana-Champaign

\noindent July, 2013.

\tableofcontents

\section{Riemannian metrics}

Riemannian geometry considers manifolds with the additional structure of
a Riemannian metric, a type $(0,2)$ positive definite symmetric tensor
field. To a first order approximation this means that a Riemannian
manifold is a Euclidean space: we can measure lengths of vectors and
angles between them. Immediately we can define the length of curves by
the usual integral, and then the distance between points comes from the
glb of lengths of curves. Thus, Riemannian manifolds become metric
spaces in the topological sense.

\vspace{\medskipamount}
Riemannian geometry is a subject of current mathematical research in
itself. We will try to give some of the flavor of the questions being
considered now and then in these notes. A Riemannian structure
is also frequently used as a tool for the study of other properties of
manifolds. For example, if we are given a second order linear 
elliptic partial
differential operator, then the second-order coefficients of that
operator form a Riemannian metric and it is natural to ask what the
general properties of that metric tell us about the partial differential
equations involving that operator. Conversely, on a Riemannian manifold
there is singled out naturally such an operator (the Laplace-Beltrami \index{Laplace-Beltrami}
operator),
so that it makes sense,
for example, to talk about solving the heat equation\index{heat equation} on a Riemannian
manifold. The Riemannian metrics have nice properties not shared by just
any topological metrics, so that in topological studies they are also
used as a tool for the study of manifolds.

\vspace{\medskipamount}
The generalization of Riemannian geometry to the case where the metric
is not assumed to be positive definite, but merely nondegenerate, forms
the basis for general relativity theory\index{general relativity}. We will not go very far in that
direction, but some of the major theorems and concepts are identical in
the generalization. We will be careful to point out which theorems we
can prove in this more general setting. For a deeper study there is a
fine book: O'Neill\index{O'Neill, B.}, Semi-Riemannian geometry, Academic Press, 1983. I
recommend this book also for its concise summary of the theory of
manifolds, tensors, and Riemannian geometry itself.

The first substantial question we take up is the existence of Riemannian
metrics. It is interesting that we can immediately use Riemannian
metrics as a tool to shed some light on the existence of semi-Riemannian
metrics of nontrivial index.

\vspace{\medskipamount}
\begin{thm}[Existence of Riemannian metrics] \index{Riemannian metrics} On any smooth
manifold there exists a Riemannian metric.
\end{thm}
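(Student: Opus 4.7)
The plan is the standard partition of unity argument. Assuming the smooth manifold $M$ is (as usual) Hausdorff and second countable, hence paracompact, I would first invoke the existence of a smooth partition of unity subordinate to a chosen atlas. Concretely, pick a locally finite cover $\{U_\alpha\}$ of $M$ by coordinate charts $\varphi_\alpha \colon U_\alpha \to \R^n$, together with a smooth partition of unity $\{\rho_\alpha\}$ with $\operatorname{supp} \rho_\alpha \subset U_\alpha$.

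Next, I would build local Riemannian metrics by pulling back the Euclidean inner product. On each $U_\alpha$, set $g_\alpha = \varphi_\alpha^* \langle \cdot, \cdot \rangle_{\R^n}$; this is a smooth, symmetric, positive definite $(0,2)$-tensor on $U_\alpha$, because $\varphi_\alpha$ is a diffeomorphism onto an open set of $\R^n$ and so its differential is a linear isomorphism on each tangent space. Then I would define
\[
g = \sum_\alpha \rho_\alpha \, g_\alpha,
\]
where each summand $\rho_\alpha g_\alpha$ is understood to be extended smoothly by zero outside $U_\alpha$ (which is legitimate because $\operatorname{supp} \rho_\alpha \subset U_\alpha$). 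Local finiteness guarantees that near any point the sum reduces to a finite sum of smooth tensor fields, so $g$ is a smooth symmetric $(0,2)$-tensor.

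The decisive step is positive definiteness. At any $p \in M$ and any nonzero $v \in T_p M$, only finitely many $\rho_\alpha(p)$ are nonzero, each $g_\alpha(p)(v,v) > 0$ wherever $\rho_\alpha(p) \neq 0$, and at least one $\rho_\alpha(p) > 0$ since $\sum_\alpha \rho_\alpha \equiv 1$. Hence $g_p(v,v) = \sum_\alpha \rho_\alpha(p) g_\alpha(p)(v,v) > 0$. I expect the main (only real) obstacle to be making sure the summation is literally well-defined and smooth across chart boundaries, which is handled by the support condition on the $\rho_\alpha$ and local finiteness; the topological hypotheses needed to produce the partition of unity are the underlying substantive input.
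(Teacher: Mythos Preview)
Your proof is correct and follows essentially the same approach as the paper: pull back the Euclidean metric via coordinate charts and glue with a partition of unity. The paper phrases the key step as ``positive definiteness is preserved due to the convexity of the set of positive definite symmetric matrices,'' which is exactly what your direct computation $g_p(v,v) = \sum_\alpha \rho_\alpha(p) g_\alpha(p)(v,v) > 0$ verifies.
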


\vspace{\medskipamount}
The key idea of the proof is that locally we always have Riemannian
metrics carried over from the standard one on Cartesian space by
coordinate mappings, and we can glue them together smoothly with a
partition of unity\index{partition of unity}. In the gluing process the property of being positive
definite is preserved due to the convexity of the set of positive
definite symmetric matrices.
What happens for indefinite metrics? The set of nonsingular symmetric
matrices of a given index is {\em not} convex, so that the existence
proof breaks down. In fact, there is a condition on the manifold, which
can be reduced to topological invariants, in order that a
semi-Riemannian metric of index $\nu$ exist: there must be a subbundle
of the tangent bundle of rank $\nu$. When $\nu = 1$ the structure is
called a {\em Lorentz } structure\index{Lorentz structure}; that is the case of interest in
general relativity theory; the topological condition for a compact manifold to
have a Lorentz structure is easily understood: the Euler characteristic\index{Euler characteristic}
must be $0$.

The proof in the Lorentz case can be done by using the fact that for any
simple curve there is a diffeomorphism which is the identity outside any
given neighborhood of the curve and which moves one end of the curve to
the other. In the compact case start with a vector field having discrete
singularities. Then by choosing disjoint simple curves from these singularities
to inside a fixed ball, we can obtain a diffeomorphism which moves all of
them inside that ball. If the Euler characteristic is $0$, then by the Hopf
index theorem\index{Hopf index theorem}, the index of the vector field on the boundary of the ball
is $0$, so the vector field can be extended to a nonsingular vector field
inside the ball.

Conversely, by the following Theorem \ref{thm:metricsexist}, a Lorentz metric would give a
nonsingular rank $1$ subbundle. If that field is nonorientable, pass to the
double covering for which the lift of it is orientable. Then there is a
nonsingular vector field which is a global basis of the line field, so the
Euler characteristic is $0$.\index{Euler characteristic!Lorentz manifold}
\index{orientable}

In the noncompact case, take a countable exhaustion of the manifold
by an increasing family of compact sets. Then the singularities of a
vector field can be pushed outside each of the compact sets sequentially,
leaving a nonsingular vector field on the whole in the limit. Thus, every
noncompact (separable) manifold has a Lorentz structure.

\vspace{\medskipamount}
\begin{thm}[Existence of semi-Riemannian metrics]\index{semi-Riemannian metric}
\label{thm:metricsexist}
A smooth manifold has a semi-Riemannian metric of index
$\nu$ if and only if there is a subbundle of the tangent bundle of
rank $\nu$.
\end{thm}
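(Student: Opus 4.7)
The plan is to prove the two directions separately, using Theorem 1 (existence of Riemannian metrics) as the basic tool on both sides.

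For the easier direction ($\Leftarrow$), suppose $E \subset TM$ is a smooth subbundle of rank $\nu$. Let $h$ be any Riemannian metric on $M$, whose existence is guaranteed by Theorem 1. Take the $h$-orthogonal complement $E^{\perp}$, which is a smooth subbundle of rank $n-\nu$, so that $TM = E \oplus E^{\perp}$ as a smooth Whitney sum. Writing each tangent vector uniquely as $v = v_1 + v_2$ with $v_1 \in E$ and $v_2 \in E^{\perp}$, define
\[
g(u,v) \;=\; -\,h(u_1,v_1) \;+\; h(u_2,v_2).
\]
This is smooth and symmetric, nondegenerate on each fiber, negative definite on $E$ and positive definite on $E^{\perp}$, so it is a semi-Riemannian metric of index $\nu$.

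For the harder direction ($\Rightarrow$), suppose $g$ is a semi-Riemannian metric of index $\nu$ on $M$. I would again fix an auxiliary Riemannian metric $h$ from Theorem 1 and consider the smooth bundle endomorphism $A\colon TM \to TM$ characterized by $h(Au,v) = g(u,v)$; at each point $A$ is $h$-symmetric, hence diagonalizable with real eigenvalues, and the signature assumption forces exactly $\nu$ of them (with multiplicity) to be negative and $n-\nu$ positive, with none equal to zero since $g$ is nondegenerate. Let $E_p \subset T_pM$ be the sum of the negative eigenspaces of $A_p$; I claim this is a smooth rank $\nu$ subbundle.

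The main obstacle is the smoothness of $p \mapsto E_p$, since individual eigenvalues need not depend smoothly on $p$ when they collide. I would handle this via the Riesz spectral projector: around any point $p_0$, choose a positively oriented rectangular contour $\Gamma$ in $\mathbf{C}$ enclosing exactly the negative eigenvalues of $A_{p_0}$ and disjoint from $\{0\}$; by continuity of the spectrum, $\Gamma$ still encloses exactly the negative eigenvalues of $A_p$ for all $p$ in some neighborhood $U$ of $p_0$. Then
\[
P_p \;=\; \frac{1}{2\pi i}\oint_{\Gamma}(zI - A_p)^{-1}\,dz
\]
is a smooth field of projections on $U$ with image $E_p$, which exhibits $E$ as a smooth rank $\nu$ subbundle locally, hence globally. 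This produces the required subbundle and completes the proof.
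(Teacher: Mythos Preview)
Your proof is correct, and the ``if'' direction matches the paper's argument (its Technical Lemma~2) essentially verbatim. For the ``only if'' direction you take a genuinely different route. The paper, like you, reduces to showing that the negative eigenspace of the $h$-symmetric operator $A$ is a smooth subbundle, but instead of the Riesz projector it passes to the induced derivation of $A$ on the exterior power $\bigwedge^\nu TM$: there the sum $\lambda_1+\cdots+\lambda_\nu$ of the $\nu$ smallest eigenvalues becomes a \emph{simple} eigenvalue, so the implicit function theorem plus Cayley--Hamilton (Technical Lemma~3) yields a smooth eigenvector field $W$ in $\bigwedge^\nu TM$, which is decomposable and hence locally splits into $\nu$ smooth vector fields spanning the desired subbundle (Technical Lemmas~4 and~5). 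Your contour-integral argument is shorter and more direct, and has the advantage that the same fixed contour separating $(-\infty,0)$ from $(0,\infty)$ works at every point since $0$ is never in the spectrum; the paper's approach stays within real linear algebra and the implicit function theorem, and its intermediate lemmas are set up to generalize to the situation of Problem~\ref{prob1}, where one isolates any spectral cluster confined to an interval $(a,b)$.
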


\vspace{\medskipamount}

The idea of the proof is: the subbundle will be the directions in which
the semi-Riemannian metrics will be negative definite. If we change the
sign on the subbundle and leave it unchanged on the orthogonal
complement, we will get a Riemannian metric. The construction goes both
ways.

\vspace{\medskipamount}

Although the idea for the proof of Theorem \ref{thm:metricsexist} is correct, there are some nontrivial
technical difficulties to entertain us. One direction is relatively
easy.

\vspace{\smallskipamount}

{\em Proof of ``if'' part} If $M$ has a smooth tangent subbundle $V$ of rank 
$\nu$, then $M$ has a semi-Riemannian metric of index $\nu$.

\begin{defn} A {\em frame} at a point $p$ of a semi-Riemannian manifold
is a basis of the tangent space $M_p$ with respect to which the
component matrix of the metric tensor is diagonal with $-1$'s followed
by 1's on the diagonal. A {\em local frame field} is a local basis of
vector fields which is a frame at each point of its domain.
\end{defn}

\vspace{\medskipamount}

\begin{lem}[Technical Lemma 1] Local frames exist in a neighborhood of 
every point.\index{frame field!local}
\end{lem}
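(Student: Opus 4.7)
The plan is to produce, at the base point, an algebraic frame of the tangent space by Sylvester's law of inertia, extend its members to smooth vector fields on a coordinate chart, and then correct the extension by a semi-Riemannian Gram--Schmidt procedure performed near the point.

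Fix $p \in M$. Applied to the nondegenerate symmetric bilinear form $g_p$ on $M_p$, Sylvester's law of inertia yields a basis $v_1,\dots,v_n$ of $M_p$ for which the matrix of $g_p$ is diagonal with entries $\epsilon_1,\dots,\epsilon_n$, where $\epsilon_i=-1$ for $i\le\nu$ and $\epsilon_i=+1$ otherwise. Pick a coordinate chart about $p$ and extend each $v_i$ to a smooth vector field $V_i$ on it with $V_i(p)=v_i$, for instance by declaring the components constant in the chart. Since $\det(V_1,\dots,V_n)(p)\ne 0$, continuity gives an open neighborhood $U$ of $p$ on which the $V_i$ remain pointwise linearly independent. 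Next I would run the semi-Riemannian Gram--Schmidt recursion on $U$: set $W_1=V_1$ and, inductively,
\[
W_k \;=\; V_k \;-\; \sum_{i=1}^{k-1} \frac{g(V_k,W_i)}{g(W_i,W_i)}\,W_i, \qquad E_k \;=\; \frac{W_k}{\sqrt{|g(W_k,W_k)|}}.
\]
Evaluated at $p$ these formulas reproduce $v_1,\dots,v_n$, so $g(W_k,W_k)(p)=\epsilon_k\ne 0$. By continuity of $g$ and of the $W_k$ there is a (possibly smaller) neighborhood of $p$ on which each $g(W_k,W_k)$ retains its sign and stays away from zero, so the divisions and the square roots are smooth there. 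On that neighborhood the fields $E_1,\dots,E_n$ are smooth, pointwise independent, and satisfy $g(E_i,E_j)=\epsilon_i\delta_{ij}$ with the $\epsilon_i$ ordered $-1,\dots,-1,+1,\dots,+1$; this is the required local frame field.

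\textbf{Main obstacle.} The subtlety absent from the purely Riemannian case is that the recursion divides by $g(W_i,W_i)$, and this quantity can vanish on nonzero null vectors when $g$ is indefinite. The whole argument hinges on using Sylvester's theorem first, at the single point $p$, to arrange $g(W_i,W_i)(p)=\pm 1$; only then does continuity keep the denominators nonzero on a neighborhood. Attempting to orthonormalize a generic smooth local basis directly could encounter null intermediate vectors and fail, so controlling the initial basis at $p$ is the essential step.
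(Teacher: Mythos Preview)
Your proof is correct and follows essentially the same approach the paper sketches: a modified Gram--Schmidt on a smooth local basis, shrinking the domain at each step so that the normalizing divisions are legitimate. Your device of first invoking Sylvester at $p$ to start from a basis already orthonormal there is a clean way to make explicit the ``modification'' the paper alludes to (the paper also remarks that the indefinite case can alternatively be handled by iterated completion of squares on the quadratic form $g$).
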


TL 1 is important for other purposes than the proof of the theorem at
hand. For the proof of TL 1 one modifies the Gramm-Schmidt procedure,
starting with a smooth local basis and shrinking the domain at each
step if necessary to divide by the length for normalization.

\vspace{\medskipamount}
\begin{rem} If we write $g = g_{ij}\omega^i\omega^j$, then a local
frame is exactly one for which the coframe of $1$-forms $(\epsilon^i)$
satisfies\index{coframe}

$$g=-(\epsilon^1)^2-\cdots-(\epsilon^\nu)^2+\cdots+(\epsilon^n)^2.$$

The Gramm-Schmidt \index{Gramm-Schmidt procedure}procedure amounts to iterated completion of squares,
viewing $g$ as a homogeneous quadratic polynomial in the $\omega^i$.
The modifications needed to handle the negative signs are probably
easier in this form.
\end{rem}

\vspace{\medskipamount}

\begin{lem}[Technical Lemma 2] If $V$ is a smooth tangent subbundle of 
rank $\nu$ and $g$ is a Riemannian metric, then the $(1,1)$ tensor
field $A$ and the semi-Riemannian metric $h$ given as follows are
smooth:

\begin{equation}A=
\begin{cases} -1&\text{ on } V\\
1&\text{ on } V^\perp.
\end{cases}
\end{equation}
$$h(v,w)=g(Av,w).$$
\end{lem}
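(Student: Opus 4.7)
The plan is to reduce everything to local smoothness around each point by exhibiting a smooth $g$-orthonormal frame adapted to the splitting $TM = V \oplus V^\perp$. Around any $p \in M$, since $V$ is a smooth subbundle of rank $\nu$, there is a smooth local frame $(X_1, \ldots, X_\nu)$ of $V$ on some neighborhood $U$ of $p$. I would extend this to a smooth local frame $(X_1, \ldots, X_n)$ of $TM$ by adjoining vectors at $p$ that complete a basis and then extending them smoothly; linear independence persists on a possibly smaller neighborhood by continuity of the determinant.

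Next, I would apply the Gramm-Schmidt procedure from Technical Lemma 1 to the frame $(X_1, \ldots, X_n)$ using $g$, producing a smooth $g$-orthonormal frame $(E_1, \ldots, E_n)$ with the key flag-preserving property $\mathrm{span}(E_1, \ldots, E_k) = \mathrm{span}(X_1, \ldots, X_k)$ for every $k$. Taking $k = \nu$ shows that $(E_1, \ldots, E_\nu)$ is a smooth $g$-orthonormal frame of $V$ and hence $(E_{\nu+1}, \ldots, E_n)$ is one for $V^\perp$, so $V^\perp$ is a smooth subbundle and the decomposition $TM = V \oplus V^\perp$ is smooth. In this frame $A$ has constant matrix $\mathrm{diag}(-1, \ldots, -1, 1, \ldots, 1)$ with $\nu$ leading $-1$'s, hence is manifestly smooth on $U$; invariantly one may write $A = I - 2\pi_V$ where $\pi_V$ is the $g$-orthogonal projection onto $V$. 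Smoothness of $h$ then follows from $h(v,w) = g(Av, w)$ as a tensorial composition of smooth objects, symmetry of $h$ is immediate, and the matrix of $h$ in $(E_i)$ is the same diagonal matrix, confirming that the index is $\nu$.

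The only real obstacle is verifying the flag-preservation step, which amounts to observing that in the recursion $E_k \propto X_k - \sum_{j < k} g(X_k, E_j) E_j$ no component of $X_j$ with $j > k$ is ever introduced, so the procedure respects the flag automatically. The smooth extension of a frame of $V$ to a frame of $TM$ is a standard bundle trivialization argument and poses no difficulty on a sufficiently small neighborhood; since smoothness is a local property, global smoothness of $A$ and $h$ follows.
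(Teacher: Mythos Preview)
Your proof is correct and follows essentially the same approach as the paper: the paper's proof is the single parenthetical remark that the expressions for $A$ and $h$ in terms of smooth local $g$-frames adapted to $V$ are constant, hence smooth. You have simply supplied the details---constructing such adapted frames by extending a local frame of $V$ and running Gramm--Schmidt, then reading off the constant diagonal matrices---which is exactly what the paper intends.
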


(Their expressions in terms of smooth local frames {\em adapted to V}
for $g$ are {\em constant}, hence smooth.)

Now the converse.

{\em Proof of``only if'' part} If there is a semi-Riemannian metric $h$ of 
index $\nu$, then there is a tangent subbundle $V$ of rank $\nu$.

The outline of the proof goes as follows. Take a Riemannian metric
$g$. Then $h$ and $g$ are related by a $(1,1)$ tensor field $A$ as
above. We know that $A$ is symmetric with respect to $g$-frames, and
has $\nu$ negative eigenvalues (counting multiplicities) at each
point. Thus, the subspace spanned by the eigenvectors of these
negative eigenvalues at each point is $\nu$-dimensional. The claim is
that those subspaces form a smooth subbundle, even though it may be
impossible to choose the individual eigenvectors to form smooth vector
fields.
\begin{lem}[Technical Lemma 3] If $A:V\to V$ is a symmetric linear 
operator, smoothly dependent on coordinates $x^1,\ldots,x^n$, and
$\lambda_0$ is a simple eigenvalue at the origin, then $\lambda_0$
extends to a smooth simple eigenvalue function in a neighborhood of
the origin having a smooth eigenvector field.
\end{lem}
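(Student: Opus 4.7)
\medskip
The plan has two stages: first produce the eigenvalue as a smooth function $\lambda(x)$, and then produce a smooth eigenvector for it.

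For the eigenvalue, I would apply the implicit function theorem to the characteristic polynomial $p(\mu,x) = \det(\mu I - A(x))$, which is smooth in $(\mu,x)$ because the entries of $A$ are smooth in $x$. Simplicity of $\lambda_0$ as a root of $p(\cdot,0)$ means $\partial p/\partial \mu \neq 0$ at $(\lambda_0,0)$, so the implicit function theorem produces a smooth $\lambda(x)$ near $0$ with $\lambda(0)=\lambda_0$ and $p(\lambda(x),x)\equiv 0$. The other roots of $p(\cdot,0)$ are isolated from $\lambda_0$, and roots of a monic polynomial depend continuously on its coefficients, so after shrinking the neighborhood $\lambda(x)$ remains a simple root of $p(\cdot,x)$.

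For the eigenvector, rather than attempting to track individual eigenvectors (the genuinely delicate task), I would exploit the adjugate identity. Set $B(x) = \lambda(x) I - A(x)$, so that $B(x)\cdot\operatorname{adj}(B(x)) = \det(B(x))\cdot I = 0$; thus every column of $\operatorname{adj}(B(x))$ lies in $\ker B(x)$. Because $\lambda(x)$ is algebraically simple, its geometric multiplicity is also $1$ (it is squeezed between $1$ and the algebraic multiplicity), so $B(x)$ has rank one less than $\dim V$ and consequently $\operatorname{adj}(B(x))$ has rank exactly $1$. In particular, $\operatorname{adj}(B(0))$ has some nonzero column; the corresponding column of $\operatorname{adj}(B(x))$ is polynomial in the entries of $B$, hence smooth in $x$, and remains nonzero in a neighborhood of $0$ by continuity. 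This column is the desired smooth eigenvector field for $\lambda(x)$.

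The step I expect to require the most care is verifying that simplicity of $\lambda(x)$ is preserved in a neighborhood, but this reduces to continuity of roots plus isolation of $\lambda_0$ in the spectrum of $A(0)$. It is worth noting that symmetry of $A$ is not actually used in this local lemma — it enters the surrounding argument through Theorem \ref{thm:metricsexist}, where $g$-symmetry of the operator $A$ constructed from $h$ and $g$ is what guarantees real eigenvalues and a global orthogonal eigenspace decomposition that the subbundle can be built from.
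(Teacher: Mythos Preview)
Your proof is correct and matches the paper's approach: the paper also applies the implicit function theorem to the characteristic polynomial $P$, then writes $P(X)=(X-\lambda)Q(X)$ and takes a nonzero column of $Q(A)$ as the smooth eigenvector field via the Cayley--Hamilton theorem. Your adjugate $\operatorname{adj}(\lambda I - A)$ is in fact exactly the matrix $Q(A)$ (both satisfy $(\lambda I - A)M = P(\lambda)I$, and this determines $M$ for generic $\lambda$, hence for all $\lambda$ by polynomial continuity), so the two constructions coincide.
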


Let $P(X) = \det (XI-A)$. Use the implicit function theorem to solve
$P(X)=0$, getting $X=\lambda (x^1,\ldots,x^n)$. Then we can write
$P(X) = (X-\lambda )Q(X)$, and any nonzero column of $Q(A)$ is an
eigenvector, by the Cayley-Hamilton theorem.\index{Cayley-Hamilton theorem}

\begin{lem}[Technical Lemma 4] Suppose that
$W:\R^n \to \bigwedge^\nu V$ is a smooth function with
decomposable, nonzero values. Then locally there are smooth
vector fields having wedge product equal to $W$.
\end{lem}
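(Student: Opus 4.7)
The plan is to use Pl\"ucker coordinates to reduce the problem to solving a linear system whose coefficients depend smoothly on $x$. First I would fix a basis $e_1, \dots, e_n$ of $V$ and expand
$$W(x) = \sum_I W^I(x)\, e_I,$$
the sum indexed by strictly increasing $\nu$-tuples $I = (i_1 < \dots < i_\nu)$, with $e_I = e_{i_1} \wedge \dots \wedge e_{i_\nu}$. Since $W(0) \neq 0$, some coefficient $W^{I_0}(0)$ is nonzero, and by continuity $W^{I_0}(x) \neq 0$ on a neighborhood of $0$; after a relabeling of the basis I may assume $I_0 = (1, \dots, \nu)$.

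Next I would invoke the algebraic fact that a nonzero decomposable $\nu$-vector $w \in \bigwedge^\nu V$ determines the $\nu$-dimensional subspace $\Pi(w) = \{v \in V : v \wedge w = 0\}$, and $w$ spans the line $\bigwedge^\nu \Pi(w)$. The hypothesis $W^{(1,\dots,\nu)}(x) \neq 0$ is precisely the condition that $\Pi(W(x))$ projects isomorphically onto $\mathrm{span}(e_1, \dots, e_\nu)$. Hence there are unique vectors
$$u_k(x) = e_k + \sum_{j > \nu} c_{kj}(x)\, e_j, \qquad k = 1, \dots, \nu,$$
lying in $\Pi(W(x))$ --- the standard affine chart on the Grassmannian. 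Solving the equations $u_k \wedge W = 0$ and reading off the Pl\"ucker coordinates expresses each $c_{kj}$ as a ratio of Pl\"ucker coordinates of $W$ with nonvanishing denominator $W^{(1,\dots,\nu)}$, so the $c_{kj}$ are smooth on a neighborhood of $0$.

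Finally, $u_1 \wedge \dots \wedge u_\nu$ lies on the line $\bigwedge^\nu \Pi(W(x)) = \R\cdot W(x)$, so it equals $\lambda(x) W(x)$ for some smooth function $\lambda$; comparing the $(1,\dots,\nu)$ Pl\"ucker coordinates (the left side has coefficient $1$) gives $\lambda = 1/W^{(1,\dots,\nu)}$. Setting $X_1 = W^{(1,\dots,\nu)}\, u_1$ and $X_k = u_k$ for $k \geq 2$ produces smooth vector fields with $X_1 \wedge \dots \wedge X_\nu = W$.

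The main obstacle I expect is managing the Pl\"ucker bookkeeping while establishing that $\dim \Pi(w) = \nu$ for every nonzero decomposable $w$ --- this is the sole place where the decomposability hypothesis is essential, and without it the associated subspace would be too small and the construction of the $u_k$ would collapse. Everything else is linear algebra with smooth dependence on $x$, and the initial choice of a nonvanishing Pl\"ucker coordinate is what confines the argument to a local statement.
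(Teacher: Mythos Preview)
Your argument is correct. The paper's proof is more terse and uses a different device: for $\omega \in \bigwedge^{\nu-1}V^*$, the interior product $i(\omega)W$ is always a vector in the subspace carried by $W$; one simply chooses $\nu$ such $\omega$'s that yield independent vectors at the base point, and they remain independent nearby. Your approach instead works in the standard affine chart on the Grassmannian, solving $u_k \wedge W = 0$ for vectors normalized against $e_1,\dots,e_\nu$. The two are closely related---indeed, taking $\omega = e^{1}\wedge\cdots\wedge\widehat{e^{k}}\wedge\cdots\wedge e^{\nu}$ in the paper's construction produces, up to a scalar, exactly your $u_k$---but the interior-product formulation is coordinate-free and sidesteps the Pl\"ucker bookkeeping, while your version makes the smooth dependence on $x$ completely explicit via ratios of Pl\"ucker coordinates. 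Both need a final scalar adjustment to hit $W$ on the nose, which the paper leaves implicit.
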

For $\omega \in \bigwedge^{\nu-1}V^*$, the interior product
$i(\omega)W$ is always in the subspace carried by $W$. Choose $\nu$ of
these $\omega$'s which give linearly independent interior products
with $W$ at one point; then they do so locally.

\begin{lem}[Technical Lemma 5] If $A:V\to V$ is a symmetric linear 
operator of index $\nu$, smoothly dependent on coordinates $x^i$, then
the extension of $A$ to a {\em derivation} of the Grassmann algebra\index{Grassmann algebra}
$\bigwedge^*V\to\bigwedge^*V$ has a unique minimum simple eigenvalue
$\lambda_1+\cdots+\lambda_\nu$ on $\bigwedge^\nu V$. The (smooth!)
eigenvectors $W:\R^n\to \bigwedge^\nu V$ are decomposable.
\end{lem}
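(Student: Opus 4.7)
My plan is to reduce Technical Lemma 5 to Technical Lemma 3 by verifying the hypotheses of TL 3 for the derivation $D_A$ acting on the new vector space $\bigwedge^\nu V$, then read off decomposability point by point.

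First I would set up the eigenvalue picture. Fix a point $x_0$, choose an orthonormal $g$-eigenbasis $e_1,\ldots,e_n$ of $A$ with eigenvalues $\mu_1\le\mu_2\le\cdots\le\mu_\nu<0<\mu_{\nu+1}\le\cdots\le\mu_n$ (index $\nu$ means exactly $\nu$ negative eigenvalues and the rest positive, since $A$ converts $g$ to the nondegenerate $h$). The wedges $e_I=e_{i_1}\wedge\cdots\wedge e_{i_\nu}$, $I=\{i_1<\cdots<i_\nu\}$, form an orthonormal basis of $\bigwedge^\nu V$ in the induced inner product, and $D_A e_I=\bigl(\sum_{i\in I}\mu_i\bigr)e_I$. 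Hence $D_A$ is diagonal in an orthonormal basis, so it is a symmetric operator on $\bigwedge^\nu V$ in the sense required by TL 3.

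Next I would prove that the minimum eigenvalue $\lambda_1+\cdots+\lambda_\nu$ (the sum of the $\nu$ smallest, i.e., all the negative, eigenvalues) is simple. Any subset $I\ne\{1,\ldots,\nu\}$ of size $\nu$ must contain some $j>\nu$ and omit some $i\le\nu$; swapping $j$ for $i$ decreases the sum by $\mu_j-\mu_i>0$, since $\mu_j>0>\mu_i$. Therefore $\{1,\ldots,\nu\}$ is the unique index set achieving the minimum, and the corresponding eigenspace is spanned by the single vector $e_1\wedge\cdots\wedge e_\nu$. This is the crucial place where the index hypothesis is used, and it is the only real obstacle; without the sign gap between $\mu_\nu$ and $\mu_{\nu+1}$ there would be no reason for uniqueness.

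Now I apply Technical Lemma 3 to the symmetric operator $D_A$ on $\bigwedge^\nu V$, which depends smoothly on $x^1,\ldots,x^n$ because $A$ does and the derivation extension is polynomial in $A$. TL 3 furnishes a smooth eigenvalue function, necessarily $\lambda_1(x)+\cdots+\lambda_\nu(x)$ by continuity of the roots of the characteristic polynomial, together with a smooth eigenvector field $W$ in a neighborhood of $x_0$.

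Finally, decomposability is automatic: at each $x$ in the neighborhood, $W(x)$ lies in the one-dimensional minimal eigenspace of $D_{A(x)}$, which by the computation above is spanned by $e_1(x)\wedge\cdots\wedge e_\nu(x)$ for any orthonormal eigenbasis of the $\nu$ negative eigenvalues of $A(x)$. Hence $W(x)$ is a scalar multiple of a wedge of $\nu$ vectors in $V$, i.e., decomposable. This hands TL 4 a smooth decomposable $W$, setting up the extraction of actual smooth vector field representatives in the next step of the overall argument.
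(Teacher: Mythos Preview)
Your proposal is correct and is precisely the argument the paper has in mind. The paper does not write out a proof of Technical Lemma 5; it simply states the lemma and relies on the reader to assemble it from TL~3 (simple eigenvalue $\Rightarrow$ smooth eigenvector field) applied to the derivation on $\bigwedge^\nu V$, together with the pointwise observation that the minimal eigenspace is the line through $e_1\wedge\cdots\wedge e_\nu$. Your reduction---diagonalize $A$ at a point, note that the index hypothesis forces a sign gap $\mu_\nu<0<\mu_{\nu+1}$ so that $\{1,\ldots,\nu\}$ is the \emph{unique} index set minimizing $\sum_{i\in I}\mu_i$, then invoke TL~3 on the smoothly varying symmetric operator $D_A$---is exactly that assembly, and your remark that decomposability is automatic once the eigenspace is identified is the point of the lemma.
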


\vspace{\medskipamount}
\begin{prob}\label{prob1} Generalize the result of TL's 3, 4, 5: If there 
is a group of $\nu$ eigenvalues $\lambda_1,\ldots,\lambda_\nu$ of
$A:V\to V$ which always satisfy $a < \lambda_i < b$, then the subspace
spanned by their eigenvectors is smooth. \end{prob}

\vspace{\medskipamount}
(Consider $B=(aI-A)(bI-A)$. Can $a,b$ be continuous functions of 
$x^1,\ldots,x^n$ too?)

\vspace{\medskipamount}
\begin{prob}\label{prob2} Now order {\em all} of the eigenvalues of 
symmetric smooth $A:V\to V,\ \lambda_1\le\lambda_2\le\cdots\le\lambda_n$, 
defining uniquely $n$
functions $\lambda_i$ of $x^1,\ldots,x^n$. Prove that the $\lambda_i$
are continuous; on the subset where
$\lambda_{i-1}<\lambda_i<\lambda_{i+1}$, $\lambda_i$ is smooth and has
locally smooth eigenvector fields.\end{prob}

\vspace{\medskipamount}
\begin{prob} \label{prob3}Construct an example 
$$A = \begin{pmatrix}f(x,y)&g(x,y)\\ g(x,y)&h(x,y)
\end{pmatrix}$$
for which $\lambda_1(0,0)=\lambda_2(0,0)=1$ and neither $\lambda_1, \lambda_2$
nor their eigenvector fields are smooth in a neighborhood of
$(0,0)$.\end{prob}

\vspace{\medskipamount}
\begin{rem} I have had to referee and reject two papers because the
proofs were based on the assumption that eigenvector fields could be
chosen smoothly. Take care!
\end{rem}

\noindent \textbf{Hard Problem.}
In Problem \ref{prob3}, can it be arranged so that there 
is no smooth eigenvector field on the set where $\lambda_1<\lambda_2$?
If that set is simply connected, then there is a smooth vector field;
and in any case there is always a smooth subbundle of rank 1.
However, in the non-simply-connected case, the subbundle may be
disoriented in passing around some loop.

\subsection{Finsler Metrics} \index{Finsler metric}Let $L:TM\to \R$ be as continuous function which is
positive on nonzero vectors and is {\em positive homogeneous of degree 1}:
$${\rm for\ } \alpha \in \R, v \in TM, \hbox{we have } L(\alpha v) = 
|\alpha|
L(v).$$
Eventually we should also assume that the {\em unit balls}, that is , the 
subsets
of the tangent spaces on which $L\le 1$, are convex, but that property is not
required to give the initial facts we want to look at here. In fact, it is 
usually assumed that $L^2$ is smooth and that its restriction to each tangent
space has positive definite second derivative matrix (the {\em Hessian} of 
$L^2$)
with respect to linear coordinates on that tangent space. This implies that the
unit balls are strictly convex and smooth. 
 
A function $L$ satisfying these conditions is called a {\em Finsler metric} on M.
If $g$ is a Riemannian metric on M, then there is a corresponding Finsler 
metric,
given by the norm with respect to $g$: $L(v) = \sqrt {g(v,v)}$. We use the 
letter
$L$ because it is treated like a Lagrangian in mechanics.

Finsler metrics were systematically studied by P. Finsler starting about 1918.\index{Finsler, P.}

\vspace{\medskipamount}
\subsection{Length of Curves} For a piecewise $C^1$ curve $\gamma:[a,b]\to M$ we 
define\index{length}
the {\em length} of $\gamma$ to be
$$|\gamma| = \int_a^b L(\gamma '(t))\/dt.$$
The reason for assuming that $L$ is positive homogeneous is that it makes the 
length of a curve independent of its parametrization. This follows easily
from the change of variable formula for integrals.

Length is clearly additive with respect to chaining of curves end-to-end.
Not every Finsler metric comes from a Riemannian metric. The condition for that
to be true is the {\em parallelogram law}\index{parallelogram law}, well-known to functional analysts:
\begin{thm}[Characterization of Riemannian Finsler Metrics]
A Finsler metric $L$
is the Finsler metric of a Riemannian metric g if and only if it satisfies the
parallelogram law:
$$L^2(v+w) + L^2(v-w) = 2L^2(v) + 2L^2(w),$$
for all tangent vectors $v,w$ at all points of $M$. 
When this law is satisfied, the Riemannian metric can be recovered from $L$ by the {\em polarization identity}\/:\index{polarization}
$$2g(v,w) = L^2(v+w) - L^2(v) - L^2(w).$$
\end{thm}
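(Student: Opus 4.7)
\emph{Proof proposal.} I plan to handle the two directions separately. The forward direction is a one-line expansion; the converse is the Jordan--von Neumann argument adapted to the Finsler setting. The key observation for the converse is that the polarization identity in the statement itself supplies the candidate formula for $g$, so the problem reduces to verifying that this formula actually defines a Riemannian metric.

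If $L^2(v) = g(v,v)$ for some Riemannian metric $g$, expanding $g(v\pm w,v\pm w)$ by bilinearity and symmetry and adding immediately gives $L^2(v+w)+L^2(v-w) = 2L^2(v)+2L^2(w)$, proving the ``only if'' direction. For the converse, assume $L$ satisfies the parallelogram law, and \emph{define} $g$ on each tangent space by the polarization identity in the statement. Symmetry of $g$ in its two arguments is visible from the formula. Setting $w=v$ and using $L(2v)=2L(v)$ yields $g(v,v) = L^2(v)$, which simultaneously gives positive definiteness of $g$ and shows that, once $g$ is known to be a Riemannian metric, $L$ is indeed its Finsler metric. Smoothness of $g$ as a $(0,2)$-tensor field is inherited from smoothness of $L^2$, since the polarization formula expresses $g(X,Y)$ for smooth vector fields $X,Y$ as a fixed algebraic combination of values of $L^2$ applied to smooth vector fields.

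The main obstacle is bilinearity of $g$. By symmetry this reduces to additivity in the first argument: $g(v_1+v_2,w) = g(v_1,w)+g(v_2,w)$. My plan is to apply the parallelogram law three times, to the pairs $(v_1+w,v_2+w)$, $(v_1,v_2)$, and $(v_1+v_2+w,w)$; subtracting the first two relations and then using the third to collapse $\tfrac{1}{2}L^2(v_1+v_2+2w)+\tfrac{1}{2}L^2(v_1+v_2)$ into $L^2(v_1+v_2+w)+L^2(w)$ produces exactly the required additivity identity. Once additivity is in hand, integer homogeneity $g(nv,w) = n g(v,w)$ follows by induction together with $g(-v,w)=-g(v,w)$ (which itself follows from the parallelogram law applied to the pair $(v,w)$); rational homogeneity follows by rescaling, and real homogeneity by continuity of $L$. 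This completes the verification that $g$ is a Riemannian metric, and the polarization identity in the statement is then tautologically satisfied.
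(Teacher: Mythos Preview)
Your proof is correct; it is the classical Jordan--von~Neumann argument, and your three applications of the parallelogram law do yield the additivity identity as claimed. Note, however, that the paper states this theorem without proof---it is quoted as a well-known fact from functional analysis---so there is no argument in the paper to compare against. Your write-up would serve perfectly well as the missing proof.
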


\subsection{Distance}\index{distance} When we have a notion of lengths of curves satisfying the 
additivity with respect to chaining curves end-to-end, as we do when we have
a Finsler metric, then we can define the {\em intrinsic metric} (the word 
metric is used here as it is in topology, a distance function) by specifying
the {\em distance from p to q} to be\index{metric!intrinsic}
$$d(p,q) = \inf \{|\gamma|: \gamma \hbox{is a curve from $p$ to $q$}\}.$$
Generally this function is only a semi-metric, in that we could have
$d(p,q) = 0$ but not $p=q.$ The symmetry and the triangle inequality
are rather easy consequences of the definition, but the nondegeneracy
in the case of Finsler lengths of curves is nontrivial.
\begin{thm}[Topological Equivalence Theorem] If $d$ is the intrinsic
metric coming from a Finsler metric, then $d$ is a topological metric
on $M$ and the topology given by $d$ is the same as the manifold
topology.\index{topology of Finsler manifold}
\end{thm}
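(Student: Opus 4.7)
The plan is to reduce everything to a single coordinate chart, where I can compare $L$ with an auxiliary Euclidean norm and then bootstrap the comparison up to lengths of curves. I would first dispatch the easy properties: symmetry follows because, for a piecewise $C^1$ curve $\gamma$ from $p$ to $q$, the reversed curve $\gamma^-$ has tangent $-\gamma'$, and $L(-v)=L(v)$ by positive homogeneity with absolute values, so $|\gamma^-|=|\gamma|$ and hence $d(q,p)=d(p,q)$. The triangle inequality is immediate from additivity of length under end-to-end chaining.

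Next I would set up the local comparison. Fix $p\in M$ and a chart $(U,\varphi)$ with $\varphi(p)=0$. The chart induces an auxiliary Euclidean norm $|\cdot|_e$ on each tangent space $M_q$, $q\in U$, namely the standard norm on $\R^n$ pulled back by the coordinate differential; in the induced trivialization of $TU$ this norm is jointly continuous in $(q,v)$. Choose $r>0$ so that the closed coordinate ball $K=\varphi^{-1}(\overline{B_e(0,r)})$ is compact and contained in $U$. The set $S=\{v\in TM: v\in M_q\text{ for some }q\in K,\ |v|_e=1\}$ is then compact (a product of $K$ with a Euclidean unit sphere in the trivialization), and since $L$ is continuous and positive on nonzero vectors it attains a positive minimum $c$ and a finite maximum $C$ on $S$. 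By positive homogeneity, $c|v|_e\le L(v)\le C|v|_e$ for every tangent vector based in $K$.

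From this two-sided comparison both nondegeneracy and topological equivalence drop out. For any $q\in K$, the coordinate straight-line segment $t\mapsto \varphi^{-1}(t\varphi(q))$ has constant coordinate velocity $\varphi(q)$, so its Finsler length is at most $C|\varphi(q)|_e$, giving $d(p,q)\le C|\varphi(q)|_e$; hence every manifold-neighborhood of $p$ contains a $d$-neighborhood. Conversely, any piecewise $C^1$ curve starting at $p$ and ending at $q\notin K$ must, by continuity of $\varphi\circ\gamma$, first cross the coordinate sphere of radius $r$ at some time $t_0$; the initial arc on $[0,t_0]$ lies in $K$, has coordinate length at least $r$, hence Finsler length at least $cr$, and so $|\gamma|\ge cr$. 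Therefore $d(p,q)\ge cr$ whenever $q\notin K$. This simultaneously yields $d(p,q)=0\Rightarrow q=p$ and shows the $d$-ball of radius $cr$ about $p$ sits inside $K$, completing the equivalence of topologies.

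The main obstacle I anticipate is exactly the ``converse'' step just sketched: a competing curve realizing the infimum could in principle wander out of the chart, where the local Euclidean comparison is unavailable. The fix is to truncate the curve at its first exit from $K$ and apply the lower bound $cr$ only to that initial arc; everything after the exit time only adds to $|\gamma|$ and is harmlessly discarded. A secondary subtlety is making sure $|\cdot|_e$ is treated as a function on $TK$, not merely on a single fibre, so that compactness of $S$ is legitimate; this is automatic once one trivializes $TU$ via $\varphi$ as $\varphi(U)\times\R^n$.
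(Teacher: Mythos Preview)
Your proof is correct and follows essentially the same route as the paper: both arguments use compactness of the Euclidean unit-sphere bundle over a compact coordinate set to get a two-sided comparison $c|v|_e\le L(v)\le C|v|_e$ (the paper packages this as the Nondegeneracy Lemma), then convert that pointwise bound into a curve-length bound and nest $d$-balls inside coordinate balls and vice versa. The one point the paper singles out that you use without comment is that a curve in $\R^n$ from the origin to the coordinate sphere of radius $r$ has Euclidean arc length at least $r$; the paper flags this nondegeneracy of the Euclidean intrinsic metric as a subtlety requiring its own (easy) verification.
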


If $M$ is not connected, then the definition gives $d(p,q) = \infty$
when $p$ and $q$ are not in the same connected components. We
simply allow such a value for $d;$ it is a reasonable extension of
the notion of a topological metric.
\begin{lem}[Nondegeneracy Lemma] If $\varphi:U\to \R^n$ is a
coordinate map, where $U$ is a {\em compact} subset of $M$, then
there are positive constant $A$, $B$ such that for every curve
$\gamma$ in $U$,
$$|\gamma| \ge A|\varphi \circ \gamma|, \qquad |\varphi \circ
\gamma|\ge B|\gamma|.$$
Here $|\varphi \circ \gamma|$ is the standard Euclidean length of
$\varphi \circ\gamma.$
\end{lem}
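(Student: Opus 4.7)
The plan is to compare $L$ with the standard Euclidean norm pointwise on tangent vectors, uniformly over $U$, and then integrate. First I would transfer the Finsler metric down to $\R^n$ via the chart. Define
$$N:\varphi(U)\times\R^n\to\R,\qquad N(q,v) = L\bigl((d\varphi_{\varphi^{-1}(q)})^{-1}(v)\bigr).$$
Since $L$ is continuous on $TM$ and $\varphi$ is a diffeomorphism, $N$ is continuous; it is positive on $\{v\ne 0\}$ because $L$ is; and it inherits positive homogeneity of degree one in $v$ from $L$.

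Next I would use compactness. The product $K = \varphi(U)\times S^{n-1}$ is compact, since $\varphi(U)$ is the continuous image of the compact set $U$, so $N$ attains a strictly positive minimum $A$ and a finite positive maximum $M$ on $K$. By homogeneity in $v$, this yields the pointwise sandwich
$$A\|v\|\;\le\;N(q,v)\;\le\;M\|v\|$$
for every $q\in\varphi(U)$ and every $v\in\R^n$, where $\|\cdot\|$ denotes the Euclidean norm.

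Finally I would integrate. For any piecewise $C^1$ curve $\gamma$ in $U$, the chain rule gives $(\varphi\circ\gamma)'(t) = d\varphi_{\gamma(t)}(\gamma'(t))$ on each smooth piece, hence $L(\gamma'(t)) = N(\varphi(\gamma(t)),(\varphi\circ\gamma)'(t))$. Integrating the sandwich over $[a,b]$ (and summing over smooth pieces) produces
$$A\,|\varphi\circ\gamma|\;\le\;|\gamma|\;\le\;M\,|\varphi\circ\gamma|,$$
so the constants $A$ and $B=1/M$ do the job.

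The argument is largely routine; the only substantive step is extracting the uniform constants $A$ and $M$, and that is precisely where the compactness of $U$ enters. Without it, either bound could fail as one approaches the boundary of the chart, since the ratio $N(q,v)/\|v\|$ might oscillate or degenerate; so the compactness hypothesis is essential. Continuity of $L$ on $TM$ (a standing hypothesis on any Finsler metric) is the other indispensable ingredient, as it is what makes $N$ continuous on the compact set $K$ in the first place.
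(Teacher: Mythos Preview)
Your proof is correct and follows essentially the same approach as the paper: the paper considers the compact set $EU\subset TM$ of Euclidean-coordinate unit vectors at points of $U$ and takes the minimum $A$ and maximum $1/B$ of $L$ there, which is exactly your set $\varphi(U)\times S^{n-1}$ and your constants $A$, $M$ transported through the chart. The only difference is cosmetic---you push everything down to $\R^n$ first, while the paper stays in $TM$---but the key idea (compactness of the unit-sphere bundle over $U$, continuity and positivity of $L$, then homogeneity) is identical.
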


The key step in the proof is to observe that the union $EU$ of the
unit (with respect to the Euclidean coordinate metric) spheres at
points of $U$ forms a {\em compact} subset of the tangent bundle $TM$.
Since $L$ is positive on nonzero vectors and continuous on $TM,$ on
$EU$ we have a positive minimum $A$ and a finite maximum $1/B$
for $L$ on $EU$. 

\vspace{\medskipamount}
The result breaks down on infinite-dimensional manifolds modeled
on a Banach space,\index{Banach space} because there the set of unit vectors $EU$ will
not be compact. So in that case the inequalities $L(v) \ge A||v||,$
and $||v|| \ge BL(v)$ must be taken as a local hypothesis on $L$.

\vspace{\medskipamount}
The nondegeneracy of $d$ uses the lemma in an obvious way, although
there is a subtlety that could be overlooked: the nondegeneracy of the
intrinsic metric on $\R^n$ defined by the standard Euclidean
metric must be proved independently.

\vspace{\medskipamount}
Aside from the Hausdorff separation axiom,\index{Hausdorff separation axiom} the topology of a manifold
is determined as a local property by the coordinate maps on compact
subsets $U$. Since the nondegeneracy lemma tells us that there is
nesting of d-balls and coordinate-balls, the topologies must coincide.

\begin{prob}\label{prob4} Prove that the intrinsic metric on $\R^n$
defined by the standard Euclidean metric is nondegenerate, and, in
fact, coincides with the usual distance formula.\end{prob}

Hint: For a curve $\gamma$ from $p$ to $q$, split the tangent vector
$\gamma'$ into components parallel to $\vec {pq}$ and perpendicular
to $\vec {pq}$. The integral of the parallel component is always at
least the usual distance.

\vspace{\medskipamount}
\subsection{Length of a curve in a metric space}\index{length} If we have a topological
metric space $M$ with distance function $d$ and a curve $\gamma
:[a,b]\to M$, then the {\em length} of $\gamma$ is 
$$|\gamma| = \sup \sum d(\gamma(t_i),\gamma(t_{i+1})),$$
where the supremum is taken over all partitions of the interval
$[a,b]$. Due to the triangle inequality, the insertion of another point
into the partition does not decrease the sum, so that in particular
the length of a curve from $p$ to $q$ is at least $d(p,q)$. If the
length is finite, we call the curve {\em rectifiable}. It is obvious
from the definition that the length of a curve is independent of its
parametrization and satisfies the additivity property with respect to
chaining curves.
 Following the definition of the length of a curve, we can then define the
{\em intrinsic metric generated by $d$}, as we defined the intrinsic metric
for a Finsler space. Clearly the intrinsic metric is at least as great as
the metric $d$ we started with.

\section{Minimizers}

A curve whose length equals the intrinsic distance
between its endpoints is called a {\em minimizer}\index{minimizer} or a {\em shortest path}. In Riemannian 
manifolds they are often called {\em geodesics},\index{geodesic} but we will avoid that
term for a while because there is another definition for geodesics. One
of our major tasks will be to establish the relation between minimizers
and geodesics. They will turn out to be not quite the same:
geodesics turn out to be only {\em locally} minimizing, and furthermore,
there are technical reasons why we should require that geodesics
have a special parametrization, a constant-speed parametrization.

\subsection{Existence of Minimizers} We can establish the existence of
minimizers for a Finsler metric within a connected compact set
by using convergence techniques developed by nineteenth century
mathematicians to show the existence of solutions of ordinary
differential equations by the Euler method.\index{Euler method} 
The main tool is the
Arzel\`a-Ascoli Theorem.\index{Arzel\`a-Ascoli Theorem}
Cesare Arzel\`a (1847-1912)\index{Arzel\`a, C.} was a
professor at Bologna, who established the case where the domain is a
closed interval, and Giulio Ascoli (1843-1896)\index{Ascoli, G.} was a professor at
Milan, who formulated the definition of uniform equicontinuity.\index{uniform equicontinuity}

\begin{defn} A collection of maps $\mathcal{F} = \{f:M\to N\}$ from a
metric space $M$ to a metric space $N$ is {\em uniformly equicontinuous}
if for every $\epsilon > 0$, there is $\delta > 0$ such that for every
$f \in \mathcal{F}$ and every $x, y \in M$ such that $d(x,y) < \delta$ we have 
$d(f(x), f(y)) < \epsilon$.
\end{defn}

The word ``uniform'' refers to the quantification over all $x, y$, just as
in ``uniform continuity'', while ``equicontinuous''  refers to the
quantification over all members of the family. The definition is only
significant for infinite families of functions, since a finite family of
uniformly continuous functions is always also equicontinuous.
Moreover, the application is usually to the case where $M$ is compact,
so that uniform continuity, but not uniform equicontinuity,
is automatic. 
If $M$ and $N$ are subsets of Euclidean spaces and the members of
the family have a uniform bound on their gradient vector lengths,
then the family is uniformly equicontinuous. Even if those gradients
exist only piecewise, this still works, which explains why the
theorem below can be used to get existence of solutions by the Euler
method.

\begin{thm}[Arzel\`a-Ascoli Theorem] Let $K,K'$ be compact metric
space and assume that $K$ has a countable dense subset. Let $\mathcal{F}$
be a collection of continuous functions $K\to K'$. Then the following
properties are equivalent:

(a) $\mathcal{F}$ is uniformly equicontinuous.

(b) Every sequence in $\mathcal{F}$ has a subsequence which is
uniformly convergent on $K$.
\end{thm}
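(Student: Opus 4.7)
The plan is to prove the two directions separately. The direction (b)$\Rightarrow$(a) is a clean contradiction, while (a)$\Rightarrow$(b) is the classical Cantor diagonal extraction followed by a promotion of pointwise convergence on a dense set to uniform convergence.

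For (b)$\Rightarrow$(a), I argue by contradiction. If $\mathcal{F}$ fails to be uniformly equicontinuous, then there exist $\epsilon > 0$, points $x_n, y_n \in K$, and functions $f_n \in \mathcal{F}$ with $d(x_n, y_n) \to 0$ but $d(f_n(x_n), f_n(y_n)) \geq \epsilon$. Apply (b) to extract a subsequence $f_{n_k}$ converging uniformly to a (necessarily continuous) limit $f : K \to K'$. Using compactness of $K$, pass to a further subsequence so that $x_{n_k} \to x$; since $d(x_n, y_n) \to 0$ we also have $y_{n_k} \to x$. Combining uniform convergence of $f_{n_k} \to f$ with continuity of $f$ at $x$ shows both $f_{n_k}(x_{n_k})$ and $f_{n_k}(y_{n_k})$ tend to $f(x)$, contradicting the lower bound $\epsilon$.

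For (a)$\Rightarrow$(b), let $\{q_i\}_{i=1}^\infty$ be the countable dense set in $K$ and let $(f_n) \subset \mathcal{F}$ be a given sequence. By compactness of $K'$, pass to a subsequence along which $f_n(q_1)$ converges; thin to a further subsequence along which $f_n(q_2)$ converges; iterate. The diagonal subsequence $g_k$ is eventually a subsequence of every row, hence converges at every $q_i$. To promote this to uniform convergence, fix $\epsilon > 0$ and choose $\delta$ from uniform equicontinuity for tolerance $\epsilon/3$. Cover $K$ by finitely many $\delta$-balls centered at $q_{i_1}, \ldots, q_{i_N}$ (using compactness of $K$ and density of the $q_i$). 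Choose $k_0$ so that for all $k, \ell \geq k_0$ and each $j$, $d(g_k(q_{i_j}), g_\ell(q_{i_j})) < \epsilon/3$. For any $x \in K$, pick the nearest center $q_{i_j}$ and combine equicontinuity applied to $g_k$ and $g_\ell$ with the Cauchy estimate at $q_{i_j}$ to conclude $d(g_k(x), g_\ell(x)) < \epsilon$. Thus $(g_k)$ is uniformly Cauchy, and by completeness of the compact space $K'$ it converges uniformly.

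The main obstacle is the bookkeeping in (a)$\Rightarrow$(b): one must record the nested sequence of index sets cleanly enough that the diagonal sequence is visibly an eventual subsequence of each row, and then marshal the $\epsilon/3$ estimates so that uniformity across all of $K$ is apparent. The promotion step is the real content of the theorem, exploiting both compactnesses---of $K$ for the finite cover and of $K'$ for completeness---together with uniform equicontinuity as the glue that transfers estimates from the countable skeleton $\{q_i\}$ to every point of $K$.
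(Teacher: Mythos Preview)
Your proof is correct and follows the classical route: contradiction for (b)$\Rightarrow$(a), and Cantor diagonalization plus the $\epsilon/3$ argument for (a)$\Rightarrow$(b). The paper does not actually supply its own proof---it simply cites Bartle's \emph{Elements of Real Analysis} for the Euclidean case and remarks that the metric-space version needs no essential changes---so your argument is precisely the kind of standard proof the reference would contain.
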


\vspace{\medskipamount}
There is a proof for the case where $K$ and $K'$ are subsets of
Euclidean spaces given in R. G.~Bartle,\index{Bartle, R.G.} ``The Elements of Real Analysis'',
2nd Edition, Wiley, page 189. No essential changes are needed for this
abstraction to metric spaces.

\begin{thm}[Local Existence of minimizers in Finsler Spaces] 
\index{minimizer!existence}If
$M$ is a Finsler manifold, each $p\in M$ has nested neighborhoods
$U\subset V$ such that every pair of points in $U$ can be joined by a
minimizer which is contained in $V$.
\end{thm}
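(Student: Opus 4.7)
The plan is to localize the problem via a coordinate chart, use the Nondegeneracy Lemma to reduce to Euclidean estimates, extract a uniformly convergent subsequence from a minimizing sequence by Arzel\`a-Ascoli, and close by lower semicontinuity of length.

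First I would choose a coordinate map $\varphi$ on an open neighborhood $W$ of $p$ with $\overline{W}$ compact and sitting inside a larger chart, normalized so $\varphi(p)=0$. The Nondegeneracy Lemma supplies constants $A,B>0$ with
$$A\,|\varphi \circ \gamma| \;\le\; |\gamma| \;\le\; \tfrac{1}{B}\,|\varphi \circ \gamma|$$
for every piecewise $C^1$ curve $\gamma$ in $\overline{W}$. Take $V = \varphi^{-1}(B_R(0))$ with $\overline{V} \subset W$, and $U = \varphi^{-1}(B_r(0))$ with $r$ so small that $3r/B < 2A(R-r)$. Then for any $x,y \in U$ the straight coordinate segment from $x$ to $y$ gives $d(x,y) < 2r/B$, while any curve from $x$ to $y$ leaving $V$ has Euclidean length at least $2(R-r)$, hence Finsler length at least $2A(R-r) > 3r/B$. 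In particular every curve from $x$ to $y$ of length within $r/B$ of $d(x,y)$ is trapped in $V$.

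Given $x,y \in U$, choose piecewise $C^1$ curves $\gamma_n:[0,1]\to V$ from $x$ to $y$ with $|\gamma_n| \to d(x,y)$, which we may assume satisfy $c_n := |\gamma_n| < 3r/B$. Reparametrize each by constant Finsler speed, so that for $s\le t$
$$|\varphi \circ \gamma_n(t)-\varphi \circ \gamma_n(s)| \le \tfrac{1}{B}\,|\gamma_n|_{[s,t]} = \tfrac{c_n}{B}(t-s) \le \tfrac{3r}{B^2}(t-s).$$
Hence $\{\varphi \circ \gamma_n\}$ is uniformly Lipschitz, therefore uniformly equicontinuous, and takes values in the compact set $\overline{B_R(0)}$. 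By the Arzel\`a-Ascoli Theorem, after extracting a subsequence, $\varphi \circ \gamma_n$ converges uniformly to a continuous $\eta : [0,1] \to \overline{B_R(0)}$, and we set $\gamma := \varphi^{-1}\circ \eta$, a continuous curve from $x$ to $y$ with image in $\overline{V}$.

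The main obstacle, and the final step, is to show that $\gamma$ is actually a minimizer. I would invoke the metric-space length of the following subsection and prove lower semicontinuity: for any partition $0=t_0<\cdots<t_k=1$, continuity of $d$ yields
$$\sum_i d(\gamma(t_i),\gamma(t_{i+1})) = \lim_n \sum_i d(\gamma_n(t_i),\gamma_n(t_{i+1})) \le \liminf_n |\gamma_n| = d(x,y),$$
since each partition sum is bounded by the corresponding Finsler length. Taking the supremum over partitions gives $|\gamma| \le d(x,y)$, and the reverse inequality is automatic, so $\gamma$ has length exactly $d(x,y)$. Because this common value is strictly less than $2A(R-r)$, the curve $\gamma$ is trapped strictly inside $V$ (shrinking $R$ slightly from the start if one wishes the strict inclusion). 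The delicate point throughout is the mismatch between the integral-based Finsler length of the approximants and the metric-based length of the merely continuous limit; the lower semicontinuity argument above is what bridges them.
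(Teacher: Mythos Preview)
Your proof is correct and follows essentially the same line as the paper's sketch: coordinate ball for $V$, smaller ball $U$ chosen via the Nondegeneracy Lemma so that near-minimizing curves are trapped in $V$, constant-speed reparametrization to force uniform equicontinuity, and Arzel\`a--Ascoli to extract a limit. The paper simply asserts that ``it is fairly easy to prove that the limit is a minimizer,'' whereas you supply the lower-semicontinuity argument explicitly via the metric-space length, which is exactly the right bridge.

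One small slip: in your Lipschitz estimate you want the bound coming from $|\gamma| \ge A\,|\varphi\circ\gamma|$, so the constant should be $1/A$ rather than $1/B$; that is,
\[
|\varphi\circ\gamma_n(t)-\varphi\circ\gamma_n(s)| \le \tfrac{1}{A}\,|\gamma_n|_{[s,t]} = \tfrac{c_n}{A}(t-s).
\]
This does not affect the argument, since all you need is some uniform Lipschitz bound.
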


To start the proof one takes $V$ to be a compact coordinate ball
about $p$, and $U$ a smaller coordinate ball so that, using the
curve-length estimates from the Nondegeneracy Lemma, any curve
which starts and ends in $U$ cannot go outside $V$ unless it has
greater length than the longest coordinate straight line in $U$. 
Now for two points $q$ and $r$ in $U$ we define a family of curves
$\mathcal{F}$ parametrized on the unit interval $[0,1] = I$ (so that $K$ of
the theorem will be $I$ with the usual distance on the line). We
require that the length of each curve to be no more than the Finsler
length of the coordinate straight line from $q$ to $r$. We parametrize
each curve so that it has constant ``speed'', which together with
the uniform bound on length, gives the uniform equicontinuity of
$\mathcal{F}$. We take $K'$ to be the outer compact ball $V$.

\vspace{\smallskipamount}
By the definition of intrinsic distance $d(q,r)$ there exists a
sequence of curves from $q$ to $r$ for which the lengths converge to
the infimum of lengths. If the coordinate straight line is already
minimizing, then we have our desired minimizer. Otherwise the
lengths will be eventually less than that of the straight line, and
from there on the sequence will be in $\mathcal{F}$. By the AA Theorem
\index{AA -- Arzel\`a-Ascoli}
there must be a uniformly convergent subsequence. It is fairly easy to
prove that the limit is a minimizer.

\begin{rem} The minimizers do not have to be unique, even 
locally. For an example consider the $l_1$ or $l_\infty$ norm on $R^2$
to define the Finsler metric (the ``taxicab metric'').\index{taxicab metric} Generally the
local uniqueness of minimizers can only be obtained by assuming
that the unit tangent balls of the Finsler metric are strictly convex.
We won't do that part of the theory in such great generality, but we
will obtain the local uniqueness in Riemannian manifolds by using
differentiability and the calculus of variations.
\end{rem}
 
The result on existence of minimizers locally can be abstracted a
little more.  Instead of using coordinate balls we can just assume
that the space is locally compact. Thus, in a locally compact space
with intrinsic metric there are always minimizers locally. The
proof is essentially the same.

\vspace{\medskipamount}

\subsection{Products} If we have two metric spaces $M$ and $N$, then the
Cartesian product has a metric $d_{M\times N}$, whose square is
$d_M^2 + d_N^2$. \index{product mteric}We use the same idea to get a Finsler metric or a
Riemannian metric on the product when we are given those structures on
the factors. When we pass to the intrinsic distance function of a
Finsler product there is no surprise, the result is the product
distance function.

\vspace{\medskipamount}
\begin{prob}\label{prob5} Prove that the projection into each factor of a
minimizer is a minimizer. \index{minimizer!product space}Conversely, if we handle the
parametrizations correctly, then the product of two minimizers is
again a minimizer.\end{prob}

\section{Connections}\index{connection}
 
We define an additional structure to a manifold
structure, called a connection. It can be given either in terms of
covariant derivative operators $D_X$ or in terms of a horizontal
tangent subbundle on the bundle of bases. Both ways are important, so
we will give both and establish the way of going back and forth.
Eventually our goal is to show that on a semi-Riemannian manifold
(including the Riemannian case) there is a canonical connection called
the Levi-Civita connection.
\index{Levi-Civita, T.}

\subsection{Covariant Derivatives}\index{covariant derivative} For a connection in terms of
covariant derivatives we give axioms for the operation $(X,Y) \to D_XY$,
associating a vector field $D_XY$ to a pair of vector fields $(X,Y)$.

1. If $X$ and $Y$ are $C^k$, then $D_XY$ is $C^{k-1}$.

2. $D_XY$ is $\mathcal{F}$-linear in $X$, for $\mathcal{F} =$ real-valued
functions on $M$. That is, $D_{fX}Y = fD_XY$ and $D_{X+X'}Y = D_XY +
D_{X'}Y$.

3. $D_X$ is a derivation with respect to multiplication by
elements of $\mathcal{F}$. That is, $D_X(fY) = (Xf)Y + fD_XY$ and $D_X(Y+Y') = D_XY + D_XY'$.

\vspace{\medskipamount}

We say that $D_XY$ is the {\em covariant derivative of Y with respect
to X}. It should be thought of as an extension of the defining
operation $f \to Xf$ of vector fields so as to operate on vector fields
$Y$ as well as functions $f$.

\subsection{Pointwise in $X$ Property} Axiom 2 conveys the information that for
a vector $x \in M_p$, we can define $D_xY \in M_p$. We extend $x$ to a
vector field $X$ and prove, using 2, that $(D_XY)(p)$ is the same for
all such extensions $X$.

\subsection{Localization} If $Y$ and $Y'$ coincide on an open set, then
$D_XY$ and $D_XY'$ coincide on that open set.

\subsection{Basis Calculations} If $(X_1,\ldots,X_n) = B$ is a local basis of
vector fields, we define $n^2$ $1$-forms $\varphi_j^i$ locally by
$$D_XX_j = \sum_{i=1}^n \varphi_j^i(X)X_i.$$
These $\varphi_j^i$ are called the {\em connection $1$-forms with respect
\index{connection!$1$-forms}
to the local basis B}. If we let $\omega^1,\ldots,\omega^n$ be the dual
basis to $B$ of $1$-forms, and arrange them in a column $\omega$, and let
$\varphi = (\varphi_j^i)$, then we can specify the connection locally
in terms of $\varphi$ by
$$D_XY = B(X + \varphi (X))\omega(Y).$$

\subsection{Parallel Translation} If $D_xY = 0$, we say that $Y$ is 
\index{parallel translation}{\em infinitesimally parallel} in the direction $x$. 
If $\gamma:[a,b] \to M$
is a curve and $D_{\gamma '(t)}Y = 0$ for all $t\in [a,b]$, we say that
$Y$ is {\em parallel along} $\gamma$ and that $Y(\gamma(b))$ is the
{\em parallel translate} of $Y(\gamma(a))$ along $\gamma$.

\begin{thm}[Existence, Uniqueness, and Linearity of Parallel Translation]
For a given curve $\gamma:[a,b]\to M$ and a vector $y \in
M_{\gamma(a)}$, there is a unique parallel translate of $y$ along
$\gamma$ to $\gamma(b)$.  This operation of parallel translation along
$\gamma, M_{\gamma(a)}\to M_{\gamma(b)}$, is a linear transformation.
\end{thm}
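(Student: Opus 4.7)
The plan is to reduce the defining condition $D_{\gamma'(t)}Y = 0$ to a linear first-order ODE in local coordinates, then invoke standard ODE theory for existence, uniqueness, and linear dependence on initial data, then patch over $[a,b]$ by compactness.

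First I would work in a neighborhood of $\gamma(a)$ on which a local basis $B = (X_1,\ldots,X_n)$ of vector fields is defined (such bases exist trivially in any coordinate chart). Any vector field $Y$ along $\gamma$ can be written $Y(\gamma(t)) = \sum_i y^i(t)\, X_i(\gamma(t))$. Using the derivation property (axiom~3) together with the pointwise-in-$X$ property, and then expanding $D_{\gamma'(t)}X_j = \sum_i \varphi_j^i(\gamma'(t))\, X_i(\gamma(t))$ via the connection $1$-forms, the parallel condition $D_{\gamma'(t)}Y = 0$ becomes the linear system
$$\frac{d y^i}{dt}(t) + \sum_j \varphi_j^i(\gamma'(t))\, y^j(t) = 0, \qquad i = 1,\ldots,n.$$
Since $\gamma$ is $C^1$ and the $\varphi_j^i$ are continuous, the coefficient matrix $A^i_j(t) := \varphi_j^i(\gamma'(t))$ is continuous in $t$. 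Classical linear ODE theory then gives a unique solution $(y^1(t),\ldots,y^n(t))$ on the whole subinterval on which $\gamma$ remains in the domain of $B$, for any choice of initial values $(y^1(a),\ldots,y^n(a))$ determined by $y$.

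To handle a curve $\gamma$ which leaves a single chart, I would use compactness of $[a,b]$: cover $\gamma([a,b])$ by finitely many domains of local bases, and pick a partition $a = t_0 < t_1 < \cdots < t_N = b$ such that $\gamma([t_{k-1},t_k])$ lies in one such domain. Solve the ODE successively on each subinterval, using the value at $t_k$ computed in the $k$-th chart as the initial data for the $(k+1)$-st. Uniqueness on each piece, together with the fact that in the overlap of two local bases the two systems of ODEs are related by an invertible smooth change of frame (so their solutions with matching initial data agree), yields a single well-defined $Y(\gamma(b))$ independent of the choices.

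Linearity is then immediate: because the ODE is linear and homogeneous, the solution depends linearly on the initial condition $(y^1(a),\ldots,y^n(a))$, hence on $y \in M_{\gamma(a)}$; this linearity is preserved under the patching since composition of linear maps is linear. The only real subtlety I anticipate is the chart-patching bookkeeping---verifying that the transition from one local basis to another is consistent, which reduces to the tensorial transformation law of the connection forms. Everything else is a direct application of the theory of linear ODEs with continuous coefficients.
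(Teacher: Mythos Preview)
Your proposal is correct and follows essentially the same approach as the paper: reduce the parallelism condition to the linear homogeneous ODE $\frac{d\omega(Y\circ\gamma)}{dt} = -\varphi(\gamma'(t))\,\omega(Y\circ\gamma)$ in a local basis neighborhood, then use compactness of $\gamma([a,b])$ to chain finitely many local solutions together. The paper's proof is a terse two-sentence sketch of exactly this; you have simply spelled out more of the bookkeeping (the explicit patching via a partition and the linearity-from-linear-ODE remark), which is entirely appropriate.
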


If $\gamma$ lies in a local basis neighborhood of $B$, then the
coefficients $\omega(Y\circ\gamma)$ of $Y$ along $\gamma$, when $Y$ is
parallel, satisfy a system of linear homogeneous differential
equations:
$$\frac{d\omega(Y\circ\gamma)}{dt} = -\varphi(\gamma'(t))
\omega(Y\circ\gamma).$$
Globally we chain together the local parallel translations to span all
of $\gamma$ in finitely many steps, using the fact that $\gamma([a,b])$
is compact.

\subsection{Existence of Connections} 
\index{connection!existence} It is trivial to check that for a local
basis $B$ we can set $\varphi = 0$ and obtain a connection in the local
basis neighborhood. Then if $\{U_\alpha\}$ covers $M$, $D^\alpha$ is a
connection on $U_\alpha$, and $\{f_\alpha\}$ is a subordinate partition
of unity \index{partition of unity}(we do not even have to require that $0\le f_\alpha\le 1$,
only that the sum $\sum f_\alpha = 1$ be locally finite), then the definition
$$D_XY = \sum_\alpha f_\alpha D_X^\alpha Y$$
defines a connection globally on $M$.

\subsection{An Affine Space} \index{affine space}If $D$ and $E$ are connections, then for any
$f\in \mathcal{F}$ we have that $fD +(1-f)E$ is again a connection. As $f$
runs through constants this gives a straight line in the collection of
all connections on $M$. Moreover, $S_XY = D_XY - E_XY$ is
$\mathcal{F}$-linear in $Y$, and so defines a (1,2) tensor field $S$ such that $E = D + S$. Conversely, for any (1,2) tensor field $S,\ D+S$ is a
connection. We interpret this to say:

\vspace{\smallskipamount}

\textsl{The set of all connections on $M$ is an affine space for which the
associated vector space can be identified with the space of all (1,2)
tensor fields on $M$.}

\subsection{The Induced Connection on a Curve} \index{connection!on curve}
If $\gamma$ is a curve in a
manifold $M$ which has a connection $D$, then we can define what we
call the induced connection on the pullback of the tangent bundle to
the curve. This is a means of differentiating vector fields $Y$ along
the curve with respect to the velocity of the curve. Thus, for each
parameter value $t$, $Y(t) \in M_{\gamma(t)}$, and $D_{\gamma'}Y$ will
be another field, like $Y$, along $\gamma$. There are various ways of
formulating the definition, and there is even a general theory of
pulling back connections along maps (see Bishop \& Goldberg,
pp.220--224), \index{Goldberg, S.I.}\index{connection!on map}
but there is a simple method for curves as follows. 
Take a tangent space basis at some point of $\gamma$ and parallel translate
this basis along $\gamma$ to get a parallel basis field
$(E_1,\ldots,E_n)$ for vector fields along $\gamma$. Then we can
express $Y$ uniquely in terms of this basis field, $Y = \sum f^iE_i$,
where the coefficient functions $f^i$ are smooth functions of $t$. We
define $D_{\gamma}Y = \sum {f^i}'E_i$. From the viewpoint of the theory
of pulling back connections it would be more appropriate to write
$D_{d/dt}Y$ instead of $D_{\gamma'}$. In fact, the Leibnitz rule for
this connection on $\gamma$ is
$$D_{\gamma'}fY = \frac{df}{dt}Y + fD_{\gamma'}Y,$$
and we also have the strange result that even if $\gamma'(t) = 0$ it
is possible to have $(D_{\gamma'}Y)(t) \ne 0$. Even a field on a
constant curve (which is just a curve in the tangent space of the
constant value of the curve) can have nonzero covariant derivative
along the curve.

\subsection{Parallelizations} \index{parallelization}
A manifold $M$ is called {\em parallelizable}
if the tangent bundle $TM$ is trivial as a vector bundle: $TM \cong
M\times\R^n$.  For a given trivialization $(\cong)$ the vector
fields $X_1,\ldots,X_n$ which correspond to the standard unit vectors
in $\R^n$ are called the corresponding {\em parallelization} of
$M$. Conversely, a global basis of vector fields gives a
trivialization of $TM$. If we set $\varphi = 0$, we get the {\em
connection of the parallelization}.\index{connection!of parallelation} 
For this connection parallel
translation depends only on the ends of the curve. The parallel fields
are $\sum a^iX_i,\ a^i$ constants.

\vspace{\medskipamount}

An even-dimensional sphere is {\em not} parallelizable.

\vspace{\medskipamount}

Lie groups are parallelized by a basis of the left-invariant vector
fields.

\vspace{\medskipamount}

We shall see that for any manifold $M$ its bundle of bases $BM$ and
various frame bundles $FM$ are parallelizable.
\index{bundle of bases}

\subsection{Torsion} \index{torsion}If $D$ is a connection, then
$$T(X,Y) = D_XY - D_YX - [X,Y]$$
defines an $\mathcal{F}$-linear, skew-symmetric function of pairs of vector
fields, called the {\em torsion} of $D$. Hence $T$ is a $(1,2)$ tensor
field. For a local basis $B$ with dual $\omega$ and connection form
$\varphi$, the torsion $T(X,Y)$ has $B$-components
$$\Omega(X,Y) = d\omega(X,Y) + \varphi\wedge\omega(X,Y).$$
The $\R^n$-valued $2$-form $\Omega$ is called the {\em local torsion
$2$-form} and its defining equation\index{torsion!$2$-form}
$$\Omega = d\omega + \varphi\wedge\omega$$
is called the {\em first structural equation}.

\vspace{\medskipamount}

A connection with $T=0$ is said to be {\em symmetric}.\index{connection!symmetric}

\vspace{\medskipamount}

For any connection $D$, the connection $E = D - \frac{1}{2}T$ is always
symmetric.

\subsection{Curvature} \index{curvature}For vector fields $X,Y$ we define the 
{\em curvature
operator} $R_{XY}:Z\to R_{XY}Z$, mapping a third vector field $Z$ to a
fourth one $R_{XY}Z$ by\index{curvature!operator}
$$R_{XY} = D_{[X,Y]} - D_XD_Y + D_YD_X.$$
(Some authors define this to be $-R_{XY}$.) As a function of three
vector fields $R_{XY}Z$ is $\mathcal{F}$-linear and so defines a $(1,3)$
tensor field. However, the interpretation as a $2$-form whose values are
linear operators on the tangent space is the meaningful viewpoint. For
a local basis we have {\em local curvature $2$-forms}, with values which
are $n\times n$ matrices:
$$\Phi = d\varphi + \varphi\wedge\varphi,$$
which is the {\em second structural equation}. \index{second structural equation}\index{curvature!2-form}
The sign has been
switched, so that the matrix of $R_{XY}$ with respect to the basis $B$
is $-\Phi(X,Y)$. The wedge product is a combination of matrix
multiplication and wedge product of the matrix entries, just as it was
in the first structural equation, so that $(\varphi\wedge\varphi)(X,Y)
= \varphi(X)\varphi(Y) - \varphi(Y)\varphi(X)$.

\vspace{\medskipamount}
\begin{prob}\label{prob6} Calculate the torsion tensor for the connection of
a parallelization, relating it to the brackets of the basis
fields.\end{prob}\index{connection!torsion of parallelization}

\vspace{\smallskipamount}
\begin{prob}\label{prob7} Calculate the law of change of a connection: 
\index{connection!law of change} that
is, if we have a local basis $B$ and its connection form $\varphi$ and
another local basis $B'= BF$ and its connection form $\varphi'$, find
the expression for $\varphi'$ on the overlapping part of the local
basis neighborhoods in terms of $\varphi$ and $F$. In the case of
coordinate local bases the matrix of functions $F$ is a Jacobian
matrix.\end{prob}

\vspace{\smallskipamount}

\begin{prob}\label{prob8} Check that the axioms for a connection are
satisfied for the connection specified by the partition of unity and
local connections, in the proof of existence of connections.\index{partition of unity}

\vspace{\smallskipamount}\end{prob}

\begin{prob}\label{prob9} Verify that the definition of $T(X,Y)$ leads to
the local expression for $\Omega$ given by the first structural
equation; that is, $T$ and $\Omega$ are assumed to be related by $T =
B\Omega$.\end{prob}

\subsection{The Bundle of Bases} We let \index{bundle of bases}
$$BM =\{(p,x_1,\ldots,x_n):p\in M,\ (x_1,\ldots,x_n)\  \hbox{\rm 
is a basis of } M_p\}.$$ 
\index{bundle of bases} This is called the {\em bundle
of bases of M}, and we make it into a manifold of dimension $n+n^2$ as
follows. Locally it will be a product manifold of a neighborhood $U$ of a
local basis $(X_1, \ldots, X_n)$ with the general linear group
$Gl(n,\R)$ consisting of all $n\times n$ nonsingular matrices.
\index{general linear group}
Since the condition of nonsingularity is given by requiring the
continuous function {\em determinant} to be nonzero, $Gl(n,\R)$
can be viewed as an open set in $\R^{n^2}$, so that it gets a
manifold structure from the single coordinate map. Then if $p\in U$ and
$g=(g_i^j)\in Gl(n,\R)$, we make the element $(p,g)$ of the
product correspond to the basis $(p, \sum_j X_j(p)g_1^j,\ldots,\sum_j
X_j(p)g_n^j)$. It is routine to prove that if $M$ has a $C^k$ structure,
then the structure defined on $BM$ by using $C^{k-1}$ local bases is a
$C^{k-1}$ manifold structure on $BM$.

\vspace{\smallskipamount}

The {\em projection map} $\pi:BM\to M$ given by $(p,x_1,\ldots,x_n)\to
p$ is given locally by the product projection, so is a smooth map. The
{\em fiber over p} is $\pi^{-1}(p)$, a submanifold diffeomorphic to
$Gl(n,\R)$.

\vspace{\smallskipamount}

Each local basis $B = (X_1,\ldots,X_n)$ can be thought of as a smooth
map $B:U\to BM$, called a {\em cross-section of BM over U}. The
composition with $\pi, \ \pi\circ B$ is the identity map on $U$.
\index{cross-section}

\subsection{The Right Action of the General Linear Group} 
\index{right action} Each matrix $g\in
Gl(n,\R)$ can be used as a change of basis matrix on every basis
of every tangent space. This simultaneous change of all bases in the
same way is a map $R_g:BM\to BM$, given by $(p, x_1,\ldots,x_n)\to (p,
\sum_j x_jg_1^j,\ldots,\sum_j x_jg_n^j)$. For $b\in BM$ we will also
write $R_gb = bg$. It is called the {\em right action of g on BM}. For
two elements $g,h\in Gl(n,\R)$ we clearly have $(bg)h = b(gh)$,
that is, $R_h\circ R_g = R_{gh}$.

\subsection{The Universal Dual $1$-Forms} \index{cobasis!universal}
\index{universal cobasis}There is a column of $1$-forms on $BM$,
existing purely due to the nature of $BM$ itself, an embodiment of the
idea of a dual basis of the basis of a vector space. These $1$-forms are
called the {\em universal dual $1$-forms}, are denoted $\omega =
(\omega^i)$, and are defined by the equation:
$$\pi_*(v) = \sum_i\omega^i(v)x_i,$$
where $v$ is a tangent vector to $BM$ at the point $(p,
x_1,\ldots,x_n)=b$. We can also use multiplication of the row of basis
vectors by the columns of values of the $1$-forms to write the definition
as $\pi_*(v) = b\omega(v)$. Thus, the projection of a tangent vector
$v$ to $BM$ is referred to the basis at which $v$ lives, and the
coefficients are the values of these canonical $1$-forms on $v$. (The
canonical $1$-forms are usually called the {\em solder forms} of $BM$.)

\vspace{\smallskipamount}

It is a simple consequence of the definition of $\omega$ that if $B$
is a local basis on $M$, then the pullback $B^*\omega$ is the local
dual basis of $1$-forms. This justifies the name for $\omega$.

\vspace{\smallskipamount}
We clearly have that $\pi\circ R_g = \pi$, so that 
$$\pi_*\circ {R_g}_*(v) = bg\omega({R_g}_*(v)) = b\omega(v),$$
where $v$ is a tangent at $b\in BM$. Rubbing out the ``$b$'' and
``$(v)$'' on both sides of the equation leaves us with an equation for
the action of $R_g$ on $\omega$:
$$g\cdot {R_g}^*\omega = \omega, \qquad {\rm or}\qquad  
{R_g}^*\omega = g^{-1}\omega.$$

\subsection{The Vertical Subbundle} \index{subbundle!vertical}
The tangent vectors to the fibers of
$BM$, that is, the tangent vectors in the kernel of $\pi_*$, form a
subbundle of TBM of rank $n^2$. This is called the {\em vertical}
subbundle of $TBM$, and is denoted by $V$.

\subsection{Connections} \index{connection}
A {\em connection on BM} is a specification of a
complementary subbundle $H$ \index{subbundle!horizontal}
to $V$ which is smooth and invariant under
all right action maps $R_g$. The idea is that moving in the direction
of $H$ on $BM$ represents a motion of a basis along a curve in $M$
which will be defined to be parallel translation of that basis along
the curve. \index{parallel translation}
We can then parallel translate any tangent vector along the
curve in $M$ by requiring that its coefficients with respect to the
parallel basis field be constant. The invariance of $H$ under the right
action is needed to make parallel translation of vectors be independent
of the choice of initial basis.

\subsection{Horizontal Lifts} \index{horizontal lift}
Since $H$ is complementary to the kernel of
$\pi_*$ at each point of $BM$, the restriction of $\pi_*$ to $H(b)$ is
a vector space isomorphism to $M_{\pi b}$. Hence we can apply the
inverse to vectors and vector fields on $M$ to obtain {\em horizontal
lifts} of those vectors and vector fields. We usually lift single
vectors to single horizontal vectors, but for a smooth vector field $X$
on $M$ we take all the horizontal lifts of all the values of $X$, thus
obtaining a {\em smooth} vector field $\bar X$. These vector field
lifts are compatible with $\pi$ and all $R_g$, so that if $Y$ is
another vector field on $M$, then $[\bar X,\bar Y]$ is right invariant
and can be projected to $[X,Y]$. However, we have not assumed that $H$
is involutive, so that $[\bar X,\bar Y]$ is not generally the
horizontal lift of $[X,Y]$.

\vspace{\smallskipamount}

The construction of the pull-back bundle and its horizontal subbundle
is the bundle of bases version of the induced connection on the curve
$\gamma$. More generally, the induced connection on a map has a bundle
of bases version defined in just the same way.\index{connection!pullback to curve}\index{connection!induced}

\vspace{\medskipamount}
We can also get horizontal lifts of smooth curves in $M$. This is
equivalent to getting the parallel translates of bases along the curve.
If the curve is the integral curve of a vector field $X$, then a
horizontal lift of the curve is just an integral curve of $\bar X$.
However, curves can have points where the velocity is $0$, making it
impossible to realize the curve even locally as the integral curve of a
smooth vector field. Thus, we need to generalize the bundle
construction a little to obtain horizontal lifts of arbitrary smooth
curves $\gamma:[a,b]\to M$. We define the {\em pull-back bundle}
$\gamma^*BM$ to be the collection of all bases of all tangent spaces at
points $\gamma(t)$, and give it a manifold with boundary structure,
diffeomorphic to the product $[a,b]\times Gl(n,\R)$, just as we
did for $BM$, along with a smooth map into $BM$. The horizontal
subbundle $H$ can also be pulled back to a horizontal subbundle of rank
$1$. Then we have a vector field $d/dt$ on $[a,b]$ whose horizontal lift
has integral curves representing the desired horizontal lift of
$\gamma$. This structure on $\gamma^*BM$ corresponds to the connection
on $\gamma$ given above.

\vspace{\smallskipamount}
When we relate all of this to the other version of connections in terms
of covariant derivative operators, we see that the differential
equations problem for getting parallel translations has turned into the
familiar problem of getting integral curves of a vector field on a
different space.

\subsection{The Fundamental Vector Fields} \index{fundamental vector fields}
Corresponding to the
left-invariant vector fields on $Gl(n,\R)$ we have some
canonically defined vertical vector fields on $BM$; each fiber is a
copy of $Gl(n,\R)$ and these canonical fields are carried over as
copies of the left invariant fields. Generally on a Lie group the
left-invariant vector fields are identified with the tangent space at
the identity: in one direction we simply evaluate the vector field at
the identity; in the other direction, if we are given a vector at the
identity as the velocity $\gamma'(0)$ of a curve, then we can get the
value at any other point $g$ of the group as the velocity of the curve
$g\cdot\gamma$ at time $0$. Note that whereas $g$ multiplies the curve on
the left, which is what makes the vector field {\em left invariant},
what we see nearby $g$ is the result of multiplying $g$ on the {\em
right} by $\gamma$. It is this process of multiplying on the right by a
curve through the identity that we can imitate in the case of $BM$,
since we have the right action of $Gl(n,\R)$ on $BM$. 
It is natural to view the tangent space of $Gl(n,\R)$ at the identity
matrix $I$ as being the set of all $n\times n$ matrices, which we
denote by $gl(n,\R)$. If we let $E_j^i$ be the matrix with $1$ in
the $ij$ position and $0$'s elsewhere, we get a standard basis of the Lie
algebra. For a curve with velocity $E_j^i$ we can take simply
$\gamma(t) = I + tE_j^i$.
The {\em fundamental vector fields} on $BM$ are the vector fields
$E_j^i$ 
defined by
$$E_j^i(b) = \gamma'(0),\ {where} \ \gamma(t) = b\cdot (I+tE_j^i).$$
We sometimes also call the constant linear combinations of these basis
fields {\em fundamental}.

\subsection{The Connection Forms}\index{connection!$1$-forms} If we are given a connection $H$ on $BM$,
we define a matrix of $1$-forms $\varphi = (\varphi_j^i)$ to be the forms
which are dual to the vector fields $E_j^i$ on the vertical subbundle
$V$ of $TBM$ and are $0$ on the connection subbundle $H$. This means that
if $\gamma(t)= b\cdot(I+ta)$, where $a$ is an $n\times n$ matrix, then
$\varphi(\gamma'(0))= a$. 

Clearly the connection forms completely determine $H$ as the subbundle
they annihilate. Thus, in order to give a connection it is adequate to
specify the connection form $\varphi$. In order to say what matrices of
1-forms on $BM$ determine a connection, besides the property that the
restriction to the vertical $V$ gives forms dual to the fundamental
fields $E_i^j$, we have to spell out the condition that $H$ is right
invariant in terms of $\varphi$. The name for this condition is {\em
equivariance}, and things have been arranged so that it is expressed
easily in terms of the differential action of $R_g$ and matrix
operations: 
$${R_g}^*\varphi = g^{-1}\varphi g\qquad\hbox{\rm for all}\ g\in
Gl(n,\R).$$

\subsection{The Basic Vector Fields} \index{basic vector fields}
The universal dual cobasis is nonzero on
any nonvertical vector, so that if we restrict it to the horizontal
subspace of a connection it gives an isomorphism: $\omega : H(b)\to {\bf
R}^n$. If we invert this map and vary $b$, we get the {\em basic vector
fields} of the connection. In particular, using the standard basis of
$\R^n$, we get the basic vector fields $E_i$; they are the
horizontal vector fields such that $\omega^i(E_j) = \delta_j^i$.

\subsection{The Parallelizability of BM} \index{parallelizability!bundle of basis}Since a connection always exists, we
now know that $BM$ is parallelizable; specifically, $\{E_i^j, E_i\}$ is
a parallelization.
\begin{thm}[Existence of Connections (again)] There exists a connection
on $BM$.\index{connection!existence}
\end{thm}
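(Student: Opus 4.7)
The idea is to convert the already-established existence of covariant-derivative connections into the existence of a right-invariant horizontal subbundle on $BM$. By the partition-of-unity existence theorem proved earlier, there is a covariant derivative $D$ on $M$. I would use $D$ to define the subbundle $H$ fiberwise, then check the two properties required by the definition of a connection on $BM$: smoothness and $Gl(n,\R)$-equivariance.

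Concretely, for $b=(p;x_1,\ldots,x_n)\in BM$, declare a tangent vector $v\in T_bBM$ to be \emph{horizontal} iff $v=\tilde\gamma'(0)$ for some curve $\tilde\gamma(t)=(\gamma(t);Y_1(t),\ldots,Y_n(t))$ whose frame components $Y_i$ are $D$-parallel along $\gamma=\pi\circ\tilde\gamma$ with $Y_i(0)=x_i$. Call this set $H(b)$. Complementarity to the vertical $V(b)$ is immediate: $\pi_*|_{H(b)}$ is surjective onto $M_p$ by the existence of parallel translation along any curve with any prescribed initial velocity, and if $\tilde\gamma'(0)\in H(b)\cap V(b)$ then $\gamma$ has zero initial velocity, which together with the parallel equation forces each $Y_i$ to be locally constant at $t=0$, hence $\tilde\gamma'(0)=0$. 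A dimension count then gives $\dim H(b)=n$ and $H(b)\oplus V(b)=T_bBM$.

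For smoothness I would work inside a local-basis trivialization $\pi^{-1}(U)\cong U\times Gl(n,\R)$ where $D$ has connection matrix $\varphi=(\varphi^i_j)$. The ODE
$$\frac{d\omega(Y\circ\gamma)}{dt}=-\varphi(\gamma'(t))\,\omega(Y\circ\gamma)$$
exhibits the horizontal lift of a vector $v\in M_p$ at a point $(p,g)$ as a smooth bundle map $v\mapsto(v,-\varphi(v)g)$ in the product coordinates, so $H$ is the image of a smooth vector-bundle morphism and is therefore a smooth subbundle. Right invariance is the cleanest part: parallel translation is linear on fibers, so if $Y_1,\ldots,Y_n$ are parallel along $\gamma$ with $Y_i(0)=x_i$, the combinations $\tilde Y_i=\sum_j Y_j g^j_i$ are also parallel and satisfy $\tilde Y_i(0)=\sum_j x_j g^j_i$. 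Hence $R_g\circ\tilde\gamma$ is a horizontal curve through $bg$, giving $R_{g*}H(b)\subseteq H(bg)$, and equality follows by dimensions.

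The main obstacle is really only the smoothness step; conceptually everything turns on the observation that linearity of parallel translation is exactly the compatibility one needs between $D$ and the right action of $Gl(n,\R)$. An alternative route would be to build $H$ directly on $BM$ by gluing the flat local horizontal subbundles (coming from $\varphi^\alpha=0$ in each local basis) with a partition of unity subordinate to $\{U_\alpha\}$ pulled back from $M$; convexity of the affine space of complementary subbundles to $V(b)$ makes the average well-defined, and pulling the weights back from $M$ preserves right invariance. Either construction suffices.
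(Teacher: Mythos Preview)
Your proposal is correct. Your main route---translating the previously-obtained covariant derivative $D$ into a horizontal subbundle on $BM$ via parallel bases---is a genuinely different argument from the paper's. The paper works directly on $BM$: it takes the local product structure $\pi^{-1}(U)\cong U\times Gl(n,\R)$ to define local horizontal subbundles (equivalently, local connection forms with $\varphi=0$), observes these are automatically equivariant, and then glues the \emph{connection forms} by a partition of unity pulled back from $M$; this is exactly the alternative you sketch in your last paragraph. Your primary approach buys a clean conceptual link between the two definitions of connection and makes right-invariance transparent as linearity of parallel translation; the paper's direct approach is more self-contained (it does not invoke the earlier existence theorem for $D$) and makes the point that the same partition-of-unity mechanism works verbatim at the bundle level. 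Either is fine; since your alternative already matches the paper's argument, you lose nothing by leading with it.
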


Locally we have that $BM$ is defined to be a product manifold. We can
define a connection locally by taking the horizontal subspace to be the
summand of the tangent bundle given by the product structure,
complementary to the tangent spaces of $Gl(n,\R)$. In turn this
will give us some local connection forms which satisfy the equivariance
condition. \index{equivariant form}Then we combine these local connection forms by using a
partition of unity for the covering of $M$ by the projection of their
domains. (This is no different than the previous proof of existence of
a connection.)

\subsection{Relation Between the Two Definitions of Connection} If we are
given a connection on $BM$ and a local basis $B:U\to BM$, then we can
pull back the connection form on $BM$ by $B$ to get a matrix of $1$-forms
on $U$. This pullback $B^*\varphi$ will then be the connection form of
a connection on $U$ in the sense of covariant derivatives. It requires
some routine checking to see that these local connection forms all fit
together, as $B$ varies, to make a global connection on $M$ in the
sense of covariant derivatives.\index{connection}\index{covariant derivative}

Conversely, if we are given a connection on $M$ in the sense of
covariant derivatives, then we can define $\varphi$ on the image of a
local basis $B$ by identifying it with the local connection form under
the diffeomorphism. Then the extension to the rest of $BM$ above the
domain of $B$ is forced on us by the equivariance and the fact that
$\varphi$ is already specified on the vertical spaces.
The geometric meaning of the relation between the local connection form
and the form on $BM$ is clear: on the image of $B$ the form $\varphi$
measures the failure of $B$ to be horizontal; by the differential
equation for parallel translation the local form measures the failure
to be parallel. But ``parallel on $M$'' and ``horizontal on $BM$'' are
synonymous.
A form on $BM$ is {\em horizontal} \index{form!horizontal}
if it gives $0$ whenever any vertical
vector is taken as one of its arguments. This means that it can be
expressed in terms of the $\omega^i$ with real-valued functions as
coefficients. A form $\eta$  on $BM$ with values in $\R^n$ is {\em
equivariant} \index{form!equvariant}
if ${R_g}^*\eta = g^{-1}\eta$. A form $\psi$ on $BM$ with
values in $gl(n,\R)$ is {\em equivariant} if ${R_g}^*\psi = g^{-1}\psi
g$. The significance of these definitions is that the horizontal
equivariant forms on $BM$ correspond to tensorial forms on $M$: $\eta$
corresponds to a tangent-vector valued form on $M$, $\psi$ corresponds
to a form on $M$ whose values are linear transformations of the tangent
space. The rules for making these correspondences are rather obvious:
evaluate $\eta$ or $\psi$ on lifts to a basis $b$ of the vectors on $M$
and use the result as coefficients with respect to that basis for the
result we desire on $M$. The horizontal condition makes this
independent of the choice of lifts; the equivariance makes it
independent of the choice of $b$.

For example, the form $\omega$ corresponds to the $1$-form on $M$ with
tangent-vector values which assigns a vector $x$ to itself.
\begin{thm}[The Structural Equations] If $\varphi$ is a connection form
on $BM$, then there is a horizontal equivariant $\R^n$-valued
2-form $\Omega$ and a horizontal equivariant $gl(n,\R)$-valued
2-form $\Phi$ such that\index{first structural equation}
$$d\omega = -\varphi\wedge\omega + \Omega,$$
and\index{second structural equation}
$$d\varphi = -\varphi\wedge\varphi + \Phi.$$
\end{thm}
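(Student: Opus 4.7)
The plan is to \emph{define} $\Omega$ and $\Phi$ by the equations themselves, namely
$$\Omega := d\omega + \varphi\wedge\omega, \qquad \Phi := d\varphi + \varphi\wedge\varphi,$$
and then show these have the stated horizontality and equivariance. In this way the structural equations become tautological, and the content is entirely in the two properties.

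For equivariance, I would just compute. Using the given transformation laws ${R_g}^*\omega=g^{-1}\omega$ and ${R_g}^*\varphi=g^{-1}\varphi g$ (and the fact that $g$ is constant, so $d$ commutes with $g^{-1}\cdot$ and $\cdot g$), one finds
$${R_g}^*\Omega = d(g^{-1}\omega) + (g^{-1}\varphi g)\wedge(g^{-1}\omega) = g^{-1}d\omega + g^{-1}\varphi\wedge\omega = g^{-1}\Omega,$$
and similarly ${R_g}^*\Phi = g^{-1}(d\varphi + \varphi\wedge\varphi)g = g^{-1}\Phi g$. This step is routine.

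The main work is horizontality: I must show that $\iota_V\Omega = 0$ and $\iota_V\Phi = 0$ whenever $V$ is a vertical vector field. By linearity it suffices to take $V = E^i_j$, a fundamental vector field. The key observation is that the flow of $E^i_j$ is right translation by $I+tE^i_j$, so by the equivariance laws recalled above,
$$\mathcal{L}_V\omega = \tfrac{d}{dt}\big|_{0}(I+tE^i_j)^{-1}\omega = -E^i_j\,\omega, \qquad \mathcal{L}_V\varphi = [\varphi,E^i_j].$$
Since $\omega(V)=0$ and $\varphi(V)=E^i_j$ is constant, Cartan's formula $\mathcal{L}_V=\iota_V d + d\iota_V$ reduces to $\iota_V d\omega = -E^i_j\,\omega$ and $\iota_V d\varphi = [\varphi,E^i_j]$. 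A direct computation of $\iota_V(\varphi\wedge\omega)$ and $\iota_V(\varphi\wedge\varphi)$ using $\varphi(V)=E^i_j$ and $\omega(V)=0$ yields exactly the cancelling terms, so $\iota_V\Omega=0$ and $\iota_V\Phi=0$.

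The subtle point, and the one easiest to botch, is the sign bookkeeping in $\mathcal{L}_V\varphi = [\varphi,E^i_j]$ versus $\iota_V(\varphi\wedge\varphi) = E^i_j\varphi - \varphi E^i_j$; once these two agree up to sign the cancellation is automatic. I would be careful to verify that the flow of the fundamental vector field is indeed right multiplication by $I+tE^i_j$ (which follows from the definition $E^i_j(b)=\tfrac{d}{dt}|_0 b(I+tE^i_j)$), since every subsequent sign rides on that identification. Everything else is formal manipulation.
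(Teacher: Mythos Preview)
Your argument is correct and is precisely the ``direct'' route the paper alludes to but does not spell out (``It is also not difficult to prove the structural equations directly from the specified equivariance of $\varphi$''). The paper's own argument is different: it observes that the structural equations were already established in their local, pulled-back form when torsion and curvature were introduced via covariant derivatives, so that $\Omega$ and $\Phi$ are already known on the image of any local basis section $B:U\to BM$; equivariance is then read off from the fact that these local forms assemble into genuine tensors $T$ and $R$ on $M$ (their independence from the choice of $B$ being exactly the equivariance condition), and horizontality is handled by noting that restricting the equations to vertical arguments collapses them to universal statements about the $Gl(n,\R)$-action on $\R^n$ and the Maurer-Cartan equations of $Gl(n,\R)$. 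Your approach has the advantage of being entirely self-contained on $BM$, with no appeal to the covariant-derivative machinery; the paper's approach has the advantage of tying $\Omega$ and $\Phi$ immediately to the torsion and curvature tensors already in hand. One small imprecision worth fixing: the flow of the fundamental field $E^i_j$ is $R_{\exp(tE^i_j)}$, not $R_{I+tE^i_j}$ (the latter is not a one-parameter group), but since only the derivative at $t=0$ enters the Lie-derivative computation, your conclusions are unaffected.
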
\index{curvature!2-form}

The structural equations have already been proved in the form of
pullbacks of the terms of the equations by a local basis. This shows
how the forms $\Omega$ and $\Phi$ are given on the image of a local
basis. The fact that these local forms yield the tensors $T$ and $R$
\index{torsion}\index{curvature}
which live independently of the local basis can be interpreted as
establishing the equivariance properties of $\Omega$ and $\Phi$ since they are horizontal.
It is also not difficult to prove the structural equations directly
from the specified equivariance of $\varphi$. 
If we restrict the structural equations to vertical vectors, or one
vertical and one horizontal vector, we get information that has nothing
to do with connections. The first one tells how $Gl(n,{\bf, R})$
operates on $\R^n$. The second one is more interesting: it is the
equations of Maurer-Cartan for $Gl(n,\R)$, which are essentially
its Lie algebra structure in its dual packaging.\index{Maurer-Cartan equations}

\subsection{The Dual Formulation} The dual to taking exterior derivatives of
$1$-forms is essentially the operation of bracketing vector fields. If we
bracket two fundamental fields or a fundamental and a basic field, we
obtain nothing new, only a repeat of the Lie algebra structure and its
action on Euclidean space. The brackets that actually convey
information about the connection are the brackets of basic vector
fields. By using the exterior derivative formula $d\alpha(X,Y) =
X\alpha(Y) - Y\alpha(X) - \alpha([X,Y])$, we see that the first
structural equation tells what the horizontal part of a basic bracket
is and the second tells us what the vertical part is. Most of the terms
are $0$:
$$d\omega(E_i,E_j) = E_i\omega(E_j) - E_j\omega(E_i) -
\omega([E_i,E_j])$$
$$\qquad = -\omega([E_i,E_j])$$
$$\qquad = \Omega(E_i,E_j).$$
Similarly,
$$-\varphi([E_i,E_j]) = \Phi(E_i, E_j).$$
We can immediately get some important geometrical information about a
connection. The condition for the subbundle to be integrable is that
the brackets of its vector fields again be within the subbundle. The
basic fields are a local basis for $H$, so the condition for $H$ to be
integrable is just that curvature be $0$. This means that locally there
are horizontal submanifolds, which are local bases with a very special
property: whenever we parallel translate around a small loop the result
is the identity transformation; or, parallel translation is locally
independent of path. The fact that setting curvature to $0$ gives this
local independence of path is not very obvious from the covariant
derivative viewpoint of connections.
If we go one step further and impose both curvature and torsion equal $0$,
then the result is also easy to interpret from basic manifold theory
applied to the fields on $BM$. Indeed, when a set of independent vector
fields has all brackets vanishing, there are coordinates so that these
fields are coordinate vector fields. When these are the $E_i$ of a
connection on $BM$ the coordinates correspond to coordinates on the
leaves of $H$, which get transferred down to coordinates on $M$ such
that the coordinate vector fields are parallel along every curve. The
geometry is exactly the same as the usual geometry of Euclidean space,
at least locally.

\subsection{Geodesics} \index{geodesic!of connection}
A {\em geodesic} of a connection is a curve for which
the velocity field is parallel. Hence, a geodesic is also called an
{\em autoparallel curve}. \index{autoparallel curve}
Notice that the parametrization of the curve
is significant, since a reparametrization of a curve can stretch or
shrink the velocity by different factors at different points, which
clearly destroys parallelism. (There is a trivial noncase: the constant
curves are formally geodesics. But then a reparametrization does
nothing.) 

If we are given a point $p$ and a vector $x$ at $p$, we can take a
basis $b=(x_1,\ldots,x_n)$ so that $x = x_1$. Then the integral curve
of $E_1$ starting at $b$ is a horizontal curve, so represents a
parallel field of bases along its projection $\gamma$ to $M$. Moreover,
the velocity field of $\gamma$ is the projection of $E_1$ at the points
of the integral curve; but the projection of $E_1(b)$ always gives the
first entry of $b$. We conclude that $\gamma$ has parallel velocity
field. The steps of this argument can be reversed, so that the
geodesics of $M$ are exactly the projections of integral curves of
$E_1$. Any other basic field could be used instead of $E_1$.

Geodesics do not have to go on forever, since the field $E_i$ may not
be complete. If $E_i$ is complete, so that all geodesics are extendible
to all of $\R$ as geodesics, then we say that $M$ is {\em geodesically
complete}.\index{complete!geodesically}

\begin{thm}[Local Existence and Uniqueness of Geodesics] 
\index{geodesic!existence} For every
$p\in M$ and $x\in M_p$ there is a geodesic $\gamma$ such that
$\gamma'(0)= x$. Two such geodesics coincide in a neighborhood about
$0\in \R$. There is a maximal such geodesic, defined on an open
interval, so that every other is a restriction of this maximal one.
\end{thm}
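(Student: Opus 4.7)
The plan is to reduce the theorem to the standard local existence and uniqueness theorem for integral curves of a smooth vector field, applied to a basic vector field on $BM$. The paragraph immediately preceding the statement already does most of the conceptual work by identifying geodesics on $M$ with projections of integral curves of basic fields like $E_1$, so I only need to turn that identification into a rigorous bijection.

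For existence, assume first $x \ne 0$ and complete $x$ to a basis $b = (x, x_2, \ldots, x_n)$, viewed as a point of $\pi^{-1}(p) \subset BM$. Since $E_1$ is a smooth vector field on $BM$, standard ODE theory produces a unique maximal integral curve $\tilde\gamma \colon (\alpha, \beta) \to BM$ of $E_1$ with $\tilde\gamma(0) = b$. Set $\gamma = \pi \circ \tilde\gamma$. Because $E_1$ is horizontal, $\tilde\gamma$ represents a parallel basis field along $\gamma$, and $\gamma'(t) = \pi_*(E_1(\tilde\gamma(t)))$ equals the first entry of $\tilde\gamma(t)$. So $\gamma'$ is parallel (it has constant coordinates $(1, 0, \ldots, 0)$ with respect to a parallel basis), making $\gamma$ a geodesic, and $\gamma'(0)$ equals the first entry of $b$, which is $x$. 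The case $x = 0$ gives the constant curve $\gamma \equiv p$ trivially.

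For uniqueness, suppose $\gamma_1, \gamma_2$ are two geodesics with $\gamma_i'(0) = x$. Choose the same $b$ as above and use the Existence, Uniqueness, and Linearity of Parallel Translation theorem to lift each $\gamma_i$ horizontally to a curve $\tilde\gamma_i$ in $BM$ starting at $b$ (i.e.\ parallel translate the basis $b$ along $\gamma_i$). The first entry of $\tilde\gamma_i(0)$ is $x = \gamma_i'(0)$; since both the first entry of $\tilde\gamma_i$ and $\gamma_i'$ are parallel along $\gamma_i$ and agree at $t = 0$, they agree for all $t$. Thus $\pi_*(\tilde\gamma_i'(t))$ equals the first entry of $\tilde\gamma_i(t)$; combined with the horizontality of $\tilde\gamma_i$ and the definition of $E_1$, this forces $\tilde\gamma_i'(t) = E_1(\tilde\gamma_i(t))$. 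So each $\tilde\gamma_i$ is an integral curve of $E_1$ through $b$, and ODE uniqueness gives $\tilde\gamma_1 = \tilde\gamma_2$ on the common domain, hence $\gamma_1 = \gamma_2$. The maximal geodesic is the projection of the maximal integral curve of $E_1$ through $b$, and every other geodesic with the given initial data is a restriction of it.

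The main obstacle is the uniqueness half: a geodesic is defined intrinsically on $M$ without any reference to a lift, so I must actually manufacture the horizontal lift from the parallel translation theorem and then verify that this lift is an integral curve of $E_1$. The verification hinges on the observation that the first entry of the parallel-translated basis tracks $\gamma'$ for all time, not just at $t = 0$, which in turn is exactly the statement that $\gamma'$ is parallel. Everything else is a direct application of ODE theory on $BM$.
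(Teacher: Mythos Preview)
Your proposal is correct and follows exactly the approach the paper intends: the paper's entire proof is the single sentence ``This theorem is an immediate consequence of the same sort of statements about vector fields,'' relying on the preceding paragraph's identification of geodesics with projections of integral curves of $E_1$. You have simply unpacked that sentence carefully, including the uniqueness direction (manufacturing the horizontal lift and checking it is an $E_1$-integral curve), which the paper leaves implicit.
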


This theorem is an immediate consequence of the same sort of
statements about vector fields.

\subsection{The Interpretation of Torsion and Curvature in Terms of
Geodesics} Recall the geometric interpretation of brackets: if we move
successively along the integral curves of $X, Y, -X, -Y$ by equal
parameter amounts $t$, we get an endpoint curve which returns to the
origin up to order $t^2$, but gives the bracket in its second order term.
We apply this to the vector fields $E_1, E_2$ on $BM$. The meaning of
the construction of the ``small parallelogram'' on $BM$ is that we
follow some geodesics below on $M$, carrying along a second vector by
parallel translation to tell us what geodesic we should continue on
when we have reached the prescribed parameter distance $t$. If we were
to do this in Euclidean space, the parallelogram would always close up,
but here the amount it fails to close up is of order $t^2$ and is
measured by the horizontal part of the tangent to the endpoint curve in
$BM$ above. But we have seen that the horizontal part of that bracket
is given by $-\Omega(E_1,E_2)$ relative to the chosen basis. When we
eliminate the dependence on the basis we conclude the following:
\begin{thm}[The Gap of a Geodesic Parallelogram] A geodesic
parallelogram generated by vectors $x,y$ with parameter side-lengths
$t$ has an endgap equal to $-T(x,y)t^2$ up to terms of order $t^3$.
\end{thm}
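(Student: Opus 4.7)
The plan is to lift the entire construction up to $BM$ and use the basic vector fields $E_1, E_2$ associated with a basis whose first two entries are $x$ and $y$. Choose $b = (x_1,\ldots,x_n) \in BM$ with $x_1 = x$ and $x_2 = y$, and let $\phi^i_t$ be the flow of the basic vector field $E_i$. Because the integral curve of $E_i$ projects to the geodesic through $p$ with initial velocity equal to the $i$-th entry of $b$, while simultaneously parallel-translating the whole basis along that geodesic, the composite flow $c(t) = \phi^{-E_2}_t \circ \phi^{-E_1}_t \circ \phi^{E_2}_t \circ \phi^{E_1}_t(b)$ projects under $\pi$ to exactly the geodesic parallelogram described in the theorem statement.

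Next I would invoke the standard flow-commutator formula: for any two smooth vector fields $X, Y$ on a manifold, in any local coordinate chart centered at $b$ one has
$$\phi^{-Y}_t \circ \phi^{-X}_t \circ \phi^{Y}_t \circ \phi^{X}_t(b) = b + t^2 [X,Y](b) + O(t^3).$$
Applied to $X = E_1$, $Y = E_2$ and then projected to $M$ using a chart at $p$, this yields
$$\pi(c(t)) = p + t^2 \, \pi_*\bigl([E_1,E_2](b)\bigr) + O(t^3),$$
so the endgap is $t^2 \pi_*([E_1, E_2](b))$ to leading order.

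The remaining step is to identify this push-down via the first structural equation. Because the basic fields are horizontal, $\varphi(E_i) = 0$, and because $\omega(E_j) = e_j$ is the constant $j$-th standard basis vector of $\mathbf{R}^n$, we have $E_i\omega(E_j) = 0$. The standard exterior derivative formula then gives $d\omega(E_1,E_2) = -\omega([E_1,E_2])$, while the first structural equation $d\omega = -\varphi\wedge\omega + \Omega$ yields $d\omega(E_1,E_2) = \Omega(E_1,E_2)$. Hence $\omega([E_1,E_2]) = -\Omega(E_1,E_2)$, and pushing down by $\pi_*(v) = b\,\omega(v)$ together with the relation $T = B\Omega$ gives $\pi_*[E_1,E_2](b) = -b\,\Omega(E_1,E_2) = -T(x,y)$. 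Combining with the previous display yields the claimed endgap $-T(x,y)t^2 + O(t^3)$.

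The main obstacle, to my mind, is not any individual step but the careful interpretation of the word ``endgap'': strictly speaking the displacement from $p$ to $\pi(c(t))$ is a point of $M$, not a tangent vector, so the statement must be read in a fixed coordinate chart about $p$. The content of the theorem is that the second-order Taylor coefficient of $\pi(c(t))$ in any such chart is chart-independent and equals $-T(x,y)$; this is automatic because the leading nonzero term of a curve starting at $p$ is intrinsically a tangent vector at $p$. The Lie-bracket-of-flows formula used in the second paragraph is the one nontrivial piece of machinery, but it is standard (e.g.\ from the existence/uniqueness theory of ODEs with parameters).
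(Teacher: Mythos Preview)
Your proposal is correct and follows essentially the same approach as the paper: lift the parallelogram to $BM$ via the basic fields $E_1,E_2$, invoke the bracket-as-flow-commutator interpretation, and read off the horizontal part of $[E_1,E_2]$ from the first structural equation as $-\Omega(E_1,E_2)$, which projects to $-T(x,y)$. The paper gives this argument in the paragraph immediately preceding the theorem statement and in the ``Dual Formulation'' subsection; you have simply made the steps more explicit, and your closing remark about the chart-independence of the leading term is a useful clarification the paper leaves implicit.
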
\index{geodesic!parallelogram}

The other part of the gap of the bracket parallelogram on $BM$ is the
vertical part. What that represents geometrically is that failure of
parallel translation around the geodesic parallelogram below to bring
us back to the identity. That failure is what curvature measures, up to
terms cubic in $t$. We can't quite make sense of this as it stands,
because the parallel translation in question is not quite around a
loop; however, if we close off the gap left due to torsion in any
non-roundabout way, then the discrepancies among the various ways of
closing up, as parallel translation is affected, are of higher order in
$t$. That is the interpretation we place on the following theorem.

\begin{thm}[The Holonomy of a Geodesic Parallelogram] Parallel
translation around a geodesic parallelogram generated by vectors $x,y$
with parameter side-lengths $t$ has second order approximation
$I+R_{xy}\cdot t^2$.
\end{thm}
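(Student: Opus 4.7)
The plan is to lift the geodesic parallelogram to the bundle of bases $BM$ and extract the holonomy from the vertical component of a bracket of horizontal lifts, paralleling the torsion argument in the previous theorem but reading off curvature instead. Fix a basis $b=(x_1,\dots,x_n)$ at $p$ and write $x=bv$, $y=bw$ for columns $v,w\in\R^n$. The horizontal lifts $\bar x=v^iE_i$ and $\bar y=w^jE_j$ have integral curves that project to geodesics of $M$ while carrying along a parallel basis, so the four sides of the parallelogram lift to successive flow segments of $\bar x$ and $\bar y$ starting at $b$. The standard second-order commutator-of-flows identity---the same one invoked for the torsion-gap theorem---then yields
$$\phi^{-\bar y}_t\circ\phi^{-\bar x}_t\circ\phi^{\bar y}_t\circ\phi^{\bar x}_t(b)=b+t^2[\bar x,\bar y](b)+O(t^3).$$

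Next I decompose $[\bar x,\bar y](b)$ using the dual formulation of the two structural equations. Because $\omega^k(E_i)=\delta^k_i$ is constant and $\varphi(E_i)=0$, the formula $d\alpha(X,Y)=X\alpha(Y)-Y\alpha(X)-\alpha([X,Y])$ reduces to $\omega([\bar x,\bar y])=-\Omega(\bar x,\bar y)$ and $\varphi([\bar x,\bar y])=-\Phi(\bar x,\bar y)$. The first pushes down via $\pi_\ast$ to $-T(x,y)$, reproducing the torsion gap of the previous theorem. The second, combined with the sign convention that the matrix of $R_{xy}$ in basis $b$ is $-\Phi(\bar x,\bar y)$, identifies the vertical part of $[\bar x,\bar y]$ with $\mathrm{mat}_b(R_{xy})$.

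To convert this into a closed-loop holonomy, let $\sigma$ be the local section of $BM$ near $p$ obtained by parallel-translating $b$ along radial geodesics from $p$; then $\sigma(p)=b$ and $\sigma_\ast(T_pM)\subset H(b)$, so the trivialization $(p',g)\mapsto\sigma(p')g$ has its product splitting matched to the horizontal/vertical decomposition at $b$ to first order. In this trivialization the endpoint of the four-flow composition is $\sigma(q)\cdot g(t)$ with $q-p=-t^2T(x,y)+O(t^3)$ and $g(t)=I+t^2\,\mathrm{mat}_b(R_{xy})+O(t^3)$. Closing the torsion gap by parallel translation along the reversed radial geodesic from $q$ to $p$ is by construction a horizontal lift; by right-invariance of $H$ under $R_{g(t)}$, that lift is the $R_{g(t)}$-image of the horizontal lift into $\sigma$ and hence terminates at $b\cdot g(t)$. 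Thus the holonomy acts on $M_p$ as $I+R_{xy}t^2+O(t^3)$. The main obstacle is that the answer should not depend on the choice of closing: any two ``non-roundabout'' closings of a gap of size $t^2$ bound a surface of area $O(t^4)$, and a direct Magnus-type estimate using $d\varphi=-\varphi\wedge\varphi+\Phi$ together with Stokes shows that their horizontal lifts differ by $O(t^4)$, safely below the stated order $t^3$.
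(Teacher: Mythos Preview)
Your proposal is correct and follows the same approach the paper sketches in the paragraph preceding the theorem: lift the geodesic parallelogram to $BM$ as the four-flow commutator of basic fields, read off the vertical part of $[\bar x,\bar y]$ via the second structural equation as $-\Phi(\bar x,\bar y)=\mathrm{mat}_b(R_{xy})$, and interpret this as the second-order term of the holonomy after closing the torsion gap. Your treatment of the closing---via the radial parallel section $\sigma$ and the $O(t^4)$ Stokes estimate for alternative closings---is in fact more careful than the paper, which simply asserts that ``non-roundabout'' closings differ only at higher order.
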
\index{holonomy}

It seems to me that a conventional choice of sign of the curvature
operators to make the ``+'' in the above theorem turn into a ``$-$'' is
in poor taste. The only other guides for which sign should be chosen
seem to be merely historic.
The word ``holonomy'' is used in connection theory to describe the
failure of parallel translation to be trivial around loops. By chaining
one loop after another we get the product of their holonomy
transformations, so that it makes sense to talk about a holonomy group
as a measure of how much the connection structure fails to be like
Euclidean geometry.\index{holonomy!group}

\vspace{\smallskipamount}

\begin{prob}\label{prob10} Decomposition of a Connection into Geodesics and
Torsion. Prove that if two connections have the same geodesics and
torsion they are the same. Furthermore, the geodesics and torsion can
be specified independently.\end{prob}
\vspace{\smallskipamount}

In regard to the meaning of the last statement, we intend that the
torsion can be any tangent-vector valued $2$-form; the specification of
what families of curves on a manifold can be the geodesics of some
connection has been spelled out in an article by W. Ambrose,
R.S.~Palais, and I.M.~Singer, {\em Sprays}, Anais da Academia Brasileira
de Ciencias, vol. 32, 1960. But for the problem you are required only
to show that the geodesics of a connection to be specified can be taken
to be the same as some given connection.\index{sprays}\index{Ambrose, W.}
\index{Palais, R.S.}\index{Singer, I.M.}

\vspace{\smallskipamount}
\begin{prob}\label{prob11} Holonomy of a Loop. Prove the following more
general and precise version of the Theorem on Holonomy of Geodesic
Parallelograms.
\textsl{Let $h:[0,1]\times[0,\tau]\to M$ be a smooth homotopy of the constant
loop $p=h(0,v)$ to a loop $\gamma(v)=h(1,v)$ with fixed ends, so that
$h(u,0)= h(u,\tau)=p$. Fix a basis $b=(p,x_1,\ldots,x_n)$ and let
$g(u)\in Gl(n,\R)$ be the matrix which gives parallel translation
$bg(u)$ of $b$ around the loop $v\to h(u,v)$. Let $X=\frac{\partial h}{\partial u}, Y= \frac{\partial h}{\partial v}$ and let $\bar
h:[0,1]\times[0,\tau]\to BM$ be the lift of $h$ given by lifting each
loop $v\to h(u,v)$ horizontally with initial point $b$. In particular,
$\bar h(u,\tau) = bg(u)$. Prove that}
$$\int_0^1 g(u)^{-1}g'(u)\,du = \int_0^1\,\int_0^\tau \bar h(u,v)^{-1}
\circ R_{XY}\circ\bar h(u,v)\,dv\,du.$$
The meaning of the integrand on the right is as follows:
We interpret a basis $b' = (q,y_1,\ldots,y_n)$ to be the linear
isomorphism $\R^n\to M_q$ given by $(a^1,\ldots,a^n)\to\sum_i
a^iy_i$. Thus, for each $u,v$, we have a linear map
$$\bar h(u,v)^{-1}\circ R_{X(u,v)\,Y(u,v)}\circ\bar h(u,v):
\R^n\to M_{h(u,v)}\to M_{h(u,v)}\to\R^n.$$
As a matrix this can be integrated entry-by-entry. Hint: Pullback the
second structural equation via $\bar h$ and apply Stokes' theorem on
the rectangle.\end{prob}

\vspace{\smallskipamount}
\begin{prob}\label{prob12} The General Curvature Zero Case. 
Suppose that $M$
is connected and that we have a connection $H$ on $BM$ for which $\Phi
= 0$, that is, $H$ is completely integrable. Let $\tilde M$ be a leaf
of $H$, that is, a maximal connected integrable submanifold. Show that
the restriction of $\pi$ to $\tilde M$ is a covering map. Moreover, if
we choose a base point $b\in \tilde M$, then we can get a homomorphism of
the fundamental group $\pi_1(M)\to Gl(n,\R)$ as follows: for a
loop based at $\pi(b)$ we lift the loop into $\tilde M$, necessarily
horizontally, getting a curve in $\tilde M$ from $b$ to $bg$. Then $g$
depends only on the homotopy class of the loop.\end{prob}\index{holomony}

\vspace{\smallskipamount}

A connection with curvature zero is called {\em flat} and the
homomorphism of Problem \ref{prob12} is called the 
{\em holonomy map} of that
flat connection.

\subsection{Development of Curves into the Tangent Space} 
\index{development of curve} Let
$\gamma:[0,1]\to M$ and let $D$ be a connection on $M$. We let
$\bar\gamma:[0,1]\to BM$ be a parallel basis field along $\gamma$
starting at $b=\bar\gamma(0)$ and express the velocity of $\gamma$ in
terms of this parallel basis, getting a curve of velocity components
$\beta(t)=\bar\gamma(t)^{-1}\gamma'(t)\in\R^n$. Then we let
$\sigma(t)= b\int_0^t\beta(u)\,du$. Thus, the velocity field of
$\sigma$ in the space $M_{\gamma(0)}$ bears the same relation to
Euclidean parallel translates of $b$ as does the velocity field of
$\gamma$ to the parallel translates of $b$ given by the connection. We
call $\sigma$ the {\em development of $\gamma$ into} $M_{\gamma(0)}$.

\vspace{\smallskipamount}
\begin{prob}\label{prob13} Show that the development is independent of the
choice of initial basis $b$.\end{prob}

\vspace{\smallskipamount}

\begin{prob}\label{prob14} Show that the lift $\bar\gamma$ of $\gamma$ in
the definition of the development is the integral curve of the
time-dependent vector field $\sum \beta^i(t)E_i$ on $BM$ starting at
$b$.\end{prob}

\subsection{Reverse Developments and Completeness} Starting with a curve
$\sigma$ in $M_p$ such that $\sigma(0) = 0 $, we choose a basis $b$
and let $\beta=b^{-1}\sigma'$.  By Problem \ref{prob14}, 
we can then get a curve
$\gamma$ in $M$ whose development is $\sigma$, at least locally. We
call $\gamma$ the {\em reverse development} of $\sigma$. Since the
vector field $\sum\beta^i(t)E_i$ need not be complete, it is not
generally true that every curve in $M_p$ can be reversely developed
over its entire domain.

\vspace{\smallskipamount}

We say that $(M,D)$ is {\em development-complete} at $p$ if every
smooth curve $\sigma$ in $M_p$ such that $\sigma(0) = 0$ has a reverse 
development over its entire domain.\index{completeness!development-}

\vspace{\smallskipamount}

\begin{prob}\label{prob15} Suppose that $M$ is connected. Show that 
the condition that $(M,D)$ be development-complete at $p$ is
independent of the choice of $p$.\end{prob}

\vspace{\smallskipamount}

\begin{prob}\label{prob16} Show that the development of a geodesic is a ray
with linear parametrization, and hence, if $M$ is development complete,
then $M$ is geodesically complete.\end{prob}\index{complete!geodesically}

\vspace{\smallskipamount}
\index{connection!of parallelization}
\begin{prob}\label{prob17} For the connection of a parallelization
$(X_1,\ldots , X_n)$ show that the geodesics are integral curves of
constant linear combinations $\sum a^iX_i$.\end{prob}
\index{parallelization!connection of}\index{parallelization!geodesics of connection}

\vspace{\smallskipamount}

\begin{prob}\label{prob18} For the connection of a parallelization 
$B = (X_1,\ldots,X_n)$ show that the reverse development of $\sigma$
for which $\beta = b^{-1}\sigma'$ is an integral curve of
$B\beta$.\end{prob}

\vspace{\smallskipamount}

\begin{prob}\label{prob19} Show that if $f > 0$ grows fast enough along a
curve $\gamma$ in $\R^2$, then the development of $\gamma$ with
respect to the parallelization $(f\frac{\partial}{\partial x},
f\frac{\partial}{\partial y})$ is bounded. Hence there is no reverse
development having unbounded continuation.\end{prob}

\vspace{\smallskipamount}

\begin{prob}\label{prob20} Show that the geodesics of the parallelization of
Problem \ref{prob19} are straight lines except for parametrization, and on
regions where $f=1$ even the parametrization is standard. Then by
taking $\gamma$ to be an unbounded curve with a neighborhood $U$ such
that any straight line meets $U$ at most in a bounded set, and $f$ a
function which is $1$ outside $U$ and grows rapidly along $\gamma$, it is
possible to get a connection (of the parallelization) which is
geodesically complete but not development-complete.\end{prob}
\index{complete!geodesically}

\begin{rem} Parallel translation along a curve $\gamma$ is
independent of parametrization.
\end{rem}
\begin{rem} The only reparametrizations of a nonconstant geodesic which are
again geodesics are the affine reparametrizations: $\sigma(s) =
\gamma(as+b).$ 
\end{rem}

A curve which can be reparametrized to become a geodesic is called a
{\em pregeodesic.}\index{pregeodesic}\index{geodesic!pre-}

\vspace{\smallskipamount}

\begin{prob}\label{prob21} Show that a regular curve $\gamma$ is a
pregeodesic if and only if $D_{\gamma'}\gamma' = f \gamma'$ for
some real-valued function $f$ of the parameter.\end{prob}

\subsection{The Exponential Map of a Connection} 
\index{exponential map} For $p \in M,\ x \in M_p$
let $\gamma_{p,x}$ be the geodesic starting at $p$ with initial
velocity $\gamma_{p,x}'(0) = x$. The {\em exponential map at p} is
defined by 
$$exp_p:U\to M\qquad {\rm by}\qquad exp_px = \gamma_{p,x}(1),$$
where $U$ is the subset of $x\in M_p$ for which $\gamma_{p,x}(1)$ is
defined.  
\begin{prop} The domain $U$ of $exp_p$ is an open
star-shaped subset of $M_p$. Exponential maps are smooth.
\end{prop}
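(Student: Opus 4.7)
The plan is to realize $\exp_p$ as a time-$1$ slice of the flow of a smooth vector field, so that openness of the domain and smoothness of the map both fall out of the standard ODE theorem on smooth dependence on initial data, while the star-shapedness is read off from the reparametrization behavior of geodesics.

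First I would record the basic scaling property: for any $a\in\R$ and any $x\in M_p$, the curve $s\mapsto\gamma_{p,x}(as)$ has initial velocity $ax$ and, being an affine reparametrization of a geodesic, is itself a geodesic (by the remark preceding Problem~\ref{prob21}). By uniqueness of geodesics with prescribed initial velocity we get
$$\gamma_{p,ax}(s)=\gamma_{p,x}(as)$$
on whichever side has a defined left-hand side. Star-shapedness now follows immediately: if $x\in U$, so $\gamma_{p,x}$ is defined at $s=1$, then for each $t\in[0,1]$ the identity $\gamma_{p,tx}(1)=\gamma_{p,x}(t)$ shows that $tx\in U$.

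For openness and smoothness I would pass to $BM$, which we already know to be parallelizable by the basic and fundamental vector fields. Given $\beta=(\beta^1,\ldots,\beta^n)\in\R^n$, the combination $E_\beta=\sum_i\beta^i E_i$ is a smooth horizontal vector field on $BM$, and we have seen that geodesics in $M$ are exactly projections of integral curves of such basic fields. I would package this by working on the product manifold $BM\times\R^n$ with the smooth vector field $(E_\beta,0)$ at the point $(b,\beta)$; its flow $\Psi_s$ is defined on an open subset $\mathcal{D}\subset\R\times BM\times\R^n$ and is smooth there, by the standard theorem on flows of smooth vector fields. Fix a basis $b_0\in\pi^{-1}(p)$; the assignment $x\mapsto(b_0,b_0^{-1}x)$ is a linear (hence smooth) identification of $M_p$ with $\{b_0\}\times\R^n$. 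Then
$$\exp_p x \;=\; \pi\bigl(\operatorname{pr}_{BM}\Psi_1(b_0,b_0^{-1}x)\bigr),$$
and $x\in U$ iff $(1,b_0,b_0^{-1}x)\in\mathcal{D}$. Openness of $\mathcal{D}$ gives openness of $U$, and smoothness of $\Psi$ together with smoothness of $\pi$ gives smoothness of $\exp_p$.

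The main obstacle is really just bookkeeping: one has to check that the basic vector field construction genuinely produces \emph{all} geodesics with the correct initial conditions (which was already observed in the subsection on Geodesics) and that the choice of auxiliary basis $b_0$ is harmless, since a different choice only changes the identification of $\{b_0\}\times\R^n$ with $M_p$ by a linear isomorphism and therefore does not affect whether the resulting set is open or the resulting map is smooth. Everything else reduces to the flow theorem and the affine-reparametrization identity above.
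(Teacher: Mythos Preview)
Your argument is correct. The paper states this proposition without proof (as with many results in these lecture notes, it is left to the reader or instructor), so there is no proof to compare against directly. Your approach is entirely in the spirit of the notes: the paper has already set up geodesics as projections of integral curves of basic vector fields on $BM$, and you correctly package this as a single smooth vector field on $BM\times\R^n$ so that openness of $U$ and smoothness of $\exp_p$ follow from the standard flow theorem. The star-shapedness argument via the affine reparametrization identity $\gamma_{p,tx}(1)=\gamma_{p,x}(t)$ is the expected one and uses exactly the facts the paper has recorded. The only cosmetic point is that your phrase ``on whichever side has a defined left-hand side'' is slightly garbled; what you need (and use) is that if $\gamma_{p,x}$ is defined at $1$ then its maximal domain, being an open interval containing $0$ and $1$, contains all of $[0,1]$.
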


It is called the exponential map because it generalizes the matrix
exponential map, and also the exponential map of a Lie group. These
are obtained when the connection is taken to be the connection of the
parallelization by a basis of the Lie algebra (the left-invariant
vector fields or the right invariant vector fields; both give the same
geodesics through the identity, namely, the one-parameter subgroups).
More specifically, the multiplicative group of the complex numbers is
a two-dimensional real Lie group for which the ordinary complex
exponential map coincides with the one given by the invariant (under
multiplication) connection. The geodesics are concentric circles, open
rays, and loxodromes (exponential spirals).

\subsection{Normal Coordinates} \index{normal coordinates}
Since $M_p$ is a vector space, the tangent
space $(M_p)_0$ is canonically identified with $M_p$ itself. Using
this identification, it is easily seen that the tangent map of $exp_p$
at $0$ may be considered to be the identity map. In particular, by the
inverse function theorem, there is a neighborhood $V$ of $p$ on which
the inverse of $exp_p$ is a diffeomorphism. Referring $M_p$ to a basis
$b$ gives us an isomorphism $b^{-1}$ to $\R^n$, and the
composition gives a {\em normal coordinate map} at $p$:
$$b^{-1}\circ {exp_p}^{-1}: V \to \R^n.$$

For normal coordinates it is clear that the coordinate rays starting
at the origin of $\R^n$ correspond to the geodesic rays starting
at $p$.
\begin{prob}\label{prob22} Suppose that torsion is $0$ and that
$(x^i)$ are normal coordinates at $p$. For any $x \in M_p$ show that
$D_x\frac{\partial}{\partial x^i} = 0$, and hence the operation of
covariant differentiation of vector fields with respect to vectors at
$p$ reduces to operating on the components of the vector fields by the
vectors at $p$.\end{prob}

\subsection{Parallel Translation and Covariant Derivatives of Other Tensors}
\index{parallel translation!tensors fields}\index{covariant derivative!tensors}
We have so far only been concerned with parallel translation and
covariant derivatives of vectors and vector fields. For tensors of
other types we simply reduce to the vector field case: a tensor field
is {\em parallel along a curve} $\gamma$ if the components of the
tensor field are constant with respect to a choice of parallel basis
field along $\gamma$. We calculate $D_xA$, where $A$ is a tensor field
and $x \in TM$ by taking a curve $\gamma$ with $\gamma (0) = x$,
referring $A$ to a parallel basis along $\gamma$, and differentiating
components at $t=0$. These definitions are independent of the choice
of basis.

\setcounter{section}{3}

\section{The Riemannian Connection}

\vspace{\medskipamount}\index{connection!metric}\index{connection!Riemannian}
\index{connection!Levi-Civita}
\subsection{Metric Connections} We now return to the study of
semi-Riemannian metrics, and in particular, Riemannian metrics. If $g$
is such a metric, then we say that a connection is a {\em metric
connection} if parallel translation along any curve preserves inner
products with respect to $g$. It is easy to see that there are several
equivalent ways of expressing that same condition:

\vspace{\medskipamount}

Equivalent to a connection being metric are:

\begin{enumerate}
\item The parallel translation of a frame is always a frame.
\item The tensor field $g$ is parallel along every curve.
\item $D_xg = 0$ for all tangent vectors $x$.
\item $x(g(Y,Z)) = g(D_xY, Z) + g(Y, D_xZ)$ for all tangent vectors $x$
and all vector fields $Y$ and $Z$.
\item Let $FM$ be the {\em frame bundle} of $g$, consisting of all frames
at all points of $M$. It is easily shown that $FM$ is a submanifold of
$BM$ of dimension $n(n+1)/2$. The condition equivalent to a connection
$H$ being metric is that at points of $FM$ the horizontal subspaces
are contained in $TFM$.\index{bundle of frames}
\end{enumerate}

\subsection{Orthogonal Groups} \index{orthogonal group}
\index{frame}\index{frame!bundle}The frame bundle is invariant under the
action of the orthogonal group with the corresponding index $\nu$.
This is the group of linear transformations which leaves invariant the
standard bilinear form on $\R^n$ of that index:
$$g_\nu(x,y) = \sum_{i=1}^{n-\nu}x^iy^i - \sum_{i=n-\nu+1}^nx^iy^i.$$
Thus, $A \in O(n,\nu)$ if and only if $g_\nu(Ax,Ay) = g_\nu(x,y)$ for
all $x, y \in \R^n$. In case $\nu = 0$ this reduces to the
familiar condition for orthogonality: $A^T\cdot A = I$. For other
indices the matrix transpose should be replaced by the
adjoint with respect to $g_\nu$, which we will denote by $A'$. The
corresponding Lie algebra consists of the matrices which are
skew-adjoint:\index{Lie algebra!orthogonal group}
$$so(n,\nu) = \{A: A' = -A\}.$$
No matter what $\nu$ is, the dimension of $O(n,\nu)$ is $n(n-1)/2$,
and since $FM$ is locally a product of open sets (the domains of local
frames!) in $M$ times $O(n,\nu)$, $FM$ has dimension $n(n+1)/2$.

\subsection{Existence of Metric Connections} In general, the definition and
existence of a connection on a principal bundle (this means that the
fiber is a Lie group acted on the right by a model fiber) can be
carried out by imitating the case of $BM$. 
\index{bundle of frames}However, for $FM$ we can
obtain a connection by restricting a connection on $BM$ and then
retaining only the skew-adjoint part. Thus, if $\varphi$ is a
connection form on $BM$, then for $x \in TFM$ we let
$$\varphi _a(x) = {1 \over 2 } (\varphi (x) - \varphi (x)').$$
For the Riemannian case this means that we decompose the matrix
$\varphi(x)$ into its symmetric and skew-symmetric parts and discard
the symmetric part.  Since the decomposition into these parts is
invariant under the action of the orthogonal group by conjugation
(similarity transform), it follows that the form $\varphi_a$ on $FM$
satisfies the equivariance condition:
$${R_g}^*\varphi_a = g^{-1}\varphi_a g,$$
for all $g \in O(n,\nu)$. On the vertical subspaces of $TFM$ the
values of $\varphi$ were already skew-adjoint, so that $\varphi_a$ is
an isomorphism of each vertical space onto $so(n,\nu)$. The number of
independent entries of $\varphi_a$ is $n(n-1)/2$. Consequently, the
annihilated subspaces are a complement to the vertical ones, and by
the equivariance condition, they are invariant under ${R_g}_*$. That
is, we have a connection on $FM$.

\vspace{\medskipamount}

What this means in terms of covariant derivatives on $M$ is that we
can start with any connection $D$; then for any frame field $(E_i)$
with respect to the metric $g$ we have $D_xE_i =\sum_j\varphi_i^j(x)E_j$,
defining the local connection forms
$\varphi_i^j$. Then the local connection forms of a metric connection
are obtained by taking the skew-adjoint part of $\varphi$.

\subsection{The Levi-Civita Connection}

\begin{thm}[The Fundamental Theorem of Riemannian Geometry]
For a semi-Riemannian metric $g$ there exists a
unique metric connection with torsion 0.
\end{thm}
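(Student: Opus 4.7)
The plan is to prove uniqueness and existence simultaneously via the Koszul formula, which expresses $D_XY$ algebraically in terms of $g$ and Lie brackets and hence both forces uniqueness and supplies a candidate for existence.

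For uniqueness, assume $D$ is torsion-free and metric. I would write the metric condition in the form $Xg(Y,Z) = g(D_XY,Z) + g(Y,D_XZ)$ three times, cycling $(X,Y,Z)$ to $(Y,Z,X)$ and $(Z,X,Y)$. Adding the first two and subtracting the third, and then using $D_UV - D_VU = [U,V]$ wherever an unwanted pair $D_UV,D_VU$ appears, everything collapses into a single term $2g(D_XY, Z)$ on one side while the other side becomes the Koszul expression
\[
K(X,Y,Z) := Xg(Y,Z) + Yg(Z,X) - Zg(X,Y) + g([X,Y],Z) - g([X,Z],Y) - g([Y,Z],X).
\]
Nondegeneracy of $g$ then forces $D_XY$ to be the unique vector field with $2g(D_XY,Z) = K(X,Y,Z)$ for every $Z$.

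For existence, I would take that equation as the \emph{definition}. For fixed smooth $X,Y$, the right-hand side is $\mathcal{F}$-linear in $Z$ (the derivative terms $Xg(Y,Z)$, etc., are balanced by the bracket terms $g([X,Z],Y)$, etc., so that applications of a function-factor to $Z$ cancel), so nondegeneracy of $g$ yields a unique smooth vector field $D_XY$. The main obstacle is verifying the connection axioms for this $D$: $\mathcal{F}$-linearity and additivity in $X$ follow directly from the form of $K$, but the Leibniz rule $D_X(fY) = (Xf)Y + f D_XY$ requires careful bookkeeping, in which the extra $Xf$ coming from the first term of $K$ combines with the extra contributions from $[fY,\cdot]$ in the bracket terms to leave precisely $2(Xf)\,g(Y,Z)$, matching what is needed.

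Once $D$ is established as a connection, the remaining conditions are cheap. Torsion-freeness follows by subtracting $K(Y,X,Z)$ from $K(X,Y,Z)$: the derivative terms regroup into $2g([X,Y],Z)$ and the bracket terms collapse, giving $D_XY - D_YX = [X,Y]$. Metric compatibility follows by adding $K(X,Y,Z)$ to $K(X,Z,Y)$: the bracket terms cancel in pairs after relabeling, and what remains is $2Xg(Y,Z)$, i.e., $g(D_XY,Z) + g(Y,D_XZ) = Xg(Y,Z)$, which is condition (4) in the list of equivalent characterizations of a metric connection. Uniqueness and existence together yield the canonical Levi-Civita connection for any semi-Riemannian metric.
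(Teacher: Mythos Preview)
Your argument is correct and is the standard Koszul-formula proof. The paper, however, takes a different route that lives on the frame bundle $FM$ rather than on $M$. It has already established the existence of \emph{some} metric connection $\varphi$ (by taking the skew-adjoint part of an arbitrary connection form), so the issue is uniqueness of the torsion-free one. Writing $\tau = \varphi - \varphi_0$ for the difference with a putative torsion-free metric connection $\varphi_0$, the first structural equation gives $\Omega = \tau \wedge \omega$. The paper then cycles three times through the skew-adjointness of $\tau$ and this structural relation to solve explicitly for $g_\nu(\tau(x)\omega(y),\omega(z))$ in terms of $\Omega$; this simultaneously proves uniqueness and shows how to correct any metric connection to the torsion-free one. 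In fact the paper proves the stronger statement that metric connections are in bijection with their torsions.

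The underlying algebraic manoeuvre is the same three-fold cyclic trick you use, and the paper explicitly names your version as the Koszul formula variant. What the bundle approach buys is consistency with the rest of the notes' development (structural equations, equivariant forms on $FM$) and the cleaner torsion-parametrization statement; what your approach buys is that it is self-contained, requires no prior construction of a metric connection, and avoids the frame-bundle machinery entirely.
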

\index{Fundamental Theorem of Riemannian Geometry}
In fact, the metric connections are parametrized bijectively by their
torsions. To see this it is necessary to make the correspondence
between the torsion form $\Omega$ and the form $\tau = \varphi -
\varphi_0$, where $\varphi$ is an arbitrary metric connection form and
$\varphi_0$ will be the one with torsion $0$. According to the first
structural equation (which pulls back to $FM$ unchanged in appearance)
we would have
$$d\omega = -\varphi\wedge\omega + \Omega =
-\varphi_0\wedge\omega,$$
or
$$\Omega = \tau\wedge\omega.$$
It is clear that $\tau$ determines $\Omega$, so the problem reduces to
showing that $\Omega$ determines $\tau$. The condition that the values
of the connection forms, and hence, of $\tau$, are skew-adjoint with
respect to the standard bilinear form $g_\nu$ must be used.

\vspace{\medskipamount}

The trick used to determine $\tau$ in terms of $\Omega$ is to
alternately apply the skew-adjointness:
$g_\nu(\tau(x)\omega(y),\omega(z)) =
-g_\nu(\omega(y),\tau(x)\omega(z))$
and first-structural equation relation: $\Omega(x,y) =
\tau(x)\omega(y) - \tau(y)\omega(x)$ three times:
$$g_\nu(\tau(x)\omega(y),\omega(z)) =
-g_\nu(\omega(y),\tau(x)\omega(z))$$
$$\qquad = -g_\nu(\omega(y), \Omega(x,z)) - g_\nu(\omega(y),
\tau(z)\omega(x))$$
$$\qquad = -g_\nu(\omega(y), \Omega(x,z)) + g_\nu(\tau(z)\omega(y),
\omega(x))$$
$$\qquad = -g_\nu(\omega(y), \Omega(x,z)) + g_\nu(\Omega(z,y),
\omega(x)) + g_\nu(\tau(y)\omega(z), \omega(x))$$
$$\qquad = -g_\nu(\omega(y), \Omega(x,z)) + g_\nu(\Omega(z,y),
\omega(x)) - g_\nu(\omega(z), \tau(y)\omega(x))$$
$$\qquad = -g_\nu(\omega(y), \Omega(x,z)) + g_\nu(\Omega(z,y),
\omega(x)) - g_\nu(\omega(z), \Omega(y,x)) - g_\nu(\omega(z),
\tau(x)\omega(y)).$$ 
By symmetry of $g_\nu$, the expression we began with and last term are
the same, so
$$2g_\nu(\tau(x)\omega(y),\omega(z)) = -g_\nu(\omega(y), \Omega(x,z))
+ g_\nu(\Omega(z,y),\omega(x)) - g_\nu(\omega(z), \Omega(y,x)).$$
This establishes the unique determination of $\tau$ by $\Omega$ since
$g_\nu$ is nondegenerate and the tangent vectors $y,z$ to $FM$ can be
chosen freely to give all possible values for $\omega(y), \omega(z)$.
We can use this formula in both directions: we can start with
$\varphi$ and determine $\tau$ and hence $\varphi_0$; or we can start
with $\varphi_0$ and a given torsion $\Omega$ and determine $\varphi$.

\vspace{\bigskipamount}

There are several variants on the trick used to determine the
Levi-Civita connection. Levi-Civita used it to determine the
Christoffel symbols for a local coordinate basis. There is a
basis-free version of it known as the {\em Koszul formula} for covariant
derivatives:\index{Christoffel symbols}
$$2g(D_XY,Z) = Xg(Y,Z) + Yg(X,Z) - Zg(x,Y) + g([X,Y],Z) + g([Z,X],Y)
+ g(X,[Z,Y]).$$\index{Koszul formula}
If we restrict attention to $X, Y, Z$ chosen from a local frame, then
the first three terms of the Koszul formula vanish; for coordinate
basis fields the last three vanish and we recapture Levi-Civita's
formula.

\vspace{\medskipamount}
\index{coframe}
Frequently the most efficient way to calculate the Levi-Civita
connection is to use a local coframe $(\omega^i)$ and the first
structural equation with $\Omega$ assumed to be zero and $\varphi$
forced to be skew-adjoint. The fundamental theorem tells us that the
information contained in the first structural equation is enough to
determine $\varphi$. Often the equations can be manipulated so as to
apply Cartan's Lemma on differential forms:\index{Cartan's Lemma}
$$ {\rm If}\ \sum \theta^i\omega^i = 0$$
and the $\omega^i$ are linearly independent $1$-forms, then the $1$-forms
$\theta^i$ must be expressible in terms of the $\omega^i$ with a
symmetric matrix of coefficients.

\subsection{Isometries} \index{isometry}
An {\em isometry} between metric spaces is a mapping
which preserves the distance function and is $1$-$1$ onto. In particular,
it is a homeomorphism between the underlying topological spaces. If it
is merely $1$-$1$, then it is called an {\em isometric imbedding}. We use
the same words for the mappings which preserve a semi-Riemannian
metric:

\vspace{\medskipamount}
An {\em isometry} $F : M \to N$ from a semi-Riemannian manifold $M$
with metric tensor $g$ onto a semi-Riemannian manifold $N$ with metric
tensor $h$ is a diffeomorphism such that for all tangents $v,w \in
M_p$, for all $p \in M$, we have $h(F_*v,F_*w) = g(v,w)$.

\vspace{\medskipamount}\index{imbedding!isometric}
An {\em isometric imbedding} is a map satisfying the same condition
relating the tangent map and the metrics, but requiring only that it
be a differentiable imbedding; this does not mean that the topology
has to be the one induced by the map, only that the map be $1$-$1$ and
regular. Finally, for an {\em isometric immersion} we drop the
requirement that it be $1$-$1$.

\vspace{\medskipamount}
Since covariant tensor fields (those of type $(0,s)$) can be pulled
back by tangent maps in the same way that differential forms are, we
can also write the condition for isometric immersions as: $F^*h = g$.

\vspace{\medskipamount}
We have seen that there are auxiliary structures uniquely determined
by a semi-Riemannian metric or a Riemannian metric. Thus, the
Levi-Civita connection is uniquely determined by the metric tensor
$g$, and in the Riemannian case, lengths of curves and Riemannian
distance are determined by the metric tensor $g$. Moreover, the
curvature tensor is uniquely determined by the Levi-Civita connection.
These additional structures are naturally carried from one manifold to
another by a diffeomorphism. Thus, it is obvious that these auxiliary
structures are preserved by isometries.

\vspace{\medskipamount}
In particular, geodesics of the Levi-Civita connection are carried to
geodesics by an isometry; this includes their distinguished
parametrizations. Immediately we have that isometries commute with
exponential maps:\index{exponential map}
$$F \circ exp_p = exp_{Fp} \circ {F_*}_p.$$ 

\begin{thm} On a connected semi-Riemannian manifold an
isometry  is determined by its value and its tangent map at one point.
The group of isometries is imbedded in $FM$.
\end{thm}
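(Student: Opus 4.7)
The plan is to exploit the commutation formula $F \circ \exp_p = \exp_{F(p)} \circ F_{*p}$ that was just observed, which turns the theorem into a straightforward propagation argument using connectedness. Suppose $F_1, F_2: M \to N$ are isometries with $F_1(p) = F_2(p) = q$ and $F_{1*p} = F_{2*p}$, and let $S$ be the subset of $M$ on which both $F_1 = F_2$ and $F_{1*} = F_{2*}$ hold. I want to show $S = M$.

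First I would establish local determination. Let $U$ be a normal neighborhood of any point $p' \in S$, so that $\exp_{p'}$ is a diffeomorphism from a star-shaped set in $M_{p'}$ onto $U$. For each $r = \exp_{p'}(v) \in U$, the commutation formula forces
\[
F_1(r) = \exp_{F_1(p')}(F_{1*p'}v) = \exp_{F_2(p')}(F_{2*p'}v) = F_2(r),
\]
so $F_1 = F_2$ on $U$, and consequently $F_{1*} = F_{2*}$ on $U$ by differentiating. Thus $S$ is open. Since $S$ is also closed (by continuity of $F_i$ and $F_{i*}$) and nonempty (it contains $p$), connectedness of $M$ gives $S = M$.

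For the second assertion, fix a frame $b_0 = (p;\, e_1, \ldots, e_n)$ at some base point $p$ and consider the map
\[
\Phi: \mathrm{Isom}(M) \to FM, \qquad F \longmapsto \bigl(F(p);\, F_{*p}e_1, \ldots, F_{*p}e_n\bigr).
\]
Because an isometry carries frames to frames (condition 1 in the characterization of metric connections, applied to the identity rather than to parallel translation), $\Phi$ takes values in $FM$ and not merely in $BM$. Injectivity of $\Phi$ is exactly the uniqueness statement proved in the first half, so $\mathrm{Isom}(M)$ can be identified bijectively with its image in $FM$ and given the induced topology and smooth structure.

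The main obstacle I would expect is not the uniqueness itself, which is clean, but the interpretation of ``imbedded'': to be a genuine embedding rather than just an injection, one needs $\Phi(\mathrm{Isom}(M))$ to be a submanifold of $FM$ and $\Phi$ to be a homeomorphism onto it. I would handle this by noting that the local formula $F = \exp \circ F_* \circ \exp^{-1}$ exhibits $F$ smoothly in terms of its $1$-jet at $p$, so $\Phi$ is a smooth injective immersion with closed image (closedness following because a limit of isometries in $FM$ satisfies the same commutation with $\exp$, hence extends to a global isometry by the propagation argument above).
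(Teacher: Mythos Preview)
Your argument is correct and is exactly the route the paper intends: the commutation formula $F\circ\exp_p=\exp_{Fp}\circ F_{*p}$ is stated immediately before the theorem precisely so that the open--closed connectedness argument you give can be read off. The paper does not supply an explicit proof, so there is nothing further to compare for the uniqueness half.

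One small point on the second half: your final paragraph speaks of $\Phi$ being a ``smooth injective immersion,'' but smoothness presupposes a manifold structure on $\mathrm{Isom}(M)$, which is what the embedding is meant to \emph{produce} (this is the content of the Corollary that follows). The cleaner way to phrase your argument is: show that the image $\Phi(\mathrm{Isom}(M))\subset FM$ is a closed submanifold, and then \emph{define} the Lie group structure on $\mathrm{Isom}(M)$ by pulling back along the bijection $\Phi$. Your closedness sketch is on the right track but needs a bit more: a limit point of the image in $FM$ is just a frame $b$ at some $q$, and you must manufacture an isometry $F$ with $\Phi(F)=b$; the local formula $F=\exp_q\circ\,b\circ b_0^{-1}\circ\exp_p^{-1}$ gives a candidate on a normal neighborhood, and the propagation argument then extends it globally and shows it is the limit of the $F_n$ on all of $M$.
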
\index{bundle of frames}

\begin{cor} The group of isometries of a semi-Riemannian
manifold into itself is a Lie group. The dimension of the group of
isometries is at most $n(n+1)/2$, where $n$ is the dimension of the
original manifold.
\end{cor}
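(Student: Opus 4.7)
The plan is to exploit the embedding $\Phi\colon \mathrm{Iso}(M)\hookrightarrow FM$ supplied by the preceding theorem. Fix a base point $p_0\in M$ and a frame $b_0$ at $p_0$, and send each isometry $F$ to the frame $F_*b_0\in FM$. By the theorem this map is injective. I will endow $\mathrm{Iso}(M)$ with the topology and smooth structure pulled back from $FM$, and then check that the group operations become smooth and that the image is closed.

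The key ingredient is the identity $F\circ\exp_p=\exp_{F(p)}\circ F_{*p}$ recorded just before the theorem. It gives an explicit local reconstruction: once $F_*b_0$ is known at $p_0$, $F$ is recovered on a normal neighborhood by $F(\exp_{p_0}v)=\exp_{F(p_0)}(F_{*p_0}v)$. The right-hand side depends smoothly on $v$ and on the data $(F(p_0),F_{*p_0})\in FM$, so near any point the image $\Phi(\mathrm{Iso}(M))$ is cut out inside $FM$ by the condition that this locally defined smooth map extends to a globally defined isometry. Limits of convergent sequences $\Phi(F_n)$ yield frames whose reconstruction still preserves $g$ at $b_0$ and, by continuity of parallel translation and $\exp$ in frame data, continues to do so along every geodesic; hence the image is closed in $FM$.

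Once $\Phi(\mathrm{Iso}(M))$ has been identified as a smooth submanifold of $FM$, the group operations on $\mathrm{Iso}(M)$ transported through $\Phi$ become smooth: composition corresponds to applying the reconstruction formula one isometry after the other, and inversion amounts to running the same formula from the target frame back to $b_0$; both are expressed through $\exp$ and $\exp^{-1}$, which are smooth in all their arguments. The dimension bound is then automatic: $FM$ is locally a product of $M$ with $O(n,\nu)$, so $\dim FM=n+\dim O(n,\nu)=n+\tfrac{n(n-1)}{2}=\tfrac{n(n+1)}{2}$, and any submanifold of $FM$ has at most this dimension.

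The hardest step is upgrading the injective image from a closed subset to an actual smooth submanifold of $FM$; this is where one typically invokes either a principle in the spirit of Bochner--Montgomery (a locally compact topological group acting smoothly and effectively carries a compatible Lie group structure) or an analytic-continuation argument using the real-analytic character of the Levi-Civita connection in normal coordinates. Reassuringly, the dimension estimate is insensitive to this technicality: the upper bound $n(n+1)/2$ follows the moment $\Phi$ is shown to be a continuous injection into the finite-dimensional manifold $FM$.
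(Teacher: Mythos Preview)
Your approach matches the paper's implicit reasoning: the paper states this Corollary immediately after the Theorem asserting that the isometry group is imbedded in $FM$, and gives no further proof. The dimension bound you extract is exactly the intended one, since $\dim FM = n + \dim O(n,\nu) = n + n(n-1)/2 = n(n+1)/2$.

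Where you go beyond the paper is in honestly flagging the nontrivial step: getting a Lie group structure on $\mathrm{Iso}(M)$ from the imbedding into $FM$ is not automatic, and the paper simply asserts it. Your invocation of a Bochner--Montgomery or Myers--Steenrod type principle is the standard way to close this gap (for the Riemannian case Myers--Steenrod is the classical reference; for general semi-Riemannian metrics one needs a bit more care, but the result holds). Your sketch of closedness via continuity of the reconstruction formula $F(\exp_{p_0}v)=\exp_{F(p_0)}(F_{*p_0}v)$ is the right idea, though in the semi-Riemannian case $\exp_p$ need not be globally defined along all geodesics, so the propagation argument should be phrased in terms of chaining normal neighborhoods along curves rather than a single exponential. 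None of this affects the dimension estimate, as you correctly observe.
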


\subsection{Induced semi-Riemannian metrics} If we have a differentiable
immersion $F:M \to N$ and $N$ has a semi-Riemannian metric $h$, then
we get an induced symmetric tensor field of type $(0,2)$ on $M$,
$F^*h$. In the Riemannian case $F^*h$ is always positive definite,
hence a Riemannian metric on $M$; in the semi-Riemannian case it could
even happen that $F^*h$ is degenerate, or even if we assume that
doesn't happen, on different connected components of $M$, $F^*h$ could
have different indices. When $F^*h$ is indeed a semi-Riemannian
metric, we say that it is the metric on $M$ {\em induced} by $F$. Of
course, then $F$ becomes an isometric immersion (or imbedding or
isometry, depending on what other set-theoretic properties it has).

\begin{examp}[Euclidean and semi-Euclidean spaces] \index{semi-Euclidean spaces}
When we consider ${\bf R}^n$ with its standard coordinate 
vector fields $X_i = {\partial \over \partial x^i}$, we have first of all a parallelization, and
hence the connection of that parallelization. It serves to give the
usual identification of each tangent space of $\R^n$ with ${\bf R}^n$ itself.
In turn, the standard inner product $g_\nu$ of index
$\nu$ can be considered as defined on each tangent space, so that we
have a semi-Riemannian structure of index $\nu$, called the
semi-Euclidean space of index $\nu$. We denote this by ${\bf
R}^n_\nu$. When $\nu = 0$ it is Euclidean space. When $\nu = 1$ it is
called {\em Minkowski} space (although there are other things, Finsler
manifolds, called Minkowski space).\index{Minkowski space}

\vspace{\medskipamount}
The dual $1$-forms of the parallelization $X_i$ are the coordinate
differentials $\omega^i = dx^i$. When we put them into a column
$\omega$ and take exterior derivative we get $d\omega = 0$. Setting
$\varphi = 0$ clearly gives us the connection forms of the
parallelization; but parallel translation also clearly preserves
$g_\nu$, and torsion is obviously $0$. By the fundamental theorem it
follows that this same connection is also the Levi-Civita connection
for all of these semi-Riemannian metrics.

It is obvious that all of the translations $T_a:x \to x+a$ are
isometries of $g_\nu$. They form a subgroup of dimension $n$ of the
isometry group. Almost as obvious (compute the tangent map!), for any
orthogonal transformation $A \in O(n,\nu)$ the inner product $g_\nu$,
viewed as a semi-Riemannian metric, is preserved by $A$. This gives
another subgroup of isometries, of dimension $n(n-1)/2$. Together the
products of these two kinds of isometries form the full isometry group
of $\R^n_\nu$:

\vspace{\medskipamount}
The {\em semi-Euclidean motion group}: $\{T_a \circ A:x \to Ax + a : a
\in \R^n , A \in O(n,\nu)\}$.\index{semi-Euclidean motion group}
The motion group is transitive on $\R^n$, and at each point, the
tangent maps of isometries which fix that point are transitive on the
frames at that point. Thus, the induced group on the bundle of frames
is transitive; in fact, if we fix a base point of $F\R^n_\nu$,
then the motion group becomes identified with $F\R^n_\nu$ with
the base point as the identity of the group.
\end{examp}
\begin{examp}[Round spheres] \index{spheres!constant curvature}
The points at distance $R$ from the origin in
$E^n = R^n_0$ form the $n-1$-dimensional sphere $S^{n-1}$ of radius
$R$. The induced Riemannian metric has $O(n)$ as a group of
isometries, and it easy to check that it is transitive on points and
transitive on frames at (some conveniently chosen) point. Thus, we can
identify $FS^{n-1}$ with $O(n)$; the bundle projection can be taken to
be the map which takes an orthogonal matrix to $R \cdot$(first
column). Since the isometry group is transitive on frames, and
isometries preserve the curvature tensor of the Levi-Civita
connection, the curvature tensor has the same components with respect
to every frame. Along with symmetries shared by every Riemannian
curvature, the invariance under change of frame is enough to determine
the curvature tensor up to a scalar multiple. Certainly this is enough
excuse to say that $S^{n-1}$ has ``constant curvature''. However, we
shall amplify the meaning of constancy of curvature when we discuss
sectional curvature.
\end{examp}
\begin{prob}\label{prob23} Assume the symmetries of
the general Riemannian curvature tensor with respect to a frame:
$$R^i_{jhk} = -R^j_{ihk} = - R^i_{jkh} = R^h_{kij}$$
and
$$R^i_{jhk} + R^i_{hkj} + R^i_{kjh} = 0.$$\index{curvature!symetries}
\textsl{For a curvature tensor which has the same components with respect 
to every frame, discover what these components must be, up to a scalar 
multiple.}\end{prob}\index{curvature!constant}

\vspace{\medskipamount}\index{Maurer-Cartan equations!orthogonal group}
In the case $R = 1$, the unit sphere, the structural equations of the
Levi-Civita connection on the bundle of frames $O(n)$ are just the
Maurer-Cartan equations of $O(n)$. One has to separate the
left-invariant $1$-forms on the group into those which correspond to the
universal coframe and those which correspond to the connection forms.
This gives a way of calculating the curvature components for problem
23, except for the multiple.\index{coframe!universal}

\begin{examp}[Other quadric hypersurfaces] Analogously to what we
did with
round spheres, we consider quadric surfaces $\{x: g_\nu(x,x) = R\}$.
For nonzero $R$ this always gives a submanifold for which the induced
semi-Riemannian metric is nondegenerate and of constant index. That
new index is $\nu - 1$ if $R <0$ and $\nu$ if $R > 0$. In any case
this gives us examples of metrics having a maximal group of isometries
and ``constant curvature'' in the sense that the components of the
curvature tensor are the same relative to any frame.

\vspace{\medskipamount}
Especially important is the case $\nu = 1, R < 0$, for then we get a
Riemannian manifold ``dual'' to the round sphere, with constant
curvature (which we will call negative curvature). There are two
connected components; retaining only the upper component, we get the
{\em quadric surface model of hyperbolic $n-1$-space}.

\vspace{\medskipamount}
There is an elementary argument to show that the geodesics of any of
the quadric hypersurfaces are the intersections of the hypersurface
with planes through the origin of $\R^n$. We use the fact that
the isometries are so plentiful and they are induced by linear
transformations of the surrounding space, which take planes into
planes. Moreover, for any such plane, the isometries which leave it
invariant are transitive on the intersection with the quadric surface.
Thus, the parallel field generated by a tangent to that intersection
must be carried into another parallel field tangent to that
intersection, which can only be itself or its negative. In particular,
the geodesics of a round sphere are the great circles.
\end{examp}
\begin{prob}\label{prob24} Describe examples of submanifolds of a
semi-Riemannian manifold such that the metric induces tensor
fields on the submanifolds which are degenerate, or are nondegenerate
but not of constant index.\end{prob}

\begin{rem}[Geodesics of quadric hypersurface] The idea of using the
plethora of isometries to show that the geodesics of a quadric
hypersurface $g_\nu(x,x) = R$ are the intersections with planes
through the origin is valid, but the argument given in the last
paragraph does not work. Here is a correct argument. We show that
there is an isometry leaving the intersection of the plane and
hypersurface pointwise fixed, but taking every tangent vector
perpendicular to the intersection into its negative. We must assume
that the tangent vectors $v$ to the intersection are not null vectors
(for which $g_\nu(v,v) = 0$). The desired isometry is described in
terms of a frame for the vector space $\R^n$ with the bilinear
form $g_\nu$. The first frame vector is a normal to the hypersurface
$\grad g_\nu$ (as a quadratic form $g_\nu$ can be considered to be a
real-valued function on $\R^n$, and the gradient is taken
relative to the inner product $g_\nu$) at some point of the
intersection. The second frame member is to be tangent to the
intersection at that same point. Then the frame is filled out in any
way. The isometry then takes this frame into the frame having the same
first two members (so that it fixes the plane) and the remaining
members of the frame replaced by their negatives.

\vspace{\medskipamount}
The existence of such an isometry forces the intersection curve to be
a geodesic. The parallel field along that intersection generated by
the tangent to the geodesic at the base point, i.e., the second frame
member, must be carried to a parallel field along the (fixed)
intersection curve by the isometry. Since one vector, at the base
point, of the parallel field is fixed, the whole field is fixed. But
the only vectors fixed by the isometry are tangent to the
intersection. Hence the (unit) tangent field along the intersection is
parallel, making that curve a geodesic.

\vspace{\medskipamount}\index{quadric hypersurface}
In case the induced metric on the quadric hypersurface has nonzero
index, there will be null vectors, and the proof given will not apply
to planes through the origin tangent to those null vectors. But the
result is still true, since the limit of geodesics is still a
geodesic, and the excluded planes can be obtained as limits of the
others.
\end{rem}
\subsection{Infinitesimal isometries--Killing fields} \index{Killing field}
A one-parameter\index{derivative!Lie}
subgroup in the isometry group of a semi-Riemannian manifold is, in
particular, a one-parameter group of diffeomorphisms of the manifold.
Hence, it is the flow of a complete vector field. More generally there
will be vector fields whose local flows will be isometries of the open
sets on which they are defined. These are called {\em infinitesimal
isometries} or {\em Killing fields}. We give an equation which
describes Killing fields, derived by using the idea of a Lie
derivative. When we differentiate tensors carried along by a flow with
respect to the flow parameter, we get the Lie derivative of the tensor
field with respect to the vector field generating the flow. When the
flow consists of local isometries and the tensor field is the metric
tensor $g$ itself, the tensors being differentiated are constant, so
the derivative is $0$ Hence, we have: 

\begin{prop} If $J$ is a Killing field, then $L_Jg = 0$.
Consequently, a Killing field is characterized by the fact that the
linear map on each tangent space given by $v \to D_vJ$ is
skew-adjoint with respect to $g$.
\end{prop}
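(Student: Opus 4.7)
The plan is to derive the equation $L_J g = 0$ directly from the definition of a Killing field, and then re-express $L_J g$ in terms of the Levi-Civita connection to recover the skew-adjointness statement. Let $\phi_t$ denote the local flow of $J$. By hypothesis each $\phi_t$ is a local isometry, so $\phi_t^* g = g$ on the domain of the flow. Differentiating this identity with respect to $t$ at $t=0$ gives, by the very definition of the Lie derivative of a tensor field, $L_J g = 0$. This is the content of the first assertion.

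For the second assertion I would translate $L_J g$ into covariant-derivative language. Evaluated on vector fields $X, Y$, the Lie derivative of a $(0,2)$ tensor is
$$(L_J g)(X,Y) = J\,g(X,Y) - g([J,X],Y) - g(X,[J,Y]).$$
Now I apply the two defining properties of the Levi-Civita connection $D$: metric compatibility rewrites $J\,g(X,Y) = g(D_J X,Y) + g(X, D_J Y)$, and vanishing torsion rewrites $[J,X] = D_J X - D_X J$ and similarly for $[J,Y]$. Substituting and cancelling gives the clean identity
$$(L_J g)(X,Y) = g(D_X J, Y) + g(X, D_Y J).$$
Combined with $L_J g = 0$, this says exactly that the map $v \mapsto D_v J$ is skew-adjoint with respect to $g$ at every point.

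There is no real obstacle here; the work is really just bookkeeping. The only step that requires any care is invoking \emph{both} properties of the Levi-Civita connection at the right moments — metric compatibility to unfold $J\,g(X,Y)$, and torsion-freeness to convert brackets into covariant derivatives — so that all terms involving $D_J$ cancel and only the tensorial expression $g(D_X J,Y) + g(X, D_Y J)$ survives. Conversely, since $v \mapsto D_v J$ is a pointwise linear map and $g$ is nondegenerate, the identity above shows that vanishing of $L_J g$ is equivalent to pointwise skew-adjointness, which is the stated characterization.
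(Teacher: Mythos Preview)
Your proof is correct and follows essentially the same route as the paper: first $L_Jg=0$ from differentiating $\phi_t^*g=g$, then expanding $(L_Jg)(X,Y)$ via metric compatibility and torsion-freeness to obtain $g(D_XJ,Y)+g(X,D_YJ)=0$. The only addition is your explicit remark that the identity makes the characterization an equivalence, which the paper leaves implicit.
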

We derive the consequence of $L_Jg = 0$, the {\em equation of a
Killing field}, namely, $g(D_xJ,y) = - g(x,D_yJ)$, as follows:
$$0 = (L_Jg)(X,Y) = Jg(X,Y) - g(L_JX,Y) - g(X,L_JY)$$
$$\qquad = g(D_JX,Y) + g(X,D_Y) - g([J,X],Y) - g(X,[J,Y])$$
$$\qquad = g(D_JX,Y) + g(X,D_JY) - g(D_JX - D_XJ,Y) - g(X,D_JY-D_YJ)$$
$$\qquad = g(D_XJ,Y) + g(X,D_YJ).$$
In terms of a local coordinate basis, applied with $x,y$ chosen from
all pairs of coordinate vector fields, the equation of a Killing field
becomes a system of linear first order partial differential equations
for the coefficients of $J$. Since the group of isometries has
dimension at most $n(n+1)/2$, we know {\em a priori} that there are
at most that many linearly independent solutions for $J$. In the case
of Euclidean space we already know all the isometries, and hence can
calculate the Killing fields from that knowledge too, but it is an
interesting exercise to calculate them as solutions of a system of
PDE.

The Killing fields are named after Wilhelm Killing (1847-1923),
who discovered the above equations.

\vspace{\smallskipamount}
\begin{prob}\label{prob25} For ${\bf E}^n$, calculate the Killing
fields by solving the system of PDE's.\end{prob}

\vspace{\smallskipamount}
There is a simple, but useful, relation between geodesics and Killing
fields, generalizing a theorem of Clairaut on surfaces of revolution.
\index{conservation of energy}

\begin{lem}[Conservation Lemma] Let $\gamma$ be a geodesic and $J$
a Killing field. Then $g(\gamma', J) = c$, a constant along $\gamma$.
If $\gamma$ has unit speed, then $c$ is a lower bound on the length
$|J|$ of $J$ along $\gamma$, and hence, $\gamma$ lies in the region
where $|J| \ge c$. If $\gamma$ is perpendicular to $J$ at one point,
then it is perpendicular to $J$ along its extent.
\end{lem}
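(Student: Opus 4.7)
The plan is to differentiate the function $t \mapsto g(\gamma'(t), J(\gamma(t)))$ along the curve and show the derivative vanishes by combining the geodesic equation with the Killing equation. Concretely, since the Levi-Civita connection is metric (property 4 in the list of equivalent conditions for a metric connection),
$$\frac{d}{dt} g(\gamma', J) \;=\; g(D_{\gamma'}\gamma', J) + g(\gamma', D_{\gamma'}J).$$
The first term is zero because $\gamma$ is a geodesic, so $D_{\gamma'}\gamma' = 0$. The second term is zero by the Killing equation $g(D_x J, y) = -g(x, D_y J)$ applied with $x = y = \gamma'$: this forces $g(\gamma', D_{\gamma'}J)$ to equal its own negative. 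Hence $g(\gamma', J) \equiv c$ along $\gamma$.

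For the length bound, assume $\gamma$ has unit speed, so $g(\gamma', \gamma') = 1$. In the Riemannian setting the Cauchy–Schwarz inequality gives
$$|c| \;=\; |g(\gamma', J)| \;\le\; |\gamma'|\,|J| \;=\; |J|,$$
so $|J| \ge |c|$ along $\gamma$; taking $c \ge 0$ by adjusting orientation if needed, this says $\gamma$ stays in the region $\{|J| \ge c\}$. The perpendicularity statement is then immediate: if $g(\gamma'(t_0), J) = 0$ at some point $t_0$, then the constant $c$ equals $0$, so $g(\gamma', J) \equiv 0$ along $\gamma$.

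The main obstacle is essentially bookkeeping: one must be careful that the two vanishing terms really do appear by exactly the two specialized identities (geodesic equation and Killing equation) and nothing extra is needed. There is a minor subtlety in the unit-speed clause about the sign of $c$ and the use of Cauchy–Schwarz, which is sharp only in the Riemannian case; in the semi-Riemannian generalization the statement about $|J|$ would need reformulation, but for the lemma as stated the Cauchy–Schwarz step suffices.
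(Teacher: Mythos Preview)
Your proof is correct and is precisely the intended argument: the paper does not spell out a proof of the Conservation Lemma, but having just derived the Killing equation $g(D_XJ,Y) + g(X,D_YJ) = 0$, the lemma follows immediately by differentiating $g(\gamma',J)$ along $\gamma$ and applying the geodesic equation together with the Killing equation at $X=Y=\gamma'$, exactly as you do. Your remarks on Cauchy--Schwarz for the length bound and on the sign of $c$ are also apt; the paper tacitly assumes the Riemannian case here (as in the Clairaut application that follows).
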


\begin{cor}[Clairaut's Theorem] If $\gamma$ is a geodesic on 
a surface of revolution, $r$ is the distance from the axis, and
$\varphi$ is the angle $\gamma$ makes with the parallels, then $r
\cos\varphi = r_0$ is constant along $\gamma$. Hence, $\gamma$ can
never pass inside the ``barrier'' $r = r_0$.
\end{cor}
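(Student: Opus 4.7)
The plan is to recognize that on a surface of revolution the rotational symmetry produces a canonical Killing field, after which the conclusion is essentially a rewriting of the Conservation Lemma in terms of the geometric quantities $r$ and $\varphi$. First I would prove the Conservation Lemma itself, which is quick: differentiating $t \mapsto g(\gamma'(t), J)$ along $\gamma$ and using the Leibniz rule for the induced connection gives $g(D_{\gamma'}\gamma', J) + g(\gamma', D_{\gamma'}J)$. The first summand vanishes because $\gamma$ is a geodesic, and the second vanishes because the equation of a Killing field $g(D_x J, y) = -g(x, D_y J)$, applied with $x = y = \gamma'$, forces $g(\gamma', D_{\gamma'}J) = 0$. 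The length bound $|J| \ge |c|$ in the unit-speed case is then just Cauchy--Schwarz applied to the constant inner product $g(\gamma', J) = c$, and perpendicularity at one point gives $c = 0$, hence perpendicularity everywhere along $\gamma$.

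For the Corollary, the key step is to produce the relevant Killing field. A surface of revolution $S$ carries the one-parameter group of isometries given by rotations about the axis; its infinitesimal generator is a vector field $J$ on $S$ (defined wherever $r > 0$), tangent to the parallels. By the Proposition on Killing fields, $J$ is a Killing field. I would then identify $|J|$ at a point at distance $r$ from the axis: the integral curves of $J$ are the parallels, traversed at unit angular speed, hence with arc-length speed $r$, so $|J| = r$.

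Finally, apply the Conservation Lemma to a unit-speed geodesic $\gamma$. Since $|\gamma'| = 1$ and $|J| = r$, and $\varphi$ is by definition the angle between $\gamma'$ and the parallel direction (i.e., the direction of $J$), we have $g(\gamma', J) = r\cos\varphi$. The lemma says this is constant along $\gamma$; call its value $r_0$. The barrier assertion is then immediate from $|\cos\varphi| \le 1$: at every point of $\gamma$, $r = r_0 / \cos\varphi$, so $r \ge |r_0|$ and $\gamma$ cannot enter the region $\{r < |r_0|\}$. The main subtlety I would flag is the behavior of $J$ on the axis itself, where $J$ vanishes and the parallels degenerate: the statement really concerns the open submanifold $\{r > 0\}$, and a geodesic which does meet the axis necessarily has $r_0 = 0$, so the barrier becomes vacuous there.
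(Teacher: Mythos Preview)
Your proof is correct and follows essentially the same route as the paper: produce the rotational Killing field $J$ (the paper writes it as $\partial/\partial\theta$, viewed as a Killing field of ambient Euclidean space restricted to the surface), observe that $|J|=r$, and then read off $g(\gamma',J)=r\cos\varphi$ as the conserved quantity from the Conservation Lemma. Your inclusion of a proof of the Conservation Lemma itself and your remark about the degenerate case on the axis are extra care beyond what the paper supplies, but the core argument is the same.
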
\index{Clairaut's Theorem}\index{barrier}

Of course, the proposition can be interpreted as saying that the level
hypersurfaces of $|J|$ form barriers to geodesics even in the general
case. For the surface of revolution we take $J = {\partial \over
\partial \theta}$, where $\theta$ is the angular cylindrical
coordinate about the axis of revolution in space; $J$ is a Killing
field of Euclidean space and its restriction to any surface of
revolution is tangent to that surface.

\vspace{\medskipamount}
Clairaut's theorem is a very powerful tool for analyzing the
qualitative behavior of geodesics on a surface of revolution. We
develop this theme in the following problems, in which we suppose that
the profile curve is expressed parametrically in terms of its
arclength $u$ by giving $r$ and $z$ as functions of $u$: $r = f(u), z
= h(u)$.\index{geodesic!surface of revolution}

\vspace{\smallskipamount}
\begin{prob}\label{prob26} If $u_0$ is a critical point of $f$,
then the parallel corresponding to this value of $u$ is a geodesic. If
$u_0$ is not a critical point of $f$, then on any geodesic tangent to
the corresponding parallel $r$ has a nondegenerate local minimum at
the point of tangency.\end{prob}

\vspace{\medskipamount}
\begin{prob}\label{prob27} The meridians
$\theta = \,constant$ are geodesics. On the other geodesics, $\theta$
is strictly monotonic, and $u$ is strictly monotonic on arcs which
contain no barrier point.\end{prob}\index{meridian}

\vspace{\smallskipamount}
\begin{examp} On the usual donut torus $r$ is not monotonic between
barriers for most geodesics.
\end{examp}

\vspace{\smallskipamount}
\begin{prob}\label{prob28} Classify the geodesics according to whether
$u$ is periodic or not along the geodesic, and, if not, according to how
it behaves relative to its barrier parallels.\end{prob}

\vspace{\smallskipamount}
\begin{prob}\label{prob29} Suppose that we have a
geodesic which traverses the gap between its barriers, neither of
which are geodesics. Show that the angular change $\Delta\theta$ in
$\theta$ between successive collisions with barriers is a continuous
function of the geodesic for nearby geodesics (obtained, say, by
varying the angle $\varphi$ at a point through which we assume they
all pass). Hence, either $\Delta\theta$ is constant or there are
nearby geodesics which fill up the barrier strip densely and others
which are periodic (closed) with arbitrarily long periods. The case of
$\Delta\theta$ constant can actually occur. How?\end{prob}

\section{Calculus of variations}\index{calculus of variations}\index{energy}
\index{Euler equations}
An important technique for discovering
the extremal properties of functionals (usually looking for minimums
of length, energy, area, etc.) is the calculus of variations. A
putative extremum is presumed to be within a family, whereupon the
derivative of the functional with respect to the parameter(s) of the
family must be zero. Using the arbitrariness of the choice of family,
we then obtain the {\em Euler equations} for extremals of the
functional. These are usually differential equations which the mapping
representing the extremal must satisfy. It is also called the {\em
first variation condition}; ``first'' refers to ``first derivate with
respect to the parameter''. Once the condition for the first variation
to be zero is satisfied, the analogue of the second derivative test
for a minimum is often employed; hence we must calculate the ``second
variation'', similar to the Hessian of a function at a critical point.
\index{Hessian}
Thus, the second variation is a quadratic form on the
(infinite-dimensional) tangent space to the space of all mappings in
question. The condition for a nondegenerate minimum is then that the
second variation be positive definite. More subtle results are
obtained by determining the {\em index} of the second variation, that
is, the maximal dimension of a subspace on which the second variation
is negative definite. If the Euler equations are elliptic, then the
index will usually be finite. We won't get into this subtler analysis
very much; it is known as ``Morse theory'', named after Marston Morse,
who developed it for the length functional on the space of paths with
remarkable success.\index{Morse theory}\index{Morse, M.}

\vspace{\medskipamount}\index{rectangles - smooth}\index{longitudinal curve}
\index{transverse curve}
\subsection{Variations of Curves--Smooth Rectangles}. For a given curve
$\gamma: [a,b] \to M$ a {\em variation} of $\gamma$ is a one-parameter
family of curves such that $\gamma$ is the curve obtained by taking
the parameter value 0. Formally this is a mapping $Q: [a,b]\times [0,
\epsilon] \to M$, smooth as a function of two variables, such that
$\gamma(s) = Q(s,0)$. We call $Q$ a {\em smooth rectangle} with base
curve $\gamma$. The curves we get by fixing the second variable $t$
and varying $s$ are called the {\em longitudinal} curves of $Q$; the
curves obtained by fixing the first variable $s$ are called the {\em
transverse} curves of $Q$. The velocity fields of the longitudinal
curves are united in the {\em longitudinal vector field}, which is
formally a vector field on the map $Q$ [see Bishop and Goldberg,
\S5.7],\index{Goldberg, S.I.} and can be denoted either $\partial Q \over \partial s$ or
$Q_* ({\partial \over \partial s})$. Similarly, we have the {\em
transverse vector field} $\partial Q \over \partial t$ or
$Q_*({\partial \over \partial t})$. The {\em variation vector field}
is the vector field on $\gamma$ obtained by restricting $\partial Q
\over \partial t$ to the points $(s,0)$. The first variation of length
or energy depends only on the variation vector field; and when the
base curve is critical with respect to these functionals, and we
restrict to variations which satisfy reasonable end conditions, then
the second variation also depends only on the variation vector field.
(This is analogous to the situation where a smooth function has a
critical point, whereupon the second derivative becomes tensorial.)

\subsection{Existence of Smooth Rectangles, given the Variation Field}. If we
are given a vector field $V$ on a curve $\gamma$, then we can obtain a
rectangle having $V$ as its variation field by using the exponential
map of some connection:
$$Q(s,t) = \exp _{\gamma(s)}(tV(s)).$$
\index{rectangles - smooth!existence}

\subsection{Length and Energy} If we have a Riemannian metric, we have
already defined the length of a curve $\gamma$ as the integral
$|\gamma|$ of its speed. The {\em energy} of $\gamma$ is 
$$E(\gamma) = \int_a^b g(\gamma_*,\gamma_*)\,ds.$$
By Schwartz inequality applied to the functions $f=1$ and
$\sqrt{g(\gamma_*,\gamma_*)}$ on the interval $[a,b]$ we obtain 
$$|\gamma|^2 \le (b-a)E(\gamma).$$

The condition for equality is that $g(\gamma_*,\gamma_*)$ be
proportional to $f=1$ in the sense of integration, i.e., at all but a
set of measure zero.  Thus, we are allowed to apply this condition on
piecewise smooth curves, so the conclusion is that we have equality
for piecewise smooth curves if and only if the speed is constant.
Hence, if we reparametrize curves so that they have constant speed,
which doesn't change their lengths, then the energy functional becomes
practically the same as the length functional for the purposes of
calculus of variations. There is a technical advantage gained from
using energy instead of length because the formulas for derivatives
are simplified--similar to what happens in calculus when you choose to
differentiate the square root of a function implicitly.

\begin{prop} If M is a Riemannian manifold, then a curve has 
minimum energy if and only if the curve has minimum length and is
parametrized by constant speed. (The comparison is among smooth curves
connecting the same two points, parametrized on the same interval
$[a,b]$.)\end{prop}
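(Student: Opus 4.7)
The whole proposition rides on the Schwartz inequality stated just above it,
$$|\gamma|^2 \le (b-a)E(\gamma),$$
together with its equality clause: equality holds for a (piecewise) smooth curve precisely when the speed $\sqrt{g(\gamma_*,\gamma_*)}$ is constant. The plan is to use this inequality in one direction to pass from energy to length, and in the other to pass from length back to energy, with the equality clause pinning down the parametrization.

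For the easier $(\Leftarrow)$ direction, suppose $\gamma:[a,b]\to M$ is a minimum-length curve with constant speed. Constant speed gives equality, $(b-a)E(\gamma)=|\gamma|^2$. For any competing smooth $\delta:[a,b]\to M$ joining the same endpoints, minimum length gives $|\gamma|\le|\delta|$, and Schwartz gives $|\delta|^2\le(b-a)E(\delta)$. Chaining these,
$$(b-a)E(\gamma)\;=\;|\gamma|^2\;\le\;|\delta|^2\;\le\;(b-a)E(\delta),$$
so $E(\gamma)\le E(\delta)$, i.e.\ $\gamma$ minimizes energy.

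For $(\Rightarrow)$, suppose $\gamma$ minimizes energy among smooth curves on $[a,b]$ joining $p$ to $q$. I would first show $\gamma$ has constant speed. If it did not, reparametrize $\gamma$ by (a rescaled) arclength: the map $u\mapsto a+(b-a)\int_a^u|\gamma'(r)|\,dr/|\gamma|$ is smooth and strictly monotonic whenever $|\gamma'|$ is everywhere positive, and its inverse yields a smooth constant-speed reparametrization $\tilde\gamma$ of $\gamma$ on $[a,b]$ with the \emph{same length}. The equality clause then gives $(b-a)E(\tilde\gamma)=|\tilde\gamma|^2=|\gamma|^2<(b-a)E(\gamma)$, where the strict inequality uses the strict Schwartz inequality for non-constant speed --- contradicting the minimality of $E(\gamma)$. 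Hence $\gamma$ has constant speed. Knowing this, if some smooth competitor $\delta:[a,b]\to M$ had $|\delta|<|\gamma|$, reparametrize $\delta$ to constant speed $\tilde\delta$ on $[a,b]$ as above; then $(b-a)E(\tilde\delta)=|\delta|^2<|\gamma|^2=(b-a)E(\gamma)$, again contradicting minimum energy. So $\gamma$ also minimizes length.

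The one point that needs care is the smoothness of the constant-speed reparametrization used in the $(\Rightarrow)$ step. The rescaled-arclength change of variable is a diffeomorphism of $[a,b]$ provided $|\gamma'|$ is strictly positive, but it can fail to be smoothly invertible at points where $\gamma'$ vanishes. This is the only real obstacle in the argument. It can be dispatched either by restricting the comparison class to regular curves, or by noting that an energy minimizer cannot have stationary subarcs (chopping out a stationary interval and riding smoothly through would lower $E$), so the minimizer is automatically regular and the reparametrization is smooth. With that technicality handled, the two Schwartz chains above give both implications.
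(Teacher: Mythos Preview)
Your argument is correct and is exactly the approach the paper intends: the proposition is stated immediately after the Schwartz inequality and its equality clause, with no further proof given, so the paper is simply asserting that the two chains of inequalities you wrote down do the job.

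One small remark on the technicality you flag. Your proposed fix---that an energy minimizer cannot have stationary \emph{subarcs}---only rules out intervals on which $\gamma'$ vanishes; it does not directly handle isolated zeros of $\gamma'$, and at such points the arclength reparametrization can genuinely fail to be smooth (e.g.\ $\gamma(t)=(t^2,0)$ near $t=0$). The cleanest resolutions are either to work in the class of piecewise smooth curves, as the paper's preceding paragraph already does when discussing the equality case, or to appeal forward to the first variation formula, which shows that an energy minimizer satisfies $D_{\gamma'}\gamma'=0$ and hence has constant speed automatically. The paper glosses over this point entirely, so you have been more careful than the text itself.
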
\index{critical energy}\index{critical length}

\index{Lorentz manifold}\index{time-like}\index{space-like}\index{elapsed time}
In semi-Riemannian manifolds the concept of length does not have a
meaning, so that the significant functional for the calculus of
variations of curves is energy. However, in a Lorentz manifold of
index $n-1$ the curves $\gamma$ for which $g(\gamma',\gamma') > 0$,
which are called time-like curves, have a special significance: they
represent paths of ``events'' which an object could experience as time
passes. For these curves the usual expression for length is called the
``elapsed time'', measuring the amount that a clock would change as it
moved with the object. In a Lorentzian vector space the direction of
the Cauchy-Schwartz and triangle inequalities, restricted to time-like
vectors, is reversed. Thus, in Lorentz geometry we find that the
time-like geodesics are the curves which locally {\em maximize} energy
among nearby time-like curves. If one person moves on a geodesic while
a second person starts out at the same time and place, accelerates
away, and then steers backs to join the first person, the first person
will age more than the second! A discussion of the inequalities is
given in O'Neill, Semi-Riemannian Geometry, p 144.\index{O'Neill, B.}
\index{triangle inequality!reversed}

\index{first variation!arclength}\index{first variation!energy}
\begin{thm}\label{thm:firstvar}[First Variation of Energy and Arclength]
Let $Q$ be a smooth 
rectangle in a semi-Riemannian manifold, with base curve $\gamma$ and
variation field $V$. Denote the longitudinal curves by $Q_t:s \to
Q(s,t)$. Then the first variation of energy is given by
$${dE(Q_t) \over dt}(0) = 2\left[g(V(b),\gamma'(b)) -
g(V(a),\gamma'(a)) - \int_a^b
g(V(s),D_{\gamma'(s)}\gamma')\,ds\right].$$
If $g(\gamma', \gamma') = C^2$ is  positive, then the first variation
of arclength also makes sense and is given by
$${dL(Q_t) \over dt}(0) = {dE(Q_t) \over dt}(0)/2C.$$
\end{thm}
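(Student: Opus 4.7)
The plan is to differentiate under the integral sign, using that the Levi-Civita connection is metric-compatible and torsion-free, and then integrate by parts. Write
$$E(Q_t) = \int_a^b g\!\left(\tfrac{\partial Q}{\partial s},\tfrac{\partial Q}{\partial s}\right) ds,$$
differentiate with respect to $t$, and pass the derivative under the integral. By metric compatibility applied to the induced connection on the map $Q$,
$$\frac{\partial}{\partial t}\, g\!\left(\tfrac{\partial Q}{\partial s},\tfrac{\partial Q}{\partial s}\right) = 2\, g\!\left(D_{\partial/\partial t}\tfrac{\partial Q}{\partial s},\tfrac{\partial Q}{\partial s}\right).$$

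Next I would invoke the symmetry lemma for covariant derivatives on a smooth rectangle: since the Levi-Civita connection has zero torsion and $[\partial/\partial s,\partial/\partial t]=0$ on $[a,b]\times[0,\epsilon]$, the induced-connection version of $T=0$ gives
$$D_{\partial/\partial t}\tfrac{\partial Q}{\partial s} = D_{\partial/\partial s}\tfrac{\partial Q}{\partial t}.$$
Substituting and using metric compatibility a second time,
$$2\, g\!\left(D_{\partial/\partial s}\tfrac{\partial Q}{\partial t},\tfrac{\partial Q}{\partial s}\right) = 2\,\tfrac{\partial}{\partial s}\, g\!\left(\tfrac{\partial Q}{\partial t},\tfrac{\partial Q}{\partial s}\right) - 2\, g\!\left(\tfrac{\partial Q}{\partial t},D_{\partial/\partial s}\tfrac{\partial Q}{\partial s}\right).$$
Integrate the first term by the fundamental theorem of calculus and evaluate at $t=0$, noting $\partial Q/\partial t|_{t=0} = V$ and $\partial Q/\partial s|_{t=0} = \gamma'$. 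This yields exactly the displayed formula for $dE(Q_t)/dt(0)$.

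For the arclength half, observe $L(Q_t) = \int_a^b \sqrt{g(Q_t',Q_t')}\,ds$, so
$$\frac{dL(Q_t)}{dt} = \int_a^b \frac{1}{2\sqrt{g(Q_t',Q_t')}}\,\frac{\partial}{\partial t}\,g(Q_t',Q_t')\,ds.$$
At $t=0$ the hypothesis $g(\gamma',\gamma')=C^2$ is constant in $s$, so $1/(2C)$ pulls outside the integral, and what remains is precisely $dE(Q_t)/dt(0)$. This gives $dL/dt(0) = dE/dt(0)/(2C)$.

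The main obstacle is the symmetry step $D_{\partial/\partial t}\partial_s Q = D_{\partial/\partial s}\partial_t Q$: it is not literally an instance of $T(X,Y)=D_XY-D_YX-[X,Y]$ on $M$, because $\partial_s Q$ and $\partial_t Q$ are vector fields along the map $Q$, not vector fields on $M$. One must either invoke the pulled-back connection on $Q^*TM$ and the fact that the torsion-free condition transfers to it, or verify the identity in local coordinates using Christoffel symbols with the standard chain-rule computation. Once that lemma is in hand, the rest is bookkeeping with integration by parts.
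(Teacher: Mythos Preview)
Your proposal is correct and follows exactly the approach the paper has in mind: the paper does not spell out the computation but the Remark immediately following the theorem singles out the same key step you flagged, namely the torsion-zero exchange rule $D_{\partial/\partial t}\,\partial Q/\partial s = D_{\partial/\partial s}\,\partial Q/\partial t$, justified via pullback of the connection to the rectangle. The rest of your argument---metric compatibility, integration by parts, and pulling $1/(2C)$ outside the integral for arclength---is the standard bookkeeping the paper leaves to the reader.
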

\begin{rem}\index{coframe!universal}
Covariant derivatives of vector fields along curves are defined in
terms of parallel frames along curves. This amounts to lifting the
curves to the bundle of frames, pulling back the universal coframe
$\omega$ and connection form $\varphi$, then using the usual formula
($\omega(D_XY) = (X+\varphi(X))\omega(Y))$. For an alternative approach
which does not use bundles, but instead develops the idea of
connections on maps and their pullbacks, see Bishop and Goldberg,
Chapter 5. In particular, the fact that the torsion 0 exchange rule
$D_{\partial \over \partial t}{\partial Q \over \partial s} =
D_{\partial \over \partial s}{\partial Q \over \partial t}$ has
meaningful terms and is true follows from the invariance of exterior
calculus under pullbacks.\index{connection!on map}
\end{rem}

\index{energy-critical curve}
We define a curve to be {\em energy-critical} or {\em length-critical}
within spaces of curves by the requirement that for all variations $Q$
of the curve in the space the first variation of energy or length is
$0$. By elementary calculus it then follows that energy-minimal and
length-minimal curves in the space are also critical. The space of
curves used can be the smooth curves connecting two fixed points and
parametrized on a fixed interval $[a,b]$. More generally, we can let
the ends of the curves vary on submanifolds. The generality of the
domain interval $[a,b]$ is convenient because it allows us to apply
results to subintervals immediately: if a curve is critical or minimal
for certain endpoint conditions on the interval $[a,b]$, then its
restriction to $[a',b'] \subset [a,b]$ is critical or minimal for
fixed endpoint variations over $[a',b']$. Thus, after analyzing the
fixed endpoint case we can easily discover what additional condition
is needed for the variable endpoint case.

\begin{thm} If
$\gamma$ is energy-critical (length-critical) for smooth curves from
$p$ to $q$, then $\gamma$ is a geodesic (pregeodesic).
\end{thm}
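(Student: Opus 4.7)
The plan is to invoke Theorem \ref{thm:firstvar} directly. Consider only variations $Q$ through $p$ and $q$, so that $Q(a,t)=p$ and $Q(b,t)=q$ for all $t$, which forces the variation field $V$ to satisfy $V(a)=V(b)=0$. In the first variation formula the two boundary terms then vanish, leaving the condition
$$\int_a^b g(V(s),D_{\gamma'(s)}\gamma')\,ds = 0$$
for every admissible $V$.

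Next I would argue that the set of admissible $V$ is rich enough to force $D_{\gamma'}\gamma' = 0$. By the earlier subsection on existence of smooth rectangles, for any smooth vector field $V$ along $\gamma$ vanishing at the endpoints, the rectangle $Q(s,t) = \exp_{\gamma(s)}(tV(s))$ realizes $V$ as its variation field (and stays through $p,q$). So the integral vanishes against every smooth $V$ with $V(a)=V(b)=0$. Applying the standard fundamental lemma of the calculus of variations: given any interior point $s_0$, pick a bump function $f$ with small support around $s_0$ and try test fields of the form $V(s) = f(s)W(s)$, where $W$ is an arbitrary parallel vector field along $\gamma$. Varying $W$ through a parallel frame at $s_0$ and using nondegeneracy of $g$ on $M_{\gamma(s_0)}$ forces $D_{\gamma'}\gamma'(s_0)=0$. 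Since $s_0$ was arbitrary, $\gamma$ is a geodesic.

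For the length case, I would first reduce to constant speed. Because any energy-critical or length-critical curve can be assumed nonconstant and regular on each subinterval (constancy of $\gamma'$ is ruled out by taking $\gamma$ to be nontrivial from $p$ to $q$, and vanishing of $\gamma'$ on a subinterval would contradict length-criticality by a local perturbation argument), we may reparametrize by constant speed; length-criticality is preserved because reparametrization does not affect length. Now $g(\gamma',\gamma')=C^2>0$, so the length first-variation formula applies, and it equals $1/(2C)$ times the energy first variation. The same fundamental-lemma argument above gives $D_{\gamma'}\gamma'=0$ for the reparametrized curve, so the original curve is a pregeodesic.

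The main obstacle is the fundamental-lemma step in the semi-Riemannian setting: unlike the Riemannian case where one could simply take $V = fD_{\gamma'}\gamma'$ and deduce $\int f|D_{\gamma'}\gamma'|^2 = 0$, here $g$ is indefinite so this direct choice can fail to be positive. The fix is to use a parallel frame along $\gamma$ and test against each frame element separately, so that nondegeneracy of $g$ (rather than positive-definiteness) does the work. A secondary subtlety in the length case is checking that a length-critical curve may be reparametrized without losing criticality and without the reparametrization itself being forbidden by the variation class, which is routine since reparametrization is just a smooth change of variable on the parameter domain.
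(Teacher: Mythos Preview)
The paper does not supply an explicit proof of this theorem; it is stated immediately after the First Variation formula (Theorem~\ref{thm:firstvar}) and is evidently meant to follow from it by the standard fundamental-lemma argument. Your proposal carries out exactly that implied argument, and the energy case is correct as written: the boundary terms drop because $V(a)=V(b)=0$, every smooth $V$ with those boundary values arises from a fixed-endpoint rectangle via $\exp_{\gamma(s)}(tV(s))$, and your use of a parallel frame plus bump functions correctly handles the semi-Riemannian case where $g$ is merely nondegenerate rather than positive definite.

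For the length case your reduction to constant speed is the right idea and matches what the paper's formula requires (it is only stated under the hypothesis $g(\gamma',\gamma')=C^2>0$). The one soft spot is your treatment of regularity: you assert that a length-critical curve cannot have $\gamma'$ vanish, invoking a ``local perturbation argument,'' but this is not quite an argument. The real issue is isolated zeros of $\gamma'$, where arc-length reparametrization need not be smooth. In the spirit of these notes one typically either restricts to immersed curves from the outset or argues that a length-critical immersed curve, reparametrized by arc length, satisfies the first-variation identity and hence is a geodesic. Your outline is acceptable at the level of the text, but be aware that the regularity step, as you have written it, is where a careful reader would ask for more.
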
\index{geodesic}\index{pregeodesic}

There is a subtle mistake which should be avoided at this point in the
development: we cannot immediately assert from the theorem that
minimizing curves are geodesics. There are two gaps in the argument:
minimizers may not exist, or if they do they may not be smooth. For
the case at hand, dealing with the length of curves, we have taken
care of the existence problem using the Arzel\`a-Ascoli Theorem.
\index{Arzel\`a-Ascoli Theorem}
However, the example of the taxicab metric shows that we are still
required to establish smoothness. Moreover, the failure to take care
of these gaps in other contexts has generated some famous mistakes:
recall the 19{\em th} century dispute over the Dirichlet principle;
and in our era, the Yamabe ``theorem'' turned into the Yamabe {\em
problem} precisely because the convergence to a critical map failed,
yielding a nonsmooth object which was a generalized function, not a
smooth map.\index{Arzel\`a-Ascoli Theorem}\index{taxicab metric}
\index{Dirichlet principle}\index{Yamabe problem}

\index{Gauss's Lemma}
For the length functional on curves we handle the difficulty by
imbedding the geodesic in a field of geodesics and applying Gauss's
Lemma. For the special case of Euclidean space Problem \ref{prob4} and its hint
shows how this works by using as the field of geodesics a field of
parallel lines.

\begin{lem}[Gauss's Lemma] Let $Q$ be a variation of a
geodesic $\gamma$ such that all the longitudinal curves $Q_t$ are
geodesics with the same speed and having variation field $V$. Then
$g(\gamma', V)$ is constant. In particular, if $V$ is orthogonal to
$\gamma$ at one point, it is orthogonal everywhere.
\end{lem}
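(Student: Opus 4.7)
The plan is to show $\frac{d}{ds}\,g(\gamma'(s), V(s)) = 0$ by working directly on the smooth rectangle $Q$, using three ingredients from the preceding material: metric compatibility of the Levi-Civita connection, the geodesic equation applied to each longitudinal curve, and the torsion-zero exchange rule $D_{\partial/\partial s}(\partial Q/\partial t) = D_{\partial/\partial t}(\partial Q/\partial s)$ mentioned in the remark after Theorem \ref{thm:firstvar}.

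Write $S = \partial Q/\partial s$ and $T = \partial Q/\partial t$, so that along the base curve $S|_{t=0} = \gamma'$ and $T|_{t=0} = V$. The function $g(\gamma', V)$ is the restriction of $g(S,T)$ to $t=0$. First I would expand
$$\frac{\partial}{\partial s}\, g(S,T) = g\bigl(D_{\partial/\partial s}S,\, T\bigr) + g\bigl(S,\, D_{\partial/\partial s}T\bigr)$$
using metric compatibility. The first term vanishes identically on the rectangle because every longitudinal curve $Q_t$ is a geodesic, hence $D_{\partial/\partial s}S = 0$. So the issue is to kill the second term at $t=0$.

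Here is where the common-speed hypothesis enters. Since every $Q_t$ is a geodesic, $g(S,S)$ is constant in $s$ along each longitudinal curve; since all these geodesics have the same speed, $g(S,S)$ is in fact constant on the entire rectangle. Differentiating in $t$ and applying metric compatibility gives $g(D_{\partial/\partial t}S, S) = 0$. The torsion-zero exchange rule then converts this into $g(D_{\partial/\partial s}T, S) = 0$ on all of $Q$, in particular at $t=0$. Combining the two vanishings shows $\partial_s\, g(S,T) = 0$ on the rectangle, and restricting to $t=0$ yields $\frac{d}{ds}\,g(\gamma', V) = 0$, proving $g(\gamma', V)$ is constant. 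The ``in particular'' clause is then immediate: if $g(\gamma', V) = 0$ at one parameter value, it vanishes at every parameter value.

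There is no serious obstacle here; the only point that deserves care is the justification that $g(S,S)$ is truly constant on the whole rectangle (not merely in $s$ for each fixed $t$), which is exactly the content of the common-speed assumption, and the legitimacy of the exchange rule, which is the torsion-zero fact asserted in the earlier remark. Given those, the proof is a two-line computation.
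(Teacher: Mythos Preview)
Your proof is correct and is precisely the natural argument given the tools the notes have just set up: metric compatibility, the geodesic equation for each longitudinal curve, and the torsion-zero exchange rule $D_{\partial/\partial s}(\partial Q/\partial t)=D_{\partial/\partial t}(\partial Q/\partial s)$ highlighted in the remark after Theorem~\ref{thm:firstvar}. The paper does not write out a proof of Gauss's Lemma explicitly, but the ingredients it emphasizes point directly to your computation.

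One alternative worth noting, in the spirit of the surrounding text: the lemma also follows immediately from the first variation formula itself. Since all $Q_t$ are geodesics of the same speed, $E(Q_t)$ is constant in $t$, so its derivative at $t=0$ vanishes; and since $D_{\gamma'}\gamma'=0$, the integral term in Theorem~\ref{thm:firstvar} drops out, leaving $g(V(b),\gamma'(b))=g(V(a),\gamma'(a))$ for every subinterval $[a,b]$. This is logically the same computation packaged differently, but it makes the dependence on the first-variation machinery explicit. Your direct derivative approach has the virtue of being self-contained and of showing the identity on the whole rectangle, not just along the base curve.
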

\begin{rem} We have already seen a special case of Gauss's Lemma,
namely, the proposition on Killing fields which we specialized to get
Clairaut's Theorem. Given a Killing field $J$ and a geodesic segment
$\gamma$, we generate a variation $Q$ by applying the flow of $J$ to
$\gamma$; this $Q$ satisfies the hypotheses of Gauss's Lemma. The fact
that the variations of geodesics generated by the flow of a Killing
field always yields geodesics for the longitudinal curves shows that a
Killing field, restricted to any geodesic, is always a Jacobi field,
defined as follows.
\end{rem}
\begin{defn}[Jacobi fields] If $\gamma$ is a geodesic, a {\em
Jacobi field} along $\gamma$ is a vector field $J$ on $\gamma$ such
that on each subinterval of $\gamma$, $J$ is the variation field of a
variation for which the longitudinal curves are geodesics.
\end{defn}\index{Jacobi field}

The longitudinal curves in this definition do not all have to have the
same speed, so that the Jacobi fields dealt with by Gauss's Lemma are
a little special. It is rather trivial to analyze the Jacobi fields
which come from variations which don't move the base curve out of its
trace, instead simply sliding and stretching the geodesic along
itself. We have already stated how a geodesic can be reparametrized to
remain a geodesic, and that's all there is to it. Thus, a Jacobi field
which is tangent to the geodesic it sits on has the form $J(s) =
(as+b)\gamma'(s)$ for some constants $a$ and $b$. We also have a clear
idea of how freely we can vary geodesics in general: we can move the
initial point to any neighboring point and the initial velocity to any
neighboring velocity. Thus, the geodesic variations correspond to a
neighborhood of the initial velocity in the tangent bundle, and the
Jacobi fields, in turn, correspond to the tangents to the tangent
bundle at that point.
\begin{prop} The Jacobi fields along
a given geodesic form a space of vector fields of dimension $2n$.
\end{prop}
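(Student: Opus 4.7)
The plan is to derive the Jacobi equation, a second-order linear ODE along $\gamma$, and then invoke the standard existence and uniqueness theory for such equations.

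First I would derive the equation that every Jacobi field must satisfy. Given a geodesic variation $Q(s,t)$ of $\gamma$, set $T = \partial Q/\partial s$ and $V = \partial Q/\partial t$; each longitudinal curve is a geodesic, so $D_T T = 0$. Since mixed partials of $Q$ commute, $[T,V] = 0$, and the Levi-Civita connection being torsion-free gives the exchange rule $D_T V = D_V T$. Using the paper's curvature identity $R_{XY} = D_{[X,Y]} - D_X D_Y + D_Y D_X$, applied in the induced connection on the map $Q$, I obtain
\[
D_T D_T V = D_T D_V T = D_V D_T T - R_{TV} T = -R_{TV} T.
\]
Restricting to $t=0$, every Jacobi field $J$ therefore satisfies the Jacobi equation
\[
D_{\gamma'} D_{\gamma'} J + R_{\gamma' J} \gamma' = 0.
\]

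Second, I would show that the solutions of this equation form a vector space of dimension $2n$. Pick a parallel frame $(E_1,\dots,E_n)$ along $\gamma$, which exists by the parallel translation theorem. Expressing $J = \sum f^i E_i$ converts the Jacobi equation into a system of $n$ second-order linear ODEs with smooth coefficients for the scalar functions $f^i$, because all covariant derivatives reduce to ordinary derivatives of components in a parallel frame. Standard ODE theory then produces a unique solution for each choice of initial data $(J(0), D_{\gamma'}J(0)) \in M_{\gamma(0)} \oplus M_{\gamma(0)}$, so the solution space has dimension exactly $2n$.

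Third, I would verify that every solution is realized by a geodesic variation, matching the paper's definition. Given initial data $(v_0, w_0) \in M_{\gamma(0)} \oplus M_{\gamma(0)}$, choose any curve $\sigma$ with $\sigma(0)=\gamma(0)$ and $\sigma'(0)=v_0$, and any vector field $W$ along $\sigma$ with $W(0)=\gamma'(0)$ and $D_{\sigma'(0)}W = w_0$. For $(s,t)$ near any chosen point of $[a,b]\times\{0\}$, set $Q(s,t)=\exp_{\sigma(t)}(sW(t))$; each longitudinal curve is a geodesic by construction. Then $J(0) = \partial Q/\partial t(0,0) = \sigma'(0) = v_0$, and evaluating the exchange rule $D_T V = D_V T$ at $s=0$ gives $D_{\gamma'}J(0) = D_{\sigma'(0)}W = w_0$; by the uniqueness from step two, the variation field is the prescribed solution. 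Covering a given compact subinterval of $\gamma$ by finitely many such patches yields the local variations demanded by the definition. The main obstacle is step one, because $T$ and $V$ live on the two-dimensional domain of $Q$ rather than on $M$, so one must invoke the induced connection on the map $Q$ to give meaning to $D_T V$, the identity $[T,V] = 0$, the torsion-free exchange, and the curvature formula in this pulled-back setting; once that machinery is in place, steps two and three are routine.
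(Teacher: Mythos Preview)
Your proof is correct, but your ordering differs from the paper's. The paper places this proposition \emph{before} it derives the Jacobi equation, and justifies it by a direct geometric count: geodesics near $\gamma$ are parametrized by a neighborhood of $\gamma'(0)$ in the tangent bundle $TM$, so infinitesimal variations of geodesics---hence Jacobi fields---correspond to tangent vectors of $TM$ at that point, a $2n$-dimensional space. Only afterward does the paper derive the Jacobi equation and remark that this confirms the space is genuinely a $2n$-dimensional \emph{vector} space.

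Your route inverts this: you first derive the Jacobi equation, then invoke linear ODE theory for the dimension count, and finally check surjectivity onto the solution space by an explicit exponential-map construction. This is the standard rigorous argument and is in fact what the paper eventually does in pieces; it has the advantage of immediately giving the vector-space structure rather than just a $2n$-parameter family. The paper's heuristic, on the other hand, makes the geometric origin of the number $2n$ transparent without any computation. Your step three (realizing arbitrary initial data $(v_0,w_0)$ via $Q(s,t)=\exp_{\sigma(t)}(sW(t))$) is exactly the construction implicit in the paper's tangent-bundle argument, so the two approaches ultimately rest on the same idea.
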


\begin{prob}\label{prob30} (Rather trivial) Show that a vector field $J$
on $M$ is a Killing field if and only if along every geodesic $J$ 
has constant inner product with the geodesic's velocity.\end{prob}

\vspace{\smallskipamount}
In fact, we shall soon calculate that the Jacobi fields satisfy a
homogeneous linear second-order differential equation, so that the
space of them is actually a vector space of dimension $2n$. The
tangential ones form a subspace of dimension $2$, and Gauss's Lemma
says, in effect, that the ones orthogonal to the base at every point
form a subspace of dimension $2n-2$, complementary to the tangential
ones.

\begin{prop} In a normal coordinate neighborhood of
$p$ the images under $\exp_p$ of the hypersurfaces $g(v,v) = c$ form a
family of hypersurfaces orthogonal to the radial geodesics from $p$.
The tangent map ${\exp_p}_*$ takes vectors orthogonal to the rays from
the origin of $M_p$ to vectors orthogonal to the corresponding
geodesic from $p$.
\end{prop}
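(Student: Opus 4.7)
The plan is to deduce the proposition directly from the Gauss's Lemma stated earlier, by constructing a geodesic variation tangent to the level hypersurface in $M_p$ and transporting the orthogonality statement.

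First I would fix $v \in M_p$ with $g(v,v) = c$ and a tangent vector $w \in (M_p)_v$ to the hypersurface $\{u : g(u,u) = c\}$ at $v$; under the canonical identification $(M_p)_v \cong M_p$ this just says $g(v,w) = 0$. Choose a smooth curve $v : (-\epsilon, \epsilon) \to M_p$ lying on the level set, with $v(0) = v$ and $v'(0) = w$; then $g(v(t), v(t)) = c$ for all $t$. Shrinking $\epsilon$ if necessary so that $sv(t)$ stays in the domain of $\exp_p$ for $s \in [0,1]$, define the smooth rectangle
$$Q(s,t) = \exp_p(s v(t)), \qquad (s,t) \in [0,1] \times (-\epsilon,\epsilon).$$
Each longitudinal curve $Q_t : s \mapsto \exp_p(sv(t))$ is, by definition of the exponential map, a geodesic with initial velocity $v(t)$, and hence constant speed $g(v(t), v(t)) = c$ independent of $t$.

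Next, applying Gauss's Lemma to $Q$ with base curve $\gamma(s) = Q(s,0) = \exp_p(sv)$ and variation field $V(s) = \frac{\partial Q}{\partial t}(s,0)$, we conclude $g(\gamma'(s), V(s))$ is constant on $[0,1]$. But $Q(0,t) = p$ for all $t$, so $V(0) = 0$, forcing that constant to be $0$. At $s=1$, on the one hand $V(1) = (\exp_p)_{*v}(w)$ by the chain rule; on the other hand, since $\gamma(s) = \exp_p(sv)$, the velocity $\gamma'(1) = (\exp_p)_{*v}(v)$ is the image of the radial vector at $v$. Thus
$$g\bigl((\exp_p)_{*v}(w),\, (\exp_p)_{*v}(v)\bigr) = 0,$$
which is precisely the assertion that $(\exp_p)_*$ sends a vector at $v$ orthogonal to the ray through $v$ to a vector orthogonal to the corresponding radial geodesic. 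Since $\gamma'(1)$ is tangent to $\exp_p$ of that ray, and the image of the tangent space to the level set $\{g(u,u)=c\}$ at $v$ is exactly the tangent space to the image hypersurface at $\exp_p(v)$, this simultaneously proves both statements of the proposition.

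The substantive ingredient is Gauss's Lemma itself; everything else is a bookkeeping matter of choosing the right variation. The only mild subtlety I expect is the endpoint issue $V(0) = 0$ and the need to identify $\gamma'(1)$ and $V(1)$ as the images under $(\exp_p)_{*v}$ of $v$ and $w$ respectively, which requires carefully applying the canonical identification of $(M_p)_v$ with $M_p$. In the semi-Riemannian case one also tacitly uses $c \neq 0$, so that $\{g(u,u)=c\}$ is a genuine smooth hypersurface whose tangent space at $v$ coincides with $v^\perp$ with respect to $g_p$; the Riemannian case $c > 0$ is automatic.
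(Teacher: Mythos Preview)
Your proof is correct and follows exactly the approach the paper intends: the paper states the proposition immediately after Gauss's Lemma without giving an explicit proof, clearly meaning it as a direct consequence, and your argument supplies precisely the standard derivation---build the radial geodesic variation $Q(s,t)=\exp_p(sv(t))$ with $v(t)$ on the level set, note all longitudinal curves have the same speed, apply Gauss's Lemma, and use $V(0)=0$ to pin the constant at zero.
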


Note that both Gauss's Lemma and the above Proposition hold in the
semi-Riemannian case. In the Riemannian case the local minimizing
property of geodesics now follows by using as a field of geodesics the
radial geodesics of the same speed starting from the initial point and
applying the property given about that field in the Proposition.  The
local maximizing property of time-like geodesics in a Lorentz manifold
of index $n-1$ is done in just the same way, using the fact that
removing the orthogonal component of the tangent to a nearby curve
then {\em increases} rather than decreases the elapsed time.

\begin{thm} Let $M$ be a Riemannian manifold, $p \in M$, and
$R$ the radius of a normal coordinate ball $B$ at $p$. Then for any $q
\in B$ the radial geodesic segment from $p$ to $q$ is the shortest
smooth curve from $p$ to $q$.
\end{thm}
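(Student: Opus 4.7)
The plan is to use the proposition just stated (which is the geometric content of Gauss's Lemma for exponential maps) to decompose the velocity of an arbitrary competitor curve into a radial component and a component tangent to the geodesic spheres, and to observe that only the radial component contributes to the net radial displacement.

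First I would set up the comparison. Identify $B$ with a ball of radius $R$ in $M_p$ via the normal coordinate diffeomorphism $\exp_p$, and write $q = \exp_p(v)$ with $|v| < R$. The radial geodesic $\gamma_0(t) = \exp_p(tv)$, $t \in [0,1]$, has constant speed $|v|$ and hence length $|v|$, so the task is to show $|\sigma| \ge |v|$ for every smooth curve $\sigma : [a,b] \to M$ from $p$ to $q$.

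Next, I would reduce to curves contained in $B \setminus \{p\}$ on $(a,b]$. If $\sigma$ ever leaves $B$, then by continuity it crosses every geodesic sphere of radius $r \in (|v|, R)$, and applying the estimate below to the initial arc up to the first such crossing already gives $|\sigma| > |v|$. So I may assume $\sigma((a,b]) \subset B \setminus \{p\}$, and write $\sigma(t) = \exp_p(r(t) u(t))$ with $r(t) > 0$ and $|u(t)| = 1$ in $M_p$. Differentiating and using $\langle u, u' \rangle = 0$, the pre-image velocity $r'(t)u(t) + r(t)u'(t)$ splits into a radial part and a part orthogonal to the ray $\mathbb{R} u(t)$. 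By the Proposition preceding the theorem, ${\exp_p}_*$ sends these to orthogonal vectors in $M_{\sigma(t)}$; moreover, on radial vectors ${\exp_p}_*$ is an isometry (since the image of the ray through $r(t)u(t)$ is a unit-speed geodesic when $r(t)u(t)$ is rescaled to a unit vector, or equivalently because the radial reparametrization is by arclength). Therefore
\[
|\sigma'(t)|^2 = |r'(t)|^2 + \bigl|{\exp_p}_*(r(t) u'(t))\bigr|^2 \ge r'(t)^2,
\]
and integrating,
\[
|\sigma| = \int_a^b |\sigma'(t)|\,dt \ge \int_a^b |r'(t)|\,dt \ge \left|\int_a^b r'(t)\,dt\right| = |v| - 0 = |v|.
\]

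Finally, I would handle the mild technicality at the endpoint $t = a$ where $\sigma(a) = p$ and the polar expression $(r,u)$ is singular: apply the inequality above to the restriction $\sigma|_{[a+\epsilon, b]}$, which starts at some $\exp_p(r(a+\epsilon)u(a+\epsilon))$, obtaining $|\sigma|_{[a+\epsilon,b]}| \ge |v| - r(a+\epsilon)$, and let $\epsilon \downarrow 0$ using $r(a+\epsilon) \to 0$. The main obstacle I anticipate is exactly this boundary bookkeeping and the justification that a smooth curve with $\sigma(a)=p$ admits the polar decomposition $\sigma(t) = \exp_p(r(t)u(t))$ with $r, u$ smooth on $(a,b]$ and $r(a+\epsilon) \to 0$; once that is in place, the Gauss-Lemma orthogonal decomposition of $\sigma'$ and the trivial inequality $|\sigma'| \ge |r'|$ do all the geometric work.
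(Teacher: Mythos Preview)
Your proposal is correct and follows essentially the same route as the paper: use the field of radial geodesics from $p$, invoke the Proposition (Gauss's Lemma) to split an arbitrary competitor's velocity into mutually orthogonal radial and sphere-tangent parts, and integrate the inequality $|\sigma'|\ge |r'|$ to bound length below by net radial displacement. The paper only sketches this (pointing back to the hint of Problem~\ref{prob4} for the Euclidean prototype), whereas you spell out the polar decomposition, the reduction when $\sigma$ exits $B$, and the $\epsilon$-limit at the initial endpoint; but the underlying argument is the same.
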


The ``radius'' referred to in the theorem is the distance measured in
terms of the Euclidean formula with the normal coordinates. The value
of that radius function at $q$ is the length of the radial geodesic.
Hence we have

\begin{cor} If the ball of radius $R$ is
normal, then the distance function $q \to d(p,q)$ is smooth on
$B -\{p\}$, and its square is smooth at $p$ as well.
\end{cor}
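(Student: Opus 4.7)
By the theorem immediately preceding the corollary, for each $q \in B$ the distance $d(p,q)$ is realized by the radial geodesic from $p$ to $q$, so the plan is simply to compute the length of that radial geodesic as an explicit smooth function of $q$ and then extract the smoothness claims.

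First I would set up the coordinate picture. Let $b = (b_1,\dots,b_n)$ be the orthonormal frame at $p$ used to define the normal coordinate map $\varphi = b^{-1}\circ \exp_p^{-1}:B\to \R^n$, and write $(x^1(q),\dots,x^n(q))$ for the coordinates of $q$. By definition $\exp_p^{-1}(q) = \sum_i x^i(q)\, b_i \in M_p$, so the radial geodesic from $p$ to $q$ is
$$\gamma(t) = \exp_p\!\Bigl(t\sum_i x^i(q) b_i\Bigr), \qquad t \in [0,1],$$
with initial velocity $\gamma'(0) = \sum_i x^i(q) b_i$. Since $b$ is orthonormal, $g(\gamma'(0),\gamma'(0)) = \sum_i (x^i(q))^2$.

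Next I would use the metric property of the Levi-Civita connection: parallel translation along $\gamma$ preserves inner products, and since $\gamma'$ is itself parallel along $\gamma$, the speed $|\gamma'(t)|_g$ is constant. Therefore
$$|\gamma| = \int_0^1 |\gamma'(t)|_g\,dt = \sqrt{\textstyle\sum_i (x^i(q))^2}.$$
Combined with the preceding theorem, this gives the explicit formula $d(p,q)^2 = \sum_i (x^i(q))^2$ on all of $B$.

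The smoothness conclusions now follow immediately. The coordinate functions $x^i$ are smooth on $B$ because $\varphi$ is a diffeomorphism onto its image, so $q \mapsto \sum_i (x^i(q))^2$ is smooth on all of $B$, including at $p$; this is the assertion that $d(p,\cdot)^2$ is smooth at $p$. Away from $p$ the function $\sum_i (x^i(q))^2$ is strictly positive, so the square root $d(p,q) = \bigl(\sum_i (x^i(q))^2\bigr)^{1/2}$ is smooth there as well. There is no real obstacle in this argument; the only point requiring a moment of care is invoking the theorem to identify the intrinsic distance with the Euclidean-style radial length in normal coordinates, which is where the hypothesis that $B$ is a \emph{normal} ball of radius $R$ is essential.
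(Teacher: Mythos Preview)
Your argument is correct and is essentially the same as the paper's: the paper notes that the Euclidean radius function in normal coordinates equals the length of the radial geodesic, which by the preceding theorem equals $d(p,q)$, and the corollary follows immediately. You have simply written out in detail why the radial geodesic has length $\bigl(\sum_i (x^i)^2\bigr)^{1/2}$ (constant speed from the metric property of the Levi-Civita connection), which the paper takes for granted.
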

\begin{thm}[The Jacobi equation] Let $J$ be a Jacobi field along a 
geodesic $\gamma$. We denote covariant derivatives with respect to
$\gamma'$ by a prime. Then
$$J'' + R_{\gamma' J}\gamma' = 0.$$
\end{thm}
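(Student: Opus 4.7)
The plan is to realize $J$ as the variation field of a geodesic variation $Q(s,t)$ and then manipulate the second covariant derivative along $\gamma$ by combining the torsion-free exchange rule, the geodesic equation for the longitudinal curves, and the definition of curvature to produce the asserted identity.

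First I would invoke the definition of Jacobi field to pick a smooth rectangle $Q:[a,b]\times(-\varepsilon,\varepsilon)\to M$ whose longitudinal curves $Q_t$ are all geodesics, with $Q_0=\gamma$ and variation field $J$. Write $T=\partial Q/\partial s$ and $V=\partial Q/\partial t$, so that along $t=0$ we have $T=\gamma'$ and $V=J$, and the geodesic condition reads $D_TT=0$. Because $\partial/\partial s$ and $\partial/\partial t$ commute on the parameter rectangle, their images satisfy $[T,V]=0$ in the pullback sense, and (by the torsion-free exchange noted in the remark following Theorem~\ref{thm:firstvar}) we have $D_TV=D_VT$.

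Next I would compute $D_TD_TV$. Using the exchange rule,
\[
D_TD_TV \;=\; D_TD_VT.
\]
Now apply the curvature definition $R_{XY}=D_{[X,Y]}-D_XD_Y+D_YD_X$ with $X=T$, $Y=V$, $Z=T$: since $[T,V]=0$,
\[
D_TD_VT-D_VD_TT \;=\; -R_{TV}T.
\]
The term $D_VD_TT$ vanishes because $D_TT=0$ along every longitudinal curve, so $D_TD_VT=-R_{TV}T$. Restricting to $t=0$, using $J=V|_{t=0}$, $\gamma'=T|_{t=0}$, and denoting covariant differentiation along $\gamma$ by a prime, this becomes
\[
J'' \;=\; -R_{\gamma'J}\gamma',
\]
which is the Jacobi equation.

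The only genuinely delicate point is the status of $T$ and $V$ and of $[T,V]$: these are not vector fields on $M$ but sections of the pulled-back tangent bundle $Q^{*}TM$, so the torsion-free exchange and the curvature identity have to be read in the sense of the connection induced on this pullback. The remark after Theorem~\ref{thm:firstvar} already flags this point and guarantees that $D_{\partial/\partial t}(\partial Q/\partial s)=D_{\partial/\partial s}(\partial Q/\partial t)$ and the analogous curvature formula remain valid. Granting that, everything above reduces to a three-line computation; granting that the space of Jacobi fields was characterized as variation fields of geodesic variations, no further work is required.
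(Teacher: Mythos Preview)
Your proof is correct. It is essentially the same argument the paper sketches: both start from a smooth rectangle generating $J$ and extract the Jacobi equation from the torsion-zero condition together with the definition of curvature. The only difference is packaging---the paper phrases the computation as ``applying the first and second structural equations'' (the frame-bundle language used throughout these notes), whereas you work directly with covariant derivatives on the pullback $Q^*TM$. As the remark following Theorem~\ref{thm:firstvar} already indicates, these are interchangeable formulations: the first structural equation with $\Omega=0$ is exactly your exchange rule $D_TV=D_VT$, and the second structural equation is the curvature identity you invoke. Your version has the virtue of being self-contained at the level of covariant derivatives and is the form most readers will have seen elsewhere; the paper's version keeps the argument consistent with its systematic use of $\omega$, $\varphi$, and $\Phi$ on $FM$.
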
\index{Jacobi equation}

The proof proceeds in a straightforward fashion by applying the first
and second structural equations to a smooth rectangle generating the
Jacobi field.

\begin{examp} In Euclidean space or in $\R_\nu^n$ the
connection is flat, $R = 0$. Thus, the Jacobi equation is $J'' = 0$.
But covariant differentiation is just differentiation of components
with respect to the standard Cartesian coordinates. Hence the Jacobi
fields are linear fields, $J(s) = sA + B$, where s is a linear
parameter on a straight line.
\end{examp}

\begin{examp} In a sphere of radius $1$ the curvature tensor is given
for a unit speed geodesic $\gamma$ by
$$R_{\gamma'J}\gamma' = 
\begin{cases}
J,& \text{if $J \perp \gamma'$};\\
       0,& \text{if $J$ is tangent to $\gamma$}.
\end{cases}
$$
Hence the Jacobi differential equation splits into uncoupled second
order differential equations for the components of $J$ with respect to
a parallel frame along $\gamma$: $J_i'' + J_i = 0$ for the components
orthogonal to $\gamma$, and $J_n'' = 0$ for the component tangent to
$\gamma$.
\end{examp}\index{Jacobi equation!of sphere}

\begin{examp} For hyperbolic geometry the curvature is opposite in sign from that on the sphere: $J_i'' - J_i = 0$, if $i<n$, $J_n'' = 0$.
\end{examp}
\begin{examp} On a surface the curvature operator is expressed in
terms of one component, the Gaussian curvature $K$. If $J$ is
orthogonal to unit vector $\gamma'$ we have
$R_{\gamma'J}\gamma' = KJ$. Thus, the Jacobi
equation for a Jacobi field orthogonal to a geodesic is $J'' + KJ = 0$,
which is regarded as a scalar differential equation for the one
component of $J$.
\end{examp}

\section{Riemannian curvature}

\subsection{Curvature Symmetries} \textsl{The matrix of $R_{XY}$ is given
by the value $- \Phi(X,Y)$ of a $2$-form, so that it is skew-symmetric in
$X,Y$: ({\em skew-symmetry in arguments})
\begin{equation}\label{eq:1}
R_{XY} = - R_{YX}.
\end{equation}\index{curvature!symmetries}
The matrices $- \Phi$ have values in $so(n,\nu)$, so on $M_p$ the
{\em operators are skew-adjoint with respect to $g$}:
\begin{equation}\label{eq:2}
g(R_{XY}Z,W) = - g(Z,R_{XY}W).
\end{equation}
If we take the exterior derivative of the first structural equation
and substitute, using both structural equations, for $d\omega$ and
$d\varphi$, we get
$$0 = d^2\omega = - d\varphi\wedge\omega + \varphi\wedge d\omega$$
$$\qquad =\varphi\wedge\varphi\wedge\omega - \Phi\wedge\omega
-\varphi\wedge\varphi\wedge\omega = - \Phi\wedge\omega.$$
Evaluating this on $X,Y,Z$, using the shuffle permutation rule for
exterior products, gives
\begin{equation}\label{eq:3}
R_{XY}Z + R_{YZ}X + R_{ZX}Y = 0.
\end{equation}}
We call this the {\em cyclic curvature symmetry}. Many references call
it the ``first Bianchi identity'' (or even worse, ``Bianchi's first
identity''), but that name is solely due to its formal resemblance to
the Bianchi identity (see below). It was known to Christoffel and
Lipschitz in 1871 (when Bianchi was 13) and probably to Riemann in
1854.

\vspace{\medskipamount}
A fourth identity, {\em bivector symmetry} or {\em symmetry in pairs},
is a consequence of (\ref{eq:1}), (\ref{eq:2}), (\ref{eq:3}):
\begin{equation}\label{eq:4}
g(R_{XY}Z,W) = g(R_{ZW}X,Y).
\end{equation}
The name ``bivector symmetry'' comes from a standard identification of
$\bigwedge^2M_p$ with $so(M_p,g)$, the skew-adjoint endomorphisms
of $M_p$. We extend $g$ to bivectors by the usual determinant method:
$$g(X\wedge Y, Z\wedge W) = g(X,Z)g(Y,W) - g(X,W)g(Y,Z).$$
Then if $A:M_p \to M_p$ is skew-adjoint, we can interpret $A$ as a
$(1,1)$ tensor, then raise the covariant index to get a skew-symmetric
$(2,0)$ tensor, also labeled $A$; this amounts to
$$g(A,X\wedge Y) = g(AX,Y).$$
Thus, $R:X\wedge Y \to R_{XY}$ is interpreted as a linear map 
$\bigwedge^2M_p \to so(M_p,g) \approx \bigwedge^2M_p$, for which
(\ref{eq:4}) yields
\begin{equation}\label{eq:4'}
g(R_{XY},Z\wedge W) = g(X\wedge Y, R_{ZW}).
\end{equation}
In this way we have an interpretation of $R$ as a self-adjoint linear
map of $\bigwedge^2M_p$.

\begin{prob}\label{prob31} Let $S:M_p \to M_p$ be
a symmetric (with respect to $g$) linear transformation. Extend $S$ to
act on the Grassmann algebra over $M_p$ as a homomorphism, which we
denote on $\bigwedge^2M_p$ by $S\wedge S$. Show that $R = S\wedge S$,
turned into a $(1,1)$ tensor-valued $2$-form by equation (\ref{eq:4'}) satisfies
all the identities of a curvature tensor.\end{prob}

\subsection{Covariant Differentials} \index{covariant differential}
If $T$ is a tensor field of type
$(r,s)$, then we define a tensor field $DT$ of type $(r, s+1)$ by
letting $DT(..., X) = D_XT(...)$. For example, the Riemannian Hessian
of a function $f$ is $Ddf$, given explicitly by $Ddf(X,Y) = XYf
-(D_XY)f$. At a critical point of $f$ this coincides with the natural
Hessian, and it is always symmetric.\index{Hessian}

\subsection{Exterior Covariant Derivatives} \index{covariant derivative!exterior}
If a tensor field $T$ is
skew-symmetric in some of its vector arguments, say the last $t$ of
the $s$ arguments, then after forming $DT$ we can skew-symmetrize on
the last $t+1$ vector arguments, obtaining a tensor field which we
denote $dT$. This is called the {\em exterior covariant derivative}
of $T$ viewed as an $(r,s-t)$ tensor valued $t$-form. The second
exterior derivative satisfies an identity $d^2T = R\wedge\cdot T$,
which requires an explanation, so although it is not $0$, it does not
depend on derivatives of components of $T$, only the pointwise value
of $T$ and the curvature of the space.

\vspace{\smallskipamount}
\begin{prob}\label{prob32} (a) Explain how the explicit expression for $Ddf$ 
comes from a product rule for the ``product'' $(df,X) \to df(X)$. (b) Find
the corresponding explicit expression for $(DR(X,Y,Z))W$, taking as
conventional: $X,Y$ are the $2$-form arguments of $R$ which appear in
$R_{XY}$, $W$ is the vector on which $R_{XY}$ operates, and $Z$ is the
additional argument for $D$.\end{prob}

\begin{rem} In the semi-Riemannian case, keeping track of the signs
$g(e_i,e_i) = \epsilon_i$ is a source of considerable irritation. In
what follows we do not generally sum on a repeated index when it is
attached only to $\epsilon$ and one other letter; sometimes we use the
sum convention in other settings, sometimes we stick in sum signs.
Usually it should be clear from context whether a sum is intended.
\end{rem}
\begin{thm}[The Bianchi Identity] \index{Bianchi identity}The exterior covariant derivative of 
the curvature $2$-form is $0$. The formulation $dR = 0$ can be expanded
to give the usual expressions as follows. The covariant differential
$DR$ is already skew-symmetric in the $2$-form arguments it inherited
from $R$, so that in order to skew-symmetrize on those two and the
additional one we only need to throw in the cyclic permutation of the
three. Thus, we get the equation
$$DR(X,Y,Z) + DR(Y,Z,X) + DR(Z,X,Y) = 0,$$
for all vector fields $X, Y, Z$. In terms of components with respect
to a basis (not necessarily a frame), we take the first two indices of
$R$ to be the indices of its matrix as a linear operator, so that one
is up the other down, then the next two are its indices as a $2$-form,
hence subscripts. Taking the covariant differential adds another
subscript, which is customarily separated from the others by a
semicolon ``;'' or a solidus ``$|$''. Thus, in index notation the
Bianchi identity is written
$${R^i}_{jhk|l} +{R^i}_{jkl|h} +{R^i}_{jlh|k} = 0.$$
\end{thm}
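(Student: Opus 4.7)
The plan is to derive the Bianchi identity globally on the bundle of bases $BM$ by taking the exterior derivative of the second structural equation, and then pull the resulting identity down to $M$ via a local basis. This avoids index juggling and exploits the structural equations already established.

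First I would differentiate the second structural equation $d\varphi = -\varphi\wedge\varphi + \Phi$. Applying $d$ to both sides and using $d^2 = 0$ gives
$$0 = -d(\varphi\wedge\varphi) + d\Phi = -d\varphi\wedge\varphi + \varphi\wedge d\varphi + d\Phi.$$
Now I would substitute the second structural equation for each occurrence of $d\varphi$, obtaining
$$d\Phi = (-\varphi\wedge\varphi+\Phi)\wedge\varphi - \varphi\wedge(-\varphi\wedge\varphi+\Phi).$$
The triple wedges $\varphi\wedge\varphi\wedge\varphi$ cancel (they are the same associative product), leaving
$$d\Phi = \Phi\wedge\varphi - \varphi\wedge\Phi.$$
This is the Bianchi identity on $BM$; the right-hand side is exactly the correction needed to convert the ordinary exterior derivative of the $gl(n,\mathbf{R})$-valued form $\Phi$ into its exterior covariant derivative, so the content is $D\Phi=0$.

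Next I would transfer this identity to $M$. Pick a local basis $B:U\to BM$, set $\omega_B = B^*\omega$, $\varphi_B = B^*\varphi$, $\Phi_B = B^*\Phi$, so that $\Phi_B$ represents the curvature operator $R$ in the basis $B$. Pulling the bundle identity back by $B$ yields $d\Phi_B = \Phi_B\wedge\varphi_B - \varphi_B\wedge\Phi_B$, which I would then rearrange and recognize, via the definition of $DT$ on tensor-valued forms (covariant differentiation picks up precisely the $\varphi_B$-bracket terms), as the statement that the exterior covariant derivative of $R$, viewed as a $(1,1)$-tensor-valued $2$-form, vanishes. Finally, unpacking the skew-symmetrization of the three vector arguments gives the cyclic form
$$DR(X,Y,Z) + DR(Y,Z,X) + DR(Z,X,Y) = 0,$$
and inserting a basis gives the component form ${R^i}_{jhk|l}+{R^i}_{jkl|h}+{R^i}_{jlh|k}=0$.

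I expect the main obstacle to be bookkeeping rather than conceptual: verifying carefully that $d\Phi + \varphi\wedge\Phi - \Phi\wedge\varphi$ on $BM$ is the right object to call $D\Phi$, and that its pullback by a local basis really agrees with the exterior covariant derivative of $R$ defined intrinsically on $M$ via $DT(\ldots,X) = D_X T(\ldots)$ followed by skew-symmetrization. This matching is essentially the same check made earlier when relating $\Omega$ to $T$ and $\Phi$ to $R$; once that dictionary between horizontal equivariant matrix-valued forms on $BM$ and tensor-valued forms on $M$ is trusted, the identity is immediate from the calculation above.
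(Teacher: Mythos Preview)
Your approach is correct and is essentially the same as the paper's: both differentiate the second structural equation on the bundle and read off that the exterior covariant derivative of $\Phi$ vanishes. The paper takes one small shortcut you might appreciate: after writing $d\Phi_j^i = d\varphi_k^i\wedge\varphi_j^k - \varphi_k^i\wedge d\varphi_j^k$, it does not bother substituting $d\varphi$ back in, but simply observes that every term already carries a $\varphi$ factor and hence vanishes on horizontal vectors; since the exterior covariant derivative is by definition the horizontal part of $d\Phi$, this finishes the argument in one line and sidesteps the bookkeeping you flagged about matching $d\Phi+\varphi\wedge\Phi-\Phi\wedge\varphi$ with $D\Phi$.
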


The usual application of the Bianchi identity is to prove Schur's
theorem that a semi-Riemannian manifold with pointwise constant
curvature and dimension at least $3$ has constant curvature. We state it
precisely and then combine the proof with a proof of the Bianchi
identity.\index{curvature!constant}

\begin{thm}[Schur's Theorem] Let $M$ be a connected
semi-Riemannian manifold of dimension $>2$ 
such that there is a function $K:M \to \R $ such that the local
expression for the curvature forms is $\Phi_j^i =
\epsilon_jK\omega^i\wedge\omega^j$. Then $K$ is constant.
\end{thm}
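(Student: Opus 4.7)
The plan is to pair the assumed form of the curvature with the Bianchi identity and then extract a first-order condition on $K$.

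First I would establish the Bianchi identity in the matrix form useful here. Since $\Phi = d\varphi + \varphi\wedge\varphi$ is the second structural equation, applying $d$ and substituting $d\varphi = \Phi - \varphi\wedge\varphi$ gives, after the two triple-$\varphi$ terms cancel,
$$d\Phi = \Phi\wedge\varphi - \varphi\wedge\Phi,$$
which componentwise reads $d\Phi^i_j = \sum_k \bigl(\Phi^i_k\wedge\varphi^k_j - \varphi^i_k\wedge\Phi^k_j\bigr)$. This is Bianchi; it holds on $BM$ and pulls back unchanged to $FM$, and we work on $FM$ with the Levi-Civita connection so that torsion vanishes and $d\omega^i = -\sum_k \varphi^i_k\wedge\omega^k$. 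We also have the skew-adjointness relation $\epsilon_i\varphi^i_j + \epsilon_j\varphi^j_i = 0$.

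Next I would substitute $\Phi^i_j = \epsilon_j K\,\omega^i\wedge\omega^j$ into Bianchi. On the left, the Leibniz rule produces one term $\epsilon_j\,dK\wedge\omega^i\wedge\omega^j$ together with contributions involving $d\omega^i$ and $d\omega^j$. Using the first structural equation these contributions become sums of wedges with the connection forms $\varphi^i_k$ and $\varphi^j_k$. On the right, inserting the same expression for $\Phi$ yields terms of the form $K\epsilon_k\,\omega^i\wedge\omega^k\wedge\varphi^k_j$ and $K\epsilon_j\,\varphi^i_k\wedge\omega^k\wedge\omega^j$. The terms proportional to $K$ then match identically between the two sides after one invokes the skew-adjoint identity $\epsilon_j\varphi^j_k = -\epsilon_k\varphi^k_j$ together with the anticommutativity of wedging $1$-forms. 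This is the reassuring check that a constant-curvature $\Phi$ automatically satisfies Bianchi; what survives is precisely the term carrying $dK$:
$$\epsilon_j\,dK\wedge\omega^i\wedge\omega^j = 0 \qquad\text{for all } i,j.$$

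Finally I would harvest the pointwise consequence. Because $K$ is the pullback of a function on $M$, $dK$ is horizontal on $FM$, so $dK = \sum_a K_a\,\omega^a$ for some functions $K_a$. Fix any index $a$. Since $\dim M = n>2$, choose $i$ and $j$ distinct from each other and both distinct from $a$. Then $\omega^a\wedge\omega^i\wedge\omega^j$ is a nonzero basis element of $\bigwedge^3$, and the vanishing of $dK\wedge\omega^i\wedge\omega^j$ forces $K_a = 0$. Hence $dK = 0$ at every point, so $K$ is locally constant; connectedness of $M$ then gives that $K$ is constant on $M$.

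The main obstacle is bookkeeping: keeping the signs $\epsilon_j$ straight, verifying that the $K$-terms really do cancel between the two sides of Bianchi (this is where the Levi-Civita assumption and skew-adjointness both enter), and ensuring the concluding linear-algebra step really needs $n>2$ — which it does, since for $n=2$ there is only one pair $(i,j)$ up to order, and the condition $dK\wedge\omega^1\wedge\omega^2 = 0$ is vacuous, as Gauss's \emph{theorema egregium} example of arbitrary Gaussian curvature on a surface shows.
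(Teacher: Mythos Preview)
Your proof is correct and follows essentially the same route as the paper: both combine the Bianchi identity with the assumed form $\Phi^i_j = \epsilon_j K\,\omega^i\wedge\omega^j$ on $FM$, isolate the $dK\wedge\omega^i\wedge\omega^j$ term, and use $n>2$ to force each horizontal derivative of $K$ to vanish. The only packaging difference is that the paper observes directly that every term of $d\Phi^i_j$ (from the general formula $d\varphi^i_k\wedge\varphi^k_j - \varphi^i_k\wedge d\varphi^k_j$) contains a vertical factor, so the horizontal part of $d\Phi$ vanishes without any explicit cancellation check---whereas you write Bianchi as $d\Phi = \Phi\wedge\varphi - \varphi\wedge\Phi$ and verify the $K$-terms match by hand using skew-adjointness.
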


The local coframe expression given is what we call
``pointwise constant'' curvature.\index{coframe}

\vspace{\smallskipamount}

\begin{proof} The hypothesis about local coframe expression passes
over to a claim that the global expression for the curvature forms on
$FM$ looks the same, $\Phi_j^i = \epsilon_jK\omega^i\wedge\omega^j$,
where now $K$ is the pullback to $FM$ of the former $K$. Thus, $K$ is a
real-valued function which is constant on fibers, so that to show it
is constant we only need to show that its derivatives in the
horizontal directions are $0$, i.e., $E_kK = 0$.

\vspace{\smallskipamount}
In general we have
$$d\Phi_j^i = d(d\varphi_j^i + \varphi_k^i \wedge \varphi_j^k)$$
$$\qquad = d\varphi_k^i \wedge \varphi_j^k - \varphi_k^i \wedge
d\varphi_j^k.$$

Now the general procedure for calculating the exterior covariant
derivative of a tensor-valued form is to pass to the corresponding
equivariant form on $FM$, take exterior derivative, and then take the
horizontal part. We note that every term of $d\Phi_j^i$ has vertical
factors, so the horizontal part is $0$. This proves the Bianchi identity.

Now we continue the calculation supposing that the curvature forms
have the pointwise constant curvature expression.
$$d(K\omega^i\wedge \omega^j) = dK\wedge \omega^i \wedge\omega^j +
K(-\varphi_k^i \wedge\omega^k \wedge\omega^j + \omega^i \wedge
\varphi_k^j \wedge\omega^k)$$
$$\qquad = (E_kK)\omega^k \wedge \omega^i \wedge \omega^j +
K(-\varphi_k^i \wedge\omega^k \wedge\omega^j + \omega^i \wedge
\varphi_k^j \wedge\omega^k).$$
Now we see that in order for this expression to have no horizontal
component the first term must vanish for all $i$ and $j$. If $n > 2$,
we can choose $i$ and $j$ different from any given $k$, so that we
must have $E_kK = 0$ for all $k$, as required.
\end{proof}

\subsection{Sectional Curvature} \index{sectional curvature}
\index{curvature!sectional}The curvature tensor is the major invariant
of Riemannian geometry; it entirely determines the local geometry in a
sense spelled out precisely by the Cartan Local Isometry Theorem.
\index{Cartan Local Isometry Theorem}
However, it is too complicated to use directly in the formulation of
local hypotheses which have significant geometric consequences. Thus,
it is important to repackage the information it conveys in a more
tractable form, the sectional curvature function. In two-dimensional
spaces this reduces to a function on points, since there is only one
$2$-plane section at each point, namely, the tangent plane; the
sectional curvature is then just the Gaussian curvature, $K =
R_{1212}$, which is a component of $R$ with respect to a frame. In
higher dimensions the sectional curvature is still a real-valued
function, but the domain consists of all $2$-dimensional subspaces of
all the tangent spaces, which we call {\em sections}.\index{Gaussian curvature}

\vspace{\medskipamount}

Let $\sigma$ be a section and let $(v,w)$ be a frame for $\sigma$.
Then the sectional curvature of $\sigma$ is
\begin{equation}\label{eq:dagger}
K(\sigma) = g(R_{vw}v,w).
\end{equation}
In the semi-Riemannian case this must be modified slightly: it is not
defined if the section $\sigma$ is degenerate, so there is no frame
for it; but even in the case where the metric is indefinite on
$\sigma$ we change the sign, so that the result will conform to the
more general formula for $K(\sigma)$ in terms of an arbitrary basis
$(x,y)$ of $\sigma$. Then we can write $v = ax + by, w = cx + dy$, and
we get, using the symmetries of $R$
\begin{equation}\label{eq:ddagger}
K(\sigma) = (ad-bc)^2g(R_{xy}x,y).
\end{equation}
A straightforward calculation, starting with the inverse expressions
for $x$ and $y$ in terms of $v$ and $w$, gives
\begin{equation}\label{eq:areal}
g(x,x)g(y,y) - g(x,y)^2 = \pm (ad-bc)^{-2}.
\end{equation}
The sign is positive if $g$ is definite (positive or negative) on
$\sigma$, and negative if $g$ is indefinite. In the Riemannian case
the geometric meaning of the expression (\ref{eq:areal}) is that it is the square
of the area of the parallelogram with edges $x,y$. If we take another
frame for $(x,y)$, then the calculation shows that the result for
$K(\sigma)$ is the same whether we use (\ref{eq:dagger}) or
(\ref{eq:ddagger}), showing that we have really defined $K$.
Following the prescription given for the sign, the general formula
given for sectional curvature in terms of an arbitrary basis is thus
$$K(\sigma) = \frac{g(R_{xy}x,y)}{g(x,x)g(y,y) - g(x,y)^2}.$$
For sections in the direction of pairs of frame vectors, $\sigma_{ij}$
spanned by $e_i, e_j$, we introduce the signs $\epsilon_i =
g(e_i,e_i)$ and get expressions for sectional curvature in terms of
curvature components:\index{curvature!constant}
$$K_{ij} = K(\sigma_{ij}) = g(R_{e_ie_j}e_i,e_j)\epsilon_i\epsilon_j
   = \epsilon_i\epsilon_j R_{ijij} = \epsilon_iR_{iji}^j = R_{ij}^{ij}.$$
\begin{rem} If the curvature at $p \in M$ is constant in the sense
of having the same components with respect to every frame, then as a
map $R:\bigwedge^2M_p \to \bigwedge^2M_p$ it must be a constant
multiple $K\cdot I$ of the identity map. ({\em A priori} this claim is
true for possibly different multiples depending on the signature of
the section, but one checks that it works in general by making some
Gallilean boost change of frames.) Hence the sectional curvature is
that same constant $K$ for all sections.\index{Gallilean boost}
\end{rem}

\index{curvature!space of tensors}
\subsection{The Space of Pointwise Curvature Tensors} 
Let $\mathcal{R}$ denote
the subspace of $(1,3)$ tensors over a semi-Euclidean vector space $V$
of dimension $n$ satisfying the curvature symmetries (\ref{eq:1}), 
(\ref{eq:2}), (\ref{eq:3}),
and hence (\ref{eq:4}). Let $W = \bigwedge^2V$. 
By lowering the first index we
identify $\mathcal{R}$ with the symmetric tensors of degree $2$ over $W$
satisfying (\ref{eq:3}). The dimension of $W$ is $N = n(n-1)/2$, so that
$\dim(\mathcal{S}^2(W)) = N(N+1)/2$; here $\mathcal{S}$ denotes the space of
symmetric tensors. In terms of a basis we get an independent linear
restriction from the cyclic symmetry (\ref{eq:3}) for each choice of $4$ distinct
indices $i<j<h<k$. Hence,
$$\dim\mathcal{R} = N(N+1)/2 - \binom{n}{4} = n^2(n^2 - 1)/12.$$

\subsection{The Ricci Tensor} \index{Ricci tensor}
For $R \in \mathcal{R}$, for all $v, w \in V$, we
consider the linear map $A_{vw}:V \to V, x \to R_{vx}w$. The trace
gives us a bilinear form $tr A_{vw} = Ric(v,w)$ on $V\times V$ called
the Ricci tensor of $R$. In terms of components $R_{ihk}^j$ with
respect to a basis the components of $Ric$ are the contraction
$Ric_{ih} = \sum_j R_{ijh}^j$. With respect to a frame $e_i$ we write
$R_{ijhk} = g(R_{e_ie_j}e_h, e_k)$, which makes the superscript index
correspond to the {\em fourth} index of the covariant form. To take
care of the indefinite case we use the signs $\epsilon_i$, whereupon
$R_{ijhk} = \epsilon_kR_{ijh}^k$, hence $Ric_{ih} = \sum_j \epsilon_j
R_{ijhj} = \sum_j \epsilon_j R_{hjij} = Ric_{hi}$. That is, $Ric$ is a
symmetric bilinear form.

\subsection{Ricci Curvature} \index{Ricci curvature}\index{curvature!Ricci}
The Ricci tensor is determined by the
corresponding quadratic form $v \to Ric(v,v)$. For a unit vector $v$,
$Ric(v,v)g(v,v)$ is called the {\em Ricci curvature} of $v$. We can then
take $v$ to be a frame member $v = e_i$, so that
$$Ric(v,v)g(v,v) = \epsilon_i \sum_j \epsilon_jR_{ijij} =
\sum_{j\ne i}K_{ij},$$
the sum of sectional curvatures of frame sections containing $v$. When
the normal space to $v$ is definite with respect to $g$ we could
average the sectional curvatures of planes containing $v$ over the
whole normal unit sphere, obtaining a standard multiple (depending
only on dimension) of $Ric(v,v)$.

\vspace{\medskipamount}
The geometric content of the Ricci curvature is that it measures the
acceleration of volume contraction with respect to the flow along
certain fields of geodesics. We formulate this precisely as follows.

\begin{defn} The {\em divergence} of a vector field $V$ is
a concept which depends only on an unsigned volume element, that is, a
{\em density} $\mu$. If $\Omega$ is an $n$-form such that locally $\mu
= |\Omega|$, then $div \,V$ is defined by taking the Lie derivative of
$\Omega$:
$$L_V\Omega = (div \, V)\Omega.$$
\end{defn}
For any given unit vector $v$ we define a canonical unit vector field
extension $V$, by forcing $V$ to satisfy
\begin{enumerate}
\item The integral curves of $V$ are unit speed geodesics.
\item If $v^\perp$ is the hyperplane in $M_p$
orthogonal to $v$, then $V$ is orthogonal to the hypersurface
$\exp_pv^\perp$.
\end{enumerate}
\begin{thm}[Theorem on Volume and Ricci Curvature] \index{divergence}
The divergence of the canonical
extension $V$ of $v$ is $0$ at $p$; its derivative in the direction of
$v$ is
$$v\,div\,V = - Ric(v,v).$$
\end{thm}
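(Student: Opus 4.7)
The plan is to read off both statements from a one-parameter family of Jacobi fields along the geodesic $\gamma(t)=\exp_p(tv)$. Choose a $g$-orthonormal basis $(e_1=v,e_2,\dots,e_n)$ of $M_p$ and let $E_1,\dots,E_n$ be its parallel extensions along $\gamma$; then $E_1=\gamma'=V\circ\gamma$. For each $i\ge 2$ form the smooth rectangle $F_i(s,t)=\exp_{\alpha_i(s)}(tN(\alpha_i(s)))$, where $\alpha_i(s)=\exp_p(se_i)$ lies in $S_0:=\exp_p v^\perp$ and $N$ is the unit normal field of $S_0$. Every longitudinal curve is a $V$-flow line, so $J_i(t):=(\partial F_i/\partial s)(0,t)$ is a Jacobi field along $\gamma$, and the torsion-free exchange rule yields the transport identity
$$D_{J_i(t)}V \;=\; J_i'(t)\qquad(\ast)$$
along $\gamma$, with $\,'=D_{\gamma'}$.

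The crucial initial data are $J_i(0)=e_i$ and
$$J_i'(0) \;=\; D_{e_i}V\big|_p \;=\; 0,\qquad i\ge 2.$$
The vanishing is the heart of the argument: in normal coordinates $(x^j)$ at $p$ with $\partial_j|_p=e_j$, the hypersurface $S_0$ is precisely $\{x^1=0\}$; the expansion $g_{jk}=\epsilon_j\delta_{jk}+O(|q|^2)$ forces $N=\partial_1+O(|q|^2)$ along $S_0$; and since the Christoffel symbols vanish at $p$, differentiating along a curve in $S_0$ tangent to $e_i$ gives $D_{e_i}N|_p=0$. Separately, the Jacobi equation shows $g(J_i,\gamma')$ is linear in $t$, and it vanishes at $t=0$ together with its derivative (using $|V|\equiv 1$), so $J_i\perp\gamma'$. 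We may therefore write $J_i(t)=\sum_{j\ge 2}A^j_i(t)E_j(t)$ with $A(0)=I$ and $A'(0)=0$.

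Now $(\ast)$ identifies the matrix of $DV$ on $\mathrm{span}(E_2,\dots,E_n)$ as $B=A'A^{-1}$, and $D_VV=0$ kills the $E_1$-row, so
$$\mathrm{div}\,V(\gamma(t)) \;=\; \mathrm{tr}\,B(t) \;=\; \mathrm{tr}(A'(t)A(t)^{-1}).$$
Evaluation at $t=0$ yields $\mathrm{div}\,V(p)=0$. Differentiating once more, the $-A'A^{-1}A'A^{-1}$ term vanishes at $t=0$, leaving $v\,\mathrm{div}\,V=\mathrm{tr}\,A''(0)$. Finally, the Jacobi equation $J_i''(0)=-R_{v e_i}v$ together with the excerpt's definition $\mathrm{Ric}(v,v)=\sum_i\epsilon_i g(R_{v e_i}v,e_i)$ (the $i=1$ summand vanishes) rewrites this trace as $-\mathrm{Ric}(v,v)$.

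The one genuinely delicate step is the identity $D_{e_i}V|_p=0$, which geometrically says that the second fundamental form of $\exp_p v^\perp$ vanishes at its center. This is the single place where the specific ``canonical'' choice of extension enters decisively; everything else is bookkeeping with parallel frames, the torsion-free exchange rule, and the Jacobi equation.
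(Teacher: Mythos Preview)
Your argument is correct. The paper states this theorem without proof, so there is no author's argument to compare against directly.

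That said, your approach is very much in the spirit of the machinery the paper develops elsewhere. The key identity $D_{J_i}V=J_i'$ and the representation $B=A'A^{-1}$ for the shape operator of the wave fronts are exactly the relations the paper later sets up in the section on comparison theorems, where the operator Riccati equation $D_XB+B^2+R_X=0$ is derived; your computation of $v\,\mathrm{div}\,V=\mathrm{tr}\,A''(0)=-\mathrm{Ric}(v,v)$ is the infinitesimal version of tracing that Riccati equation at the center. The one place you invoke something slightly outside that later framework is the step $J_i'(0)=D_{e_i}V|_p=0$, which you handle cleanly via the normal-coordinate expansion $g_{jk}=\delta_{jk}+O(|x|^2)$ and the vanishing of Christoffel symbols at $p$, both of which the paper records. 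So your proof fits the paper's toolkit well; it simply assembles pieces that the author presents in a different order and context.
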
\index{curvature!Ricci}

\begin{prob}\label{prob33}[The Divergence Theorem.] Suppose that $M$ is
compact and orientable. Prove that for any smooth vector field $V$
$$\int_M (div \,V)\Omega = 0.$$\end{prob}\index{orientable}

\subsection{Scalar Curvature} \index{scalar curvature}\index{curvature!scalar}
If we contract the curvature a second time, we get the {\em scalar curvature}
$$S = \sum_{i,j}R_{ij}^{ij} = \sum_{i\ne j} K(\sigma_{ij}) = tr\,Ric.$$
It is twice the sum of all the frame sectional curvatures, independent
of the choice of frame. In Riemannian geometry it gives the discrepancy
of the measure of a sphere or ball from the Euclidean value:
$$\mu_{n-1}(S(p,r)) = r^{n-1}\Omega_{n-1}(1-c_nS(p)r^2 + \cdots),$$
$$\mu_n(B(p,r)) = \int_0^r \mu_{n-1}(S(p,t))\,dt = 
\frac{1}{n}r^n\Omega_{n-1}(1-\frac{n}{n+2}c_nS(p)r^2+ \cdots),$$
where $\Omega_{n-1}$ is the $n-1$-dimensional measure of the
Euclidean unit sphere in $E^n$ and $c_n$ is another constant
depending only on $n$. The argument to prove these approximations is
based on the fact that $S$ is the only linear scalar invariant of $R$,
as well as expressions for the second-order terms of the metric in
normal coordinates which we develop later. (Cartan, Le\c cons sur la
G\'eom\'etrie des Espaces de Riemann, discusses this, pp 255-256.)
\index{Cartan, E.}

\subsection{Decomposition of $\mathcal{R}$} \index{curvature!decomposition of tensors}
Tensor spaces over an inner product
space are naturally inner product spaces, with the induced action of
$O(n,\nu)$ leaving the inner product invariant. That is, if $e_i$ is a
frame of $V$, then we get a frame for the tensor space by forming all
the products of the $e_i$. When the tensor space involves the
Grassmann algebra over $V$, then it is customary to use the
determinant inner product, so that for $e_i\wedge e_j = e_i\otimes e_j
- e_j\otimes e_i$ (in conformity with the shuffle-permutation
definition of $\wedge$), we have $g(e_i\wedge e_j, e_i\wedge e_j) =
g(e_i,e_i)g(e_j,e_j) - g(e_i,e_j)^2$, not twice that.

\vspace{\medskipamount}

The operations of raising and lowering indices and contractions are
equivariant under the action of $O(n,\nu)$. Thus, the kernel of the
Ricci contraction $C_{Ric}:\mathcal{R} \to \mathcal{S}^2$, $R \to Ric$, is a
subspace $ker\,C_{Ric} = \mathcal{W} \subset \mathcal{R}$ invariant under
$O(n,\nu)$. The orthogonal projection of $\mathcal{R}$ onto $\mathcal{W}$ gives
us the {\em Weyl conformal curvature tensor}. It is easy to show that
$C_{Ric}$ is {\em onto}, so that we have an orthogonal direct sum
$\mathcal{R} = \mathcal{W} \oplus \mathcal{S}^2$.\index{curvature!Weyl tensor}
\index{Weyl tensor}\index{curvature!conformal tensor}

\vspace{\medskipamount}
\index{curvature!Ricci}
The second contraction to get scalar curvature, $tr:\mathcal{S}^2 \to 
\R$ splits the Ricci curvature summand further into a
constant-curvature tensor and a trace-free Ricci tensor in $\mathcal{S}_0^2
= ker\,tr$. Thus, we always have an orthogonal direct sum
decomposition
$$\mathcal{R} = \mathcal{W} \oplus \R \oplus \mathcal{S}_0^2.$$
This decomposition is irreducible under $SO(n,\nu)$ except when $n =
4$. Then there is a Hodge $\ast$-operator $\ast:\bigwedge^2V \to
\bigwedge^2V$ satisfying $\ast\circ\ast = I$, which has a natural
\index{Hodge star operator}
isometric extension to $\mathcal{R}$, and consequently gives an a further
splitting of $\mathcal{R}$ into the $+1$ and $-1$ eigenspaces of $\ast$,
invariant under the orientation-preserving maps $SO(n,\nu)$. Curvature
tensors which are eigenvalues of $\ast$ are called self-dual and
anti-self-dual; they have become very important in recent years
because the Yang-Mills extremals are just the connections with
self-dual or anti-self-dual curvature tensor, and there were surprising
relations with the multitude of differentiable structures on
$4$-manifolds.\index{Yang-Mills extremals}

\vspace{\smallskipamount}
\begin{prob}\label{prob34} (a) Calculate the dimensions of the summands in 
the splitting of $\mathcal{R}$. (b) For self-adjoint linear transformations
$A,B:M_p \to M_p$ we have seen in Problem \ref{prob31} that
$A\wedge A \in \mathcal{R}$, hence $A \wedge B + B \wedge A = (A+B) \wedge (A+B) - A \wedge A -
B \wedge B \in \mathcal{R}$. Taking $B = I$, the identity transformation,
show that
$$C_{Ric}(A \wedge I + I \wedge A) = (n-2)A + (tr \,A)I.$$
(c) Hence,
$$C_{Ric}(\frac{1}{n-2}(A \wedge I + I \wedge A - \frac{1}{n-1}(tr\,A)I
\wedge I)) = A,$$
and
$$W^\perp(A) = \frac{1}{n-2}(A \wedge I + I \wedge A -
\frac{1}{n-1}(tr\,A)I \wedge I)$$
gives a monomorphism $W^\perp:\mathcal{S}^2 \to \mathcal{R}$ equivariant
under $O(n,\nu)$.\end{prob}

\vspace{\smallskipamount}

From the problem we conclude that the Weyl conformal curvature tensor
is given by
$$W(R) = R - W^\perp(Ric).$$

\subsection{Normal Coordinate Taylor Series} It was Riemann who invented
Riemannian geometry and defined normal coordinates, calculated the
second-order expressions for the metric coefficients. Maybe that's how
he discovered the Riemann tensor (i.e., the curvature tensor); but
anyway he knew that those second-order coefficients are given in terms
of curvature components.\index{Riemann tensor}\index{Riemann, G.B.}
\index{normal coordinates!Taylor expansion}

\begin{thm}[Riemann's Normal Coordinate Expansion of Metric
Coefficients]
The second-order Taylor expansion of the metric in terms of normal
coordinates $x^i$ is give in terms of components of $R$ as follows:\par
$$g_{ij} = g(\frac{\partial}{\partial x^i},\frac{\partial}{\partial x^j})
= \delta_{ij} - \frac{1}{3}\sum_{h,k}R_{ihjk}x^hx^k + \cdots$$
\end{thm}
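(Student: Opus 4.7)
The plan is to identify the coordinate vector $(\partial/\partial x^i)_q$ at a nearby point $q=\exp_p(x)$ with the terminal value of a Jacobi field along the radial geodesic from $p$ to $q$, and then Taylor-expand that Jacobi field by means of the Jacobi equation.

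Choose the normal coordinates so that $(\partial/\partial x^i)_p = e_i$ is an orthonormal frame at $p$; then $g_{ij}(0)=\delta_{ij}$ is the constant term. For the linear term, Problem \ref{prob22} gives $D_{\partial_k}(\partial/\partial x^i)=0$ at $p$ in normal coordinates. Because the Levi-Civita connection is metric,
$$\partial_k g_{ij}(p) = g(D_{\partial_k}\partial_i,\partial_j)(p) + g(\partial_i,D_{\partial_k}\partial_j)(p) = 0,$$
so the first-order terms vanish and only the quadratic contribution needs to be computed.

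Fix $q$ with normal coordinates $x=(x^1,\ldots,x^n)$ and consider the variation $\alpha(t,s) = \exp_p(t(x+s e_i))$. Each longitudinal curve is a geodesic, so $J_i(t) = (\partial\alpha/\partial s)(t,0)$ is a Jacobi field along $\gamma(t)=\exp_p(tx)$. Using torsion-freeness to swap the mixed covariant derivative at $(0,0)$ gives $J_i(0)=0$, $(D_{\gamma'}J_i)(0)=e_i$, and $J_i(1) = (\exp_p)_{*,x}(e_i) = (\partial/\partial x^i)_q$. The Jacobi equation $J_i''+R_{\gamma'J_i}\gamma' = 0$ together with $J_i(0)=0$ yields $J_i''(0)=0$; differentiating it once more and using $D_{\gamma'}\gamma'=0$ gives $J_i'''(0) = -R_{x,e_i}x$. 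Reading the expansion of $J_i$ against a parallel orthonormal frame $(E_a(t))$ with $E_a(0)=e_a$ (which remains orthonormal because the connection is metric), and evaluating at $t=1$,
$$g_{ij}(q) = g(J_i(1),J_j(1)) = \delta_{ij} - \tfrac{1}{6}\bigl[g(R_{x,e_i}x,e_j) + g(R_{x,e_j}x,e_i)\bigr] + O(|x|^3).$$

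The bivector symmetry (\ref{eq:4}), applied with $X=Z=x$, forces $g(R_{x,e_i}x,e_j) = g(R_{x,e_j}x,e_i)$, so the bracketed sum combines to $2\,g(R_{x,e_i}x,e_j)$ and the coefficient becomes $-1/3$. Writing $x = \sum_h x^h e_h$ and using the skew-symmetries of $R$ to identify $g(R_{e_h,e_i}e_k,e_j)$ with $R_{ihjk}$ yields $g(R_{x,e_i}x,e_j) = \sum_{h,k} R_{ihjk}x^h x^k$, matching the stated formula. The main technical obstacle is the careful Taylor expansion of $J_i$: one must interpret its derivatives $J_i^{(k)}(0)$ as vectors at $p$ and use the metric property of the Levi-Civita connection (via parallel transport) to transfer the inner product at $q$ back to one at $p$, after which the curvature symmetries finish the job.
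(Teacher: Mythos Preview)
Your proof is correct, but it follows a different route from the paper's. The paper argues by reduction to the two-dimensional case: there it uses the identity $x^2E+2xyF+y^2G=x^2+y^2$ for the radial field, the orthogonality relation $xy(E-G)+(y^2-x^2)F=0$ coming from the transverse Jacobi field, and the vanishing of the Christoffel symbols at the origin, and then matches fourth-order terms and invokes the classical formula for Gaussian curvature in terms of $E,F,G$. The general case is then obtained by polarization, applying the surface result to the $2$-planes spanned by $\partial_i,\partial_j$, by $\partial_i,\partial_j+\partial_k$, and by $\partial_i+\partial_h,\partial_j+\partial_k$, after checking that normal coordinates and sectional curvatures restrict correctly to these exponentiated planes.

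Your argument bypasses both the two-dimensional detour and the polarization step: identifying $(\partial/\partial x^i)_q$ with the endpoint $J_i(1)$ of a Jacobi field and reading off $J_i'''(0)=-R_{x,e_i}x$ from the differentiated Jacobi equation gives the quadratic term directly in any dimension. This is cleaner and extends more readily to higher-order terms (one keeps differentiating the Jacobi equation). The paper's approach, on the other hand, is closer to how Riemann himself would have seen it and ties the computation to classical surface theory; it also makes the corollary that sectional curvatures determine the curvature tensor emerge naturally from the polarization. One small point worth making explicit in your write-up: when you set $t=1$ in the Taylor series for $J_i$, the remainder is controlled because $J_i^{(k)}(0)$ is homogeneous of degree $k-1$ in $x$ (each covariant differentiation of the Jacobi equation introduces another factor of $\gamma'(0)=x$), so the $t^4$ and higher terms genuinely contribute $O(|x|^3)$.
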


An interesting feature is how sparse these quadratic parts are, as
well as the fact that there are no linear terms. The components of $R$
are supposed to be evaluated at the origin, with respect to the frame
which defines the normal coordinates. There is probably a version for
semi-Riemannian geometry, but as it stands the formula given is only
for the Riemannian case. We can now continue by obtaining the
Christoffel symbols of the normal coordinate vector field basis,
stopping at the linear terms. It is quite easy to use the Koszul
formula for this, since the inverse of the matrix $g_{ij}$ which is
needed is just the identity matrix, up to second-order terms.

\subsection{The Christoffel Symbols} \index{Christoffel symbols}
The Christoffel symbols for the
coordinate vector field local basis have as the linear terms of their
Taylor expansions
$$\Gamma_{jk}^i = \frac{1}{3}(R_{jikh} + R_{kijh})x^h + \cdots.$$

On a normal coordinate neighborhood we define a canonical frame field
$(E_i)$ as follows. At the origin $p$ the frame coincides with the
coordinate basis $\partial_i = \frac{\partial}{\partial x^i}$. Then we
obtain $E_i$ at other points by parallel translation along the radial
geodesics from $p$. Thus, $D_{x^i\partial_i}E_j = 0$. These canonical
frames are vital in the proof of the Cartan Local Isometry Theorem.
\index{Cartan Local Isometry Theorem}

\vspace{\smallskipamount}
\begin{prob}\label{prob35} Show that the second order Taylor expansion of 
the canonical frame is given by
$$E_i = \sum_h(\delta_i^h + \frac{1}{6}\sum_{j,k}\,R_{ijhk}x^jx^k +
\cdots)\,\partial_h,$$
the coframe by\index{coframe}
$$\omega^i = \sum_h(\delta_h^i - \frac{1}{6}\sum_{j,k}\,R_{ijhk}x^jx^k
+ \cdots)\,dx^h.$$
Moreover, the connection forms of this frame field are
$$\varphi_j^i = \frac{1}{4}\sum_{h,k}\,R_{ijhk}(x^hdx^k - x^kdx^h) +
\cdots .$$\end{prob}

\vspace{\smallskipamount}
\begin{prob}\label{prob36} Explain why for each $h,k$, along every radial
geodesic $x^h\partial_k - x^k\partial_h$ is a  field.\end{prob}

\vspace{\smallskipamount}
We can prove Riemann's theorem by doing the $2$-dimensional case first.
Let $ds^2 = E\,dx^2 + 2F\,dx\,dy + G\,dy^2$ be the metric in terms of
Riemannian normal coordinates $x,y$. Then the result to be proved is
that the second order Taylor expansions are
$$E = 1 - \frac{1}{3}Ky^2 + \cdots,$$
$$F = \frac{1}{3}Kxy + \cdots,$$
$$G = 1 - \frac{1}{3}Kx^2 + \cdots,$$
where $K$ is the Gaussian curvature at the origin $p$. Once we have
the 2-dimensional case, we can prove the general case by a multitude
of polarizations; that is, we apply the $2$-dimensional case to surfaces
obtained by exponentiating tangent planes spanned by $(\partial_i,
\partial_j)$, $(\partial_i, \partial_j + \partial_k)$, and
$(\partial_i + \partial_h, \partial_j + \partial_k)$. To make this
approach work we need to know that the Gaussian curvature of these
surfaces is the sectional curvature of their tangent planes, and that
the normal coordinates of these surfaces are the restrictions to the
surfaces of the appropriate normal coordinates of the space.

\vspace{\medskipamount}
In the Riemannian case, the fact that a geodesic of the ambient space
which lies in a submanifold is a geodesic of the induced metric on
that submanifold follows immediately from the characterization of
geodesics as locally length-minimizing curves. However, we also would
like to have the results in the semi-Riemannian case, so a more
computational proof that geodesics are inherited is in order. In fact,
we need to know how the Levi-Civita connections are related.

\begin{thm}[Theorem on the Connection of an Isometric Imbedding]
\index{connection!isometric imbedding}
Let $N \subset M$ be a submanifold such that the induced metric
from $M$ on $N$ is nondegenerate. (This is automatic in the
Riemannian case.) Then the Levi-Civita connection of $N$ is given
by projecting the covariant derivatives for the Levi-Civita connection
of $M$ orthogonally onto the tangent planes of $N$. That is, if $\Delta$
is the connection of $N$, $D$ is the connection of $M$, and
$\Pi:M_p \to N_p$ is the orthogonal projection for each $p \in N$,
then for vector fields $X,Y$ on $N$ 
$$\Delta_XY = \Pi D_XY.$$
\end{thm}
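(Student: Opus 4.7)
The plan is to invoke the Fundamental Theorem of Riemannian Geometry: the Levi-Civita connection $\Delta$ on $N$ is uniquely characterized as the torsion-free metric connection for the induced metric $F^*g$ (where $F\colon N \hookrightarrow M$ is the inclusion). So I will define $\Delta_X Y := \Pi D_X Y$ for vector fields $X,Y$ on $N$, verify that this expression makes sense and satisfies the connection axioms, and then check that it is torsion-free and compatible with the induced metric. Uniqueness from the fundamental theorem then forces $\Delta$ to be given by this formula.

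First I would address well-definedness. Given $X,Y$ on $N$, one extends them locally to vector fields $\tilde X, \tilde Y$ on a neighborhood in $M$ in order to apply $D$. By the pointwise-in-$X$ property of $D$, the value $(D_{\tilde X}\tilde Y)(p)$ at $p \in N$ depends only on $\tilde X(p) = X(p) \in N_p$, so the $X$-extension is irrelevant. Independence from the $Y$-extension follows because $D_x \tilde Y$ at $p \in N$ along a direction $x \in N_p$ can be computed using a curve in $N$ tangent to $x$, so only the restriction $\tilde Y|_N = Y$ matters. Hence $\Pi D_X Y$ is a well-defined vector field on $N$. The connection axioms (1)--(3) transfer immediately from $D$ to $\Delta$, since $\Pi$ is $\mathcal{F}$-linear on tangent vectors and commutes with multiplication by functions on $N$.

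Next I would verify the two defining properties of the Levi-Civita connection. For torsion, I use that when $X,Y$ are tangent to $N$ along $N$, the Lie bracket $[X,Y]$ is also tangent to $N$, so $\Pi[X,Y] = [X,Y]$. Hence
\begin{equation*}
\Delta_X Y - \Delta_Y X - [X,Y] = \Pi(D_X Y - D_Y X - [X,Y]) = \Pi T^D(X,Y) = 0,
\end{equation*}
since $D$ has zero torsion. For the metric property, take vector fields $X,Y,Z$ on $N$; then $g(Y,Z) = F^*g(Y,Z)$ as functions on $N$, and because $D$ is metric,
\begin{equation*}
X\,g(Y,Z) = g(D_X Y, Z) + g(Y, D_X Z).
\end{equation*}
The key observation is that $Z$ is tangent to $N$, so $g(D_X Y, Z) = g(\Pi D_X Y, Z)$ because the normal component of $D_X Y$ is $g$-orthogonal to $Z$; likewise for the other term. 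Thus $X\,g(Y,Z) = g(\Delta_X Y, Z) + g(Y, \Delta_X Z)$, which is the metric condition for $\Delta$ against the induced metric.

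The main potential obstacle is the extension/well-definedness issue, which in the semi-Riemannian case requires that $\Pi$ actually exist, i.e., that $N_p$ be nondegenerate in $M_p$ so the orthogonal decomposition $M_p = N_p \oplus N_p^\perp$ is valid; this is exactly the hypothesis of the theorem, guaranteeing a well-defined orthogonal projection $\Pi$. Once $\Pi$ is in hand, everything reduces to the routine verifications above, and uniqueness in the fundamental theorem closes the argument.
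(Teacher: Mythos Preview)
Your proof is correct. The paper explicitly mentions your approach---``a straightforward verification of the axioms and characteristic properties for $\Delta$ (connection axioms, torsion $0$, metric)''---as one valid route, but then chooses to write out a different one in detail: the method of \emph{adapted frame fields}. There one takes a local frame $(E_1,\ldots,E_n)$ for $M$ along $N$ whose first $q=\dim N$ members are tangent to $N$, so that the dual coframe satisfies $\omega^{q+1}=\cdots=\omega^n=0$ on $N$; restricting the first structural equation of $M$ to $N$ then shows that the upper-left $q\times q$ block of $\varphi$ is the torsion-free, skew-adjoint connection form for $N$, and the projection formula $\omega^i(\Delta_X Y)=\omega^i(D_X Y)$ for $i\le q$ drops out. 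Your direct verification is arguably more elementary and requires no frame machinery; the adapted-frame argument, on the other hand, fits the paper's bundle-of-frames philosophy and makes the structural equations do the work, which pays off when one goes on to compare curvatures (Gauss equation) or study the second fundamental form.
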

The theorem can be proved by a straightforward verification of the
axioms and characteristic properties for $\Delta$ (connection axioms,
torsion $0$, metric). It can also be done by using {\em adapted frame
fields}: \index{adapted frame field}
these are frames of $M$ at points of $N$ such that the first
$q = \dim\,N$ of the fields are a frame for $N$. Locally we can
always extend adapted frame fields to a frame field of $M$ in a
neighborhood. Then the local coframe $\omega^i$, when restricted to
$N$, satisfies
$$\omega^1, \cdots, \omega^q$$
is a coframe of $N$, and
$$\omega^{q+1} = 0, \cdots, \omega^n = 0$$
on $N$. Now consider the connection forms $\varphi_j^i$ of $\omega^i$.
The first $q \times q$ block is skew-adjoint with respect to the
semi-Euclidean metric induced on the tangent spaces of $N$ by
$(\omega^1,\cdots,\omega^q)$. The restriction of the first structural
equation of $M$ to $N$ then gives
$$\omega^i = - \sum_{j=1}^q\,\varphi_j^i \wedge \omega^j,$$
which shows that the $q \times q$ block is that connection form of $N$
and that torsion is $0$. Then we have that orthogonal projection of $Z
\in M_p$ to $N_p$ is the vector with components $\omega^i(Z),\ i =
1,\dots,q$, and hence
$$\omega^i(\Delta_XY) = X\omega^i(Y) +
\sum_{j=1}^q\,\varphi_j^i(X)\omega^j(Y)$$
$$\qquad = \omega^i(D_XY),\ i = 1,\dots,q.$$
This verifies the formula $\Delta_XY = \Pi D_XY$.
\begin{cor} A geodesic of $N$ is a curve $\gamma$ in $N$
such that the $M$-acceleration $D_{\gamma'}\gamma'$ is always
orthogonal to $N$.
\end{cor}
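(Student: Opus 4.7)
The plan is to read off the corollary directly from the preceding Theorem on the Connection of an Isometric Imbedding, which asserts $\Delta_X Y = \Pi D_X Y$ for vector fields $X,Y$ on $N$. By definition, a curve $\gamma$ in $N$ is a geodesic of $N$ precisely when $\Delta_{\gamma'}\gamma' = 0$, where $\Delta_{\gamma'}$ denotes the induced covariant derivative along $\gamma$ for the Levi-Civita connection of $N$. So the corollary will follow as soon as we (i) transfer the formula $\Delta = \Pi\circ D$ from vector fields on $N$ to vector fields along the curve $\gamma$, and (ii) translate ``$\Pi D_{\gamma'}\gamma' = 0$'' into ``$D_{\gamma'}\gamma' \perp N$''.

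For step (i), recall from the earlier section on the induced connection on a curve that covariant derivatives along $\gamma$ can be computed by extension: given a parameter value $t_0$, choose any local extensions of $\gamma'(t_0)$ and $\gamma'$ to vector fields $X,Y$ on $N$ in a neighborhood of $\gamma(t_0)$ with $X(\gamma(t_0))=\gamma'(t_0)$ and $Y\circ\gamma$ agreeing with $\gamma'$ near $t_0$. The pointwise-in-first-argument property and locality of both $D$ and $\Delta$ then give $\Delta_{\gamma'}\gamma'(t_0) = (\Delta_X Y)(\gamma(t_0))$ and similarly for $D$. Applying the theorem yields $\Delta_{\gamma'}\gamma'(t) = \Pi\bigl(D_{\gamma'}\gamma'(t)\bigr)$ pointwise along $\gamma$.

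For step (ii), the hypothesis of the theorem is that the induced metric on $N$ is nondegenerate, so at every $p \in N$ we have the orthogonal direct sum decomposition $M_p = N_p \oplus N_p^\perp$, and $\Pi$ is the projection along $N_p^\perp$ onto $N_p$. Therefore $\Pi\bigl(D_{\gamma'}\gamma'(t)\bigr) = 0$ if and only if $D_{\gamma'}\gamma'(t) \in N_{\gamma(t)}^\perp$, i.e., $D_{\gamma'}\gamma'(t)$ is orthogonal to $N$ at $\gamma(t)$. Combining with step (i), $\gamma$ is a geodesic of $N$ if and only if $D_{\gamma'}\gamma'$ is always orthogonal to $N$, which is the statement of the corollary.

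There is essentially no obstacle here; the only step worth being careful about is the extension argument in (i), which is slightly less trivial in the semi-Riemannian case only because one must keep track that the decomposition $M_p = N_p\oplus N_p^\perp$ really is a direct sum, and that is exactly what nondegeneracy of the induced metric on $N$ provides (and what is automatic in the Riemannian case, as noted in the theorem).
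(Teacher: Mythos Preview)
Your argument is correct and is exactly the immediate deduction the paper intends: the corollary is stated without proof, as it follows at once from the formula $\Delta_XY=\Pi D_XY$ of the preceding theorem together with the definition of a geodesic. Your steps (i) and (ii) simply spell out this implication, so there is nothing to add.
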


This suggests a numerical algorithm for approximate geodesics on a
surface $N$ in $3$-space. Given a velocity tangent to $N$, move a small
distance on $N$ in the direction of that velocity. Then rotate the
velocity in the normal plane to $N$ at the new point so that the
velocity will be tangent to $N$ again. Then repeat the procedure. It
is not hard to show that as the ``small distance'' goes to $0$, the
sequence of points generated converges to a geodesic.

\begin{cor}[Riemann's Theorem--Pointwise
Realizability of Curvature Tensors] 
\index{curvature!pointwise realization}If we are given components
$R_{ijhk}$ satisfying the symmetries (\ref{eq:1}), (\ref{eq:2}), (\ref{eq:3}), then there exists
a metric on a neighborhood of $O \in \R ^n$ such that these
$R_{ijhk}$ are the components of the curvature tensor at the origin.
\end{cor}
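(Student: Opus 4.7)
My plan is to run Riemann's normal-coordinate expansion in reverse: take the quadratic formula as the \emph{definition} of a metric, and then verify that its curvature tensor at the origin is the prescribed one. Concretely, on a small open ball $U \subset \mathbf{R}^n$ about the origin, define
$$g_{ij}(x) = \delta_{ij} - \frac{1}{3}\sum_{h,k} R_{ihjk}\,x^h x^k.$$
First I would check that this is actually a Riemannian metric near $0$. Symmetry $g_{ij} = g_{ji}$ is not quite automatic; it requires that the symmetric-in-$(h,k)$ part of $R_{ihjk}$ be symmetric in $(i,j)$. But using pair symmetry $R_{ihjk} = R_{jkih}$ (identity (\ref{eq:4})) together with skew-symmetry in the last two indices, one checks $R_{ihjk} + R_{ikjh} = R_{jkih} + R_{jhik}$, which is exactly what symmetrization in $(h,k)$ demands. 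Positive-definiteness on a neighborhood of $0$ follows from $g_{ij}(0) = \delta_{ij}$ by continuity, and one may shrink $U$ (or insert a smooth cutoff and patch with the Euclidean metric outside) to obtain a globally defined metric on $\mathbf{R}^n$ if desired.

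Next I would compute the curvature of $g$ at $0$. Since $g_{ij}(x) - \delta_{ij}$ vanishes to second order in $x$, at the origin $g^{ij}(0) = \delta^{ij}$ and $\partial_a g_{ij}(0) = 0$, so all Christoffel symbols vanish at $0$. Therefore the curvature components at $0$ reduce to the purely second-derivative expression
$$\widetilde R_{ijhk}\big|_0 = \tfrac{1}{2}\bigl(\partial_i\partial_k g_{jh} + \partial_j\partial_h g_{ik} - \partial_i\partial_h g_{jk} - \partial_j\partial_k g_{ih}\bigr)\Big|_0.$$
From our definition, $\partial_a\partial_b g_{ij}\big|_0 = -\tfrac{1}{3}(R_{iajb} + R_{ibja})$ (symmetrizing in the two differentiation indices). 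Plugging this into the displayed expression gives $\widetilde R_{ijhk}|_0$ as a linear combination of six terms of the form $R_{\ast\ast\ast\ast}$, all with coefficient $\pm 1/6$.

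The crux of the proof -- and the step I expect to be the main obstacle -- is showing that this linear combination collapses back to $R_{ijhk}$. The skew-symmetries (\ref{eq:1}) and (\ref{eq:2}) let one consolidate pairs of terms, and the cyclic identity (\ref{eq:3}) is precisely what is needed to reduce the remaining sum. This is in fact the same algebraic identity that makes Riemann's expansion theorem itself correct, run in the opposite direction; it is a finite but slightly fiddly verification, best carried out by reducing everything to the first-Bianchi orbit of a single component $R_{ijhk}$ and checking that the coefficients sum to one there and to zero for components outside that orbit. Once this is done, $\widetilde R_{ijhk}|_0 = R_{ijhk}$, and the corollary follows.
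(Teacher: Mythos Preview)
Your proposal is correct and is precisely the argument the paper has in mind: the corollary is stated without separate proof because defining $g_{ij}(x)=\delta_{ij}-\tfrac{1}{3}\sum R_{ihjk}x^hx^k$ and computing the curvature at the origin (where the Christoffel symbols vanish) is the obvious way to run Riemann's normal-coordinate expansion backwards. The symmetry check and the reduction of the six-term expression for $\widetilde R_{ijhk}|_0$ to $R_{ijhk}$ via identities (\ref{eq:1})--(\ref{eq:3}) are exactly the verifications needed, and they go through as you outline.
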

\begin{cor}[to the Proof of Riemann's Theorem] \index{Riemann's Theorem}
Sectional curvatures determine the curvature tensor.
\end{cor}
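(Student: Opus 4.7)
The plan is to reduce the claim to showing that if a tensor $Q$ on $M_p$ of type $(0,4)$ satisfies all of the curvature symmetries \eqref{eq:1}, \eqref{eq:2}, \eqref{eq:3}, \eqref{eq:4} and vanishes on every ``diagonal'' 4-tuple $(x,y,x,y)$, then $Q \equiv 0$. Suppose $R$ and $R'$ are two tensors of type $(1,3)$ at $p$ satisfying the curvature symmetries and giving the same sectional curvatures for all sections. Lower the contravariant index with $g$ and set $Q(x,y,z,w) = g((R-R')_{xy}z,w)$. Then $Q$ still satisfies all four symmetries. The hypothesis on sectional curvatures says $Q(x,y,x,y) = 0$ whenever the plane $\mathrm{span}(x,y)$ is nondegenerate; but $Q(x,y,x,y)$ is a polynomial in the components of $x$ and $y$, and the nondegenerate pairs form an open dense set in $M_p \times M_p$, so $Q(x,y,x,y) = 0$ identically.

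Next I would polarize twice. First, replacing $x$ by $x+z$ in $Q(x,y,x,y)=0$ and expanding, the diagonal terms drop out and the cross terms collapse to $2Q(x,y,z,y) = 0$ by bivector symmetry \eqref{eq:4}. Polarizing this in $y$ by replacing $y$ with $y+w$ gives
\begin{equation*}
Q(x,y,z,w) + Q(x,w,z,y) = 0.
\end{equation*}

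The last step is to play this identity against the cyclic symmetry \eqref{eq:3}. From $Q(x,y,z,w) + Q(y,z,x,w) + Q(z,x,y,w) = 0$ I would rewrite each of the last two terms by \eqref{eq:1}, \eqref{eq:4}, and the polarization identity above; after collecting, the relations $2 Q(x,y,z,w) = Q(x,z,y,w)$ and $2 Q(x,z,y,w) = Q(x,y,z,w)$ emerge, which together force $3Q(x,y,z,w) = 0$ and hence $R = R'$.

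The main obstacle is book-keeping in the final algebraic step: there are many ways to route the symmetries, and one must be careful not to apply a symmetry that is only valid after the sign correction for indefinite signature, so I would verify each re-indexing using only the four abstract identities \eqref{eq:1}--\eqref{eq:4} (which hold in the semi-Riemannian case as well). The initial density argument is the only analytic ingredient; everything else is linear algebra, and so the corollary holds in both Riemannian and semi-Riemannian signatures.
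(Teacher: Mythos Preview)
Your argument is correct: the two polarizations followed by the cyclic identity do force $Q\equiv 0$, and your density remark handles the semi-Riemannian case cleanly. This is the standard direct algebraic proof, and it is exactly the route the paper alludes to in the sentence immediately following the corollary (``can be proved easily directly \ldots\ explicit polarization formula'').

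However, it is not the route the paper itself takes. The corollary is labeled ``to the Proof of Riemann's Theorem'' because the paper extracts it from the machinery just developed: the 2-dimensional case of the normal-coordinate expansion shows that the Gaussian curvature of $\exp_p(\sigma)$ at $p$ determines the second-order Taylor coefficients of the induced metric on that surface; the paper then polarizes \emph{geometrically}, by applying the 2D result to the surfaces through the planes spanned by $(\partial_i,\partial_j)$, $(\partial_i,\partial_j+\partial_k)$, and $(\partial_i+\partial_h,\partial_j+\partial_k)$, and reads off all the $R_{ihjk}$ from those sectional (Gaussian) curvatures. So the paper's polarization lives on the level of metric coefficients in normal coordinates, while yours lives purely on the level of the algebraic curvature tensor. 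Your approach is more self-contained and works at a single point without invoking normal coordinates or the induced-metric discussion; the paper's approach has the advantage that it falls out for free once Riemann's expansion is in hand, and it makes visible \emph{which} sectional curvatures (finitely many, attached to a frame) suffice.
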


Of course, the fact that the sectional curvatures determine the
curvature tensor can be proved easily directly. See Bishop and
Crittenden, Corollary 2, p 164, and the explicit polarization formula
of Problem 2, p 165.\index{polarization}\index{Crittenden, R.J.}

\vspace{\medskipamount}

We give an outline of the proof of the $2$-dimensional case of Riemann's
theorem. We know that the vector field $x\partial_x + y\partial_y$ is
radial, so that the normal coordinate parametrization, which is a
numerical realization of the exponential map, preserves its
square-length $x^2+y^2$. Thus, we obtain an identity:
\begin{equation}\label{eq:aa}
x^2E + 2xyF + y^2G = x^2 + y^2.
\end{equation}

By Problem \ref{prob36} we know that $y\partial_x - x\partial_y$ is a normal
Jacobi field along every radial geodesic. We do not need the Jacobi
equation, but the fact that the two vector fields are orthogonal
gives:
\begin{equation}\label{eq:c}
xy(E-G) + (y^2-x^2)F = 0.
\end{equation}
The classical formulas for the Christoffel symbols, which can be
calculated from the Koszul formulas by plugging in coordinate vector
fields give us equations expressing the first derivatives of $E, F, G$
in terms of linear expressions in those Christoffel symbols. (The
coefficients of those linear equations are each $E, F$, or $G$.) But
we have seen that the Christoffel symbols all vanish at the origin.
Hence, $E, F, G$ have linear coefficients $0$; of course, they have
constant terms $1$, $0$, $1$, respectively. Now match up the fourth order
terms in (\ref{eq:aa}) and (\ref{eq:c}). The result is that at the origin all of the
second derivatives vanish except three, which are themselves equal as
follows:
$$E_{yy} = -2F_{xy} = G_{xx}.$$
Finally, by the classical equations for the curvature tensor in terms
of the Christoffel symbols we calculate that the Gaussian curvature at
the origin is $K = -3E_{yy}/2$.

\vspace{\smallskipamount}
\begin{prob}\label{prob37} For the indefinite case, for which $E(0,0) = -1$
and $G(0,0) = 1$, show that the proof outlined still goes through with
only some sign changes.\end{prob}

\section{Conjugate Points}\index{conjugate point}
Let $p \in M$ and let $\gamma$ be a geodesic
through $p$. A point $q$ on $\gamma$, $q \ne p$, is a {\em conjugate
point of p along} $\gamma$ if there exists a Jacobi field $J \ne 0$ on
$\gamma$ such that $J$ vanishes at both $p$ and $q$. We could
equivalently define a conjugate point $q$ of $p$ to be a singular
value of $\exp_p$, for if we had $v\in (M_p)_x$, $v \ne 0$, such that
${\exp_p}_*v = 0$, then $J(s) = {\exp_p}_*sv$ defines a Jacobi field
along the geodesic $s \to \exp_psx$ such that $J(0) = 0$ and $J(1) =
0$. (To interpret $sv \in (M_p)_{sx}$ we use the canonical isomorphism
of $M_p$ with its tangent spaces.)

\vspace{\smallskipamount}

Both viewpoints of conjugate points are important. The definition as
given shows that the relation ``is a conjugate point of'' is
symmetric, and on the other hand, by applying Sard's Theorem to
$\exp_p$ we obtain that the set of all conjugate points of $p$
forms a set of measure $0$. \index{Sard's Theorem}\index{Sard, Arthur}
Moreover, the singular value
property gives us the following theorem, which is half of the property
of local nonminimization beyond a first conjugate point $q$ of $p$
along $\gamma$. In a more general setting, such as a locally compact
intrinsic metric space, we would take this geometrically significant
\index{metric space!intrinsic}property as the {\em definition} of a conjugate point.
\begin{thm}[Local Nonminimization Implies Conjugate Point]
Let $\gamma([0,s])$ be a nonselfintersecting geodesic segment such
that for every neighborhood $U$ of $\gamma([0,s])$ there is $s' > s$
such that there is a shorter curve in $U$ from $\gamma(0)$ to
$\gamma(s')$ than $\gamma([0,s'])$. Then $\gamma([0,s])$ has a
conjugate point of $\gamma(0)$.
\end{thm}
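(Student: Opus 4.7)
The plan is to prove the contrapositive: assume no point of $\gamma((0,s])$ is conjugate to $p := \gamma(0)$, and produce a neighborhood $U$ of $\gamma([0,s])$ in which $\gamma|_{[0,s']}$ is length-minimizing for every $s' > s$ close enough to $s$; this contradicts the hypothesis. Reparametrize so $\gamma$ has unit speed and set $v := \gamma'(0)$, so $\gamma(t) = \exp_p(tv)$.

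First I would note that the no-conjugate-point assumption means $(\exp_p)_*$ is nonsingular at every point $tv$ with $t \in [0,s]$ (trivially so at $t=0$). By the inverse function theorem combined with compactness of $[0,s]$, there is an $\epsilon > 0$ and an open neighborhood $W_0 \subset M_p$ of the slightly extended segment $S := \{tv : 0 \le t \le s+\epsilon\}$ on which $\exp_p$ is a local diffeomorphism at every point. Shrinking $\epsilon$ if necessary, I can also ensure $\gamma|_{[0,s+\epsilon]}$ is nonselfintersecting, since a geodesic is locally injective at any parameter where its velocity is nonzero.

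The hard step will be upgrading this pointwise local injectivity to injectivity of $\exp_p$ on a single neighborhood of $S$. If no such neighborhood worked, one could produce sequences $x_n \ne y_n$ in $M_p$ whose distances from $S$ tend to zero and with $\exp_p x_n = \exp_p y_n$. Passing to convergent subsequences $x_n \to x$, $y_n \to y$ in $S$ would give $\exp_p x = \exp_p y$, and since the images of $x$ and $y$ lie on $\gamma|_{[0,s+\epsilon]}$, nonselfintersection forces $x = y$; but then the local invertibility of $\exp_p$ near $x$ supplied by Step 1 is contradicted. Hence there is an open $W \subset W_0$ containing $S$ on which $\exp_p|_W : W \to U := \exp_p(W)$ is a diffeomorphism onto an open neighborhood $U$ of $\gamma([0, s+\epsilon])$.

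Finally I would invoke the Proposition on $(\exp_p)_*$-orthogonality of rays in $M_p$ and level hypersurfaces of $g(\cdot,\cdot)$. This is a pointwise identity at regular points of $\exp_p$, so it holds throughout $W$ even though $W$ need not be star-shaped. Let $\sigma : [0,1] \to U$ be any piecewise smooth curve from $p$ to $\gamma(s')$ with $s' \in (s, s+\epsilon]$. Lift via $(\exp_p|_W)^{-1}$ and write the lift in polar form $t \mapsto r(t) u(t)$ with $u(t)$ a unit vector in $M_p$, $r(0)=0$, $r(1)=s'$. The orthogonality together with the fact that $(\exp_p)_* u(t)$ has unit length (it is the velocity of a unit-speed geodesic) gives $|\sigma'(t)|^2 = r'(t)^2 + r(t)^2 |(\exp_p)_* u'(t)|^2 \ge r'(t)^2$, so $|\sigma| \ge \int_0^1 |r'(t)|\,dt \ge s'$, which equals the length of $\gamma|_{[0,s']}$. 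Thus $\gamma|_{[0,s']}$ is a shortest curve in $U$ from $p$ to $\gamma(s')$ for every such $s'$, contradicting the hypothesis applied to this $U$. A conjugate point of $p$ must therefore lie on $\gamma((0,s])$.
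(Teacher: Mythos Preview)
Your proof is correct and follows essentially the same strategy as the paper's: argue the contrapositive, construct a neighborhood of the segment in $M_p$ on which $\exp_p$ is a diffeomorphism, and then invoke Gauss's Lemma to show radial geodesics minimize within the image. The paper builds this neighborhood as the union of a small normal ball with a thin cone $\bigcup_{0<t<1} tV$ about the ray (hence star-shaped), whereas you obtain a tubular neighborhood via a clean compactness-plus-local-injectivity sequence argument and correctly observe that star-shapedness is unnecessary since the Gauss-Lemma length estimate is pointwise.
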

\begin{proof} Suppose there are no conjugate points of $p = \gamma(0)$
on $\gamma([0,s])$. Then $\exp_p$ is a diffeomorphism of some
neighborhood $V$ of $s\gamma'(0)$ onto a neighborhood of $\gamma(s)$.
Moreover, we may assume that $V$ is so small that $\exp_p$ is also a
diffeomorphism on the ``cone'' $\cup\{tV : 0<t<1\} = W$. We can also
take a central normal ball $B \subset M_p$ so small that $\exp_pB$
intersects $\gamma([0,s])$ only in an initial radial segment of
$\gamma$. Let $X = B \cup W$. Then $\exp_p: X \to \exp_pX = U$ is a
diffeomorphism onto a neighborhood of $\gamma([0,s])$ which is thereby
filled with a field of radial geodesics uniquely connecting points to
$p$. Now we use the proof of the minimization theorem, page 25, to
show that if $\gamma(s') \in U$, then $\gamma([0,s'])$ is the shortest
curve in $U$ from $p$ to $\gamma(s')$.
\end{proof}
\begin{rem} To turn the local nonminimizing property into a
characterization of first conjugate points it is necessary to free
ourselves of the restriction that $\gamma([0,s])$ be
nonselfintersecting. We do this by taking the neighborhoods $U$ in
question to be a neighborhood in the space of rectifiable curves
starting at $p$. To topologize the space of curves we parametrize them
with constant speed on $[0,1]$ and use the topology of uniform
convergence. We construct $W$ as we did above, and take $B$ to be any
central normal ball, so that for some $\epsilon > 0$, $\exp_p$ is a
diffeomorphism of some neighborhood of $t\gamma'(0)$ within $X$ onto
the $\epsilon$-ball centered at $\gamma(t)$ for every $t, 0 < t < 1$.
Then any rectifiable curve which is uniformly $\epsilon$-close to
$\gamma$ can be lifted uniquely to a curve in $X$. This means that we
can define a unique radial geodesic to each point of the curve, so
that we can carry out the comparison of lengths as in the proof of
Theorem \ref{thm:firstvar}.
\end{rem}

\index{second variation}
\subsection{Second Variation} We have seen that when we vary a geodesic to
nearby curves with the same endpoints, then the first derivative of
energy or arclength is $0$. Moreover, if there is no conjugate point,
then the critical value of energy or length actually is a local
minimum. As in calculus, to gain information about what is happening
at a critical point we need to look at second derivatives. We will
only do the simple case for which the endpoints are fixed; the more
general case in which we study the distance to a submanifold would
require some notions that we have not defined, {\em focal points} and
\index{focal point} the {\em second fundamental form of a submanifold}. 
\index{second fundamental form}The details of the
more general case can be found in Bishop \& Crittenden or other
references.

\vspace{\smallskipamount}

We assume that the geodesic base curve and the longitudinal curves of
the variations we consider are normalized in their parametrization, so
\index{parametrization!normalized}
that they have constant speed and are parametrized on $[0,1]$.
Intuitively it is clear that we don't lose anything by such a
restriction. It makes energy equal to the square of the length among
the curves under consideration, so that there is no difference which
we choose to calculate with. Moreover, the restriction is fitting to
the conclusion we want to draw: by showing that some second derivative
is negative we conclude that the value is not a minimum. Thus, it is
some particular special variations that we want to discover from a
calculation of second variation, and great generality is not required
as long as what we do points to the special variations needed.

\vspace{\medskipamount}
So suppose that $Q$ is a smooth rectangle with base geodesic and
constant end transversal curves, and longitudinal parameter $s, 0 \le
s \le 1$. Let $X$ be the longitudinal vector field and $Y$ the
transverse vector field. We will use such facts as $D_XY = D_YX$ and
$R_{XY} = -D_XD_Y + D_YD_X$ without going through some formal
justification. Our starting point is the formula for first variation
of energy:
$$\frac{dE}{dt} = 2 \int_0^1 g(D_XY,X)\,ds.$$
Differentiating with respect to $t$ again:
$$\frac{d^2E}{dt^2} = 2 \int_0^1 [g(D_YD_XY,X) + g(D_XY,D_YX)]ds$$
$$\qquad = 2 \int_0^1[g(D_XD_YY +R_{XY}Y,X) + g(D_XY,D_XY)]ds$$
$$\qquad = 2 \int_0^1[\frac{\partial}{\partial s}g(D_YY,X)
- g(D_YY,D_XX) - g(R_{XY}X,Y) + g(D_XY,D_XY)]ds.$$
Now we evaluate at $t=0$, so that $D_XX = 0$ because the base curve is
a geodesic, and the term which can be integrated drops out because the
end transverse curves are constant: $D_YY = 0$, at $s = 0,1$. For the
term $g(D_XY,D_XY)$ we have an alternative form
$\frac{\partial}{\partial s}g(D_XY,Y) - g(D_XD_XY,Y)$, and again the direct integration
of the derivative drops out because of the fixed end condition. We
also simplify the notation, writing $Y' = D_XY$ and $Y'' = D_XD_XY$
when $t=0$. Thus, we get {\em Synge's formula for second variation}:
\index{Synge's formula!2nd variation}\index{Synge, J.L.}
$$\frac{d^2E}{dt^2}(0) = 2 \int_0^1 [g(Y',Y') - g(R_{XY}X,Y)]ds$$
$$\qquad = -2 \int_0^1 g(Y''+R_{XY}X,Y)ds.$$
There are several interesting features to note. The result only
depends on the variation vector field $Y$ along the base, not on other
properties of $Q$. This is an illustration of the rule that when a
first derivative vanishes, then the second derivative becomes
tensorial. The formula is a quadratic form in $Y$, so that it makes
sense to consider the corresponding bilinear form, which is called the
{\em index form for variations of the base curve}. The formulas for
the index form are rather transparent; just replace one of the $Y$'s
in each term by a $Z$ to get $I(Y,Z)$.\index{index form}

\vspace{\smallskipamount}
From the second expression we see that if we can take $Y$ to be a
Jacobi field, so that the two ends of the base geodesic are conjugate
points of each other, then the second variation vanishes. In fact,
$I(Y,Z) = 0$ for every $Z$, which tells us that the Jacobi field is in
the nullspace of the index form. \index{nullspace!index form}
Conversely, if $I(Y,Z)=0$ for every
$Z$, then we must have $Y''+R_{XY}X = 0$, that is $Y$ is a Jacobi field.
Hence we have identified the nullspace of $I$: it consists of Jacobi
fields which vanish at both ends.

\begin{thm}[Nonminimization Beyond Conjugate Point] If a
geodesic segment $\gamma$ has an interior conjugate point, then it is
not minimal.\index{nonminimization!beyond conjugate point}
\end{thm}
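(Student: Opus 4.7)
The plan is to produce a piecewise smooth variation field along $\gamma$, vanishing at both endpoints, whose index form $I$ is strictly negative; Synge's second variation formula will then give a broken nearby curve from $p$ to $r$ of smaller energy, and rounding its corner in a small normal ball will furnish a strictly shorter smooth curve.

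Parametrize $\gamma:[0,1]\to M$ so that $p=\gamma(0)$, $r=\gamma(1)$, and suppose $q=\gamma(s_0)$ with $0<s_0<1$ is conjugate to $p$. By definition there is a nontrivial Jacobi field $J$ along $\gamma|_{[0,s_0]}$ with $J(0)=J(s_0)=0$. Because the Jacobi equation is a linear second order ODE, the initial data $J(s_0)=0$, $J'(s_0)=0$ would force $J\equiv 0$; hence $J'(s_0)\ne 0$. Define the broken field
\[
\tilde J(s)=
\begin{cases}
J(s), & 0\le s\le s_0,\\
0, & s_0\le s\le 1.
\end{cases}
\]
Then $\tilde J$ is continuous and piecewise smooth on $[0,1]$, vanishes at both endpoints, and has a jump $\tilde J'(s_0^-)-\tilde J'(s_0^+)=J'(s_0)\ne 0$ in its covariant derivative at $s_0$.

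Using the Jacobi equation on $[0,s_0]$ and $\tilde J\equiv 0$ on $[s_0,1]$, an integration by parts on each subinterval gives $I(\tilde J,\tilde J)=0$. For any smooth $Z$ along $\gamma$ with $Z(0)=Z(1)=0$, the same integration by parts leaves only the boundary contribution at the corner:
\[
I(\tilde J,Z)=g(J'(s_0),Z(s_0)).
\]
Choose a smooth $Z$ with $Z(0)=Z(1)=0$ and $Z(s_0)=-J'(s_0)$. Then $I(\tilde J,Z)=-|J'(s_0)|^2<0$, so for every sufficiently small $\varepsilon>0$,
\[
I(\tilde J+\varepsilon Z,\tilde J+\varepsilon Z)=-2\varepsilon|J'(s_0)|^2+\varepsilon^2 I(Z,Z)<0.
\]

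Fix such an $\varepsilon$, set $Y=\tilde J+\varepsilon Z$, and build a broken variation $Q(s,t)=\exp_{\gamma(s)}(tY(s))$; each longitudinal curve $Q_t$ is piecewise smooth from $p$ to $r$ with a single corner at $s=s_0$. Applied piecewise, Synge's formula gives $E(Q_t)=E(\gamma)+\tfrac12 t^2 I(Y,Y)+O(t^3)$ (the first variation contributes no boundary term at the corner because $\gamma$ is geodesic and $Y$ is continuous), so $E(Q_t)<E(\gamma)$ and hence $|Q_t|<|\gamma|$ for all sufficiently small $t>0$. Finally, round the corner of $Q_t$ at $s_0$ inside a small normal coordinate ball around $Q_t(s_0)$: this replaces the broken piece by a smooth shortcut whose length is strictly less than the broken original, giving a smooth curve from $p$ to $r$ of length $<|\gamma|$. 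Thus $\gamma$ is not minimal.

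The main obstacle is administrative rather than substantive: one must verify that the second variation formula and the integration-by-parts identities extend to the broken field $\tilde J$ (the continuity of $\tilde J$ is exactly what makes the boundary contribution at $s_0$ vanish in $I(\tilde J,\tilde J)$), and that corner-rounding strictly decreases length (which follows from the local minimization theorem for normal balls proved earlier). Both are standard, but they are the points where the argument passes from a broken competitor to a smooth one and so deserve explicit mention.
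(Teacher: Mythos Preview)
Your argument is correct and is a genuinely different route from the one the paper ultimately takes. Both proofs begin the same way: extend the Jacobi field by zero past the conjugate point to obtain a broken field $\tilde J$ with $I(\tilde J,\tilde J)=0$. From there the paper invokes the Basic Inequality, replacing $\tilde J$ on a subinterval $[s_1,1]$ straddling the conjugate point by the Jacobi field $W$ matching $\tilde J$ at $s_1$ and vanishing at $1$; since $(Y,0)\ne W$ on that subinterval, the strict case of the Basic Inequality yields $I((Y,W,W))<0$. Your approach instead exploits the bilinear structure directly: you compute $I(\tilde J,Z)=g(J'(s_0),Z(s_0))$ via integration by parts, observe that $J'(s_0)\ne 0$ forces $\tilde J$ out of the nullspace of the polarized form, and perturb along a suitable $Z$ to get $I(\tilde J+\varepsilon Z,\tilde J+\varepsilon Z)<0$. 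Your method is more elementary---it avoids the somewhat delicate proof of the Basic Inequality (Lemma~\ref{lem:1} and the calculation leading to~(\ref{eq:d}))---while the paper's method buys a reusable comparison lemma that also feeds into later results such as the Rauch comparison theorem. The corner-rounding step at the end of your argument is harmless but not strictly needed: the broken $Q_t$ already has $|Q_t|^2\le E(Q_t)<E(\gamma)=|\gamma|^2$, so $\gamma$ is not minimal among piecewise smooth curves, which is all that is required.
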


\begin{proof} Make the parametrization be such that $\gamma(1)$ is a
conjugate point of $\gamma(0)$, and $\gamma$ extends beyond to some
$\gamma(s), s>1$. Using the nontrivial Jacobi field which vanishes at
$\gamma(0)$ and $\gamma(1)$ as a variation field, we obtain a
variation of that first part of $\gamma$ for which the change in
arclength is $O(t^3)$. These varying curves form angles at $\gamma(1)$
with $\gamma$ looking backwards proportional to $t$, up to higher
order. If we cut across the obtuse angles formed, the saving in length
is on the order of $t^2$, so that saving is greater in magnitude than
the gain in length $O(t^3)$ that we have from the variation. Hence
there must be shorter curves nearby $\gamma$ connecting $\gamma(0)$
and $\gamma(s)$.
\end{proof}

The claim in the proof that ``the saving in length is on the order of
$t^2$'' requires a proof, for which we introduce some second variations
of vector fields which are not $0$ at the ends. For a vector field $Y$
along $\gamma$ we still define an index form
\begin{equation}\label{eq:*}
I(Y) = 2 \int_0^1[g(Y',Y') - g(R_{XY}X,Y)]\,ds.
\end{equation}
To interpret $I(Y)$ as a second derivative of the energy of some
rectangle we note that in leading up to (\ref{eq:*}) when we had a fixed-end
variation, we integrated terms
$\frac{\partial}{\partial s}g(X,D_YY)$. Thus, the weaker condition
$D_YY =0$ at $s = 0$ and $1$
would also suffice to derive (\ref{eq:*}). For any given $Y$ this condition
can be attained by letting the initial and final transverse curves be
geodesics. In fact, that was how we showed the existence of a
variation attached to a given variation field anyway.

\vspace{\smallskipamount}
Moreover, we also want to interpret (\ref{eq:*}) as a second derivative in
the case where $Y$ is only continuous and piecewise $C^1$. We can do
so if we require that at the finite number of places where $Y' = D_XY$
does not exist we have $D_YY = 0$; in particular, it is again enough
to make the transverse curves at those places be geodesics.

\begin{thm}[The Basic Inequality] Suppose that there are no
\index{basic inequality}
conjugate points of $\gamma(0)$ on $\gamma((0,1])$. Let $V$ be a
piecewise $C^1$ vector field on $\gamma$ orthogonal to $\gamma$ such
that $V(0) = 0$. Let $Y$ be the unique Jacobi field such that $Y(0) =
0$ and $Y(1) = V(1)$. Then $I(V) \ge I(Y)$ and equality occurs only if
$V = Y$.
\end{thm}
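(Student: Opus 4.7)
The plan is to reduce everything to a single completed-square identity by writing $V$ in a basis of Jacobi fields that vanish at $\gamma(0)$. Since there are no conjugate points of $\gamma(0)$ on $\gamma((0,1])$, the map $v \mapsto J_v(s)$ with $J_v(0)=0$, $J_v'(0)=v$ is a linear isomorphism from $\gamma'(0)^\perp$ to $\gamma'(s)^\perp$ for every $s \in (0,1]$. First I would pick $J_i'(0)$ to be a basis of $\gamma'(0)^\perp$; the resulting Jacobi fields $J_1,\dots,J_{n-1}$ are normal to $\gamma'$ (since initial data is normal and the Jacobi equation preserves this), vanish at $s=0$, and are linearly independent at every $s>0$. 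Hence on $(0,1]$ one can write $V(s)=\sum_i \phi^i(s) J_i(s)$ uniquely, with $\phi^i$ piecewise $C^1$ wherever $V$ is.

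Next I would split $V' = A+B$ where $A=\sum \phi^{i\prime}J_i$ and $B=\sum\phi^i J_i'$, and prove the pointwise identity
$$g(V',V') - g(R_{XV}X,V) \;=\; g(A,A) + \frac{d}{ds}\,g(V,B).$$
The two ingredients are the symmetry $g(J_i',J_j)=g(J_j',J_i)$ (verified by showing that $g(J_i',J_j)-g(J_i,J_j')$ has zero derivative by the Jacobi equation and vanishes at $s=0$), and the self-adjointness $g(J_i'',J_j)=g(J_i,J_j'')$, which follows from the bivector symmetry of the curvature operator. With these, expanding $\frac{d}{ds}g(V,B)$ term-by-term produces exactly $2g(A,B)+g(B,B)-g(R_{XV}X,V)$, and the identity drops out.

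Integrating the identity over $[0,1]$ gives
$$I(V) = 2\int_0^1 g(A,A)\,ds + 2\bigl[g(V,B)\bigr]_0^1.$$
For the boundary term at $s=0$: expanding the Jacobi fields as $J_i(s)=sJ_i'(0)+O(s^3)$ shows $J_i(s)/s$ is a continuous basis near $0$, so the coefficients $\phi^i$ extend continuously to $s=0$; hence $B(0)$ is finite and, because $V(0)=0$, $g(V,B)|_{s=0}=0$. At $s=1$, $V(1)=Y(1)$ forces $\phi^i(1)=c^i$, and since for $Y$ the coefficients are the constants $c^i$, we have $A_Y\equiv 0$ and $B_Y(1)=\sum c^i J_i'(1)=Y'(1)$; thus $g(V(1),B_V(1))=g(Y(1),Y'(1))=g(Y(1),B_Y(1))$. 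Subtracting the formulas for $I(V)$ and $I(Y)$ therefore cancels all boundary contributions:
$$I(V) - I(Y) \;=\; 2\int_0^1 g(A,A)\,ds \;\ge\; 0,$$
since $g$ is Riemannian. Equality forces $A\equiv 0$; by linear independence of the $J_i$ on $(0,1]$ this gives $\phi^{i\prime}\equiv 0$, so $\phi^i\equiv c^i$ and $V=Y$.

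The main obstacle is the analysis at $s=0$: the coefficients $\phi^i$ are defined via $J_i$ which are all singular there, so one must verify that $\phi^i$ extends continuously and that $g(V,B)$ vanishes in the limit. A secondary annoyance is that $V$ is only piecewise $C^1$, so $A$ may jump; but $V$ (hence $\phi^i$, hence $B$) is continuous across break points, so the interior boundary contributions telescope away, and the integration by parts remains valid.
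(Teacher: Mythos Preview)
Your argument is correct and is essentially the paper's own proof: expand $V$ in a basis of Jacobi fields vanishing at $0$, split $V'$ into the piece with differentiated coefficients (your $A$, the paper's $W$) and the piece with differentiated basis (your $B$, the paper's $Z$), use the Lagrange identity $g(J_i',J_j)=g(J_i,J_j')$ to obtain the completed-square identity, and integrate. The only notable differences are cosmetic: the paper first gives a one-line geometric argument for the bare inequality (compare the rectangle on $V$ with the rectangle of geodesics sharing the same final transversal) before doing the calculation for the equality case, and you are more explicit than the paper about why the coefficients $\phi^i$ and the boundary term $g(V,B)$ behave at $s=0$.
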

\begin{proof} The inequality $I(V) \ge I(Y)$ is actually an immediate
consequence of the minimization of energy by geodesics: if we take a
rectangle $Q$ such that $V = \frac{\partial Q}{\partial t}$ and the
final transversal $t \to Q(1,t)$ is a geodesic, then we can define
another rectangle $\hat Q$ by making $\hat Q_t:s \to \hat Q(s,t)$ be
the geodesic from $\gamma(0)$ to $Q(1,t)$. Then $E(\hat Q_t) \le
E(Q_t)$, so also $I(Y) \le I(V)$.

\vspace{\smallskipamount}
To show that the case of equality requires $V=Y$ we make a calculation
involving a basis of $Y_1,\ldots,Y_{n-1}$ of the Jacobi fields which
are $0$ at $s = 0$ and perpendicular to $\gamma$ at $s = 1$. The we can
write $V = \sum f_iY_i$, where the coefficients $f_i$ are piecewise
$C^1$. We let
$$V' = \sum f_i'Y_i + \sum f_iY_i' = W + Z.$$

\begin{lem}\label{lem:1} If $Y$ and $Z$ are Jacobi fields along a
geodesic $\gamma$, then $g(Y',Z) - g(Y,Z')$ is constant. If $Y$ and
$Z$ vanish at the same point, then the constant is $0$, so that $g(Y',Z)
= g(Y,Z')$.
\end{lem}

\begin{proof} Let $X = \gamma'$. Differentiating the difference we have
$$g(Y'', Z) + g(Y',Z') - g(Y',Z') - g(Y,Z'') = -g(R_{XY}X,Z) + 0 +
g(Y,R_{XZ}X) = 0,$$
by the symmetry in pairs of $R$.
\end{proof}

Now we have
\begin{equation}\label{eq:a}
g(V, \sum f_j'Y_j') = \sum f_if_j'g(Y_i,Y_j') 
\end{equation}
$$\qquad = \sum f_if_j'g(Y_i',Y_j),\qquad\text{by Lemma \ref{lem:1},}$$
$$\qquad = g(W,Z).$$
\begin{equation}\label{eq:b}
g(V,Z)' = g(V',Z) + g(V,Z') 
\end{equation}
$$\qquad = g(W,Z) + g(Z,Z) + g(V, \sum f_i'Y_i') + g(V, \sum f_iY_i'')$$
$$\qquad = g(W,Z) + g(Z,Z) + g(W,Z) - g(V, \sum f_iR_{XY_i}X)$$
$$\qquad = 2 g(W,Z) + g(Z,Z) - g(V,R_{XV}X).$$
The integrand in $I(V)$ is thus
$$g(V',V') - g(R_{XV}X,V) = g(W+Z,W+Z) - g(R_{XV}V,V)$$
$$\qquad\qquad = g(W,W) + g(V,Z)', \qquad\text{by (\ref{eq:b}),}$$
Note that $V$ and $Z$ are continuous, piecewise $C^1$, so that we can
integrate to get
\begin{equation}\label{eq:d}
I(V) = \int_0^1g(W,W)\,ds + g(V(1),Z(1)) - g(V(0),Z(0)) 
\end{equation}
$$\qquad= \int_0^1g(W,W)\,ds + g(V(1),Z(1)).$$
We make the same calculation with $Y = \sum f_i(1)Y_i$, wherein the
field corresponding to $W$ is $0$ and $V(1) = Y(1)$,
$Z(1) = \sum f_i(1)Y_i'(1)$ are the same. Thus,
$$I(V) - I(Y) = \int_0^1g(W,W)\,ds \ge 0,$$
and the condition for equality is that the piecewise continuous field
$W = 0$, hence the $f_i$ are constant and $V=Y$. This completes the
proof of the Basic Inequality.
\end{proof}

Now we can give a precise version of the proof that geodesics do not minimize beyond conjugate points.

\vspace{\smallskipamount}
Let $q = \gamma(s_2)$ be a conjugate point of $p = \gamma(0)$, and let
$Y$ be a nonzero Jacobi field vanishing at $p$ and $q$. If we extend
$Y$ by a segment of 0-vectors beyond $q$, we will still have $I(Y) =
0$. Let $\epsilon$ be a positive number such that there is no
conjugate point of $\gamma(s_2-\epsilon)$ on
$\gamma((s_2-\epsilon,s_2+\epsilon])$. Reparametrize $\gamma$ (and
scale $\epsilon, s_2$ correspondingly) so that $s_2+\epsilon = 1,
s_2-\epsilon = s_1$. We will indicate vector fields along $\gamma$ by a
triple designating what they are on each subinterval $[0,s_1],
[s_1,s_2], [s_2,1]$. For example, $(Y,Y,0)$ denotes the vector field
which is $Y$ on $[0,s_1]$, $Y$ on $[s_1,s_2]$, and $0$ on $[s_2,1]$.
 As long as we force the tranverse curves at the
break points $s_1,s_2$ to be geodesics, the summing of index forms for
subintervals will represent a sum of second derivatives of energies
which gives the second derivative of the whole. Let $W$ be the Jacobi
field such that $W(s_1) = Y(s_1)$ and $W(1) = 0$. Then we satisfy the
condition for the basic inequality on the interval $[s_1,1]$, so that
$$0 = I((Y,Y,0)) > I((Y,W,W)).$$
Since there is a vector field $(Y,W,W)$ having negative second
variation, the longitudinal curves of any rectangle fitting it are
shorter than $\gamma([0,1])$ on the order of $t^2$. This completes
the proof of the nonminimization beyond conjugate points.

\begin{rem} In Problem \ref{prob30} the condition that the field be a Jacobi
field along every geodesic should be derived from the fact that it is
a Killing field independently of the other parts of the problem.
\index{Killing field}
\end{rem}

\vspace{\medskipamount}
\begin{examp} The following kind of metric comes up in classical
mechanics because the space of positions of a rigid body rotating
about a fixed point is $SO(3)$, and the kinetic energy of the rigid
body can be viewed as a Riemannian metric having the left-invariance
considered. A general theorem of mechanics of a freely moving
conservative system \index{conservative system}says that the free, 
unforced motions are geodesics
of the kinetic energy metric. See Bishop \& Goldberg, chapter 6.

On $SO(3)$ and $S^3$ we have parallelizations by left invariant
vector fields $X_1, X_2, X_3$ satisfying 
$$[X_1,X_2] = X_3,\ [X_2,X_3] = X_1,\ [X_3,X_1] = X_2.$$
The dual basis $\omega^1, \omega^2, \omega^3$ satisfies the
Maurer-Cartan equations\index{Maurer-Cartan equations}
$$d\omega^1 = -\omega^2\wedge\omega^3, d\omega^2 =
-\omega^3\wedge\omega^1, d\omega^3 = -\omega^1\wedge\omega^2.$$
We introduce a left-invariant metric for which the $X_i$ are
orthogonal and have constant lengths $a,b,c$:
$$ds^2 = a^2(\omega^2)^2 + b^2(\omega^2)^2 + c^2(\omega^3)^2$$
$$\qquad = (\eta^1)^2 + (\eta^2)^2 + (\eta^3)^2.$$
Hence $d\eta^1 = -a\omega^2\wedge\omega^3 =
-\frac{a}{bc}\eta^2\wedge\eta^3 = -\varphi_2^1\wedge\eta^2 -
\varphi_3^1\wedge\eta^3$, where the $\varphi_j^i$ are the Levi-Civita
connection forms for our metric. By Cartan's Lemma, $\varphi_2^1$ and
$\varphi_3^1$ \index{Cartan's Lemma}are linear combinations of 
$\eta^2$ and $\eta^3$, hence
have no $\eta^1$ term. By cyclic permutations we conclude that each
connection form is a multiple of a single basis element:
$$\varphi_2^1 = -C\eta^3, \varphi_3^2 = -A\eta^1, \varphi_1^3 =
-B\eta^3.$$
Then the first structural equations give us what $A,B,C$ are, using a
little linear algebra:
$$A = \frac{b^2+c^2-a^2}{2abc},$$
and the cyclic permutations. We continue by calculating the curvature
by using the second structural equations.
$$\Phi_2^1 = (AC+BC-AB)\eta^1\wedge\eta^2 =
K_{12}\eta^1\wedge\eta^2,$$
and again cyclicly. Thus, the curvature as a symmetric operator on
bivectors is diagonal with respect to the assumed basis. We reduce the
numerator to a sum and difference of squares to see whether the
curvatures can be negative:
$$K_{12} = \frac{3(u-v)^2 + (u+v)^2 - (3w-u-v)^2}{12uvw},$$
where $u=a^2,v=b^2,w=c^2$.
\end{examp}
\subsection{Loops and Closed Geodesics} [This material is from Bishop \&
Crittenden, Section 11.7, with little change.] 
\index{Crittenden, R.J.}A {\em closed
geodesic}, or {\em geodesic loop} is a geodesic segment for which the
initial and final points coincide. A {\em smooth} closed geodesic, or
{\em periodic} geodesic is a geodesic loop for which the initial and
final tangents coincide.\index{geodesic!closed}\index{geodesic!loop}

In a compact Riemannian manifold we can get convergent subsequences of
a family of constant speed curves uniformly bounded in length, using
the Arzel\`a-Ascoli theorem. \index{Arzel\`a-Ascoli Theorem}
If we have a continuous loop with base
point $p$, we can obtain a rectifiable curve in the same homotopy class
by replacing uniformly confined subsegments (which exist by uniform
continuity) by unique minimal geodesic segments. Then we can
reparametrize the resulting piecewise smooth curve by its
constant-speed representative (all parametrized on $[0,1]$). Applying
the Arzel\`a-Ascoli Theorem to a sequence of such homotopic
constant-speed curves with length descending to the infimum length
produces a loop of {\em minimum} length in the homotopy class. It is
obviously a geodesic loop. More generally, the same procedure works in
a {\em complete} Riemannian manifold. (Completeness is next on the agenda.)
\begin{prop} In a compact Riemannian manifold each pointed
homotopy class of loops contains a minimal geodesic loop.
\end{prop}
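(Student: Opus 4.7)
The plan is to apply the direct method of the calculus of variations inside a fixed pointed homotopy class. Fix a pointed homotopy class $[\alpha]$ of loops at $p$ and set $\ell = \inf\{|\gamma| : \gamma \in [\alpha],\ \gamma \text{ piecewise } C^1\}$, which is finite because $[\alpha]$ has piecewise $C^1$ representatives. Pick a minimizing sequence $\gamma_n \in [\alpha]$ and reparametrize each $\gamma_n$ to constant speed on $[0,1]$; since $|\gamma_n| \le \ell + 1$ eventually, we get the uniform Lipschitz bound $d(\gamma_n(s),\gamma_n(t)) \le (\ell+1)|s-t|$, which is uniform equicontinuity on the family. Since $M$ is compact, the Arzel\`a-Ascoli Theorem (with $K = [0,1]$, $K' = M$) extracts a subsequence, still called $\gamma_n$, converging uniformly to a continuous loop $\gamma_0$ based at $p$.

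The next step is to replace each $\gamma_n$ by a broken geodesic of no greater length, still in $[\alpha]$. By compactness of $M$ together with the Local Existence of Minimizers theorem there is a uniform $\rho > 0$ such that any two points of $M$ within distance $\rho$ lie in a common convex normal ball, in which they are joined by a unique minimizing geodesic that depends smoothly on its endpoints. Partition $[0,1]$ as $0 = t_0 < t_1 < \cdots < t_N = 1$ finely enough that each $\gamma_0([t_{i-1}, t_i])$ has diameter less than $\rho/4$; then for $n$ large, uniform convergence forces each segment $\gamma_n|_{[t_{i-1}, t_i]}$ to lie inside a ball of radius $\rho$. Let $\tilde\gamma_n$ denote the broken geodesic obtained by replacing each such segment by the unique minimizing geodesic between its endpoints. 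Within each convex ball the radial (geodesic) straight-line homotopy fixes the endpoints, and concatenating these homotopies (which in particular keep $\gamma_n(0) = \gamma_n(1) = p$ fixed throughout) shows $\tilde\gamma_n \in [\alpha]$, with $|\tilde\gamma_n| \le |\gamma_n|$.

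Each $\tilde\gamma_n$ is determined by its $N+1$ vertices in the compact space $M^{N+1}$, so along a further subsequence the vertex tuple converges to $(\gamma_0(t_0),\ldots,\gamma_0(t_N))$. By smooth dependence of minimizing segments on endpoints, $\tilde\gamma_n$ converges uniformly to a broken geodesic $\tilde\gamma_0$ with those vertices; the same straight-line homotopies in normal balls (applied to pairs $\tilde\gamma_n, \tilde\gamma_0$ for large $n$) show $\tilde\gamma_0 \in [\alpha]$, and continuity of length under uniform convergence of piecewise geodesics gives $|\tilde\gamma_0| = \lim |\tilde\gamma_n| \le \ell$. Since $\tilde\gamma_0 \in [\alpha]$, the definition of $\ell$ forces $|\tilde\gamma_0| = \ell$.

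It remains to remove the possible corners at the interior vertices. If $\tilde\gamma_0$ had a genuine corner at some $t_i \in (0,1)$, then inside a convex normal ball containing both adjacent geodesic segments one could replace a small symmetric piece about $t_i$ by the unique minimizer between its endpoints, strictly shortening the loop by the strict triangle inequality (the one-sided velocities at $t_i$ differ), while staying in $[\alpha]$ and not disturbing $p$. This would contradict $|\tilde\gamma_0| = \ell$, so no interior corner exists and $\tilde\gamma_0$ is a geodesic on $(0,1)$, i.e., a geodesic loop realizing the minimum length in $[\alpha]$. The main obstacle is the homotopy-invariance bookkeeping under the repeated local geodesic replacements and in the passage to the limit; convexity of small normal balls is what makes the straight-line homotopies canonical and allows their concatenation to remain in the pointed class.
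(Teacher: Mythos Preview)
Your proof is correct and follows essentially the same approach as the paper: take a length-minimizing sequence in the pointed class, use constant-speed parametrization plus compactness to apply Arzel\`a--Ascoli, replace by broken geodesics in convex normal balls while staying in the homotopy class, and pass to the limit. The paper's argument is given as a brief sketch in the paragraph preceding the proposition and ends with ``it is obviously a geodesic loop''; you have supplied the details the paper omits, in particular the finite-dimensional reduction to vertex tuples, the explicit homotopy bookkeeping via straight-line homotopies in convex balls, and the corner-removal argument showing the limiting broken geodesic is in fact smooth on $(0,1)$.
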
\index{homotopy class}

If we allow the base point to float during homotopies, then we get
{\em free} homotopy classes of loops. If we connect a loop back and
forth to a base point by a curve, then the resulting pointed loops, as
we vary the connecting curve, give conjugate elements of the
fundamental group. In this way we associate uniquely to a free
homotopy class of loops a conjugate class in the fundamental group.
\index{fundamental group}\index{conjugate homotopy class}

\vspace{\smallskipamount}
Using the same trick, in a compact manifold we can extract a periodic
\index{geodesic!periodic}\index{homotopy class!free}
geodesic representative of a free homotopy class of loops. However, in
the complete, but noncompact, case the trick does not necessarily work
because the length-decreasing sequence can go off to infinity. For
pointed loops that does not happen because closed bounded sets are
compact, and everything takes place in a closed ball about the base
point.
\begin{thm}[Synge's Theorem on Simple Connectedness] If $M$ is
compact, orientable, even-dimensional, and has positive sectional
curvatures, then $M$ is simply connected.\index{Synge's Theorem}
\index{simply connected}\index{curvature!positive}
\end{thm}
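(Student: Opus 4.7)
The plan is to argue by contradiction: assume $M$ is not simply connected, produce a length-minimizing periodic geodesic in some nontrivial free homotopy class, and then use positive curvature together with the parity hypothesis to construct a strictly shorter loop in the same class.

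First I would extract the minimizer. If $\pi_1(M)$ is nontrivial there is a nontrivial free homotopy class $[\gamma_0]$ of loops. Using compactness, constant-speed reparametrization on $[0,1]$, replacement of uniformly short subsegments by unique minimal geodesic segments, and Arzel\`a--Ascoli, a sequence of constant-speed loops in $[\gamma_0]$ whose lengths descend to the infimum has a uniformly convergent subsequence with limit $\gamma$ of minimal length in $[\gamma_0]$. A local-minimization/corner-cutting argument then forces $\gamma$ to be smooth everywhere, including at its base point: otherwise one could round off a corner (floating the base point, since the class is free) and save length to first order. Hence $\gamma'(0)=\gamma'(1)$ and $\gamma$ is a periodic geodesic.

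Next I would produce a parallel variation field. Parallel translation $P\colon M_p\to M_p$ once around $\gamma$ is orthogonal (metric connection) and fixes $x:=\gamma'(0)$ since $\gamma$ is a geodesic, so $P$ preserves the $(n-1)$-dimensional orthogonal complement $x^\perp$. Orientability lets us reduce to the oriented orthonormal frame bundle, so $\det P=+1$. Because $n$ is even, $n-1$ is odd: the nonreal eigenvalues of $P|_{x^\perp}$ occur in conjugate pairs each contributing $+1$ to the determinant, leaving an odd number of real eigenvalues $\pm 1$ with product $+1$, so the count of $+1$ eigenvalues is odd and in particular positive. Pick a unit eigenvector $v\in x^\perp$ with $Pv=v$ and parallel-translate it along $\gamma$; this yields a nowhere-zero parallel vector field $V$ along $\gamma$, orthogonal to $\gamma'$ everywhere, with $V(0)=V(1)$.

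Finally I would apply Synge's second variation formula to $Q(s,t)=\exp_{\gamma(s)}(tV(s))$. Because $V$ closes up, each longitudinal curve $Q_t$ is a smooth loop in the free class $[\gamma_0]$. The periodicity $V(0)=V(1)$ kills the boundary terms in both the first and second variation; the first variation vanishes because $\gamma$ is a geodesic; and since $V$ is parallel ($V'=0$), the second variation collapses to
$$\frac{d^2 E}{dt^2}(0)=-2\int_0^1 g(R_{\gamma' V}\gamma',V)\,ds=-2\int_0^1 K(\sigma(s))\,g(\gamma',\gamma')\,g(V,V)\,ds,$$
where $\sigma(s)$ is the $2$-plane spanned by $\gamma'(s)$ and $V(s)$. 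Positive sectional curvature makes this strictly negative, so $E(Q_t)<E(\gamma)$ for small $t\neq 0$. By Schwartz, $|Q_t|^2\le (b-a)E(Q_t)$, and since $\gamma$ has constant speed, $|\gamma|^2=(b-a)E(\gamma)$; therefore $|Q_t|<|\gamma|$, contradicting minimality. The main obstacle is the eigenvalue-counting step, where all three hypotheses (orientability for $\det P=+1$, even dimension for odd $n-1$, and metric connection for orthogonality of $P$) must cooperate to guarantee a nonzero parallel perpendicular field; a secondary technical point is establishing smoothness and periodicity of the free-class minimizer, which requires the corner-cutting argument rather than merely the pointed-loop result quoted earlier.
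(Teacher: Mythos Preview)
Your argument is correct and follows essentially the same route as the paper: extract a length-minimizing periodic geodesic in a nontrivial free homotopy class (the paper does this in the paragraphs immediately preceding the theorem), use orientability and even dimension to find a fixed vector of parallel transport in the odd-dimensional normal space, and plug the resulting closed parallel field into Synge's second-variation formula to get $I(V)<0$. You simply spell out in more detail the eigenvalue-counting step (which the paper leaves as a ``Why?''), the corner-cutting smoothness of the free-class minimizer, and the passage from energy to length via the Schwartz inequality.
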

\begin{proof} The idea is to use second variation to show that a
nontrivial periodic geodesic cannot be minimal in its homotopy class.
By parallel translation once around such a periodic geodesic, we get a
map of the normal space to the geodesic at the initial point into
itself. Because that map has determinant $1$ (Why?) and the normal space
is odd-dimensional, there must be an eigenvalue equal to $1$, hence a
fixed vector $v$. This fixed vector $v$ generates a parallel field
$V$, joining up smoothly at the ends since $v$ is fixed by parallel
translation. Thus, the second variation of an attached rectangle is
$$I(V) = \int_0^1 [g(V',V') - g(R_{XV}X, V)]\,ds = -\int_0^1
g(R_{XV}X, V)\,ds < 0,$$
so there are nearby homotopic shorter curves.
\end{proof}

\vspace{\medskipamount}
We give some variations on the same theme as problems.

\vspace{\medskipamount}
\begin{prob}\label{prob38} Let $M$ be compact, even-dimensional,
nonorientable, and have positive curvature. Show that the fundamental
group of $M$ is $Z_2$.\end{prob}\index{fundamental group}

\vspace{\medskipamount}
\begin{prob}\label{prob39} Let $M$ be compact, odd-dimensional, and have
positive curvature. Show that $M$ is orientable.\end{prob}
\index{orientable}

\vspace{\medskipamount}\index{Klingenberg, W.}
\index{Klingenberg's Theorem}
A method of Klingenberg gives us either geodesic loops or conjugate
points. The hypothesis can be weakened to completeness and an
assumption that there is a cut point. The definition of cut points and
the facts used about them will be given after we discuss completeness.

\begin{thm}[Klingenberg's Theorem] Let $M$ be compact, $p \in M$
and $m$ a point of the cut locus of $p$ which is nearest to $p$. If
$m$ is not a conjugate point of $p$, then there is a unique geodesic
loop based at $p$ through $m$ and having both segments to $m$
minimal.\index{geodesic loop}
\end{thm}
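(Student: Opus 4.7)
The plan is to proceed in four steps: extract two distinct minimizing geodesics from $p$ to $m$; prove that $\exp_p$ is a diffeomorphism of the open ball of radius $L = d(p,m)$ in $M_p$ onto its image; use first variation with the inverse function theorem to force the terminal velocities at $m$ to be antipodal; and finally establish uniqueness.

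Since $M$ is compact, minimizing geodesics from $p$ to $m$ exist; since $m$ is a cut point of $p$ but not a conjugate point along any such geodesic, the standard dichotomy for cut points (a cut point is either the first conjugate point along some minimizing geodesic, or is joined to $p$ by at least two distinct minimizing geodesics) produces two distinct unit-speed minimizing geodesics $\gamma_1,\gamma_2:[0,L]\to M$ from $p$ to $m$. I would then show that $\exp_p$ is a diffeomorphism of $B(0,L)\subset M_p$ onto its image. For the local-diffeomorphism half: because $L$ is the distance from $p$ to the cut locus, the cut parameter $\rho(u)$ along every unit direction $u\in M_p$ satisfies $\rho(u)\ge L$, and since $\rho(u)$ is bounded above by the first conjugate parameter in direction $u$, no $\exp_p(v)$ with $|v|<L$ is conjugate to $p$, so $d\exp_p$ is nonsingular throughout $B(0,L)$. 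For injectivity: if $v_1\ne v_2$ in $B(0,L)$ both mapped to a common $q$, then the two geodesics from $p$ to $q$ would either both be minimizing (making $q$ a cut point at distance $<L$ from $p$) or one would fail to be minimizing (forcing its cut parameter to be $<L$); either alternative contradicts $\rho(u)\ge L$.

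The heart of the proof is the antipodal relation $\gamma_1'(L)=-\gamma_2'(L)$. Suppose it fails. Then the open half-spaces $H_i=\{w\in M_m:g(w,\gamma_i'(L))<0\}$ have nonempty common intersection, from which I pick a unit vector $w$. Because $m$ is not conjugate to $p$ along $\gamma_i$, $\exp_p$ has a smooth local inverse $\varphi_i$ near $m$ with $\varphi_i(m)=L\gamma_i'(0)$. Put $m_t=\exp_m(tw)$ and let $\gamma_i^t$ be the geodesic with initial velocity $\varphi_i(m_t)$; then $\gamma_i^t$ ends at $m_t$ with length $|\varphi_i(m_t)|$, and applying Theorem~\ref{thm:firstvar} to the resulting one-parameter family of geodesics (variation field zero at $p$ and $w$ at $m$) yields
\[ \frac{d}{dt}L(\gamma_i^t)\bigg|_{t=0} = g(w,\gamma_i'(L)) < 0. \]
Hence for small $t>0$, $L(\gamma_i^t)<L$, i.e., $\varphi_i(m_t)\in B(0,L)$. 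But $\varphi_1(m_t)$ and $\varphi_2(m_t)$ are distinct points (tending to $L\gamma_1'(0)\ne L\gamma_2'(0)$ as $t\to 0$) in $B(0,L)$ both mapped by $\exp_p$ to $m_t$, contradicting injectivity.

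Thus $\gamma_1'(L)=-\gamma_2'(L)$, and the concatenation of $\gamma_1$ with the time-reverse of $\gamma_2$ is a single smooth geodesic at $m$ that forms the required loop with both halves minimal. The same antipodal argument applied to $(\gamma_1,\gamma_3)$ for any third minimizing geodesic $\gamma_3$ from $p$ to $m$ yields $\gamma_3'(L)=\gamma_2'(L)$; since a geodesic is determined by its endpoint and terminal velocity, $\gamma_3=\gamma_2$, which gives uniqueness of the loop. The main obstacle I anticipate is the injectivity half of the diffeomorphism statement for $\exp_p|_{B(0,L)}$: the local-diffeomorphism part follows at once from the absence of conjugate points inside the ball, but injectivity must genuinely exploit the minimality of $L$ as the distance to the cut locus rather than merely as an upper bound on that distance. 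With that lemma in hand, the rest is a clean application of the inverse function theorem combined with first variation.
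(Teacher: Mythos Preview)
Your proof is correct and follows essentially the same approach as the paper's: both obtain two minimizing geodesics from the cut-point dichotomy, then derive a contradiction to the minimality of $d(p,m)$ by moving away from $m$ in a direction that shortens both geodesics when the terminal velocities are not antipodal. The paper packages this via the local distance functions $f,g$ (your $|\varphi_1(\cdot)|,|\varphi_2(\cdot)|$) and the hypersurface $\{f=g\}$, whereas you isolate the injectivity of $\exp_p$ on $B(0,L)$ as a separate lemma and invoke first variation directly---a slightly different wrapping of the same geometric idea.
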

\begin{proof}. If $m$ is not a conjugate point, then there are at least
two minimal segments from $p$ to $m$. We show that there are just two
and that they match smoothly at $m$. Let $\gamma$ and $\sigma$ be any
two. By matching smoothly at $m$ we mean that $\gamma'(1) = -
\sigma'(1)$. Otherwise, there will be local distance functions $f$ and
$g$, giving the distance from $p$ in neighborhoods of $\gamma$ and
$\sigma$, respectively. In a neighborhood of $m$, the equation $f=g$
defines a smooth hypersurface not perpendicular to $\gamma$ or
$\sigma$. (The proof given in B \& C has an error at this point.) In a
direction on this hypersurface making acute angles with both geodesics
there are points which are nearer to $p$ which can be reached by distinct
geodesics, one near $\gamma$, one near $\sigma$. That is, there are
cut points of $p$ closer than $m$.\index{cut point}\index{cut locus}
\end{proof}
\begin{cor} \label{cor:cutpoint}Let $M$ be compact and let $(p,m)$ be a pair
which realizes the minimum distance from a point to its cut locus. Then
either $p$ and $m$ are conjugate to each other, or there is a unique
periodic geodesic through $p$ and $m$ such that both segments are
minimal.\index{minimal locus|see{cut locus}}
\end{cor}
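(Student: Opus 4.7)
The plan is to reduce Corollary \ref{cor:cutpoint} to Klingenberg's Theorem by applying the theorem at both $p$ and $m$. The crux is to show that $p$ is itself a nearest cut point of $m$, so that the symmetric hypothesis is available. Assuming the cut-locus relation is symmetric (if $m\in\operatorname{Cut}(p)$ then $p\in\operatorname{Cut}(m)$), we have
$$d(m,\operatorname{Cut}(m))\le d(m,p)=d(p,\operatorname{Cut}(p)),$$
while the minimality of the pair $(p,m)$ forces $d(m,\operatorname{Cut}(m))\ge d(p,\operatorname{Cut}(p))$. Equality then says that $p$ is a nearest cut point of $m$, and $d(m,p)$ is the minimum cut-locus distance from $m$ as well.

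Now suppose $p$ and $m$ are not conjugate along any minimal geodesic between them (otherwise the first alternative of the conclusion holds). By Klingenberg's Theorem applied at $p$, there are exactly two minimal geodesics $\gamma,\sigma\colon[0,1]\to M$ from $p$ to $m$, and they match smoothly at $m$, so $\gamma'(1)=-\sigma'(1)$. Applying the theorem at $m$, using that conjugacy of endpoints along a geodesic is symmetric in its endpoints (a Jacobi field vanishing at both ends of $\gamma$ vanishes at both ends of $\gamma^{-1}$ as well), we obtain two minimal segments from $m$ to $p$ that fit smoothly at $p$. But these must be the reverses $\gamma^{-1}$ and $\sigma^{-1}$ of our original two; so the smooth-matching condition at $p$ translates, after differentiating and using $(\gamma^{-1})'(1)=-\gamma'(0)$, into $\gamma'(0)=-\sigma'(0)$.

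Putting the two matching conditions together, the concatenation of $\gamma$ with $\sigma^{-1}$ is smooth at both $m$ and $p$, hence is a periodic geodesic through $p$ and $m$ consisting of two minimal segments. Uniqueness of this periodic geodesic is immediate from the uniqueness clause of Klingenberg's Theorem, which pinned down the unordered pair $\{\gamma,\sigma\}$.

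The main obstacle is verifying that $p$ lies in the cut locus of $m$; this is the symmetry of the cut-locus relation, which is the standard fact that if a minimal geodesic from $p$ to $m$ fails to remain minimal past $m$, then its reverse, a minimal geodesic from $m$ to $p$, fails to remain minimal past $p$. The second, more subtle step is the translation of Klingenberg's smoothness-at-the-apex conclusion applied at $m$ into a smoothness condition for the loop based at $p$; the derivative bookkeeping there is exactly what makes a loop into a \emph{periodic} geodesic rather than merely a closed geodesic.
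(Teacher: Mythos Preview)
Your argument is correct and is precisely the intended derivation: the paper states this as a corollary without proof, and the natural route is exactly the symmetry argument you give---show that $p$ is also a nearest cut point of $m$, then apply Klingenberg's Theorem at both endpoints to get the smooth matching at $m$ and at $p$.

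One small remark: you describe the symmetry of the cut-locus relation as ``the main obstacle'' and something to be ``assumed,'' but in fact it is already established earlier in the paper (the corollary immediately following the theorem characterizing cut points as either conjugate points or endpoints of multiple minimizers). So you may simply cite it rather than flag it as a gap. Likewise, the symmetry of the conjugate-point relation is noted in the paper via the Jacobi-field characterization. With those two facts in hand, your reduction to two applications of Klingenberg's Theorem is exactly right, and your bookkeeping on the velocity vectors at $p$ and $m$ is clean.
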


\begin{cor} Let $M$ be compact, even-dimensional,
orientable, with positive curvature, and let $p, m$ be as in corollary
\ref{cor:cutpoint}. Then $p$ and $m$ are conjugate.
\end{cor}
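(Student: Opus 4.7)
Suppose for contradiction that $p$ and $m$ are not conjugate. Then Corollary~\ref{cor:cutpoint} furnishes a periodic geodesic $\gamma:[0,L_0]\to M$, parametrized by arclength with $L_0 = 2d(p,m)$, satisfying $\gamma(0) = \gamma(L_0) = p$, $\gamma(d(p,m)) = m$, $\gamma'(L_0) = \gamma'(0)$, and both halves minimal. The plan is to mimic Synge's theorem to produce strictly shorter nearby closed loops, and then to exploit the hypothesis that $d(p,m)$ is the \emph{global} infimum of the distance from a point to its cut locus to reach a contradiction by lifting these loops into a normal ball.

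First I would run Synge's construction on $\gamma$. Parallel translation once around $\gamma$ is orthogonal, orientation-preserving (by orientability of $M$), and fixes $\gamma'(0)$, hence restricts to an element of $SO(\gamma'(0)^\perp)\cong SO(n-1)$. Since $n$ is even, $n-1$ is odd and every element of $SO(2k+1)$ has $+1$ as an eigenvalue; parallel-translating a unit eigenvector produces a smooth parallel unit vector field $V$ along $\gamma$, everywhere normal to $\gamma'$, with $V(L_0)=V(0)$. Setting $Q(s,t)=\exp_{\gamma(s)}(tV(s))$, each longitudinal curve $Q_t = Q(\cdot,t)$ is a smooth closed loop because $V$ is periodic, and the second variation formula reduces, using $V$ parallel, to
$$I(V) = -2\int_0^{L_0} K(\gamma'(s)\wedge V(s))\,ds < 0,$$
by positivity of sectional curvature. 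Hence $|Q_t| < L_0 = 2d(p,m)$ for all small $t > 0$.

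Next, fix such a small $t$, let $L_t = |Q_t|$, reparametrize $Q_t$ by arclength on $[0,L_t]$, and set $p_t = Q_t(0)$, $q_t = Q_t(L_t/2)$. The two subarcs $\alpha_t = Q_t|_{[0,L_t/2]}$ and $\beta_t = (Q_t|_{[L_t/2,L_t]})^{-1}$ are distinct curves from $p_t$ to $q_t$ of common length $L_t/2 < d(p,m)$, with initial tangents at $p_t$ approaching $\gamma'(0)$ and $-\gamma'(0)$ respectively as $t \to 0$. The extremal property of $(p,m)$ gives $\operatorname{inj}(p_t) \ge d(p,m)$, so both arcs lie inside the open normal ball of radius $d(p,m)$ about $p_t$ (each of their points is within arclength $L_t/2$ of $p_t$). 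Lifting by $\exp_{p_t}^{-1}$ produces curves $\tilde\alpha_t, \tilde\beta_t$ in $T_{p_t}M$, both joining $0$ to the single point $v_t := \exp_{p_t}^{-1}(q_t)$.

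The contradiction then emerges as $t \to 0$. Fix a local trivialization of $TM$ near $p$ so that the $v_t$ are identified with elements of a common vector space $T_pM$. Uniform convergence $Q_t \to \gamma$ and continuous dependence of $\exp^{-1}$ on basepoint force $\tilde\alpha_t$ to converge, on every $[0, d(p,m) - \delta]$, to the straight ray $s\mapsto s\gamma'(0)$; an Arzel\`a-Ascoli argument, together with $L_t/2 \to d(p,m)$ and the fact that the lifts have speed $1$, then pins down $v_t \to v := d(p,m)\gamma'(0)$. The identical argument for $\tilde\beta_t$, whose limit is the reversed second half of $\gamma$ with initial tangent $-\gamma'(0)$, yields $v_t \to -v$. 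Since $v \ne 0$, this is the desired contradiction. The technical heart, and the main obstacle, is this last step: as $L_t/2$ approaches the radius $d(p,m)$ of the normal ball, $\exp_{p_t}^{-1}$ is on the verge of degenerating, so uniform convergence on proper subintervals must be combined with a tail estimate (controlled by the pullback length $L_t/2 < d(p,m)$) to fix the endpoints $v_t$ in the limit.
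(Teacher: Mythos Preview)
Your argument is correct and follows exactly the route the paper has in mind: the corollary is placed, without proof, immediately after Synge's theorem and Corollary~\ref{cor:cutpoint}, and the intended deduction is precisely ``if not conjugate, take the periodic geodesic of Corollary~\ref{cor:cutpoint} and run Synge's second-variation trick to get a contradiction with the minimality of $d(p,m)$.'' You have supplied the details that the paper omits.

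A few remarks on the part you flag as delicate. Your endpoint argument is sound, and the non-conjugacy hypothesis is exactly what makes it work. Any subsequential limit $v_\infty$ of $v_t$ satisfies $|v_\infty|\le d(p,m)$ and $\exp_p(v_\infty)=m$, so $v_\infty\in\{\pm d(p,m)\gamma'(0)\}$ by the uniqueness clause in Corollary~\ref{cor:cutpoint}. Because $m$ is not conjugate to $p$, $\exp_p$ is a local diffeomorphism on disjoint neighborhoods $U_\pm$ of these two preimages, so the pullback metric is uniformly comparable to the Euclidean metric there (and, by continuity in the basepoint, the same holds for $\exp_{p_t}$ with $t$ small). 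The tail of $\tilde\alpha_t$ beyond parameter $d(p,m)-\delta$ starts in $U_+$ and has pullback length $L_t/2-(d(p,m)-\delta)\to\delta$; by the metric comparability it cannot leave $U_+$ once $\delta$ is small, forcing $v_\infty=d(p,m)\gamma'(0)$. The symmetric argument with $\tilde\beta_t$ gives $v_\infty=-d(p,m)\gamma'(0)$, the desired contradiction. So your ``tail estimate'' is not merely plausible but genuinely available from the hypotheses, and your proof is complete.
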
\index{orientable}

\section{Completeness}\index{completeness}\index{Cauchy sequence}
\index{metric space!complete}
In a topological metric space $X$, a sequence
$(x_n)$ is a {\em Cauchy sequence} if for every $\epsilon > 0$ there
is $n_0$ such that for all $n > n_0,\ m > n_0$, we have $d(x_n, x_m) <
\epsilon$. A convergent sequence is a Cauchy sequence. If every Cauchy
sequence is convergent, then $X$ is a {\em complete} metric space. If
some subsequence of a Cauchy sequence converges, then the sequence
itself converges. Hence a compact metric space is complete.

\vspace{\medskipamount}
\index{completion}
A noncomplete metric space $X$ has an essentially unique {\em
completion}, a complete metric space in which $X$ is isometrically
imbedded and no unnecessary identifications are made among the points
added to make the space complete or among those and the original
points of $X$. There is a standard way of constructing the completion:
start with the set of all Cauchy sequences; put an equivalence
relation on that set, requiring two sequences to be equivalent if the
sequence which results from interleaving them is still Cauchy; the
metric is extended to the set of equivalence classes by taking the
limit of distances $d(x_n, y_n)$; we imbed $X$ isometrically in this
set of equivalence classes as the classes represented by constant
sequences.

\vspace{\medskipamount}

The completion of a Riemannian manifold does not have to be a
manifold. For example, we can start with the Euclidean plane, make it
noncomplete by puncturing it; then we can take the universal simply
connected covering manifold $X$. As a manifold $X$ is just
diffeomorphic to the plane. However, as a metric space there is only
one equivalence class of Cauchy sequences which does not converge: the
projection of a representative converges to the point we removed. In
terms of Riemann surface theory $X$ is the Riemann surface of the
complex logarithm function; that is, the locally defined log can be
defined as a single-valued complex-analytic function on $X$. So we
call $X$ the {\em logarithmic covering} of the punctured plane.
\index{logarithmic covering}

\vspace{\medskipamount}
\begin{prob}\label{prob40} Discover a topological property of the 
completion of the logarithmic covering which shows that it is not a
manifold.\end{prob}

\vspace{\medskipamount}\index{metric space!intrinsic}
Recall that a metric space is {\em intrinsic} if the distance between
two points is the infimum of lengths of curves between the points. We
say that a metric space is a {\em geodesic} metric space if it
intrinsic and for every pair of points there is a curve between the
points whose length realizes the distance between the points. A space
is {\em locally geodesic} is every point has a neighborhood in which
distances between pairs from that neighborhood are realized by curve
lengths. Thus, we have seen that a Riemannian manifold is a locally
geodesic space.\index{geodesic space}\index{geodesic space!locally}

\vspace{\smallskipamount}

If we parametrize a geodesic in a Riemannian manifold by the arc
length measured from some point on it, then it becomes an isometric
immersion from an interval to the Riemannian manifold. We have
realized the geodesics as the projections of the integral curves of
the vector field $E_1$ on $FM$. In particular we will have {\em
maximal} geodesics, \index{geodesic!maximal}
ones which cannot be extend as geodesics to a
larger interval; the domain is always an open interval. If that
interval is not all of $\R $, then there will be a Cauchy sequence
in the interval converging to a finite end. The image distances are no
farther apart, hence also a Cauchy sequence. Thus, if the Riemannian
manifold is complete, the limit point of that sequence can be used to
extend the geodesic to one more point. We have called this ability to
always extend a geodesic to a finite end of its domain {\em geodesic
completeness}. We have therefore proved:\index{completeness!geodesic}
\begin{prop} A complete Riemannian manifold is
geodesically complete. That is, maximal geodesics are defined on
the whole real line.
\end{prop}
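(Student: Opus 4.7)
The plan is to argue by contradiction, using the completeness of $M$ as a metric space to supply the limit point at which to restart the ODE that defines a geodesic. Suppose $\gamma:(a,b)\to M$ is a maximal geodesic with $b<\infty$ (the case of a finite left endpoint is symmetric). After rescaling we may assume $\gamma$ has constant speed $c\ge 0$; the case $c=0$ is trivial (constant curves extend to all of $\R$), so assume $c>0$. Then for any $s,t\in(a,b)$, the restriction of $\gamma$ to $[s,t]$ is a curve of length $c|s-t|$, so by the definition of the Riemannian distance we have $d(\gamma(s),\gamma(t))\le c|s-t|$. Consequently, whenever $(t_n)\subset(a,b)$ is a Cauchy sequence tending to $b$, the image sequence $(\gamma(t_n))$ is Cauchy in $M$.

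By completeness of $M$ there is $p\in M$ with $\gamma(t_n)\to p$, and the estimate above shows this limit is independent of the chosen sequence, so $\gamma(t)\to p$ as $t\to b^{-}$. The next step is to invoke a uniform local existence statement for geodesics: there exist a neighborhood $U$ of $p$ and a number $\epsilon>0$ such that for every $q\in U$ and every $v\in M_q$ with $g(v,v)\le 2c^2$, the geodesic with initial point $q$ and initial velocity $v$ is defined on $(-\epsilon,\epsilon)$. This follows from the local existence and uniqueness theorem for geodesics stated earlier in the excerpt, applied to the basic vector field $E_1$ (or more precisely, to the spray on $TM$): the geodesic equation is a smooth ODE, and smooth dependence on initial data combined with compactness of a closed ball of tangent vectors of bounded length over a compact neighborhood of $p$ yields the uniform $\epsilon$.

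Now choose $t_0\in(a,b)$ close enough to $b$ that $\gamma(t_0)\in U$ and $b-t_0<\epsilon$. By uniform local existence there is a geodesic $\sigma:(-\epsilon,\epsilon)\to M$ with $\sigma(0)=\gamma(t_0)$ and $\sigma'(0)=\gamma'(t_0)$. Local uniqueness of geodesics forces $\sigma(s)=\gamma(t_0+s)$ on the overlap of their domains, so concatenating $\gamma|_{(a,t_0]}$ with $\sigma$ produces a geodesic extension of $\gamma$ to an open interval strictly containing $(a,b)$. This contradicts the maximality of $\gamma$, so $b=\infty$, and similarly $a=-\infty$.

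The only real obstacle is making the uniform local existence in the second paragraph watertight; everything else is formal consequence of the Lipschitz bound $d(\gamma(s),\gamma(t))\le c|s-t|$ and the uniqueness of maximal integral curves. That uniformity is a standard consequence of smooth dependence of solutions of ODEs on initial conditions once one phrases geodesics as integral curves of the basic vector field on $BM$ and uses compactness of a small closed set of starting frames lying over a neighborhood of $p$; no curvature or metric assumptions beyond smoothness of the Levi-Civita connection are needed.
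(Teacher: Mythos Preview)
Your proof is correct and follows essentially the same approach as the paper: use the Lipschitz estimate $d(\gamma(s),\gamma(t))\le c|s-t|$ to show that a Cauchy sequence in the parameter interval maps to a Cauchy sequence in $M$, invoke metric completeness to produce a limit point, and then extend the geodesic through that point via local existence of integral curves of $E_1$. The paper's argument is terser—it simply asserts that ``the limit point of that sequence can be used to extend the geodesic to one more point''—whereas you spell out the uniform local existence over a compact set of initial data; but the underlying idea is identical.
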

The converse of this theorem is known as the Hopf-Rinow Theorem,
proved in its original form by H. Hopf and W. Rinow (On the concept of
complete differential-geometric surfaces, Comment. Math. Helv. \textbf{3}
(1931), 209-225.) \index{Hopf, H.}\index{Rinow, W.}
\index{Hopf-Rinow Theorem}\index{de Rham, G.}
It was recognized later, by de Rham, that it was
important to weaken the hypothesis a little more: if we assume that
just those geodesics extending from a single point can be extended
infinitely, then the Riemannian space is complete. It was that form
which has become accepted as the classical form of the Hopf-Rinow
Theorem. Strangely, an even better version was proved in 1935: S.
Cohn-Vossen, Existenz Kurzester Wege. Doklady SSSR 8 (1935), 339-342.
\index{Cohn-Vossen, S.}
The Cohn-Vossen version is applicable to locally geodesic spaces which
have maximal shortest paths of finite length; the de Rham improvement,
requiring the extendibility of only those geodesics which originate
from some single point, is also valid for Cohn-Vossen's version
\begin{thm}[The Hopf-Rinow-CohnVossen Theorem]
 \index{Hopf-Rinow-CohnVossen Theorem}If $X$ is a locally
geodesic space such that there is a point $p$ for which every maximal
geodesic through $p$ is defined on a closed interval, then $X$ is a
complete metric space.
\end{thm}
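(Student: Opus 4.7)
The plan is to establish a Heine--Borel property at $p$: every closed metric ball $\overline{B}(p,R)$ is compact. Completeness then follows immediately, since any Cauchy sequence is bounded, hence lies in some such ball, and a Cauchy sequence with a convergent subsequence converges.

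First I would prove the shortest-path-from-$p$ lemma: for every $q \in X$, there is a minimizing geodesic from $p$ to $q$ realizing $r := d(p,q)$. Using the locally geodesic property at $p$ and compactness of a small geodesic sphere around $p$, pick $\epsilon > 0$ and a point $q_\epsilon$ at distance $\epsilon$ from $p$ which minimizes distance to $q$ on that sphere; the triangle inequality forces $d(q_\epsilon, q) = r - \epsilon$. Extend the radial segment $p \to q_\epsilon$ to the maximal geodesic $\gamma$ from $p$ in this direction, whose domain $I$ is closed by hypothesis. Set $t^\ast := \sup\{t \in I : d(p, \gamma(t)) = t \text{ and } d(\gamma(t), q) = r - t\}$; by continuity and closedness of $I$, the supremum is attained. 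If $t^\ast < r$, apply the locally geodesic property at $\gamma(t^\ast)$ to obtain a minimizing segment of some length $\delta$ to a point $q'$ on a small sphere around $\gamma(t^\ast)$ chosen to minimize distance to $q$, so that $d(q', q) = r - t^\ast - \delta$. The concatenation of $\gamma|_{[0, t^\ast]}$ with this new segment has length $t^\ast + \delta$, and $d(p, q') \ge r - (r - t^\ast - \delta) = t^\ast + \delta$ forces equality, so the concatenation is minimizing. Local uniqueness of short minimizers then forces the new segment to be an extension of $\gamma$ past $t^\ast$ (this is where maximality of $\gamma$ and the closed-interval hypothesis are needed: the extension must lie in $I$), contradicting the choice of $t^\ast$. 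Hence $t^\ast = r$ and $\gamma(r) = q$.

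Next, let $R_0 := \sup\{R \ge 0 : \overline{B}(p,R) \text{ is compact}\}$; local compactness at $p$ gives $R_0 > 0$, and I claim $R_0 = \infty$. Compactness of $\overline{B}(p, R_0)$ follows from Step 1: every point of the ball is reached by a minimizing geodesic of length at most $R_0$ from $p$, so the ball is the continuous image of the compact set $[0, R_0] \times S$, where $S$ is the compact unit sphere of geodesic initial directions at $p$, under the map $(t, v) \mapsto \gamma_v(t)$; closedness of the maximal domains makes this map well-defined on the full rectangle. Local compactness at each point of the compact set $\overline{B}(p, R_0)$ then yields a uniform $\eta > 0$ so that $\overline{B}(p, R_0 + \eta)$ is covered by finitely many compact balls, hence itself compact, contradicting the definition of $R_0$ unless $R_0 = \infty$. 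Completeness follows as outlined. The main obstacle is Step 1, specifically using the closed-interval hypothesis together with the locally geodesic property to rule out a corner at $\gamma(t^\ast)$ in this abstract metric-space setting --- in the Riemannian case corners are excluded because minimizers are smooth, but here it must be extracted from local uniqueness of short minimizing curves in a locally geodesic space, combined with uniqueness of maximal geodesic extensions from $p$.
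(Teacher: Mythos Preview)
Your outline is essentially the classical Riemannian Hopf--Rinow argument, but the theorem here is stated in the abstract setting of a locally geodesic metric space, and two of your steps rely on structure that is not available there.

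In Step~2 you parametrize $\overline B(p,R_0)$ as the continuous image of $[0,R_0]\times S$, where $S$ is ``the compact unit sphere of geodesic initial directions at $p$''. No such object exists in a general locally geodesic space: there is no tangent space, and geodesics from $p$ are not indexed by any a priori compact set. This is not a technicality you can patch; it is the heart of the matter. The paper handles it by working instead with the set $E_r$ of points reachable from $p$ by a minimizing segment of length $\le r$, and proving $E_r$ is compact via the Arzel\`a--Ascoli theorem applied directly to sequences of minimizing \emph{curves} (parametrized on $[0,1]$ at constant speed), not via any sphere of directions. The diagonal subsequence trick then pushes compactness of $E_s$ for $s<r$ up to compactness of $E_r$, and this is precisely where the closed-interval hypothesis enters: the limiting half-open minimizer extends to the endpoint.

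In Step~1 you already flag the obstacle: ruling out a corner at $\gamma(t^\ast)$ requires local uniqueness of short minimizers, which a locally geodesic space need not have (the paper's own taxicab example shows this). You cannot extract this from the hypotheses. The paper sidesteps the issue entirely by \emph{not} proving the shortest-path-from-$p$ property first. Instead it shows that the set $A=\{r:E_r=\bar B_r\}$ is both open and closed in $[0,\infty)$, with the openness step using compactness of a slightly larger ball plus Arzel\`a--Ascoli to produce minimizers---so the existence of a minimizer to each point and the compactness of balls are proved together, inductively in $r$, rather than in sequence.
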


Of course, for locally compact spaces, the assumption that the space
is locally geodesic can be derived from the metric being intrinsic.

\vspace{\medskipamount}

In the same context, of locally compact intrinsic complete metric
spaces, it then follows that the space is globally geodesic.

\vspace{\medskipamount}
The modern form of the H-R-CV theorem specifies several equivalent
conditions and a consequence. This facilitates the proof of the main
implication, which was stated as the H-R-CV theorem above.
\begin{thm}[The Hopf-Rinow-CohnVossen Theorem] 
\index{Hopf-Rinow-CohnVossen Theorem}In a locally compact
intrinsic metric space $M$, the following are equivalent:
\begin{description}
\item[(i)] Every halfopen minimizing geodesic from a fixed base point
extends to a closed interval.
\item[(ii)] Bounded closed subsets are compact. (This is often said:
$M$ is finite--compact.)
\item[(iii)] $M$ is complete.
\item[(iv)] Every halfopen geodesic extends to a closed interval.
\end{description}

Any of these implies: $M$ is a geodesic space (i.e., any two points
may be joined by a shortest curve).
\end{thm}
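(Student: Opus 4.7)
The plan is to prove the cycle $(i) \Rightarrow (ii) \Rightarrow (iii) \Rightarrow (iv) \Rightarrow (i)$ by running $(ii) \Rightarrow (iii) \Rightarrow (iv) \Rightarrow (i) \Rightarrow (ii)$, treating $(iv) \Rightarrow (i)$ as a triviality (every minimizing geodesic is a geodesic, so (i) is a special case of (iv)), and to extract the geodesic-space conclusion from the machinery built in proving $(i) \Rightarrow (ii)$. The main obstacle will be $(i) \Rightarrow (ii)$; the other implications reduce to Cauchy-sequence exercises.

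For $(ii) \Rightarrow (iii)$: a Cauchy sequence is bounded, hence eventually lies in a closed ball, which by (ii) is compact, so the sequence has a convergent subsequence; a Cauchy sequence with a convergent subsequence converges. For $(iii) \Rightarrow (iv)$: if $\gamma : [0, a) \to M$ is a geodesic parametrized by arclength, then $d(\gamma(s), \gamma(t)) \leq |s - t|$, so $\gamma(s)$ is Cauchy as $s \to a^-$; completeness supplies a limit, giving the extension $\gamma(a)$, and continuity of $d$ together with the local minimization property of $\gamma$ shows the extended curve is still a geodesic.

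The heart of the theorem is $(i) \Rightarrow (ii)$. Fix the base point $p$ of (i) and set
\[
S = \{\, r \geq 0 : \bar B(p, r) \text{ is compact}\,\}.
\]
Local compactness places a neighborhood of $0$ in $S$, and $S$ is downward closed since a closed subset of a compact set is compact. The goal is $S = [0, \infty)$; let $R = \sup S$ and suppose for contradiction $R < \infty$. Two claims carry the argument. First, every $q$ with $d(p, q) < R$ is joined to $p$ by a minimizing geodesic: take curves $\gamma_n$ from $p$ to $q$ whose lengths approach $d(p, q)$, reparametrize by arclength so they are uniformly $1$-Lipschitz, confine them in a compact $\bar B(p, r)$ with $d(p, q) < r < R$, and extract a uniformly convergent subsequence by Arzel\`a-Ascoli; the limit is a minimizing curve and hence, since the space is locally geodesic, a minimizing geodesic. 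Second, $R \in S$: using the first claim and (i), every $q \in \bar B(p, R)$ is the endpoint of a minimizing geodesic from $p$ of length $\leq R$ (points at distance exactly $R$ arise as closed-interval extensions, via (i), of half-open minimizing geodesics built by the first claim); these geodesics form a uniformly $1$-Lipschitz family whose midpoint values lie in the compact set $\bar B(p, R/2)$, so Arzel\`a-Ascoli produces convergent subsequences of any sequence in $\bar B(p, R)$, establishing compactness. Finally, to contradict $R = \sup S$: cover the compact set $\bar B(p, R)$ by finitely many compact local neighborhoods and let $\delta > 0$ be smaller than all of their radii; then $\bar B(p, R + \delta)$ is a closed subset of the finite union of these compact neighborhoods together with $\bar B(p, R)$, hence compact, so $R + \delta \in S$, contradicting the supremum.

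The geodesic-space conclusion is immediate once (ii) holds: the Arzel\`a-Ascoli argument used in the first claim applies to any two points $p, q \in M$ without any upper bound on distance, since every closed ball is now known compact, so almost-minimizing curves between $p$ and $q$ subconverge to a minimizing geodesic joining them.
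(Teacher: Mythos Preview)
Your overall strategy---the same cycle, the same identification of $(i)\Rightarrow(ii)$ as the substantive step, and the use of Arzel\`a--Ascoli on families of almost-minimizing curves---matches the paper. The organizational difference (you track a single set $S$ of radii with compact closed balls, while the paper separates the compactness of the ``reachable'' sets $E_r=\{q:d(p,q)\le r$ and a minimizing segment from $p$ to $q$ exists$\}$ from the equality $E_r=\bar B_r$) is harmless in principle.

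The gap is in your Claim~2, that $R\in S$. Two linked problems. First, the parenthetical ``points at distance exactly $R$ arise as closed-interval extensions, via (i), of half-open minimizing geodesics built by the first claim'' is not justified: your first claim produces \emph{closed} minimizing segments to points at distance strictly less than $R$, not half-open geodesics on $[0,R)$ terminating at a prescribed boundary point. Second, and more seriously, your Arzel\`a--Ascoli step is circular as stated: the geodesics you collect take values in $\bar B(p,R)$, whose compactness is precisely what you are trying to prove; noting that midpoints lie in the compact $\bar B(p,R/2)$ does not rescue this, since Arzel\`a--Ascoli needs pointwise relative compactness at \emph{every} parameter value, including those near $R$. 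The repair, which the paper carries out explicitly, is a diagonal argument: given a sequence $(q_n)$ in $\bar B(p,R)$ with minimizing segments $\sigma_n$ from $p$, apply Arzel\`a--Ascoli successively on $[0,s_m]$ for $s_m\uparrow R$ (where the curves do lie in the compact $\bar B(p,s_m)$), pass to the diagonal subsequence to obtain a half-open minimizing geodesic on $[0,R)$, and only \emph{then} invoke hypothesis (i) to extend it to $[0,R]$; the endpoint of that extension furnishes the limit of a subsequence of $(q_n)$. This is exactly where (i) does its work, and it cannot be moved earlier in the argument as you attempt.
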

\begin{proof} We start with an outline of proof. We establish a cycle $(i) \Rightarrow (ii)
\Rightarrow (iii) \Rightarrow (iv) \Rightarrow (i)$, and then the
final assertion is clear from $(ii)$.

\vspace{\smallskipamount}
The implication $(ii) \Rightarrow (iii)$ is true in any metric space
and we have already noted the truth of $(iii) \Rightarrow (iv)$ above;
$(iv) \Rightarrow (i)$ is trivial. Thus, the more difficult part, $(i)
\Rightarrow (ii)$ is left.

\vspace{\medskipamount}
So let us assume $(i)$ with base point $p$ and define
$$E_r = \{q: d(p,q) \le r \ \text{and there is a minimizing geodesic
from $p$ to $q$}\},$$
$$\bar B_r = \ \text{the closed metric ball of radius $r$ with center
$p$},$$
$$A = \{ r : E_r = \bar B_r \}.$$
The idea is to show that $A$ is both open and closed in $[0,\infty)$.

\vspace{\smallskipamount}
(In the Riemannian case $E_r$ is the image under $\exp_p$ of the
intersection of the domain of $\exp_p$ with the closed ball in $M_p$
of radius $r$. The proof of the fact that $E_r$ is compact then
follows by using the continuity of $\exp_p$. From there the pattern of
proof was given by deRham, copied into Bishop \& Crittenden.)
\index{Crittenden, R.J.}

\vspace{\medskipamount}
Now we give the details of the proof of the H-R-CV theorem.

\vspace{\smallskipamount}
(1) $E_r$ is compact for all $r$.

\vspace{\smallskipamount}
The set $I$ of $r$ for which $E_r$ is compact is an interval: for if
$s < r$ and $E_r$ is compact, let $(q_n)$ be a sequence in $E_s$. A
sequence of minimizing segments from $p$ to $q_n$ lies in the compact
set $E_r$, so by the Arzel\`a-Ascoli Theorem has a convergent
subsequence.\index{Arzel\`a-Ascoli Theorem} The limit curve is necessarily a minimizing geodesic, and
its endpoint is the limit of the corresponding subsequence of $(q_n)$
and is in $E_s$.

\vspace{\smallskipamount}
The interval $I$ is open: for if $E_r$ is compact, then by local
compactness we can cover it by a finite number of open sets having
compact closure. The union of these closures will contain all points
within distance $\epsilon > 0$ of $E_r$, so it will be a compact set
$K$ containing $E_{r+\epsilon}$. Repeating the above argument with
$E_r$ replaced by $K$ and $E_s$ by $E_{r+\epsilon}$ shows that the
latter is compact.

\vspace{\smallskipamount}
Suppose that $I = [0,r)$ where $r$ is finite. Let $(q_n)$ be a
sequence in $E_r$ and $\sigma_n$ a minimizing geodesic from $p$ to
$q_n$. Using the compactness of $E_s$, for $s<r$, we construct
successive subsequences $\sigma_n^m$ for each $m$ which converge on
$[0,s_m]$ where $s_m \to r$. Then the diagonal subsequence
$\sigma_n^n$ converges to a minimizing halfopen geodesic segment on
$[0,r)$. By hypothesis, this halfopen geodesic has an extension to
$r$, which provides a limit point of $(q_n)$ and a minimizing geodesic
to it from $p$. Thus, $E_r$ is compact, contradicting the assumption
that $r \notin I$. Hence, $I = [0,\infty)$, that is, all $E_r$ are
compact.

\vspace{\medskipamount}
(2) The set $A$ of all $r$ for which $E_r = \bar B_r$ is an interval.

\vspace{\smallskipamount}
Indeed, if every point at distance $r$ or less can be reached by a
minimizing geodesic from $p$, and if $s<r$, then certainly every
point of distance $s$ or less can be so reached.

\vspace{\medskipamount}
(3) $A$ is closed. If $r$ is a limit point of elements $s$ in $A$,
then $q \in \bar B_r$ is certainly a limit of points
$q_s \in \bar B_s \subset E_r$. Since $E_r$ is compact, $q \in E_r$.

\vspace{\medskipamount}
(4) $A$ is open. If $E_r = \bar B_r$, then we can cover $\bar B_r$ by
a larger compact set including some $\bar B_{r+2\epsilon}$, so the
latter is compact. For $q \in \bar B_{r+\epsilon}$ a sequence of
curves from $p$ to $q$ with lengths converging to $d(p,q)$ will
eventually be in $\bar B_{r+2\epsilon}$. By the Arzel\`a-Ascoli Theorem
there will be a subsequence converging to a minimizing segment from
$p$ to $q$, so that $q \in E_{r+\epsilon}$. Hence, $E_{r+\epsilon} =
\bar B_{r+\epsilon}$, showing that $A$ is open.
\end{proof}

\subsection{Cut points} \index{cut point}
In this section we consider Riemannian manifolds.
The notion of a cut point in a more general space, such as a complete
locally compact interior metric space, or even a complete Riemannian
manifold with boundary, is difficult to formulate so as to retain the
nice properties that cut points have in complete Riemannian manifolds.
Also, they don't make much sense in noncomplete Riemannian manifolds.

\vspace{\smallskipamount}
If $\gamma$ is a geodesic, $p = \gamma(0)$, then $q = \gamma(s)$ is a
{\em cut point} (or {\em minimum point) of p along $\gamma$} if
$\gamma$ realizes the distance $d(p,q)$, but for every $r>s$, $\gamma$
does not realize the distance $d(p,\gamma(r)$.

\begin{thm} If $q$ is a cut point of $p$, then either there are
two minimizing segments from $p$ to $q$ or $q$ is a first conjugate
point of $p$ along a minimizing segment.
\end{thm}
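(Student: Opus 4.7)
The plan is to exploit the failure of minimization just past $q$ by taking points $\gamma(r_n)$ with $r_n \searrow s_0$ (where $q = \gamma(s_0)$ on the unit-speed geodesic $\gamma$), extracting limits of competing minimizers, and running a simple dichotomy. By the definition of cut point, $\gamma|_{[0,r_n]}$ is not minimizing, so by completeness (Hopf-Rinow-Cohn-Vossen) there exists a unit-speed minimizing geodesic $\sigma_n$ from $p$ to $\gamma(r_n)$ of length $\ell_n := d(p,\gamma(r_n)) < r_n$. Setting $v_n := \sigma_n'(0)$, I first observe $v_n \ne \gamma'(0)$: otherwise $\sigma_n$ would coincide with $\gamma|_{[0,\ell_n]}$ and terminate at $\gamma(\ell_n) \ne \gamma(r_n)$.

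By compactness of the unit sphere in $M_p$, pass to a subsequence with $v_n \to v$ for some unit $v \in M_p$. Since $\ell_n \to s_0$ and $\exp_p$ is continuous, the limit geodesic $\sigma(s) = \exp_p(sv)$ satisfies $\sigma(s_0) = q$, and its length from $0$ to $s_0$ equals $s_0 = d(p,q)$, so $\sigma|_{[0,s_0]}$ is a minimizing segment from $p$ to $q$. Now split into cases. If $v \ne \gamma'(0)$, then $\sigma$ and $\gamma$ are distinct geodesics (geodesics are determined by their initial velocity), giving two minimizing segments from $p$ to $q$, and we are done.

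Suppose instead $v = \gamma'(0)$. Consider the two sequences in $M_p$ defined by
$$u_n := r_n\,\gamma'(0), \qquad w_n := \ell_n v_n.$$
Both converge to $s_0\gamma'(0)$ and both satisfy $\exp_p(u_n) = \exp_p(w_n) = \gamma(r_n)$, yet $u_n \ne w_n$ because $u_n$ is parallel to $\gamma'(0)$ while $w_n$ is parallel to $v_n \ne \gamma'(0)$. Hence $\exp_p$ fails to be injective on any neighborhood of $s_0\gamma'(0)$, so by the inverse function theorem its differential at $s_0\gamma'(0)$ must be singular; that is, $q$ is a conjugate point of $p$ along $\gamma$. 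To upgrade this to \emph{first} conjugate point, invoke the Nonminimization Beyond Conjugate Point theorem: if there were a conjugate point $\gamma(s')$ with $0 < s' < s_0$, then $\gamma$ could not minimize beyond $s'$, contradicting the minimization of $\gamma|_{[0,s_0]}$ that is built into the definition of cut point.

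The main obstacle is the bookkeeping in the Case 2 step: one has to produce two genuinely different vectors in $M_p$ with the same exponential image accumulating at $s_0\gamma'(0)$. Ruling out $v_n = \gamma'(0)$ (so that the two competing vectors $u_n, w_n$ truly differ, not merely in length but in direction) is the crucial observation that makes the inverse function theorem bite; the rest of the argument is a straightforward Arzel\`a-Ascoli/compactness extraction and citation of results already in hand.
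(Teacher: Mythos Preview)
Your argument is correct and follows essentially the same route as the paper's proof: take points just past $q$, pull back minimizing segments to them, extract a limit via compactness of the unit sphere in $M_p$, and run the dichotomy on whether the limiting initial velocity equals $\gamma'(0)$. Your write-up is in fact more careful than the paper's: you spell out why $v_n \ne \gamma'(0)$, you make the inverse-function-theorem step explicit via the two distinct sequences $u_n, w_n$ with common image, and you supply the ``first'' in ``first conjugate point'' by invoking nonminimization beyond a conjugate point --- a detail the paper leaves implicit.
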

\begin{proof}. Let $q = \gamma(s)$ and suppose that $s_n > s$ is a
sequence converging to $s$ and that $\sigma_n$ is a curve from $p$ to
$\gamma(s_n)$ such that the length of $\sigma_n$ is less than the
length of the segment of $\gamma$ from $p$ to $\gamma(s_n)$. Since we
have assumed that the space is complete, we may as well suppose that
$\sigma_n$ is a minimizing segment. We take a convergent subsequence,
the limit of which will be a minimizing segment from $p$ to $q$. If
the limit is $\gamma$, then $\exp_p$ is not one-to-one in a
neighborhood of $s\gamma'(0)$; this shows that $q$ is a conjugate
point of $p$ along $\gamma$.
\end{proof}
\begin{cor} The relation ``is a cut point of'' is symmetric.
\end{cor}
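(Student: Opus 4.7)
The plan is to deduce the corollary directly from the preceding theorem together with the \emph{Nonminimization Beyond Conjugate Point} theorem. Suppose $q$ is a cut point of $p$, witnessed by a minimizing geodesic segment $\gamma:[0,s]\to M$ with $\gamma(0)=p$, $\gamma(s)=q$; let $\bar\gamma(t)=\gamma(s-t)$ be its reverse. Then $\bar\gamma$ is a minimizing geodesic from $q$ to $p$ of length $s$, so condition (i) of ``$p$ is a cut point of $q$ along $\bar\gamma$'' is already free. The only thing left to prove is that $\bar\gamma$ fails to minimize past $p$: for every $r>s$, the segment $\bar\gamma\!\restriction\![0,r]$ is not a shortest curve from $q$ to $\bar\gamma(r)$. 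By the theorem immediately preceding, one of two alternatives holds.

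First I would handle the case where there is a second minimizing segment $\sigma:[0,s]\to M$ from $p$ to $q$ distinct from $\gamma$. Since two geodesics starting at $p$ are determined by their initial velocities, $\sigma'(0)\neq\gamma'(0)$, so the reversed segments satisfy $\bar\sigma'(s)=-\sigma'(0)\neq -\gamma'(0)=\bar\gamma'(s)$. Assume for contradiction that $\bar\gamma$ is still minimizing on $[0,r]$ for some $r>s$; set $p'=\bar\gamma(r)$. Then the broken curve obtained by concatenating $\bar\sigma$ (from $q$ to $p$) with $\bar\gamma\!\restriction\![s,r]$ (from $p$ to $p'$) has length $s+(r-s)=r=d(q,p')$, hence is also minimizing. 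But it has a genuine corner at $p$, and any piecewise smooth curve with a corner can be strictly shortened by replacing a small neighborhood of the corner with a minimizing geodesic segment inside a convex normal ball, contradicting minimality.

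In the other case, $q$ is a first conjugate point of $p$ along the minimizing $\gamma$. Since the defining condition for conjugate points---existence of a nonzero Jacobi field vanishing at both endpoints---is manifestly symmetric in the endpoints, $p$ is a conjugate point of $q$ along $\bar\gamma$. The Nonminimization Beyond Conjugate Point theorem then says that $\bar\gamma$ is not minimizing on any interval $[0,r]$ with $r>s$, which is exactly condition (ii).

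The main obstacle, such as it is, will be the corner-shortening step in the first case; however, this is a standard local fact that follows immediately from the existence of totally normal neighborhoods (central normal balls) in which unique minimizing geodesics realize distances, and the strict triangle inequality inside such a neighborhood. Putting the two cases together establishes that $\bar\gamma$ exhibits $p$ as a cut point of $q$, proving symmetry.
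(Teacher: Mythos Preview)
Your proof is correct and follows the same route as the paper: invoke the preceding theorem's dichotomy and argue that each alternative is symmetric. The paper's version is much terser---it simply asserts that ``has two or more minimizing geodesics to'' and ``is a first conjugate point of'' are symmetric relations, leaving implicit the step from those symmetric conditions back to ``$p$ is a cut point of $q$''; your corner-cutting argument and explicit appeal to the Nonminimization Beyond Conjugate Point theorem supply exactly those details.
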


This follows from the fact that both relations ``has two or more
minimizing geodesic to'' and ``is a first conjugate point of'' are
symmetric. The symmetry of the latter follows from the
characterization of conjugate points by Jacobi fields.

\vspace{\medskipamount}

It is not hard to show that the distance to a cut point from $p$ along
the geodesic with direction $v = \gamma'(0)$ is a continuous function
of $v$ to the extended positive reals. The manifold is compact if and
only if there is a cut point in every direction. If we remove the cut
locus of $p$ from the manifold, then the remaining subset is
diffeomorphic to a star-shaped open set in the tangent space via
$\exp_p$. In particular, the noncut locus is homeomorphic to an open
$n$-ball. This shows that all the nontrivial topology of the manifold
is conveyed by the cut locus and how it is glued onto that ball. The
cut locus is the union of some conjugate points, which have measure $0$
by Sard's Theorem, \index{Sard's Theorem}
and subsets of hypersurfaces obtained by equating
``local distance functions from $p$'', the localization being in
neighborhoods of minimizing geodesics to the cut points. Thus, the cut
locus has measure $0$.  Other than that it can be very messy; H. Gluck and
M. Singer have constructed examples for which the cut locus of some point
is nontriangulable.\index{Gluck, H.}\index{Singer, M.}
\index{cut locus!nontriangulable}

\section{Curvature and topology}\index{curvature!nonpositive}
\subsection{Hadamard manifolds} A complete Riemannian manifold with
nonpositive curvature is called an {\em Hadamard manifold}.
\index{Hadamard manifold} The
two-dimensional case was studied by Hadamard, and then his results
were extended to all dimensions by Cartan. There is a further
generalization to manifolds for which some point has no conjugate
points. We first show that nonpositive curvature implies that there
are no conjugate points. \index{conjugate point}
\begin{thm} If $M$ is a Riemannian manifold with nonpositive
curvature, then there are no conjugate points.
\end{thm}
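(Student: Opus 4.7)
The plan is to argue by contradiction using the Jacobi equation together with convexity of $|J|^2$. Suppose $p=\gamma(0)$ has a conjugate point $q=\gamma(s_0)$ with $s_0>0$ along a geodesic $\gamma$, and let $J$ be a nonzero Jacobi field along $\gamma$ with $J(0)=J(s_0)=0$. I want to show that the function $f(s)=g(J(s),J(s))$ is forced to be both nonnegative and convex, with vanishing boundary values, and then conclude it is identically zero.

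The main computation is to differentiate $f$ twice, using the Jacobi equation $J''=-R_{\gamma'J}\gamma'$ proved earlier in the excerpt. One finds
$$f''(s) = 2g(J',J') - 2g(R_{\gamma'J}\gamma',J).$$
Now the curvature term is precisely the numerator of the sectional curvature of the $2$-plane spanned by $\gamma'(s)$ and $J(s)$ whenever these are independent, and it vanishes when they are dependent (since then $R_{\gamma'J}\gamma'=0$). Because the sectional curvature is $\le 0$ by hypothesis, the term $-g(R_{\gamma'J}\gamma',J)$ is $\ge 0$ in either case, so $f''(s)\ge 2|J'(s)|^2\ge 0$. Thus $f$ is a convex nonnegative function on $[0,s_0]$.

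Since $f\ge 0$ on $[0,s_0]$ with $f(0)=f(s_0)=0$, convexity gives $f\equiv 0$ on $[0,s_0]$, hence $J\equiv 0$ and, in particular, $J'(0)=0$. But then by the uniqueness theorem for the Jacobi equation (a homogeneous linear second-order ODE with initial data $J(0)=0$, $J'(0)=0$), $J$ vanishes identically along $\gamma$, contradicting the choice of a nonzero Jacobi field. Therefore no conjugate point exists.

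The only conceptual obstacle is the sign-check that nonpositive sectional curvature really does give $g(R_{\gamma'J}\gamma',J)\le 0$; this is immediate from the sign conventions and formula for $K(\sigma)$ established earlier, together with the degenerate case $J\parallel\gamma'$ in which the curvature term is automatically zero. Everything else is routine manipulation of the Jacobi equation and elementary convex-function facts.
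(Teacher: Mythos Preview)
Your proof is correct and follows essentially the same approach as the paper: both compute the second derivative of $g(J,J)$, use nonpositive curvature to conclude it is convex, and deduce the Jacobi field cannot vanish twice. The only cosmetic difference is that the paper argues directly (showing $g(Y,Y)$ has a strict minimum at $0$ since $Y'(0)\ne 0$, hence never returns to zero), while you argue by contradiction via convexity on $[0,s_0]$; the paper also reduces to $Y\perp\gamma'$ at the outset, whereas you handle the dependent case separately by noting $R_{\gamma'J}\gamma'=0$ there.
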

\begin{proof} We have to show that if $Y$ is a nonzero Jacobi field
such that $Y(0) = 0$, then $Y$ is never $0$ again. Note that $Y'(0) \ne
0$, since $Y(0)$ and $Y'(0)$ are deterministic initial conditions for
the second-order Jacobi differential equation. Now differentiate
$g(Y,Y)$ twice:
$$g(Y,Y)' = 2g(Y,Y'),$$
which is $0$ at time $0$, and
$$g(Y,Y)'' = 2g(Y',Y') + g(Y,-R_{XY}X),$$
which is positive at time $0$.  But we may assume that $Y$ is
perpendicular to the base geodesic, and that the velocity field of that
geodesic is a unit vector field $X$. Then $g(Y,R_{XY}X) = K_{XY}g(Y,Y)
\le 0$. Hence, $g(Y,Y)'' \ge 0$, so that $g(Y,Y)$ only vanishes at
time $0$.
\end{proof}
\begin{thm}[The Hadamard-Cartan Theorem] 
\index{Hadamard-Cartan Theorem}If $M$ is a complete
Riemannian  manifold having a point $p$ such that $p$ has no
conjugate points, then $\exp_p$ is a covering map. If curvature is
nonpositive, then that is true for every $p$ and every fixed-end
homotopy class of curves contains a unique geodesic segment.
If $M$ is simply connected, then it is diffeomorphic to $\R _n$
via $\exp_p$.
\end{thm}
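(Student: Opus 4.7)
The plan is to pull back the metric $g$ to a Riemannian metric $\tilde g := \exp_p^* g$ on $M_p$ via $\exp_p$, show that $(M_p, \tilde g)$ is complete, and then invoke the standard lemma that a local isometry from a complete Riemannian manifold onto a connected one is a covering map.

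First I would check that $\exp_p$ is defined on all of $M_p$ (immediate from geodesic completeness of $M$) and is a local diffeomorphism there. Indeed, the hypothesis that $p$ has no conjugate points means precisely that $(\exp_p)_{*,v}$ is nonsingular for every $v \in M_p$, by the alternative characterization of conjugate points as singular values of $\exp_p$ given earlier in the text; hence $\tilde g$ is a smooth Riemannian metric on $M_p$ and $\exp_p$ is a local isometry. Next, to see that $(M_p, \tilde g)$ is complete, observe that a local isometry sends geodesics to geodesics and reflects them back, so each radial line $\sigma_v(t) = tv$ in $M_p$ is a $\tilde g$-geodesic (since $\exp_p \circ \sigma_v$ is the $g$-geodesic with initial velocity $v$). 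Because $M$ is geodesically complete, $\exp_p(tv)$ is defined for every $t \in \R$, so the same holds for $\sigma_v$. Thus every maximal $\tilde g$-geodesic through $0 \in M_p$ extends to a closed interval, and the single-point extendibility version of the Hopf-Rinow-CohnVossen Theorem (condition (i) in its statement above) yields completeness of $(M_p, \tilde g)$. Now invoke the standard lemma: a local isometry from a complete Riemannian manifold to a connected Riemannian manifold is a covering map. This gives the first assertion.

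For the nonpositive-curvature case, the preceding theorem rules out conjugate points from \emph{every} base point, so the conclusion that $\exp_q$ is a covering holds for all $q \in M$. For the uniqueness of geodesics in a fixed-end homotopy class of curves from $q$ to $r$, note that $(M_p, \tilde g)$ is itself complete, has nonpositive curvature (local isometries preserve the curvature tensor), and is simply connected because topologically $M_p$ is the vector space $\R^n$. Reapplying the covering statement already proved to the base point $\tilde q \in M_p$, the map $\exp_{\tilde q}\colon (M_p)_{\tilde q} \to M_p$ is a covering whose total space and base are both simply connected, hence a diffeomorphism; so any two points of $M_p$ are joined by a unique $\tilde g$-geodesic segment. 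Now lift a chosen representative of the homotopy class to a path in $M_p$ starting at a chosen $\tilde q \in \exp_p^{-1}(q)$; the covering homotopy property forces every homotopic lift to share the same endpoint $\tilde r$. The projection of the unique $\tilde g$-geodesic from $\tilde q$ to $\tilde r$ lies in the given homotopy class, and any other geodesic representative in $M$ would lift to a second $\tilde g$-geodesic from $\tilde q$ to $\tilde r$, contradicting uniqueness. Finally, when $M$ itself is simply connected the covering $\exp_p\colon M_p \to M$ has simply connected total space and base, so it is a global diffeomorphism exhibiting $M \cong \R^n$.

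The main obstacle is the covering-map lemma for local isometries from complete manifolds, which is standard but not stated in the excerpt; its proof uses completeness of the domain to show that the preimage of each small normal ball in $M$ decomposes as a disjoint union of open sets each mapped isometrically onto that ball. Everything else is bookkeeping built on the de~Rham form of Hopf-Rinow-CohnVossen together with the fact that local isometries intertwine exponential maps, geodesics, and curvature.
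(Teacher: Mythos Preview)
Your proposal is correct and follows essentially the same route as the paper: pull back the metric to $M_p$ via $\exp_p$, use the radial lines together with the single-point form of Hopf--Rinow--CohnVossen to get completeness of $(M_p,\tilde g)$, and then conclude that $\exp_p$ is a covering. The only presentational difference is that the paper proves the path-lifting property directly (using compactness of closed balls in the complete $(M_p,\tilde g)$ to lift Cauchy sequences), whereas you package that step as the ``standard lemma'' that a local isometry with complete domain is a covering; these are the same argument in different clothing, and you rightly flag that this lemma is not stated in the excerpt. Your treatment of the homotopy-class uniqueness is more detailed than the paper's, which leaves that part implicit.
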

\begin{proof} We are given that $\exp_p$ is a local diffeomorphism
everywhere. From covering space theory it is sufficient to show that
it has the path-lifting property: given a curve $\gamma :[0,1] \to M$
and a point $v$ such that $\exp_p(v) = \gamma(0)$ we must show that
there is a curve $\bar \gamma$ such that $\exp_p\circ\bar\gamma =
\gamma$ and $\bar\gamma(0) = v$.

\vspace{\medskipamount}
We can pullback the metric on $M$ to get a metric $\exp_p^*g$ on
$M_p$. The radial lines from the origin are geodesics in this metric,
so that by the H-R-CV Theorem the new metric on $M_p$ is complete. We
can use the local regularity of $\exp_p$ to lift an open arc of a
curve about any point where we have it lifted already. Thus, it
becomes a matter of extending from a halfopen interval to the extra
point. On a large closed ball in $M_p$, which is compact, there will
be a positive lower bound on the amount distances will be stretched by
$\exp_p$; hence a Cauchy sequence approaching the open end in $M$ will
be lifted to a Cauchy sequence in $M_p$, providing the point to
continue the lift.
\end{proof}
\begin{thm}[Myers' Theorem] \index{Myers, S.}\index{Myers' Theorem}
Let $M$ be a complete Riemannian\index{curvature!positive}
manifold such that there is a positive number $c$ for which
$Ric(v,v) \ge (n-1)c\,g(v,v)$ for all tangent vectors $v$. Then $M$
is compact with diameter at most $\pi/\sqrt c$.
\end{thm}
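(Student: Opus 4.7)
The plan is to derive a contradiction from the assumption that some minimizing geodesic has length greater than $\pi/\sqrt{c}$, using Synge's second variation formula together with a cleverly chosen family of variation fields whose sum involves the Ricci curvature. Combined with the Hopf--Rinow--CohnVossen Theorem this will yield both the diameter bound and compactness.

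First I would suppose, for contradiction, that there exists a unit-speed minimizing geodesic $\gamma:[0,L]\to M$ with $L>\pi/\sqrt{c}$. Choose a parallel orthonormal frame $(E_1,\ldots,E_n)$ along $\gamma$ with $E_1=\gamma'$, and for each $i=2,\ldots,n$ define the variation vector field
$$V_i(s) \;=\; \sin\!\bigl(\pi s/L\bigr)\,E_i(s).$$
Each $V_i$ vanishes at $s=0$ and $s=L$, is perpendicular to $\gamma'$, and so is admissible as a fixed-endpoint variation field (using the exponential map to realize it by a smooth rectangle, as earlier in the paper). Since $\gamma$ is minimizing, the second variation theorem forces $I(V_i)\ge 0$ for each $i$.

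Next I would compute and sum. Because $E_i$ is parallel, $V_i'=(\pi/L)\cos(\pi s/L)\,E_i$, so $g(V_i',V_i')=(\pi/L)^2\cos^2(\pi s/L)$, while $g(R_{\gamma' V_i}\gamma',V_i)=\sin^2(\pi s/L)\,K(\gamma'\wedge E_i)$. Summing from $i=2$ to $n$ and invoking the identity $\sum_{i=2}^n K(\gamma'\wedge E_i)=\operatorname{Ric}(\gamma',\gamma')\ge (n-1)c$ from the Ricci-curvature section, I obtain
$$\sum_{i=2}^n I(V_i)\;\le\;2\!\int_0^L\!\!\Bigl[(n-1)\tfrac{\pi^2}{L^2}\cos^2(\pi s/L)-(n-1)c\sin^2(\pi s/L)\Bigr]ds\;=\;(n-1)\Bigl[\tfrac{\pi^2}{L}-cL\Bigr].$$
If $L>\pi/\sqrt{c}$, the right-hand side is strictly negative, so at least one $I(V_i)<0$, contradicting the minimality of $\gamma$. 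Consequently every minimizing geodesic has length $\le\pi/\sqrt{c}$.

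Finally, since $M$ is complete, the Hopf--Rinow--CohnVossen Theorem supplies, for each pair $p,q\in M$, a minimizing geodesic from $p$ to $q$; the previous step bounds its length, hence $d(p,q)\le\pi/\sqrt{c}$. Thus $\operatorname{diam}(M)\le\pi/\sqrt{c}$, so $M$ is bounded; by the equivalence $(\text{iii})\Leftrightarrow(\text{ii})$ in the Hopf--Rinow--CohnVossen Theorem, $M$ is compact. The one point requiring real ingenuity (and hence the ``hard part'') is the choice of the sinusoidal coefficient $\sin(\pi s/L)$: it is precisely the comparison function that makes the kinetic term $\int(f')^2$ and the curvature term $\int f^2$ balance at the critical length $L=\pi/\sqrt{c}$, so that any excess of $L$ over this value immediately yields a negative contribution to the summed second variation.
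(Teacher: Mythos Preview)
Your proof is correct and follows essentially the same route as the paper's: both take sinusoidal multiples of a parallel orthonormal frame orthogonal to $\gamma'$ as test fields and sum the resulting index forms so that the sectional curvatures combine into $\operatorname{Ric}(\gamma',\gamma')$. The only cosmetic difference is packaging: the paper fixes the interval $[0,\pi/\sqrt{c}]$, uses $V_i(s)=\sin(\sqrt{c}\,s)E_i$, and invokes the Basic Inequality to conclude that a conjugate point must occur by $\pi/\sqrt{c}$, whereas you scale the sine to $[0,L]$ and contradict minimality of $\gamma$ directly---the underlying computation and idea are identical.
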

\begin{proof} We show that along every geodesic there must be a
conjugate point within distance $\pi/\sqrt c$. Let $\gamma$ be a
geodesic parametrized by arc length, and let $(E_i)$ be a parallel 
frame field along $\gamma$ with $E_n = \gamma'$. We
define fields along $\gamma$ which would be Jacobi fields vanishing at
0 and $\pi/\sqrt c$ if $M$ had constant sectional curvature $c$:
$$V_i(s) = \sin\sqrt c s\,E_i(s),$$
for $i = 1,\ldots, n-1$.

Then the index form has value
$$I(V_i) = \int_0^{\pi/\sqrt c} (c\cdot \cos^2 \sqrt c s -
K_{\gamma'V_i}\sin^2 \sqrt c s)\,ds.$$
We are given that $\sum K_{\gamma'V_i} = Ric(\gamma',\gamma')
\ge (n-1)c$, so that if we add the index forms we get
$$\sum_{i=1}^{n-1}I(V_i) \le c\cdot\int\cos 2\sqrt c s \,ds = 0.$$

If there were no conjugate point on the interval in question, then the
basic inequality tells us that the index form would be positive
definite. Hence there is a conjugate point and there can be no point
at distance from $\gamma(0)$ greater than $\pi/\sqrt c$.
\end{proof}
\begin{rem} We have proved a slightly better result than claimed. We
don't have to assume that {\em all} Ricci curvatures are positively
bounded below, but only those for tangents along geodesics radiating
from a single point. Then we still get compactness, but not the
estimate on the diameter.
\end{rem}
\begin{prob}\label{prob41} The {\em radius} \index{radius of Riemannian
manifold}of a Riemannian manifold $M$
is the greatest lower bound of radii of metric balls which cover $M$.
Prove that in general $radius \,\le \, diameter \,\le \, 2\cdot radius$.
Moreover, the upper bound $radius \,\le \pi/\sqrt c$ can be obtained
from completeness and the assumption that the Ricci curvatures of
tangents along geodesics radiating from a single point have the lower
bound assumed in Myers' Theorem.\end{prob}

\vspace{\medskipamount}\index{curvature!Ricci}
When $n=2$ the condition on Ricci curvature reduces to a lower bound
$K\ge c$ on the Gaussian curvature; the conclusion of Myers' Theorem
was known for this case much earlier and this result is called
Bonnet's Theorem.\index{Bonnet's Theorem}
\subsection{Comparison Theorems}. There is an improved method of
doing comparison theorems, refining the technique of using Jacobi
fields as in Bishop \& Crittenden, employing Riccati equations as
well.\index{Riccati equation} A good reference for this approach is

\vspace{\smallskipamount}
J.-H. Eschenburg, Comparison Theorems and Hypersurfaces, Manuscripta
Math. 59(1987), 295-323.\index{Eschenburg, J.-H.}

\vspace{\medskipamount}
One of the starting points of modern comparison theory is the Rauch
Comparison Theorem.\index{Rauch Comparison Theorem}
\index{Rauch, H.} It says that if we have an inequality on sectional
curvatures at corresponding points of two geodesics, then the opposite
inequality holds for corresponding exponentiated tangent vectors. In
effect we compare growth of Jacobi fields when we are given a
curvature comparison and the same initial conditions for the Jacobi
fields.

\vspace{\smallskipamount}\index{Jacobi equation}
The reason that Riccati equations are sometimes more convenient is
that they come close to conveying just the right amount of
information, while the Jacobi equation has too much detail. If we are
concerned with estimating the distance to a conjugate point, we are
interested in whether there is a one-dimensional subspace of Jacobi
fields with zeros at two points; the Jacobi equation determines
individual members of that subspace, while the Riccati equation is
aimed at the subspace itself. For the two-dimensional case the
interest centers on the fields orthogonal to a geodesic, so that the
equations are given in terms of one scalar coefficient $f$. The Jacobi
equation is $f'' + Kf = 0$, where $K$ is the Gaussian curvature 
\index{Gaussian curvature}along
the geodesic. The Riccati equation is $h' + h^2 + K = 0$, where $h$ is
related to $f$ by $h = f'/f$. The distance between conjugate points is
the distance between two singularities of $h$. Those singularities are
all of the same sort, with $h$ approaching $-\infty$ from the left,
$+\infty$ from the right, and asymptotically $h(s)$ behaves like
$\pm(s-c)^{-1}$ at a singularity $c$. For the higher dimensional case we
deal with the vector Jacobi equation, while the Riccati equation is a
matrix Riccati equation whose solutions package all the solutions of
the Jacobi equation.

\vspace{\smallskipamount}
Geometrically the matrix of the Riccati equation represents the second
fundamental forms of a wave front, so the Riccati equation itself
expresses how the relative geometry of those wave fronts evolve as one
moves orthogonally to them. When we radiate from a point the wave
fronts are metric spheres, but the equations have the same form for
wave fronts radiating from any submanifold.

\vspace{\medskipamount}
To describe a {\em wave front},\index{wave front}
 or family of parallel hypersurface all
that is required is a real-valued function having gradient of unit
length everywhere: $f:M\to \R $, such that $g(df,df) =1$. Then the
hypersurfaces are $S_t = \{p:f(p)=t\}$, the level hypersurfaces of
$f$. We let $X = grad\,f$, the metric dual of $df$; that is, the
vector field such that $g(X,Y)= df(Y)$ for all vector fields $Y$. Then
we calculate for any tangent vector $y$:
$$0 = y\,g(X,X) = 2g(D_yX,X).$$
We can take an extension $Y$ of $y$ such that $[X,Y] = 0$ and $g(X,Y)$
is constant, hence $0 = Xg(X,Y) = g(D_XX,Y) + g(X,D_YX)$, so that
$$D_XX = 0.$$
Thus, the integral curves of $X$ are geodesics. The distances between
the hypersurfaces $S_t$ are measured along these geodesics.

\vspace{\medskipamount}
A case of particular importance is $f(q) = d(p,q)$, the distance
function defined on a deleted normal neighborhood of $p$, for which
the wave fronts are the concentric spheres about $p$.\index{concentric
spheres}

\vspace{\medskipamount}
Let $B = DX$, the Hessian \index{Hessian}tensor of $f$. 
Then $X$ is in the nullspace
of $B$, so that we will be mainly concerned with the restriction of
$B$ to the normal space $X^{\perp}$. As defined by $B = DX$, $B$ is a
linear map $Y \to D_YX$, which is the shape operator (Weingarten map)
\index{Weingarten map}\index{second fundamental form}
of the hypersurfaces $S_t$. But we also can view $B$ as the second
fundamental form, the symmetric bilinear form $(Y,Z) \to B(Y,Z) =
g(D_YX,Z)$. Usually it will be the operator version that occurs here.

\vspace{\smallskipamount}
Suppose that $J$ is a vector field orthogonal to $X$ such that
$[X,J]=0$. Then $D_JX = D_XJ = BJ$. Applying $D_X$ again we get
$$D_X(BJ) = (D_XB)J + BD_XJ = (D_XB)J + B^2J$$
$$\qquad = D_XD_JX = D_JD_XX - R_{XJ}X = - R_{XJ}X.$$
Define the symmetric linear operator $R_X$ by $R_XJ = R_{XJ}X$. Since
the vector field $J$ can have arbitrary pointwise values perpendicular
to $X$, we get the following operator Riccati equation for $B$:
$$D_XB + B^2 + R_X = 0.$$\index{Riccati equation!matrix}
Still assuming that $[X,J] = 0$, we get
$$D_XD_XJ = D_XD_JX = D_JD_XX - R_{XJ}X =-R_XJ,$$
which is the Jacobi equation; so such a $J$ must be a Jacobi field
along the integral curves of $X$. We can take $n-1$ such fields $J_1,
\ldots, J_{n-1}$ which are orthogonal to $X$ at some point, and use
them to make a linear isomorphism $\R ^{n-1} \to X^{\perp}$ which
we denote by $J$. The row of derivative fields $D_XJ = J'$ can also be
regarded as such a linear map, so that $\tilde B = J'J^{-1}$ makes
sense as a linear operator on $X^{\perp}$. Writing the definition of
$\tilde B$ as $J' = \tilde BJ$, the fact that $\tilde B$ satisfies the
same Riccati equation is just a repeat of the previous calculation.
Finally, we can rig $J$ and $J'$ at one point so that $\tilde B$
coincides with $B$ at that point, hence everywhere. This establishes
the usual relation between the Riccati equation and the corresponding
second-order linear equation, here the Jacobi equation.

\subsection{Reduction to a Scalar Equation} If we assume that $B$ has a
simple eigenvalue $\lambda$, then locally $\lambda$ will be a smooth
function and will have a smooth unit eigenvector field $U$. We show
that $\lambda$ satisfies a scalar Riccati equation\index{Riccati 
equation!scalar} of the same form,
where the driving operator $R_X$ is replaced by a sectional curvature
function.
$$X\lambda = \lambda' = g(BU,U)' = g(B'U + BU',U) + g(BU,U')$$
$$\qquad = -g(B^2U,U) - g(R_XU,U) = -\lambda^2 - K_{XU}.$$
We have used the symmetry of $B$ and the fact that
$g(BU,U') = \lambda g(U,U') = 0$, since the derivative of a unit
vector is always perpendicular to the unit vector itself.

\vspace{\medskipamount}
We cannot generally assume that the eigenvalues of $B$ will be simple.
However, we can always perturb an initial value of $B$ so that the
perturbed solutions of the Riccati equation will have simple
eigenvalues locally. Then an upper or lower bound on the eigenvalues
derived from the scalar equation can be applied to the eigenvalues of
the matrix equation by taking a limit as the perturbations go to $0$. 
This will serve our purposes even in case $B$ has multiple
eigenvalues.

\subsection{Comparisons for Scalar Riccati Equations} We consider the
Riccati equations of the form $f'= -f^2-H$, where $f,H$ are
real-valued functions of a real variable. In our geometric
applications $H$ will be the sectional curvature of a section tangent
to a geodesic, and $f$ has interpretations as a principal normal
curvature (eigenvalue of the second fundamental forms) of a wave
front, or a connection coefficient for a frame field adapted to the
setting.

\vspace{\medskipamount}
A basic trick in dealing with the scalar Riccati equation is the
change to the corresponding linear homogeneous second order equation.
We let $f = j'/j$, and then easily calculate $j'' = -Hj$. Conversely,
a solution of the second order equation leads to a solution of the
Riccati equation. Of course, $j$ is only determined up to a ratio. The
trick may be viewed as splitting the second order equation into two
first order steps, the Riccati equation and the linear equation
$j' = -fj$. We assume that $H$ is continuous.

\begin{lem} A solution $f$ on $(0,a)$ either extends
continuously to a solution in a neighborhood of 0, or $\lim_{t\to
0+}tf(t) = 1$. In either case $f$ is uniquely determined on $(0,a)$ by
its value $f(0+)$, whether finite or $+\infty$. Similarly, $f$ on
$(-a,0)$ is uniquely extendible to $f:(-a,0] \to [-\infty,+\infty)$.
\end{lem}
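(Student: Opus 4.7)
The plan is to reduce the scalar Riccati equation $f' = -f^2 - H$ to the linear second-order equation $j'' = -Hj$ via the substitution $f = j'/j$, and then read off the dichotomy and the uniqueness from the standard extension/uniqueness theory for linear ODEs with continuous coefficients.

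Given a solution $f$ on $(0,a)$, fix any $c \in (0,a)$ and set
$$j(t) = \exp\!\Bigl(\int_c^t f(s)\,ds\Bigr), \qquad t\in(0,a).$$
Direct differentiation gives $j'= fj$ and $j'' = (f'+f^2)j = -Hj$, so $j$ satisfies the linear equation on $(0,a)$ and is everywhere positive there. Because $H$ is continuous on a neighborhood of $0$ and the equation $j''=-Hj$ is linear, the solution $j$ obtained from the interior data $(j(c),j'(c))$ extends uniquely as a $C^2$ solution to an open interval containing $0$. In particular $j(0)$ and $j'(0)$ exist as finite real numbers.

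Now I split into two cases. If $j(0)\neq 0$, then (since $j>0$ on $(0,a)$) $j(0)>0$, and $f = j'/j$ extends continuously to a neighborhood of $0$ with $f(0+) = j'(0)/j(0)$ finite. If $j(0)=0$, then $j'(0)\neq 0$ by uniqueness for the linear ODE, and $j'(0)\ge 0$ since $j>0$ on $(0,a)$; hence $j'(0)>0$. The Taylor expansions $j(t) = j'(0)\,t + O(t^2)$ and $j'(t) = j'(0) + O(t)$ then give
$$tf(t) \;=\; \frac{t\,j'(t)}{j(t)} \;\longrightarrow\; 1 \qquad (t\to 0+).$$
This establishes the stated dichotomy, and in particular shows that a divergent limit $f(0+)$ must be $+\infty$ with the specific asymptotic $f(t)\sim 1/t$.

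For uniqueness, observe that the assignment $f\mapsto j$ is well-defined up to a positive multiplicative constant (changing $c$ or multiplying $j$ by a constant does not affect $f = j'/j$). The prescribed value $f(0+)$ then pins down the ray $(j(0),j'(0))\in\mathbf{R}^2$: if $f(0+)=v$ is finite, normalize to $(1,v)$; if $f(0+)=+\infty$, normalize to $(0,1)$. By uniqueness of the initial value problem for $j''=-Hj$, this determines $j$ up to a positive scalar, and hence determines $f$ on $(0,a)$. The analogous argument for $(-a,0)$ is identical, or can be deduced by applying the above to $\tilde f(t) := -f(-t)$, which satisfies a Riccati equation of the same form with $H$ replaced by $H(-\,\cdot\,)$.

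The only point requiring care is the extension of $j$, defined a priori only on $(0,a)$, across $t=0$. But this is immediate from linearity: $j$ agrees on $(0,a)$ with the unique maximal solution of the linear ODE $j''=-Hj$ passing through $(c,j(c))$ with slope $j'(c)$, and that maximal solution is defined on the full interval of continuity of $H$. Everything else in the proof is a direct reading of that linear solution near $0$.
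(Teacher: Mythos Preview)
Your proof is correct and follows essentially the same approach as the paper: convert to the linear equation $j''=-Hj$ via $f=j'/j$, extend $j$ across $0$ by linear ODE theory, and read off the dichotomy from the Taylor expansion of $j$ at $0$. Your version is more explicit about constructing $j$ from $f$ and about the uniqueness argument (mapping $f(0+)$ to a ray of initial data for $j$), but the underlying idea is identical to the paper's.
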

\begin{proof}. We must have a Taylor expansion $j(t) = c + bt + O(t^2)$
for the corresponding linear equation solution. If $c\ne 0$, $f(t) =
\frac{b+O(t)}{c+bt + O(t^2)}$ gives us the continuous extension $f(0)
= b/c$. If $c=0$, then $f(t) = \frac{b+O(t)}{bt+O(t^2)}\to +\infty$ as
$t\to 0+$ and $tf(t) \to 1$. The second order equation determines a
solution $j$ such that $j(0) = 0$ up to a constant multiple, so that
$f$ is uniquely determined when $0$ is a singularity.
\end{proof}
\begin{lem} If we determine a unique solution $f_r$ for
$r \ne 0$ by $f_r(r) = 1/r$, then the unique solution singular $f$
at $0$ is given by $f(t) = \lim_{r\to 0+}f_r(t)$.
\end{lem}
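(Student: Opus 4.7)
The plan is to reduce everything to the corresponding linear second-order equation $j'' = -Hj$, using the substitution $f = j'/j$ introduced just before the lemma. The singular solution $f$ on $(0,a)$ corresponds (up to a constant multiple of $j$) to the unique solution $j_0$ characterized by $j_0(0) = 0$; normalize it by $j_0'(0) = 1$, so that $j_0(t) = t + O(t^3)$, consistent with the behavior $tf(t)\to 1$ from the previous lemma. Each $f_r$ corresponds to a solution $j_r$ whose ratio $j_r'/j_r$ at $r$ equals $1/r$, and we may (and will) normalize by setting $j_r(r) = r$ and $j_r'(r) = 1$.

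Next I would pick the second basis element $j_1$ for the solution space with $j_1(0) = 1$, $j_1'(0) = 0$, and expand $j_r = \alpha_r j_0 + \beta_r j_1$. Evaluating the two normalization conditions at $t = r$ gives the linear system
$$\alpha_r j_0(r) + \beta_r j_1(r) = r, \qquad \alpha_r j_0'(r) + \beta_r j_1'(r) = 1.$$
As $r \to 0^+$ the coefficient matrix tends to the identity (since $j_0(r)\to 0$, $j_1(r)\to 1$, $j_0'(r)\to 1$, $j_1'(r)\to 0$) and the right-hand side tends to $(0,1)$. Hence $\alpha_r \to 1$ and $\beta_r \to 0$, so by continuous dependence $j_r \to j_0$ and $j_r' \to j_0'$ uniformly on compact subsets of $\R$.

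To finish, on any compact subinterval $[t_0,t_1]\subset(0,a)$ the function $j_0$ is bounded below by a positive constant, so for all $r$ small enough $j_r$ is too; on such an interval $f_r = j_r'/j_r$ therefore converges uniformly (and hence pointwise on $(0,a)$) to $j_0'/j_0 = f$, which is the claim.

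The only real subtlety is the bookkeeping for normalizations: since $f$ depends only on the ratio $j'/j$, any nonzero rescaling of $j_r$ would do, but pinning down $j_r(r) = r$ and $j_r'(r) = 1$ is exactly what forces the limit to be the particular representative $j_0$ with $j_0'(0) = 1$ rather than some other scalar multiple. Once that choice is made, the rest is just continuous dependence of linear ODE solutions on their initial data as the initial point slides from $r$ down to $0$.
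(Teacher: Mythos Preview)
Your proof is correct and follows essentially the same approach as the paper: pass to the linear equation, choose the normalization $j_r(r)=r$, $j_r'(r)=1$ (exactly the initial conditions the paper specifies), and take the limit as $r\to 0^+$. The paper's proof is a single sentence invoking this limit without justification; you have simply filled in the continuous-dependence argument via the basis decomposition $j_r=\alpha_r j_0+\beta_r j_1$, which is the natural way to make that step rigorous.
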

\begin{proof}. Take the limit of the corresponding solution $j_r$ for $j$,
which satisfies initial conditions $j_r(r) = r,\,j_r'(r) = 1$.
\end{proof}
\begin{thm}[Driving Function Comparison Theorem] If $f' = -f^2 -H$,
$g' = -g^2 - K$, $f(0) = g(0)$ [which may be $+\infty$], and $H\ge K$,
then $g$ exists on at least as great an interval $[0,a)$ as does $f$,
and $f\le g$ on that interval.
\end{thm}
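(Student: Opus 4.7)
The plan is to reduce the comparison to a linear first-order inequality satisfied by the difference $u = g - f$. Subtracting the two Riccati equations gives
$$u' = g' - f' = -(g^2 - f^2) - (K - H) = -(f+g)\,u + (H - K),$$
which is a linear ODE in $u$ with nonnegative forcing term $H - K$ and (initially) vanishing data $u(0) = 0$. Writing it as $u' + (f+g)u = H - K$ and multiplying by the integrating factor $\exp\bigl(\int_0^t (f+g)\,d\tau\bigr)$ yields the explicit representation
$$u(t) = \int_0^t \exp\Bigl(-\int_s^t (f+g)\,d\tau\Bigr)\bigl(H(s) - K(s)\bigr)\,ds \ge 0,$$
so $f \le g$ wherever both are finite.

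To handle the singular initial condition $f(0) = g(0) = +\infty$, I would invoke the preceding lemma: approximate $f$ and $g$ by the unique solutions $f_r, g_r$ with $f_r(r) = g_r(r) = 1/r$ for $r > 0$. On the subinterval $(r, a)$ both approximants are smooth with $u_r(r) = 0$, so the argument above gives $f_r \le g_r$ there, and letting $r \to 0^+$ (using the lemma's convergence statement) produces $f \le g$ on $(0, a)$.

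Finally, to see that $g$ exists on at least as large an interval as $f$, I would use the description of singularities of Riccati solutions given just before the theorem: as $t$ approaches a singular point from the left, the solution tends to $-\infty$. Suppose toward contradiction that $g$ has a singularity at some $b \in (0, a)$; then $g(t) \to -\infty$ as $t \to b^-$. But $g \ge f$ on $(0, b)$ and $f$ is continuous up to $b < a$, giving a finite lower bound on $g$ near $b$, a contradiction. Hence the maximal domain of $g$ contains $[0, a)$.

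The main obstacle is bookkeeping at $t = 0$ when the initial value is $+\infty$: the integrating-factor formula fails literally because $f + g$ blows up at $0$. The clean way around this is precisely the $r \to 0^+$ approximation above, and one should verify that the convergence $f_r \to f$, $g_r \to g$ is locally uniform on compact subsets of $(0,a)$ so that the inequality $f_r \le g_r$ passes to the limit. Everything else is routine ODE comparison.
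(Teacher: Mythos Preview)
Your proof is correct and follows essentially the same approach as the paper: form $h = g - f$, derive the linear inequality $h' + (f+g)h = H - K \ge 0$, use the integrating factor to conclude $h \ge 0$, handle the singular initial value via the approximants $f_r, g_r$ with $f_r(r) = g_r(r) = 1/r$, and rule out an earlier blow-up of $g$ by noting that $g$ can only go to $-\infty$ at a singularity while $g \ge f$ bounds it below. The only difference is cosmetic---you write out the explicit variation-of-constants formula and spell out the contradiction for the domain of $g$, whereas the paper compresses both into a sentence each.
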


[Note that $f$ continues until $f(t) \to -\infty$ as $t\to a-$.]

\begin{proof}. Let $h = g-f$. Then $h' + (g+f)h = -K +H \ge 0$. If
$f(0)$ is finite, we multiply both sides of this inequality by
$\exp(\int(f+g)(u)du = k$ to get $(kh)' \ge 0$. Since $h(0) = 0$ and
$k \ge 0$, we conclude that $h(t) = g(t) - f(t) \ge 0$ on $[0,a)$. But
$g$ can only become singular by going to $-\infty$, so that g must
exist on $[0,a)$ too.
\end{proof}
If $f(0) = +\infty$, then we set $f_r(r) = g_r(r) = 1/r$, use the
result just proved, and take a limit as $r\to 0+$.

\begin{thm}[The Sturm Comparison Theorem] If $j'' = -Hj$,
$k'' = -Kk$, $j(0) = k(0) = 0$, $H\ge K$, and these solutions are not
trivial, then the next $0$ of $j$ occurs at or before the next $0$ of
$k$.\index{Sturm Comparison Theorem}
\end{thm}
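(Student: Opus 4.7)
The plan is to reduce the second-order linear problem to the scalar Riccati problem already treated, and then invoke the Driving Function Comparison Theorem.

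First I would set $\alpha$ to be the next zero of $j$ after $0$, and $\beta$ the next zero of $k$ after $0$ (allowing $\beta = \infty$); the goal is to show $\alpha \le \beta$. On the interval $(0,\alpha)$ we may assume $j > 0$ (replacing $j$ by $-j$ if necessary), and likewise $k > 0$ on $(0,\beta)$. Define $f = j'/j$ on $(0,\alpha)$ and $g = k'/k$ on $(0,\beta)$. A direct computation using $j'' = -Hj$ gives $f' = j''/j - (j'/j)^2 = -H - f^2$, so $f$ solves the Riccati equation $f' = -f^2 - H$, and similarly $g' = -g^2 - K$.

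Next I would pin down the boundary behaviors. Since $j(0) = 0$ and the solution is nontrivial, $j'(0) \ne 0$, so by the first of the two lemmas on singular solutions (the one giving $t f(t) \to 1$), $f$ extends to $(-\varepsilon, \alpha)$ with singular initial value $f(0+) = +\infty$; the same lemma gives $g(0+) = +\infty$. At the right end, as $t \to \alpha-$ we have $j(t) \to 0^+$ while $j'(\alpha) \ne 0$ (again by nontriviality and uniqueness), hence $j'(\alpha) < 0$ and $f(t) \to -\infty$. Thus $\alpha$ is precisely the first singularity of $f$ in $(0,\infty)$; similarly $\beta$ is the first singularity of $g$.

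Now I would apply the Driving Function Comparison Theorem with the common singular initial data $f(0+) = g(0+) = +\infty$ and the hypothesis $H \ge K$. The theorem, in the form already proved (using the $r \to 0+$ limit of the bounded initial-value version), gives that $g$ exists on at least as large an interval $[0,a)$ as $f$ does, and $f \le g$ throughout. Since $f$ blows down to $-\infty$ at $\alpha$, the theorem forces $g$ to exist at least up to $\alpha$, i.e.\ $\beta \ge \alpha$, which is the desired conclusion.

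The one step that requires care is the handling of the common singular initial condition $+\infty$: the Driving Function Comparison Theorem was first proved assuming $f(0)$ finite, and then extended to $f(0) = +\infty$ by setting $f_r(r) = g_r(r) = 1/r$ and letting $r \to 0+$. I would explicitly invoke that extension here, using the second lemma to justify that the limits $\lim_{r\to 0+} f_r$ and $\lim_{r\to 0+} g_r$ recover our $f$ and $g$. Apart from this, the argument is a straightforward translation of ``zero of the second-order solution'' into ``singularity of the Riccati solution'', and the comparison for zeros of $j$ and $k$ follows from the comparison of singularities of $f$ and $g$.
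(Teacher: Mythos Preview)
Your proposal is correct and follows precisely the route the paper intends: the paper does not write out a separate proof of Sturm, but places it immediately after the two lemmas on the singular behavior of $f=j'/j$ and the Driving Function Comparison Theorem, so that the reader is meant to assemble exactly the argument you give (pass to $f=j'/j$, $g=k'/k$, identify zeros of $j,k$ with $-\infty$ singularities of $f,g$, and invoke the comparison with common initial value $+\infty$). Your explicit handling of the singular initial condition via the $r\to 0+$ approximation is just what the paper's proof of the Driving Function Comparison Theorem does in its last line.
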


\begin{thm}[Value Comparison Theorem] If $f' = -f^2-H$, $g' = -g^2-H$,
and $f(0) \le g(0)$, then $f\le g$ on the maximal interval $[0,a)$ on
which $f$ exists.
\end{thm}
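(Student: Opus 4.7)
The plan is to mimic the proof of the Driving Function Comparison Theorem, but exploit that the driving function $H$ is now common to both equations, so the inequality flows from the initial data rather than from a curvature comparison. I set $h = g - f$ and compute
\[
h' = -g^2 - H - (-f^2 - H) = -(g+f)(g-f) = -(f+g)h,
\]
a homogeneous linear ODE for $h$ with coefficient $-(f+g)$.

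In the case $f(0)$ finite, the integrating factor $k(t) = \exp\!\bigl(\int_0^t (f+g)(u)\,du\bigr)$ gives $(kh)' = 0$ on any subinterval where both $f$ and $g$ are finite, so $kh$ is constant; since $k > 0$ and $h(0) = g(0) - f(0) \ge 0$, we conclude $h \ge 0$, i.e., $f \le g$, wherever both solutions exist. To handle the singular initial case $f(0) = +\infty$ (which forces $g(0) = +\infty$ as well), I invoke the second lemma above: approximate by the solutions $f_r,g_r$ determined by $f_r(r) = g_r(r) = 1/r$; these coincide by uniqueness for the common equation, so passing to the limit $r \to 0+$ yields $f = g$, and in particular $f \le g$.

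It remains to promote this to the full interval $[0,a)$ of existence of $f$ and to deduce that $g$ exists there too. A solution of a scalar Riccati equation can only fail to extend by blowing up to $-\infty$, since going to $+\infty$ corresponds (via $f = j'/j$) to a zero of $j$ that is immediately preceded by the solution re-entering from $+\infty$ per the first lemma. Because $g \ge f$ on every subinterval where both are defined, $g$ cannot hit $-\infty$ before $f$ does, so the maximal interval of existence of $g$ contains $[0,a)$, and the comparison $f \le g$ propagates to all of $[0,a)$.

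The only mild obstacle is the bookkeeping at $t=0$ when $f(0) = +\infty$: one needs to know that the singular initial value problem is well-posed, which is exactly the content of the first two lemmas, so this reduces to a limiting argument rather than a new estimate. Everything else is a one-line ODE calculation.
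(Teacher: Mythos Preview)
Your proof is correct and follows exactly the paper's approach: set $h = g - f$, observe $h' + (f+g)h = 0$, and conclude $h \ge 0$. The paper's own proof is a single sentence that leaves all of your details implicit, so you have simply fleshed out what the author meant by ``Clearly $h \ge 0$ on $[0,a)$'', including the continuation argument for $g$ and the (degenerate) singular initial case.
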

\begin{proof} Again let $h = g-f$, so that $h'+(g+f)h = 0$. Clearly
$h\ge 0$ on $[0,a)$.
\end{proof}
\begin{thm}[Rauch Comparison Theorem] \index{Rauch Comparison
Theorem}Let M and N be Riemannian
manifolds, $\gamma$ and $\sigma$ unit speed geodesics in each,
$X = \gamma'$ and $Y = \sigma'$ their unit tangent vector fields.
Suppose that for every pair of vector fields $Z$ and $W$ orthogonal
to $\gamma$ and $\sigma$, respectively, we have an inequality on
sectional curvatures at corresponding points: $K_{XZ} \le K_{YW}$.
Let $J$ and $L$ be nonzero Jacobi fields orthogonal to $\gamma$
and $\sigma$, respectively, such that $J(0)= 0$, $L(0) = 0$, and
$J'(0)$, $L'(0)$ have the same length. Then $g_M(J,J)/g_N(L,L)$ is
nondecreasing for $s>0$; in particular, $J$ is at least as long as
$L$.
\end{thm}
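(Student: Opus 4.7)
The plan is to reduce the monotonicity of $g_M(J,J)/g_N(L,L)$ to an inequality on index forms, then apply the Basic Inequality together with the curvature hypothesis. For a Jacobi field $J$ on $[0,T]$ perpendicular to $\gamma$ with $J(0)=0$, integration by parts using the Jacobi equation $J''+R_{XJ}X=0$ gives
$$I_M^T(J) := \int_0^T\bigl[g_M(J',J') - g_M(R_{XJ}X,J)\bigr]\,dt = g_M(J'(T),J(T)) = \tfrac12\bigl(g_M(J,J)\bigr)'(T),$$
and analogously $I_N^T(L) = \tfrac12(g_N(L,L))'(T)$. Setting $c := |J(T)|/|L(T)|$ (positive and finite on any interval where $L$ does not vanish), the assertion that $g_M(J,J)/g_N(L,L)$ has nonnegative derivative at $T$ is equivalent to $I_M^T(J) \ge c^2\,I_N^T(L)$.

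To establish this I would construct a comparison field $W$ along $\sigma$ from $J$. Fix a parallel orthonormal frame $(E_i)$ of $\gamma'^\perp$ and expand $J(t)=\sum_i j^i(t)E_i(t)$. Choose a parallel orthonormal frame $(F_i)$ of $\sigma'^\perp$ whose values at $T$ are rotated so that $\sum_i j^i(T)F_i(T) = |J(T)|\cdot L(T)/|L(T)|$; this is possible because the orthogonal group acts transitively on spheres of fixed radius in $\sigma'(T)^\perp$. Define
$$W(t) = c^{-1}\sum_i j^i(t)F_i(t).$$
Since $(F_i)$ is parallel, $|W(t)| = c^{-1}|J(t)|$ and $|W'(t)| = c^{-1}|J'(t)|$; by construction $W(0)=0$ and $W(T)=L(T)$. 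The curvature hypothesis applied to $J(t)\in\gamma'(t)^\perp$ and $W(t)\in\sigma'(t)^\perp$ gives $K_{XJ(t)}\le K_{YW(t)}$, so
$$I_N^T(W) = c^{-2}\int_0^T\bigl[|J'|^2 - K_{YW}|J|^2\bigr]\,dt \le c^{-2}\int_0^T\bigl[|J'|^2 - K_{XJ}|J|^2\bigr]\,dt = c^{-2} I_M^T(J).$$

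Applying the Basic Inequality in $N$ to the Jacobi field $L$ and the comparison field $W$ (both vanishing at $0$, with $W(T)=L(T)$) yields $I_N^T(L)\le I_N^T(W)\le c^{-2} I_M^T(J)$, i.e., the required inequality $c^2 I_N^T(L)\le I_M^T(J)$. Since this holds at every $T>0$ on which $L$ is nonzero, the ratio $g_M(J,J)/g_N(L,L)$ is nondecreasing there; a Taylor expansion at $t=0$, using $J(t)=tJ'(0)+O(t^2)$ and $|J'(0)|=|L'(0)|$, shows the ratio tends to $1$ as $T\to 0^+$, so $|J|\ge|L|$ throughout.

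The main obstacle is the construction of $W$ so as to match $L$ at the endpoint $T$ exactly, which exploits the freedom to rotate the parallel frame $(F_i)$ at a single point in order to align $\sum_i j^i(T)F_i(T)$ with a prescribed direction in $\sigma'(T)^\perp$; without this matching, the Basic Inequality cannot be invoked. A secondary subtlety is that the Basic Inequality requires no conjugate points of $\sigma(0)$ on $\sigma((0,T])$, which is precisely the condition that $L$ have no zero there, and the theorem's conclusion is implicitly confined to such intervals.
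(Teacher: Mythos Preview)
Your argument is correct and is the classical index-form proof of Rauch (as in do~Carmo or Cheeger--Ebin), but it is \emph{not} the route the paper takes. The paper's proof uses the matrix Riccati machinery developed in the preceding pages: for Jacobi fields vanishing at the origin, the shape operator $B$ of the spherical wave front satisfies $D_XB + B^2 + R_X = 0$ with the universal initial condition $B\sim s^{-1}I$; the curvature inequality $K_{XZ}\le K_{YW}$ becomes an inequality on the driving terms $R_X$, $R_Y$, and the Driving Function Comparison Theorem (applied via the reduction to scalar Riccati equations for eigenvalues) yields the opposite inequality on the eigenvalues of the two $B$'s, hence on the logarithmic derivatives $(\log g(J,J))' = 2g(BJ,J)/g(J,J)$. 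Your approach instead transplants $J$ to $\sigma$ via parallel frames and invokes the Basic Inequality; this is self-contained relative to the variational material earlier in the notes and avoids the Riccati apparatus entirely. The Riccati route, by contrast, makes the passage to volume comparisons immediate (take the trace) and explains the remark that the dimensions of $M$ and $N$ need not agree---something your frame-transplant construction would need $\dim N\ge\dim M$ to accommodate as written. One small imprecision: the absence of conjugate points of $\sigma(0)$ on $(0,T]$ is strictly stronger than $L$ alone having no zero there; both proofs in fact require the former, since the Basic Inequality (yours) and the nonsingularity of $B$ (the paper's) each need it.
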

\begin{proof} We start by calculating the logarithmic derivative of
the length of a Jacobi field $J$:
$$(\log g(J,J))' = 2g(J',J)/g(J,J) = g(BJ,J)/g(J,J).$$
For Jacobi fields vanishing at an initial point the operators $B$ are
the shape operators of the spherical wave front\index{wave 
front!spherical} about that point. On
both manifolds these operators have a simple pole $B \sim (1/s)I$ as
their initial conditions. By the driving function comparison theorem
the eigenvalues of the operators for the spheres on $M$ and $N$ are
related oppositely to the relation for curvature. That inequality on
eigenvalues is then passed on to an inequality for the logarithmic
derivatives, and we have supposed that $J$ and $L$ are asymptotically
the same at $s=0$.
\end{proof}

Note that we did not have to assume that the dimensions are the same.
The most common application is a comparison to constant curvature
spaces, which can be stated conveniently as follows.
\index{bounded curvature}\index{curvature!bounded}

\begin{thm}[Constant Curvature Comparison Theorem] Suppose that the
sectional curvatures of $M$ are bounded by constants: $a \le K \le b$.
Let $S(a)$ and $S(b)$ be the simply connected complete Riemannian
manifolds of constant curvatures $a$ and $b$, of the same dimension as
$M$. Let $\exp_a$, $\exp_b$, and $\exp_p$ be exponential maps for
$S(a)$, $S(b)$, and $M$, respectively, each restricted to a normal
neighborhood. Then $\exp_p\circ\exp_a^{-1}$ is length nonincreasing
and $\exp_p\circ\exp_b^{-1}$ is length nondecreasing. (For convenience
we have identified the three tangent spaces by some Euclidean
isometry.)
\end{thm}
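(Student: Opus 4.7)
The plan is to recognize this as an immediate consequence of the Rauch Comparison Theorem once the differential of the map is expressed in terms of Jacobi fields. Let $\phi_a = \exp_p \circ \exp_a^{-1}$ (and similarly $\phi_b$), defined on a normal neighborhood in $S(a)$ using the fixed Euclidean identification of the three tangent spaces. To show $\phi_a$ is length nonincreasing, it suffices to show that $|({\phi_a})_* u| \le |u|$ pointwise for every tangent vector $u$.

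The first step is the standard observation that, for a vector $v$ in the common tangent space and $w$ a tangent to the tangent space at $v$, the differential $({\exp_p})_{*v}(w)$ equals $J_M(1)$, where $J_M$ is the Jacobi field along the geodesic $s \mapsto \exp_p(sv)$ with initial conditions $J_M(0)=0$, $J_M'(0)=w$; and analogously for $S(a)$. Thus, writing $u = ({\exp_a})_{*v}(w) = J_{a}(1)$, we have $({\phi_a})_*(u) = J_M(1)$, so comparing $|({\phi_a})_* u|$ to $|u|$ reduces to comparing the lengths of two Jacobi fields along corresponding geodesics with matching initial data.

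Next I would decompose $w = w_\top + w_\perp$ into components parallel and orthogonal to $v$. The tangential part produces the Jacobi field $s \, (g(w,v)/g(v,v))\,\gamma'(s)$ in both spaces, and by Gauss's Lemma this tangential component is orthogonal to the transverse part throughout; at $s=1$ its length is $|w_\top|$ in both $M$ and $S(a)$. So only the orthogonal Jacobi fields $J_{M,\perp}$ and $J_{a,\perp}$ remain to be compared, and they satisfy the hypotheses of the Rauch Comparison Theorem with $J_{M,\perp}'(0) = w_\perp = J_{a,\perp}'(0)$. Since $K_M \ge a$ everywhere, Rauch (applied with $M$ playing the role of the \emph{larger}-curvature manifold, whose Jacobi fields are shorter) yields $|J_{M,\perp}(1)| \le |J_{a,\perp}(1)|$. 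Adding squared tangential lengths gives $|J_M(1)|^2 \le |J_a(1)|^2$, which is precisely $|({\phi_a})_* u| \le |u|$. The argument for $\phi_b$ is identical with the inequalities reversed, using $K_M \le b$.

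The only genuine nuisance is checking that the composition is actually defined on the advertised normal neighborhood in $S(a)$ (respectively $S(b)$). For $\phi_b$ this is automatic: since $K_M \le b$, the Sturm/Rauch conclusion prevents conjugate points of $p$ in $M$ from occurring before distance $\pi/\sqrt{b}$, so the normal radius in $M$ dominates that in $S(b)$ and the image stays inside a normal neighborhood of $p$. For $\phi_a$ one must be slightly more careful and either restrict to the intersection of the two normal radii or invoke an independent argument that curvature bounded below prevents the cut locus of $p$ from coming too close; I expect this bookkeeping, rather than the Jacobi field comparison itself, to be the only delicate step.
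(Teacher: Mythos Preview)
Your argument is correct and matches the paper's intended approach: the paper does not give a separate proof but presents this theorem immediately after Rauch as ``the most common application \ldots\ to constant curvature spaces,'' and your reduction via Jacobi fields, the tangential/orthogonal split, and Gauss's Lemma is exactly that application. The domain bookkeeping you flag is not addressed in the paper either; the phrase ``each restricted to a normal neighborhood'' is meant to be read as restricting to the common region where all three exponential maps are diffeomorphisms, so your caution there is appropriate but no further argument is expected.
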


To keep the directions of the inequalities correct you should always
bear in mind particular comparisons, say of the Euclidean plane with
the unit sphere: it is easy to visualize that the Euclidean lines
spread apart faster than great circles making the same initial angle.

\vspace{\medskipamount}
An equivalent way of viewing the constant curvature comparison is in
terms of triangles. \index{triangle comparison}
The triangles compared should be sufficiently
small so that they lie in a normal coordinate neighborhood and in the
sphere are uniquely determined by the three side lengths. For a given
triangle in $M$ the {\em comparison triangle} is the triangle in the
constant curvature surface $S(a)$ having the same side lengths. Then
an inequality on curvatures, say, $a \le K$, is conveyed by
inequalities between the angles and corresponding distances across the
two triangles: the angles are smaller and the distance shorter in the
comparison triangle than in the given triangle.

\vspace{\medskipamount}
Alternatively, instead of making the three sides the same, one can
make two sides and the included angle the same in the given triangle
and the comparison triangle, with obvious consequent inequalities
between the other corresponding ``parts'' of the triangles. This called
{\it hinge comparison}\index{hinge comparison}

\vspace{\medskipamount}
Alexandrov \index{Alexandrov, A.D.}\index{Alexandrov space}
has turned these triangle 
comparisons into definitions, for\index{singular space}
geodesic metric spaces, of what it means for the space to have
curvature bounded above or below by a constant. This allows an
extension of many ideas of Riemannian geometry to ``singular'' spaces.
For example, he proves that if curvature is bounded above, then the
angle between two geodesic rays with a common starting point is
well-defined and satisfies many of the usual properties. However, an
angle and its supplementary angle has sum $\ge \pi$, but equality may
fail. The metric completion of the logarithm spiral surface 
\index{logarithmic spiral surface}covering
the punctured Euclidean plane has curvature $\le 0$, but geodesic rays
starting at the singular point can have arbitrarily large angle
between them. Generally, in spaces with curvature bounded above
geodesics may bifurcate (which is an indication of some infinitely
negative curvature), but locally a geodesic segment is uniquely
determined by its ends.

\vspace{\medskipamount}\index{curvature!positive}
The opposite case of spaces with curvature bounded below has also been
studied. Here again angles are meaningful; for the two-dimensional
case, the sum of angles about a point can be at most $2\pi$, and if it
is less, the point is regarded as having positive curvature measure.
Geodesics cannot bifurcate, but local bipoint uniqueness may fail and
indicate positive infinite curvature. Examples of this sort are obtain
by gluing two copies of a convex Euclidean set along their boundaries
(the {\em double} of the set).

\vspace{\medskipamount}
When a locally compact metric space has curvature bounded both above
and below then it is very close to being a manifold. To make it be a
manifold we only have to assume one further very natural property:
geodesics must be locally extendible. With this hypothesis, Nikolaev
\index{Nikolaev, I.G.}
proved that there is a $C^{3,\alpha}$ manifold structure and the
metric is given by a $C^{1,\alpha}$ Riemannian metric. The number
$\alpha$ is a H\"older exponent for the last derivatives, and can be
any number between $0$ and $1$.

\vspace{\medskipamount}
Curvature bounds defined in Alexandrov's way are easily seen to be
inherited by the limits of spaces, for some reasonable notions of such
limits. Thus, Nikolaev's theorem has an important consequence that
some limits of Riemannian manifolds are actually Riemannian manifolds.
There is a general compactness theorem of Gromov which shows
that many such limits exist.\index{Gromov, M.}\index{Gromov
Compactness Theorem}

\vspace{\medskipamount}
Another kind of result which Alexandrov was able to abstract to spaces
with curvature bounded above was the proof that one could compare
certain global triangles, given that local ones could be compared. For
example, he proved a generalization of the Hadamard-Cartan Theorem to
\index{Hadamard-Cartan Theorem}
locally compact complete geodesic metric spaces with curvature bounded
above by $0$.  Recently S. Alexander and I generalized this even more,
using instead the weaker assumption of geodesic convexity and
eliminating the hypothesis of local compactness.\index{Alexander, S.}

\subsection{Volume Comparisons} For a Riemannian manifold we can get
comparisons between the volume of balls and spheres (and more
generally, tubes) and corresponding volumes in constant curvature
spaces founded on curvature inequalities. For {\em lower} bounds on
volume it is hard to do much better than to assume upper bounds on
{\em sectional} curvature and apply the length nondecreasing maps that
we get from the Rauch comparison theorem. The more interesting case is
to get {\em upper} bounds on volumes from the weaker assumption of
lower bounds on {\em Ricci} curvature.\index{Ricci curvature}
\index{curvature!Ricci}

\begin{thm}[Bishop's Volume Comparison Theorem] \index{Bishop's
Volume Comparison Theorem}If
$Ric(X,X)\ge(n-1)K$ for all unit vectors $X = grad(d(p,\cdot))$,
then for each ball $B_p(r)$ and sphere$S_p(r) = \partial B_p(r)$
the Riemannian volume, n-dimensional and (n-1)-dimensional,
respectively, is less than or equal to that for a ball or sphere of
the same radius in a space of constant curvature $K$.
\end{thm}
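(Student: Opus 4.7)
The plan is to express both volumes in geodesic polar coordinates at $p$ and then reduce the pointwise comparison of volume elements to a scalar Riccati comparison of the kind already established. For a unit vector $u\in M_p$, let $\gamma_u(s)=\exp_p(su)$ and let $s^*(u)$ be the distance along $\gamma_u$ to the cut locus of $p$ (set $s^*(u)=+\infty$ if none). On the star-shaped set where $s<s^*(u)$, $\exp_p$ is a diffeomorphism and its Jacobian in polar coordinates is a positive function $j(s,u)$ with $j(s,u)\sim s^{n-1}$ as $s\to 0$; extend $j$ to vanish for $s\ge s^*(u)$. Then
$$\mathrm{vol}\,B_p(r)=\int_{S^{n-1}\subset M_p}\int_0^{r}j(s,u)\,ds\,d\Omega(u),\qquad \mathrm{vol}\,S_p(r)=\int_{S^{n-1}\subset M_p}j(r,u)\,d\Omega(u),$$
and the analogous formulas with $j$ replaced by the rotationally symmetric Jacobian $j_K(s)$ of the constant-curvature model $M_K$ give the comparison volumes.

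Next I identify $\theta(s,u):=\partial_s\log j(s,u)$ with the trace of the shape operator $B$ of the geodesic spheres about $p$, exactly the operator appearing in the Riccati discussion above; along $\gamma_u$, with $X=\gamma_u'$, it satisfies the matrix Riccati equation $D_X B+B^2+R_X=0$. Taking the trace and applying the Cauchy--Schwarz inequality $\mathrm{tr}(B^2)\ge(\mathrm{tr}\,B)^2/(n-1)$ together with the hypothesis $\mathrm{tr}(R_X)=Ric(X,X)\ge(n-1)K$ yields
$$\theta'+\frac{\theta^2}{n-1}+(n-1)K\le 0.$$
Substituting $\theta=(n-1)\varphi$ gives $\varphi'\le -\varphi^2-K$. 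In $M_K$ the spheres are totally umbilic and their principal curvature $\varphi_K$ satisfies the equation $\varphi_K'=-\varphi_K^2-K$ with equality, and both $\varphi$ and $\varphi_K$ share the singular initial behaviour $s\varphi(s),\,s\varphi_K(s)\to 1$ as $s\to 0$.

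The Driving Function Comparison Theorem, applied in its differential-inequality form with the singular-initial-value limiting trick $s_0\to 0$ used in the scalar Riccati lemmas preceding it, then gives $\varphi(s,u)\le\varphi_K(s)$, hence $\theta(s,u)\le \theta_K(s)=j_K'(s)/j_K(s)$ on $(0,s^*(u))$. Since $j(s,u)/s^{n-1}$ and $j_K(s)/s^{n-1}$ both tend to $1$ as $s\to 0$, integrating the inequality $(\log j)'\le(\log j_K)'$ yields $j(s,u)\le j_K(s)$ on $(0,s^*(u))$. Beyond the cut locus our convention $j=0$ preserves the inequality trivially, and integrating the pointwise bound first in $s$ and then over the unit sphere at $p$ produces the stated comparisons for $B_p(r)$ and $S_p(r)$.

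The main obstacle is the simultaneous treatment of the singular initial condition at $s=0$ and of the cut locus. Inside the normal-coordinate star domain the Riccati comparison is clean, but one must justify the common $+\infty$ initial value for $\varphi$ and $\varphi_K$ by the $r\to 0+$ limiting argument given earlier for scalar Riccati equations; past the cut locus the comparison is only preserved because the left-hand Jacobian has been set to zero while the model Jacobian $j_K$ may remain positive (it vanishes, in the case $K>0$, only at $s=\pi/\sqrt K$, which is the Myers bound on the diameter of $M$ and thus an automatic upper bound on any $s^*(u)$).
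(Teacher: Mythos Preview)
Your proof is correct and follows essentially the same approach the paper sketches: compute the logarithmic derivative of the Jacobian of $\exp_p$, pass through the operator Riccati equation $D_XB+B^2+R_X=0$, and take the trace to obtain a scalar Riccati relation for $j'/j$. You have supplied the details the paper omits, in particular the Cauchy--Schwarz step $\mathrm{tr}(B^2)\ge(\mathrm{tr}\,B)^2/(n-1)$ that turns the traced equation into a scalar Riccati \emph{inequality}, and the careful handling of the singular initial condition and the cut locus; to apply the Driving Function Comparison Theorem as stated, just set $H:=-\varphi'-\varphi^2\ge K$ so that $\varphi$ satisfies an honest equation with driving term $H$.
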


For $K>0$ the result holds for all $r\le \pi/\sqrt K$; for $K \le 0$
the result holds for all $r$. In either case it is permissible to let
$\exp_p$ be noninjective while its counterpart in the constant
curvature space is injective, since counting parts of volume more than
once enhances the inequality. This refinement is now attributed to
Gromov, but I knew it and thought it was so trivial as to be
unnecessary to say explicitly. However, it turned out to be important
in applications.

\vspace{\medskipamount}
The method of proof is to estimate the Jacobian determinant of the
exponential map, by calculating the logarithmic derivative. That much
is similar to the proof of the Rauch theorem. We express that Jacobian
determinant in terms of the length of an $(n-1)$-vector:
$$J_1\wedge\ldots\wedge J_{n-1} = j\cdot E_1\wedge\ldots\wedge
E_{n-1}.$$
This was done directly in the original proof, given in Bishop \&
Crittenden, Chapter 11. \index{Crittenden, R.J.}
Now the fashion is to use an operator Riccati
equation as intermediary, converting it to a scalar Riccati equation
for $j'/j$ by taking the trace.\index{Riccati equation}

\printindex

\end{document}